\newenvironment{customthm}[1]
  {\innercustomthm}
  {\endinnercustomthm}
\newenvironment{customconj}[1]
  {\innercustomconj}
  {\endinnercustomconj}
\newtheorem*{theorem*}{Theorem}
\newtheorem{theorem}{Theorem}[section]
\newtheorem{corollary}[theorem]{Corollary}
\newtheorem{proposition}[theorem]{Proposition}
\newtheorem{propositiondefinition}[theorem]{Proposition-Definition}
\newtheorem{lemma}[theorem]{Lemma}
\newtheorem{conjecture}[theorem]{Conjecture}
\newtheorem{mainconjecture}[theorem]{Main Conjecture}
\theoremstyle{definition}
\newtheorem{definition}[theorem]{Definition}
\newtheorem{example}[theorem]{Example}
\newtheorem{remark}[theorem]{Remark}
\newtheorem{assumption}[theorem]{Assumption}
\theoremstyle{property}
\DeclareFontFamily{OT1}{rsfs}{}
\DeclareFontShape{OT1}{rsfs}{n}{it}{<-> rsfs10}{}
\DeclareMathAlphabet{\curly}{OT1}{rsfs}{n}{it}
\newcommand\C{\mathbb C}
\newcommand\FF{\mathbb F}
\newcommand\GG{\mathbb G}
\newcommand\LL{\mathbb L}
\newcommand\PP{\mathbb P}
\newcommand\Q{\mathbb Q}
\newcommand\R{\mathbb R}
\newcommand\Z{\mathbb Z}
\newcommand\cC{\mathcal C}
\newcommand\cE{\mathcal E}
\newcommand\F{\mathcal F}
\newcommand\G{\mathcal G}
\renewcommand\L{\mathcal L}
\newcommand\M{\mathcal M}
\newcommand\N{\mathcal N}
\renewcommand\O{\mathcal O}
\newcommand\cQ{\mathcal Q}
\newcommand\ccR{\mathcal R}
\newcommand\T{\mathcal T}
\newcommand\X{\mathcal X}
\newcommand\sfE{\mathsf E}
\newcommand\sfG{\mathsf G}
\newcommand\sfQ{\mathsf Q}
\newcommand\sfV{\mathsf V}
\newcommand\sfW{\mathsf W}
\newcommand\sfZ{\mathsf Z}
\newcommand\vir{\mathrm{vir}}
\newcommand\vd{\mathrm{vd}}
\newcommand\comb{\mathrm{comb}}
\newcommand\ttop{\mathrm{top}}
\newcommand\slice{\mathrm{slice}}
\newcommand\DT{\mathrm{DT}}
\newcommand\SL{\mathrm{SL}}
\newcommand\Quot{\mathrm{Quot}}
\newcommand\bsdelta{\boldsymbol \delta}
\newcommand\bschi{\boldsymbol \chi}
\newcommand\bspi{\boldsymbol \pi}
\newcommand\bsvpi{\boldsymbol \varpi}
\newcommand\bslambda{\boldsymbol \lambda}
\newcommand\bsmu{\boldsymbol \mu}
\newcommand\mdot{{\scriptscriptstyle\bullet}}
\newcommand\Into{\ar@{^(->}[r]<-.3ex>}
\newcommand\rk{\operatorname{rk}}
\newcommand\tr{\operatorname{tr}}
\newcommand\ch{\operatorname{ch}}
\newcommand\Hom{\operatorname{Hom}}
\renewcommand\hom{\curly H\!om}
\newcommand\Ext{\operatorname{Ext}}
\newcommand\ext{\curly Ext}
\newcommand\Pic{\operatorname{Pic}}
\newcommand\beq[1]{\begin{equation}\label{#1}}
\newcommand\eeq{\end{equation}}
\newcommand\beqa{\begin{eqnarray*}}
\newcommand\eeqa{\end{eqnarray*}}
\begin{document}
\title[Rank 2 sheaves on toric 3-folds]{Rank 2 sheaves on toric 3-folds: classical and virtual counts}
\author[A. Gholampour, M. Kool, B. Young]{Amin Gholampour, Martijn Kool, and Benjamin Young \vspace{-5mm}}
\maketitle

\begin{abstract}
Let $\M$ be the moduli space of rank 2 stable torsion free sheaves with Chern classes $c_i$ on a smooth 3-fold $X$. When $X$ is toric with torus $T$, we describe the $T$-fixed locus of the moduli space. Connected components of $\M^T$ with constant reflexive hulls are isomorphic to products of $\PP^1$. We mainly consider such connected components, which typically arise for any $c_1$, ``low values'' of $c_2$, and arbitrary $c_3$. 

In the \emph{classical part} of the paper, we introduce a new type of combinatorics called double box configurations, which can be used to compute the generating function $\sfZ(q)$ of topological Euler characteristics of $\M$ (summing over all $c_3$). The combinatorics is solved using the double dimer model in a companion paper. This leads to explicit formulae for $\sfZ(q)$ involving the MacMahon function. 

In the \emph{virtual part} of the paper, we define Donaldson-Thomas type invariants of toric Calabi-Yau 3-folds by virtual localization. The contribution to the invariant of an individual connected component of the $T$-fixed locus is in general not equal to its signed Euler characteristic due to $T$-fixed obstructions. Nevertheless, the generating function of all invariants is given by $\sfZ(q)$ up to signs.



\end{abstract}

\tableofcontents

\section{Introduction}

Let $X$ be an $n$-dimensional smooth projective variety over $\C$ with polarization $H$. Let $\M_{X}^{H}(r,c_1, \ldots, c_n)$ denote the moduli space of rank $r$ $\mu$-stable torsion free sheaves on $X$ with Chern classes $c_1, \ldots, c_n$.\footnote{A coherent sheaf $\F$ is called torsion free if it has no non-zero subsheaves of dimension $< \dim X$.} See \cite{HL} for the construction of the moduli space and its history. In this paper, we do \emph{not} consider strictly semistable sheaves so this moduli space may be non-compact.\footnote{One way to ensure the absence of strictly semistable sheaves is by taking $\gcd(r,c_1 H^{n-1})=1$. In this case, the moduli space is compact.} Fixing $c_1, \ldots, c_{n-1}$, one can consider the generating function
\begin{equation} \label{tocalc}
\sfZ_{X,H,r,c_1, \ldots, c_{n-1}}(q) = \sum_{c_n \in \Z} e(\M_{X}^{H}(r,c_1, \ldots, c_n)) q^{c_n},
\end{equation} 
where $e(\cdot)$ denotes topological Euler characteristic. This is a Laurent series in $q$ when $n$ is even and $q^{-1}$ when $n$ is odd \cite[Prop.~3.6]{GK1}. These generating functions are fascinating objects with links to combinatorics, number theory, and representation theory. We mention a few among many references. For $n=1$, generating functions of Poincar\'e polynomials were calculated by G.~Harder and M.~S.~Narasimhan \cite{HN}, U.V.~Desale and S.~Ramanan \cite{DR}, and M.~F.~Atiyah and R.~Bott \cite{AB}. For $n=2$ and $r=1$, generating functions for Poincar\'e polynomials were computed by L.~G\"ottsche \cite{Got1}. For $n=2$ and $r>1$, these generating functions are related to (quasi-)modular forms and have been computed in numerous examples \cite{Kly2, Yos, Man1, Man2, Man3, Moz}. This case is related to the S-duality conjecture of physics \cite{VW}. For any $n$ and $r=1$, these generating functions lead to counting $n$-dimensional boxes as shown by J.~Cheah \cite{Che}.

For $n=3$ and $X$ satisfying $H^0(K_{X}^{-1}) \neq 0$, R.~P.~Thomas constructed a perfect obstruction theory on $\M_{X}^{H}(r,c_1, c_2, c_3)$ \cite{Tho}. When the moduli space is compact, this leads to a virtual cycle, which can be used to define deformation invariants. When $X$ is in addition Calabi-Yau, the perfect obstruction theory is symmetric so the virtual cycle is 0-dimensional. Its degree is known as a Donaldson-Thomas invariant. K.~Behrend \cite{Beh} showed the DT invariant is also equal to the Euler characteristic of $\M_{X}^{H}(r,c_1, c_2, c_3)$ weighted by a constructible function $\nu$ 
$$
\deg([\M_{X}^{H}(r,c_1, c_2, c_3)]^{\vir}) = e(\M_{X}^{H}(r,c_1, c_2, c_3), \nu) = \sum_{k \in \Z} k \ e(\nu^{-1}(\{k\})).
$$ 
Therefore, the generating function \eqref{tocalc} is related to DT theory. When $X$ is a toric Calabi-Yau 3-fold and $r=1$, D.~Maulik, N.~Nekrasov, A.~Okounkov, and R.~Pandharipande \cite{MNOP1} showed that the generating function of DT invariants (defined by $T$-localization) is given by counting the $T$-fixed points of $\M_{X}^{H}(1,c_1, c_2, c_3)$ with signs.\footnote{A toric Calabi-Yau 3-fold cannot be projective. In this case the DT invariants are defined by the localization formula of \cite{GP}.} The signs can be seen as a consequence of weighing by the Behrend function \cite{BF}. 

This paper is split up in two parts, which can be read more or less independently. The \emph{classical part} is about \eqref{tocalc} in the case $r=2$ and $X$ is a toric 3-fold. The \emph{virtual part} is concerned with rank 2 DT type invariants of a toric Calabi-Yau 3-fold and how these invariants relate to the Euler characteristics discussed in the classical part. \\

\noindent {\Large{\textit{Part I: Classical}}} \\

Little is known about the generating function \eqref{tocalc} for $n>2$ and $r>1$. We study the case $n=3$, $r=2$, and $X$ is \emph{toric}. In this case, $X$ contains a dense open torus $T$ acting on $X$ and this action lifts to the moduli space. Elements of the fixed locus of the moduli space admit a $T$-equivariant structure (unique up to tensoring by a character of $T$). Equivariant sheaves on toric varieties were studied in depth by A.~Klyachko \cite{Kly1, Kly2}. He used his methods to calculate \eqref{tocalc} for $X = \PP^2$ and $r=2$ obtaining the holomorphic part of a quasi-modular form of weight 3/2 \cite{Kly2, VW}. Other localization calculations on (stacky/ordinary) toric surfaces appear in \cite{Per2, Cho, GJK, Koo2}. 

Let $(\cdot)^* = \hom(\cdot, \O_X)$ denote the dual. Any torsion free sheaf $\F$ can be naturally embedded in its double dual $\ccR = \F^{**}$, known as its reflexive hull. This leads to a short exact sequence
$$
0 \longrightarrow \F \longrightarrow \ccR \longrightarrow \cQ \longrightarrow 0,
$$
where $\cQ$ is a sheaf of dimension $\leq 1$. Reflexive hulls are \emph{reflexive sheaves}.\footnote{A coherent sheaf $\F$ is called reflexive if the natural map $\F \rightarrow \F^{**}$ is an isomorphism.}  A reflexive sheaf is torsion free, but much easier to describe than a general torsion free sheaf, because it is determined by its restriction to the complement of any closed subset of codimension $\geq 2$. Reflexive sheaves on a 3-fold are locally free outside a 0-dimensional subscheme. For general theory on reflexive sheaves, we refer to \cite{Har2}. Klyachko showed that $T$-equivariant rank 2 reflexive sheaves have a straightforward description: they are described by attaching a flag of $\C^2$ to each ray of the fan of $X$ (his description works for any rank on any toric $n$-fold). Denote by $\N_{X}^{H}(2,c_1,c_2, c_3)$ the moduli space of rank 2 $\mu$-stable reflexive sheaves on $X$ with Chern classes $c_1$, $c_2$, $c_3$. Taking double dual gives a map to the scheme theoretic disjoint union
\begin{equation} \label{ddmap}
(\cdot)^{**} : \M_{X}^{H}(2,c_1,c_2,c_3) \longrightarrow \bigsqcup_{c_2',c_3'} \N_{X}^{H}(2,c_1,c_2',c_3').
\end{equation}
Reflexive hulls of members of a flat family do not need to form a flat family (already on surfaces), so this map is \emph{not} a morphism. Nevertheless, by a result of J.~Koll\'ar \cite{Kol}, this map is \emph{constructible}.\footnote{More precisely, the domain can be written as a union of locally closed subschemes $C_i$ such that on each $C_i$ the map $(\cdot)^{**}$ is a morphism.} This is very useful for calculating Euler characteristics by cut-paste methods. On a surface, the cokernel $\cQ$ is 0-dimensional and the double dual map leads to a product formula expressing \eqref{tocalc} as a ``point contribution'' times the generating function of reflexive (i.e.~locally free) sheaves \cite{Got2}. Two interesting things happen when $n=3$ and $r=2$: the cokernel $\cQ$ need not be 0-dimensional and $\ccR$ need not be locally free. Since we are interested in Euler characteristics, we only need to understand \eqref{ddmap} at the level of $T$-fixed points, where we can use Klyachko's description of $T$-equivariant torsion free and reflexive sheaves. As already mentioned, $T$-equivariant reflexive sheaves are very well-behaved. Generating functions of $\N_{X}^{H}(2,c_1,c_2',c_3')$ were studied in \cite{GK1}.

Equivariant torsion free sheaves are much harder to enumerate than equivariant reflexive sheaves. At the level of closed points, the fibre of \eqref{ddmap} over a reflexive sheaf $\ccR$ is the Quot scheme
$$
\Quot_X(\ccR,c_2'',c_3''),
$$
of cokernels $\ccR \twoheadrightarrow \cQ$ where $\cQ$ has dimension $\leq 1$ and Chern classes $c_2''$, $c_3''$.\footnote{The Chern classes $c_i$, $c_i'$, $c_i''$ are related by equation \eqref{Chernrel}.} Suppose $\ccR$ is $T$-equivariant, then $T$ acts on $\Quot_X(\ccR,c_2'',c_3'')$. The connected components of the $T$-fixed locus of $\Quot_X(\ccR,c_2'',c_3'')$ are indexed by a new type of combinatorial objects, which we call double box configurations (Definition \ref{doubleboxconf}). Moreover, each connected component is isomorphic to a product of $\PP^1$'s (Proposition \ref{prodP1s}). When $\cQ$ is 0-dimensional, i.e.~$c_2'' = 0$, we solve the combinatorics completely in a companion paper \cite{GKY}. 

One basic building block for our computations is $\Quot_{\C^3}(\ccR,n)$: the Quot scheme of 0-dimensional quotients $\ccR \twoheadrightarrow \cQ$ of length $n$, where $\ccR$ is a $T$-equivariant rank 2 reflexive sheaf on $\C^3$. If $\ccR$ is locally free, it decomposes\footnote{Due to Proposition \ref{rank2singular}.} as a sum of two line bundles $\ccR \cong L_1 \oplus L_2$, which gives an additional $\C^*$-action by scaling the factors. Localization with respect to this action gives a well-known result
\begin{equation} \label{locfree}
\sum_{n = 0}^{\infty} e(\Quot_{\C^3}(\ccR,n)) q^n = M(q)^2,
\end{equation}
where $M(q)$ denotes the MacMahon function counting 3D partitions. When $\ccR$ is singular, i.e.~not locally free hence indecomposable$^{7}$, this generating function is much harder to calculate. Then there are integers $v_1, v_2, v_3>0$, which roughly speaking encode the weights of the three homogeneous generators of $H^0(\ccR)$. In terms of these weights, we have: 
\begin{customthm}{A} \cite{GKY, GK2} \label{A}
$$
\sum_{n = 0}^{\infty} e(\Quot_{\C^3}(\ccR,n)) q^n = M(q)^2 \prod_{i=1}^{v_1} \prod_{j=1}^{v_2} \prod_{k=1}^{v_3} \frac{1-q^{i+j+k-1}}{1-q^{i+j+k-2}}.
$$
\end{customthm}
The triple product factor is the generating function of 3D partitions in a box of lengths $v_1$ by $v_2$ by $v_3$ and is therefore a polynomial in $q$ \cite[(7.109)]{Sta}. The length of the singularity of $\ccR$ is $v_1v_2v_3$ (Proposition \ref{rank2singular}). Therefore, in the singular case we get the ``locally free answer'' $M(q)^2$ times a polynomial counting partitions ``in the singularity of $\ccR$''. There are two proofs of this theorem:
\begin{itemize}
\item Using $\C^{*3}$-localization on $\Quot_{\C^3}(\ccR,n)$ and the double dimer model. The fixed locus of $\Quot_{\C^3}(\ccR,n)$ is described in this paper. The link to double dimers and how it leads to a proof of Theorem \ref{A} form the topic of the (purely combinatorial) companion paper \cite{GKY}. 
\item Using Hall algebra techniques, a non-toric version of Theorem \ref{A} was recently proved by the first two authors in \cite{GK2}. This proof provides a geometric explanation why counting inside the singularity of $\ccR$ and the MacMahon function occur. 
\end{itemize}
The combinatorial proof \cite{GKY} was found before the geometric proof \cite{GK2}. We briefly discuss these two proofs in Section \ref{combsection}.

What does this result tell us about generating function \eqref{tocalc} for $n=3$ and $r=2$? Recall that we fix $c_1,c_2$ and sum over all $c_3$. If $c_2$ is chosen such that $c_2 H$ is minimal with the property that there exists a rank 2 $\mu$-stable torsion free sheaf on $X$ with Chern classes $c_1, c_2$, then $\cQ$ is automatically 0-dimensional (Proposition \ref{minc2a}). In this case, we get a structure theorem for \eqref{tocalc} (Proposition \ref{minc2b}). If in addition the polyhedron $\Delta(X)$ of the toric 3-fold $X$ is small in the sense of Definition \ref{small} (e.g.~when $\rk \Pic(X) \leq 3$), then all $T$-equivariant reflexive hulls in the moduli space are isolated $T$-fixed points and the formula simplifies (Theorem \ref{Eulermin}). A special case of this setting is $X = \PP^3$ and $c_2=1$. In principle, our approach also works for more general $c_2$, but then the combinatorics becomes increasingly complex. We illustrate this for $X = \PP^3$ and $c_2=2$.
\begin{customthm}{B}\label{B}
For $X = \PP^3$ and $c_1 = -1$, we have 
\begin{align*}
\sfZ_{c_2=1}(q) &= 4(q^{-1}+q)M(q^{-2})^8, \\
\sfZ_{c_2=2}(q) &= 12 \Bigg( \frac{2q^{-4}-q^{-2}+1-4q^2+3q^4+5q^8}{(1-q^2)^2} \Bigg) M(q^{-2})^{8}.
\end{align*}
\end{customthm}

\noindent {\Large{\textit{{Part II: Virtual}}}} \\

Let $Y$ be a smooth toric Calabi-Yau 3-fold. Since $Y$ is non-compact, we choose a toric compactification $Y \subset X$ with $H^0(K_{X}^{-1}) \neq 0$ and a polarization $H$ on $X$. Define the $T$-invariant open subset
$$
\M_{Y \subset X}^{H}(2,c_1, c_2, c_3) \subset \M_{X}^{H}(2,c_1, c_2, c_3)
$$
of torsion free sheaves $\F$ on $X$ such that $\F^{**} / \F \cong \cQ$ is supported on $Y$. Openness is proved in Proposition \ref{openness}. Unlike the rank 1 case, this moduli space depends on the choice of $X$ and $H$. This reflects dependence of stability on choice of polarization for higher rank. When $\cC$ is a compact connected component of $\M_{X}^{H}(2,c_1, c_2, c_3)^T$, it has an induced virtual cycle by \cite{GP} and one can consider the expression
\begin{equation} \label{Cvir}
\int_{[\cC]^{\vir}} \frac{1}{e(N^{\vir})},
\end{equation}
where $e(\cdot)$ denotes $T$-equivariant Euler class and $N^{\vir}$ denotes the virtual normal bundle of $\cC$. These integrals depend on equivariant parameters $s_1,s_2,s_3$. We are interested in connected components $\cC$, which also satisfy $\cC \subset \M_{Y \subset X}$. Unfortunately, the DT complex restricted to such a component $\cC$ is \emph{not} automatically symmetric. However, if we suppose in addition that all closed points of $\cC$ have the same (i.e.~isomorphic) reflexive hull $\ccR$, then we can write \eqref{Cvir} as
$$
e(R\Hom(\ccR,\ccR)_0 \otimes \O_\cC) \times \int_{[\cC]^{\vir}} \frac{1}{e(N^{\vir}) \ e(R\Hom(\ccR,\ccR)_0 \otimes \O_\cC)},
$$ 
where the second factor of this product is a \emph{number} for $s_1+s_2+s_3=0$ (Proposition-Definition \ref{propdef}). We denote this number by $\DT(\cC)$. The reason for this is that after removing $-R\Hom(\ccR,\ccR)_0$ from the DT complex on $\cC$ in the $K$-group, it becomes symmetric (Proposition \ref{SD}). We think of $\DT(\cC)$ as the contribution of $\cC$ to rank 2 DT type invariants of $Y$.\footnote{Another approach to DT invariants of $Y$ is by using ``framing at infinity'' as in the work of D.~Oprea \cite{Opr}. In this case, the generating function becomes a product of rank 1 generating functions by using an extra torus action on the framing. In our theory, such a splitting does not occur due to the presence of singularities in the reflexive hull.} We have a precise conjecture for generating functions of $\DT(\cC)$ in any Chern classes on any toric Calabi-Yau 3-fold (Main Conjecture \ref{mainconj}).

For $Y = \C^3$ and fixed $T$-equivariant rank 2 reflexive hull $\ccR$ on $\C^3$, consider all connected components $\cC$ for which all its elements have constant reflexive hull $\ccR$ and 0-dimensional cokernel. Then the generating function of all $\DT(\cC)$ is the degree 0 rank 2 equivariant vertex, which we denote by $\sfW_{\ccR,  \varnothing, \varnothing, \varnothing}(q)$. We conjecture an explicit formula for it (Conjecture \ref{edgelessconj}):
\begin{customconj}{C}\label{C}
\begin{equation*}
\sfW_{\ccR,  \varnothing, \varnothing, \varnothing}(q) \Big|_{s_1+s_2+s_3=0} = \left\{ \begin{array}{cc} M(q)^2 & \mathrm{if \ } \ccR \mathrm{ \ is \ locally \ free } \\ M(q)^2 \prod_{i=1}^{v_1} \prod_{j=1}^{v_2} \prod_{k=1}^{v_3} \frac{1-q^{i+j+k-1}}{1-q^{i+j+k-2}} & \mathrm{if \ } \ccR \mathrm{ \ is \ singular}. \end{array} \right.
\end{equation*}
\end{customconj}
This can be viewed as the degree 0 part of rank 2 DT theory on smooth toric 3-folds. This formula is in accordance with the expectation that higher rank DT invariants can be expressed in terms of rank 1 DT invariants and ``point contributions''.\footnote{Mentioned to us by D.~Maulik and J.~Manschot.} Note that unlike the rank 1 case, there are no signs in this formula (essentially because rank 2 is even). Comparing to \eqref{locfree} and Theorem A, Conjecture C suggest $\DT(\cC) = e(\cC)$, but this is \emph{not} the case. In fact, $\DT(\cC)$ can be zero due to obstructions (Example \ref{abnormal}). The following is strong evidence for Conjecture C. Let $T_0 \subset T$ denote the ``Calabi-Yau torus'' defined by $t_1t_2t_3=1$. Let $\cC \subset \M_{Y \subset X}$ be a compact connected component of the $T_0$-fixed locus with constant reflexive hulls $\ccR$ and cokernels of dimension 0. Using $T_0$-localization and assuming analogs of two conjectures\footnote{One conjecture asserts smoothness of $\cC$.} used in the $T_0$-localization of stable pair theory of Pandharipande-Thomas \cite{PT2}, we prove 
\begin{equation*} 
\sum_{\cC_i \subset \cC^{\C^*}} \DT(\cC) = \sum_{\cC_i \subset \cC^{\C^*}} e(\cC_i) = e(\cC).
\end{equation*}
Here the sum is over all connected components of the $T/T_0 \cong \C^*$-fixed locus of $\cC$. See Remark \ref{finalrem} for the precise statement. Combinatorially, this means that whenever $\DT(\cC_i) = 0$ for some $\cC_i$, other ``buddy components'' $\cC_j$ in the same $\C^*$-fixed locus will compensate to recover the Euler characteristic.

Main Conjecture \ref{mainconj} is a generalization of Conjecture \ref{C} to arbitrary smooth toric Calabi-Yau 3-folds and Chern classes. It expresses generating functions of rank 2 DT type invariants in terms of Euler characteristics and signs. The signs only occur in the presence of legs, i.e.~when the cokernel is 1-dimensional. We provide the following evidence for Main Conjecture \ref{mainconj}:
\begin{itemize}
\item Explicit examples (Section \ref{nolegs}, \ref{O(-1,-1)}).
\item A proof in the case of ``expected obstructions'' (Theorem \ref{Tthm}).
\item A proof using $T_0$-localization based on the analogs of two conjectures of \cite{PT2} (Theorem \ref{T0thm}).
\end{itemize}

\noindent \textbf{Notation.} For any sets $A,B$, we denote projections to the factors by $p_A : A \times B \rightarrow A$ and $p_B : A \times B \rightarrow B$. If $A,B$ are schemes with coherent sheaves $\cE,\F$, we write $\cE \boxtimes \F := p_{A}^{*} \cE \otimes p_{B}^{*}\F$. We denote the closed points of a finite type $\C$-scheme $B$ by $B_{\mathrm{cl}}$. \\

\noindent \textbf{Acknowledgements.} We would like to thank K.~Behrend, J.~Bryan, J.~Manschot, D.~Maulik, and R.~P.~Thomas for useful discussions. We would also like to thank the anonymous referees whose comments led to an immense improvement of the exposition, e.g.~by splitting the paper in two parts.

A.G.~was partially supported by NSF grant DMS-1406788. M.K.~was supported by EP/G06170X/1, ``Applied derived categories'' (while at Imperial College), a PIMS postdoctoral fellowship (CRG Geometry and Physics, while at UBC), and NWO-GQT and Marie Sk{\l}odowska-Curie Project 656898 (INVLOCCY) (while at Utrecht).

\hfill
\newline

\noindent {\Large{\textit{{Part I: Classical}}}} \\
\addcontentsline{toc}{section}{\textit{Part I: Classical}}

\section{Fixed loci} \label{fixedloci}

In this section we recall Klyachko's description of $T$-equivariant torsion free and reflexive sheaves on a smooth projective toric 3-fold $X$. In the rank 2 case, we use it to describe (scheme-theoretically) the $T$-fixed locus of the moduli space of $\mu$-stable torsion free sheaves and deduce it is smooth (Theorem \ref{fixedlocithm}). 
We also give a combinatorial description of the fixed locus of the Quot scheme of a $T$-equivariant rank 2 reflexive hull in terms of new combinatorial objects called double box configurations (Definition \ref{doubleboxconf}).

\subsection{Toric 3-folds} \label{toric3folds}

Let $X$ be a smooth projective toric 3-fold with dense open torus $T$. We recall some basic facts and notation from toric geometry.
For a general introduction to toric varieties see \cite{Ful}. Let $\Delta(X)$ be the Newton polyhedron of $X$ determined by a polarization (e.g.~\cite[Sect.~4.1]{MNOP1}). Note that we are not actually fixing a polarization at this stage, so $\Delta(X)$ does not come with an embedding in the character lattice.
Denote the collection of vertices of $\Delta(X)$ by $V(X)$ and the collection of edges of $\Delta(X)$ by $E(X)$. The vertices $\alpha \in V(X)$ correspond bijectively to the cones of dimension 3 of the fan of $X$. We denote the edge between two adjacent vertices $\alpha, \beta \in V(X)$ by $\alpha\beta \in E(X)$. The edges $\alpha\beta \in E(X)$ correspond bijectively to the cones of dimension 2 of the fan of $X$. We denote by $U_\alpha \cong \C^3$ the $T$-invariant affine open subset corresponding to $\alpha$. Furthermore, we write $X_\alpha$ for the fixed point corresponding to $\alpha$ and $C_{\alpha \beta} \cong \PP^1$ for the $T$-invariant line joining $X_\alpha$ and $X_\beta$.  The toric divisors are labelled by the faces of $\Delta(X)$. We denote the collection of faces of $\Delta(X)$ by $F(X)$ and label its elements by $\rho \in F(X)$.\footnote{We identify $F(X)$ with the collection of rays of the fan of $X$ by sending a face to the ray it is transverse to.} The toric divisor corresponding to $\rho \in F(X)$ is denoted by $D_\rho$.

For any $\alpha \in V(X)$, there are coordinates $(x_1,x_2,x_3)$ on $U_\alpha \cong \C^3$ and $t=(t_1, t_2, t_3)$ on $T \cong \C^{*3}$ such that
\[
t \cdot (x_1, x_3, x_3) = (t_1 x_1, t_2 x_2, t_3 x_3).
\]
In these coordinates, the fixed point $X_\alpha$ has cotangent representation
\[
\langle dx_1 \rangle_\C \otimes t_1 + \langle dx_2 \rangle_\C \otimes t_2 + \langle dx_3 \rangle_\C \otimes t_3,
\]
where $\langle \cdot \rangle_{\C}$ denotes $\C$-span. We therefore have a $T$-equivariant isomorphism
\begin{equation} \label{Klocal}
K_X |_{U_\alpha} \cong \O_{U_\alpha} \otimes t_1 t_2 t_3.
\end{equation}
For each $\alpha\beta \in E(X)$, there exist integers $m_{\alpha\beta}$, $m_{\alpha\beta}'$ such that the normal bundle $N_{C_{\alpha\beta}/X}$ is given by
\[
N_{C_{\alpha\beta}/X} \cong \O_{\PP^1}(m_{\alpha\beta}) \oplus \O_{\PP^1}(m_{\alpha \beta}^{\prime}).
\]
The transition function between the charts $U_\alpha$, $U_\beta$ is given by
\begin{equation} \label{transf}
(x_1, x_2, x_3) \mapsto (x_{1}^{-1}, x_2 x_{1}^{-m_{\alpha\beta}}, x_3 x_{1}^{-m_{\alpha \beta}^{\prime}}),
\end{equation}
where $C_{\alpha\beta}$ is defined by $x_2=x_3=0$. Here we follow the notation of \cite{MNOP1}.

\subsection{Equivariant torsion free sheaves} \label{fixedloci1}

We give Klyacho's description of $T$-equivariant torsion free sheaves on a smooth projective toric 3-fold $X$ \cite{Kly1, Kly2}. Klyachko's work was further developed by M.~Perling \cite{Per1} and we will mostly follow his notation. Most constructions of Sections \ref{fixedloci1}--\ref{fixedlocigeneral} work for $X$ of any dimension, but we stick to dimension 3 for notational convenience.

Let $\F$ be a $T$-equivariant quasi-coherent sheaf on $X$ and $\alpha \in V(X)$. Let $(x_1, x_2, x_3)$ be coordinates on $U_\alpha \cong \C^3$ as in Section \ref{toric3folds}. We first study the restriction $\F|_{U_\alpha}$. The global section functor $H^0(\cdot)$ gives an equivalence between the categories of quasi-coherent sheaves on $U_\alpha$ and $\C[x_1,x_2,x_3]$-modules. It is not hard to extend this to an equivalence between the categories of $T$-equivariant quasi-coherent sheaves on $U_\alpha$ and $\C[x_1,x_2,x_3]$-modules with regular $T$-action \cite{Per1}. 

Given a $T$-equivariant quasi-coherent sheaf $\F_\alpha$ on $U_\alpha$, we have a decomposition into weight spaces according to the character group $X(T) = \Z^3$
\[
H^0(\F_\alpha) = \bigoplus_{(k_1,k_2,k_3) \in \Z^3} F_\alpha(k_1,k_2,k_3).
\]
The sheaf $\F_\alpha$ is coherent if and only if $H^0(\F_\alpha)$ has a finite number of homogeneous generators. Moreover $\F_\alpha$ is torsion free if and only if the multiplication maps $x_1, x_2, x_3$ are injective on all weight spaces $F_\alpha(k_1,k_2,k_3)$ \cite{Per1}. Therefore, the category of $T$-equivariant rank $r$ torsion free sheaves $\F_\alpha$ on $U_\alpha$ is equivalent to the category of collections of complex vector spaces $\{F_\alpha(k_1,k_2,k_3)\}_{(k_1,k_2,k_3) \in \Z^3}$ satisfying:
\begin{itemize}
\item The vector spaces $F_\alpha(k_1,k_2,k_3)$ form a triple filtration
\begin{align*}
&F_\alpha(k_1,k_2,k_3) \subset F_\alpha(k_1+1,k_2,k_3), \\
&F_\alpha(k_1,k_2,k_3) \subset F_\alpha(k_1,k_2+1,k_3), \\
&F_\alpha(k_1,k_2,k_3) \subset F_\alpha(k_1,k_2,k_3+1).
\end{align*}
\item For all $k_1, k_2, k_3 \gg 0$ we have $F_\alpha(k_1,k_2,k_3) = \C^r$.
\item Whenever $k_1 \ll 0$ or $k_2 \ll 0$ or $k_3 \ll 0$ we have $F_\alpha(k_1,k_2,k_3) = 0$.
\end{itemize}
We refer to $\{F_\alpha(k_1,k_2,k_3)\}_{(k_1,k_2,k_3) \in \Z^3}$ as a rank $r$ $\sigma$-family.\footnote{This terminology is due to Perling \cite{Per1}. Here $\sigma$ denotes the cone of maximal dimension corresponding to vertex $\alpha$. In our notation $\alpha$-family would perhaps be better terminology.} The maps in the category of $T$-equivariant rank $r$ torsion free sheaves on $U_\alpha$ are $T$-equivariant morphisms and the maps in the category of rank $r$ $\sigma$-families are endomorphisms of $\C^r$ compatible with the triple filtrations.

A $T$-equivariant torsion free sheaf $\F$ on $X$ gives rise to $T$-equivariant torsion free sheaves $\F_\alpha := \F|_{U_\alpha}$ and corresponding $\sigma$-families
\[
{\bf{F}} := \{F_\alpha(k_1,k_2,k_3)\}_{(k_1,k_2,k_3) \in \Z^3, \alpha \in V(X)}.
\]
Conversely, given such a collection of $\sigma$-families, when do the corresponding sheaves $\F_\alpha$ glue to a sheaf $\F$ on $X$? Note that for fixed $k_2, k_3$, the vector space $F_\alpha(k_1,k_2,k_3) \subset \C^r$ does not change for $k_1 \gg 0$. We denote this ``limiting vector space'' by $F_\alpha(\infty,k_2,k_3)$. It is not hard to show that the gluing conditions are given by the following equations \cite{Kly2, Per1}
\begin{equation} \label{glue}
F_\alpha(\infty,k_2,k_3) = F_\beta(\infty,k_2,k_3),
\end{equation}
for all $\alpha, \beta \in V(X)$ and $k_2, k_3 \in \Z$. Here the coordinates are chosen such that $C_{\alpha\beta}$ is defined by equations $x_2=x_3=0$ in $U_\alpha$ and $x_{2}' = x_{3}' = 0$ in $U_\beta$. This describes an equivalence between the category of $T$-equivariant rank $r$ torsion free sheaves on $X$ and collections of rank $r$ $\sigma$-families satisfying the gluing conditions (\ref{glue}) \cite{Per1}. The maps in both categories are the obvious ones.

\begin{definition}
Let $\F$ be a $T$-equivariant torsion free sheaf on a smooth projective toric 3-fold $X$ with corresponding collection of $\sigma$-families ${\bf{F}}$. We define the \emph{characteristic function} ${\bschi}_\F$ of $\F$ as follows
\begin{align*}
&{\bschi}_\F := \{\chi_{\F}^{\alpha}\}_{\alpha \in V(X)}, \\
&\chi_{\F}^{\alpha} : \Z^3 \longrightarrow \Z, \ \chi_{\F}^{\alpha}(k_1,k_2,k_3) := \dim(F_\alpha(k_1,k_2,k_3)). 
\end{align*}
We denote the collection of characteristic functions of $T$-equivariant torsion free sheaves on $X$ by $\X$. 
\end{definition}

Characteristic functions of members of a flat family of $T$-equivariant coherent sheaves are locally constant over the base \cite{Koo1}. One can construct projective coarse moduli spaces of  $T$-equivariant torsion free sheaves with fixed characteristic function using GIT. Moreover, one can choose a GIT stability condition for which GIT stability and Gieseker stability coincide \cite{Koo1}. 

Let $\F$ be a $T$-equivariant torsion free sheaf on $X$. Then there is an explicit formula for the Chern character $\ch(\F)$ in terms of the characteristic function ${\bschi}_\F$. This formula is due to Klyachko \cite{Kly1, Kly2}. An alternative way for obtaining such a formula is by $T$-equivariant d\'evissage. Concretely, this works as follows. The cohomology ring $H^{2*}(X,\Z)$ is explicitly determined by the fan (or polyhedron) of $X$. It is the free $\Z$-module generated by the $T$-invariant closed subsets
\[
D_\rho, \ C_{\alpha\beta}, \ X_\alpha
\]
modulo certain relations \cite{Ful}. For example $X_\alpha = X_\beta$ for all $\alpha, \beta \in V(X)$ is such a relation. There are three linear relations among the toric prime divisors $D_\rho$ and certain relations among the $C_{\alpha\beta}$. The $T$-equivariant Picard group $\Pic^T(X)$ is generated by $D_\rho$ \emph{without} relations. Inside the $T$-equivariant $K$-group $K_{0}^{T}(X)$, there is an explicit way of writing $\F$ as a finite sum of elements of the form
\[
\O_{D_\rho} \otimes L, \ \O_{C_{\alpha\beta}} \otimes L, \ \O_{X_{\alpha}} \otimes L,
\]
for $L \in \Pic^T(X)$. The algorithm for getting such a decomposition is explicitly described in \cite{GK1} for the case $X = \PP^3$, $r=2$, and $\F$ reflexive. See Example \ref{P3reflch} below for the equations. It is straightforward to generalize the algorithm to any $X$ and $\F$ a $T$-equivariant rank $r$ torsion free sheaf on $X$. For any ${\bschi} \in \X$, we write $\ch({\bschi})$ for the Chern character determined by a characteristic function ${\bschi}$. Characteristic function determines Chern character and hence Hilbert polynomial (by Hirzebruch-Riemann-Roch). For a fixed Chern character $\ch$, we denote by $$\X_{\ch} \subset \X$$ the collection of characteristic functions ${\bschi}$ satisfying $\ch({\bschi}) = \ch$.

\subsection{Equivariant reflexive sheaves} \label{equivsh}

Let $X$ be a smooth projective toric 3-fold with polarization $H$. Recall that a coherent sheaf $\F$ on $X$ is $\mu$-semistable (resp.~$\mu$-stable) if and only if $\F$ is torsion free and we have $\mu_\cE \leq \mu_\F$ (resp.~$\mu_\cE < \mu_\F$) for all subsheaves $\cE \subset \F$ with $0< \rk(\cE) < \rk(\F)$ \cite[Def.~1.2.12]{HL}. Here the slope $\mu_\F$ is defined by
$$
\mu_\F := \frac{c_1 H^2}{\rk(\F)}.
$$
Recall from the introduction that we always work on the open subset of $\mu$-stable sheaves and disregard strictly $\mu$-semistable sheaves completely. A torsion free sheaf $\F$ on $X$ is $\mu$-stable if and only if its reflexive hull $\F^{**}$ is $\mu$-stable.\footnote{The reason is that for all $\cE \subset \F$ the sheaves $\cE$ and $\cE^{**}$ are equal on the complement of a closed subset of codimension $\geq 2$ so $\mu_\cE = \mu_{\cE^{**}}$.} 
Klyachko \cite{Kly2} found out that $T$-equivariant reflexive sheaves on $X$ have a particularly straightforward description. Moreover, their stability is easily described in terms of their characteristic function 
\cite{Koo1}. We will now describe this.

We denote by $l$ the number of faces of the polyhedron $\Delta(X)$. Note that $l$ equals the rank of $\Pic^T(X)$. Klyachko \cite{Kly2} gives an equivalence between the category of $T$-equivariant rank $r$ reflexive sheaves on $X$ and the category of collections of flags
\begin{align*}
{\bf{R}} &:= \{R_\rho(k)\}_{k \in \Z, \rho \in F(X)} \\
\cdots \subset R_\rho(k-1) &\subset R_\rho(k) \subset R_\rho(k+1) \subset \cdots
\end{align*}
satisfying
\begin{itemize}
\item $R_\rho(k) = 0$ for all $k \ll 0$,
\item $R_\rho(k) = \C^r$ for all $k \gg 0$.
\end{itemize}
The morphisms of both categories are the obvious ones. 

We now describe how a family of flags ${\bf{R}}$ gives rise to a $T$-equivariant reflexive sheaf $\ccR$ on $X$. Let $\alpha \in V(X)$ and let $U_\alpha \cong \C^3$ be the corresponding $T$-invariant affine open subset. Write the coordinates on $U_\alpha$ by $(x_1, x_2, x_3)$ as in Section \ref{toric3folds}. Suppose the toric divisor $x_i=0$ corresponds to ray $\rho_i \in F(X)$. Define vector spaces
\[
R_\alpha(k_1,k_2,k_3) := R_{\rho_1}(k_1) \cap R_{\rho_2}(k_2) \cap R_{\rho_3}(k_3).
\]
Then 
\[
\{R_\alpha(k_1,k_2,k_3)\}_{(k_1,k_2,k_3) \in \Z^3, \alpha \in V(X)}
\]
is a collection of rank $r$ $\sigma$-families satisfying the gluing conditions \eqref{glue}. The corresponding sheaf $\ccR$ on $X$ is reflexive \cite{Kly2, Per2}. Roughly speaking the reason is that a reflexive sheaf is determined by its restriction to the complement of any closed subset of codimension $\geq 2$ \cite[Prop.~1.6]{Har2} and the flags $R_{\rho_1}(k), R_{\rho_2}(k), R_{\rho_3}(k)$ precisely correspond to the restriction of the reflexive sheaf to $\C \times \C^* \times \C^*, \C^* \times \C \times \C^*, \C^* \times \C^* \times \C$. 

In the case $r=2$, a family of filtrations ${\bf{R}}$ is entirely determined by the integers where the dimensions jump together with a choice of 1-dimensional subspaces for the jumps. More precisely, for each $\rho \in F(X)$, there are unique $u_\rho \in \Z$, $v_\rho \in \Z_{\geq 0}$ and $p_\rho \in \PP^1$ such that
\[
R_\rho(k) = \left\{\begin{array}{cc} 0 & \mathrm{for \ all \ } k < u_\rho \\ p_\rho & \mathrm{for \ all \ } u_\rho \leq k < u_\rho+v_\rho \\ \C^2 & \mathrm{for \ all \ } u_\rho+v_\rho \leq k \end{array}\right.,
\] 
where $p_\rho$ does not occur when $v_\rho = 0$.

\begin{definition} \label{toricdata} 
Let $\ccR$ be a $T$-equivariant rank 2 reflexive sheaf on $X$ corresponding to a collection of filtrations $\{R_\rho(k)\}$. We refer to the data 
\[
\{(u_\rho,v_\rho,p_\rho)\}_{\rho \in F(X)}
\] 
described above as the \emph{toric data} of $\ccR$ and abbreviate it by $({\bf{u}},{\bf{v}},{\bf{p}})$. Given toric data $({\bf{u}},{\bf{v}},{\bf{p}})$, we define
\begin{align*}
\delta_{\rho,\rho'} &:=\dim(p_{\rho} \cap p_{\rho'}) \in \{0,1\}, \\
\delta_{\rho,\rho',\rho''} &:= \dim(p_{\rho} \cap p_{\rho'} \cap p_{\rho''}) \in \{0,1\}.
\end{align*}
We abbreviate the collection of these number by $\bsdelta$. Clearly the the numbers $({\bf{u}},{\bf{v}}, \bsdelta)$ determine the characteristic function $\bschi_{\ccR}$ and vice versa. We treat both notions on equal footing.
\end{definition}

\begin{example} \label{P3reflch}
The toric 3-fold $X = \PP^3$ is described by the lattice $N = \Z^3$ together with the fan consisting of the 3-dimensional cones 
\begin{align*}
\sigma_1 &=\langle e_1, e_2, e_3 \rangle_{\Z_{\geq 0}}, \\
\sigma_2 &=\langle e_2, e_3, -e_1-e_2-e_3 \rangle_{\Z_{\geq 0}}, \\ 
\sigma_3 &=\langle e_1, e_3, -e_1-e_2-e_3 \rangle_{\Z_{\geq 0}}, \\ 
\sigma_4 &=\langle e_1, e_2, -e_1-e_2-e_3 \rangle_{\Z_{\geq 0}}.
\end{align*} 
Here $(e_1,e_2,e_3)$ is the standard basis of $\Z^3$. We denote the rays generated by $e_1$, $e_2$, $e_3$, $-e_1-e_2-e_3$ by $\rho_1$, $\rho_2$, $\rho_3$, $\rho_4$. The corresponding polyhedron is a tetrahedron and $F(X) = \{\rho_1,\rho_2,\rho_3,\rho_4\}$. Let $h$ denote the hyperplane class on $X$ and let $\ccR$ be a $T$-equivariant rank 2 reflexive sheaf on $X$ described by toric data $({\bf{u}},{\bf{v}},{\bf{p}})$. We define $u_i := u_{\rho_i}$, $v_i := v_{\rho_i}$, $\delta_{ij} := \delta_{\rho_i, \rho_j}$, and $\delta_{ijk} := \delta_{\rho_i, \rho_j,\rho_k}$. In \cite{GK1} the Chern classes of $\ccR$ are computed using $T$-equivariant d\'evissage as mentioned in Section \ref{fixedloci1}
\begin{align*}
&c_1(\ccR) = - \big(2 \sum_i u_i + \sum_i v_i \big) h, \\
&c_2(\ccR) =  \frac{1}{4} c_1(\ccR)^2 + \Big( \frac{1}{2} \sum_{i<j} (1-2\delta_{ij}) v_i v_j -\frac{1}{4} \sum_i v_{i}^{2} \Big) h^2, \\
&c_3(\ccR) = \sum_{i<j<k} v_i v_j v_k (1-\delta_{ij}-\delta_{ik}-\delta_{jk} + 2\delta_{ijk}) h^3.
\end{align*}
This expresses $\ch(\ccR)$ in terms of the characteristic function ${\bschi_\ccR}$. By further removing Chern characters of $\O_{C_{\alpha\beta}} \otimes L$, $\O_{X_{\alpha}} \otimes L$ with $L \in \Pic^T(\PP^3)$, we obtain explicit expressions for  $\ch(\F)$ in terms of ${\bschi_\F}$ for any given $T$-equivariant rank 2 torsion free sheaf $\F$ on $\PP^3$. 
\end{example}

Back to any smooth projective toric 3-fold $X$. The following proposition is very useful.
\begin{proposition} \label{rank2singular}
Let $\ccR$ be a $T$-equivariant rank 2 reflexive sheaf on $X$ with toric data $({\bf{u}},{\bf{v}},{\bf{p}})$. Let $\alpha \in V(X)$ and denote by $\rho_{1,\alpha}, \rho_{2,\alpha}, \rho_{3,\alpha} \in F(X)$ the faces sharing vertex $\alpha$. Then $\ccR|_{U_\alpha}$ is singular, i.e.~not locally free, precisely if $v_{\rho_1,\alpha}$, $v_{\rho_2,\alpha}$, $v_{\rho_3,\alpha}$ are all positive and $p_{1,\alpha}$, $p_{2,\alpha}$, $p_{3,\alpha}$ are mutually distinct. If $\ccR|_{U_\alpha}$ is singular, then the length of its singularity equals the product $v_{\rho_1,\alpha}v_{\rho_2,\alpha}v_{\rho_3,\alpha}$. Moreover, the degree of $c_3(\ccR)$ equals the sum of the lengths of the singularities of $\{\ccR|_{U_\alpha}\}_{\alpha \in V(X)}$. 
\end{proposition}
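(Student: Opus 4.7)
The plan is to prove the three assertions of the proposition in order: the singularity criterion, the formula for the local length, and the global sum for $\deg c_3(\ccR)$.

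For the first assertion, I would exploit that $U_\alpha \cong \C^3$ is affine, so every locally free sheaf on $U_\alpha$ is free, and any $T$-equivariant free module decomposes into weight eigenspaces. Hence $\ccR|_{U_\alpha}$ is locally free iff $\ccR|_{U_\alpha} \cong L_1 \oplus L_2$ for $T$-equivariant line bundles $L_j = \O_{U_\alpha} \otimes \chi_j$ with characters $\chi_j = (a_{1,j}, a_{2,j}, a_{3,j}) \in \Z^3$ and an underlying splitting $\C^2 = \ell_1 \oplus \ell_2$ of lines. Matching the resulting flag along the ray $\rho_i$ (meeting $\alpha$) against Klyachko's description gives $u_i = \min(a_{i,1}, a_{i,2})$, $v_i = |a_{i,1}-a_{i,2}|$, and (when $v_i > 0$) $p_i = \ell_j$ for whichever $j$ attains the minimum. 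In particular, all defined $p_i$'s lie in the two-element set $\{\ell_1, \ell_2\}$ and cannot be pairwise distinct. Conversely, when either some $v_i = 0$ or the $p_i$'s are not pairwise distinct, a short case analysis picks lines $\ell_1, \ell_2 \in \PP^1$ containing every defined $p_i$ and reads off characters $\chi_1, \chi_2$ from the $u_i$'s and $v_i$'s, exhibiting a splitting of $\ccR|_{U_\alpha}$. This yields the singularity criterion.

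For the second assertion, I would reduce to a global computation on $\PP^3$. Given a singular local datum $\{(u_i,v_i,p_i)\}_{i=1,2,3}$ at $\alpha$, I construct an auxiliary $T$-equivariant rank $2$ reflexive sheaf $\widetilde{\ccR}$ on $\PP^3$ whose toric data along the first three rays reproduce the given local datum and whose fourth ray is assigned $v_{\rho_4} = 0$. By the criterion just established, the three vertices of $\PP^3$ containing $\rho_4$ are then automatically locally free, so $\widetilde{\ccR}$ is singular only at the vertex $\sigma_1$ where the first three rays meet. Substituting into the formula of Example~\ref{P3reflch}, the triple sum for $c_3$ collapses to a single term at $\{i,j,k\}=\{1,2,3\}$, yielding
\[
\deg c_3(\widetilde{\ccR}) = v_1 v_2 v_3(1-\delta_{12}-\delta_{13}-\delta_{23}+2\delta_{123}) = v_1 v_2 v_3,
\]
since $p_1, p_2, p_3$ are pairwise distinct. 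By the third assertion (proved below), this equals the singularity length of $\widetilde{\ccR}$ at $\sigma_1$; since this length depends only on the local toric data at $\sigma_1$, the original length at $\alpha$ equals $v_1 v_2 v_3$.

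For the third assertion, the non-locally-free locus of a rank $2$ reflexive sheaf on a smooth $3$-fold is $0$-dimensional, and by $T$-equivariance it is contained in the $T$-fixed points $\{X_\alpha\}_{\alpha \in V(X)}$. That $\deg c_3(\ccR)$ equals the sum of local singularity lengths is standard for rank $2$ reflexive sheaves on smooth $3$-folds, obtainable for instance by comparison with a locally free approximation in equivariant $K$-theory, or via a direct Grothendieck--Riemann--Roch computation on a compactification. The main subtlety I anticipate is the case analysis in the converse of the first assertion, which branches on how many $v_i$ vanish and which $p_i$'s coincide; the remaining steps reduce to a direct substitution into Example~\ref{P3reflch} and the classical interpretation of $c_3$.
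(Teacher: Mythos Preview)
Your proposal is correct and follows essentially the same route as the paper: the third assertion is the classical identity $\deg c_3(\ccR)=\sum_\alpha \ell(\ccR|_{U_\alpha})$ for rank~2 reflexive sheaves on smooth 3-folds (this is precisely \cite[Prop.~2.6]{Har2}, which the paper cites), and the second assertion is obtained by combining this with the explicit $c_3$ formula of Example~\ref{P3reflch} on $\PP^3$. Your device of building an auxiliary $\widetilde{\ccR}$ on $\PP^3$ with $v_{\rho_4}=0$, so that only one vertex is singular, is a clean way to isolate the single summand $v_1v_2v_3$; the paper's terse ``immediate from \cite[Prop.~2.6]{Har2} and Example~\ref{P3reflch}'' is unpacking to the same computation, and the general $X$ case is deferred to \cite[Prop.~3.6]{GK1}. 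The one place where you add genuine detail beyond the paper's citations is the first assertion: your argument that $\ccR|_{U_\alpha}$ locally free forces a $T$-equivariant splitting $L_1\oplus L_2$, and hence all defined $p_i$'s lie in $\{\ell_1,\ell_2\}$, together with the case-by-case converse, is exactly the right elementary verification and is what the references would supply.
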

\begin{proof}
For $X = \PP^3$ this is immediate from \cite[Prop.~2.6]{Har2} and the formula for $c_3(\ccR)$ of Example \ref{P3reflch}. For general $X$, the proof is easily adapted as is discussed in detail in the proof of \cite[Prop.~3.6]{GK1}.
\end{proof}

Let $\ccR$ be a $T$-equivariant rank 2 reflexive sheaf on $X$ described by toric data $({\bf{u}},{\bf{v}},{\bf{p}})$. Like in the case of Chern classes, $\mu$-stability of $\ccR$ can be described in terms of the toric data only. More precisely $\ccR$ is $\mu$-stable if and only if for any 1-dimensional subspace $q \subset \C^2$ 
\begin{align*}
\sum_{\rho \in F(X)} \dim(p_{\rho} \cap q) (D_{\rho} H^2) v_{\rho} < \frac{1}{2} \sum_{\rho \in F(X)} (D_{\rho} H^2) v_{\rho}, 
\end{align*}
where we recall that $H$ denotes the polarization and $D_{\rho}$ the toric divisor corresponding to $\rho \in F(X)$ (Section \ref{toric3folds}). A similar description exists for any rank on any polarized smooth projective toric $n$-fold \cite{Koo1}.

Let $\F$ be a $T$-equivariant torsion free sheaf on $X$. Then $\F$ is $\mu$-stable if and only if its reflexive hull $\ccR := \F^{**}$ is $\mu$-stable. So in order to understand $\mu$-stability of such sheaves, we only need to write the family of filtrations ${\bf{R}}$ corresponding to $\ccR$ in terms of the collection of $\sigma$-families ${\bf{F}}$ corresponding to $\F$. This can be done as follows: for any $\alpha \in V(X)$ denote by $\rho_{1}$, $\rho_{2}$, $\rho_{3}$ the faces sharing vertex $\alpha$, then 
\[
R_{\rho_{1}}(k) = F_\alpha(k,\infty,\infty), \ R_{\rho_{2}}(k) = F_\alpha(\infty,k,\infty), \ R_{\rho_{3}}(k) = F_\alpha(\infty,\infty,k),
\]
for all $k \in \Z$ (\cite{Kly2}, see also \cite{GK1}).

\subsection{General structure of fixed loci} \label{fixedlocigeneral}

Let ${\bschi} \in \X$ be the characteristic function of a $T$-equivariant rank 2 torsion free sheaf on $X$. There exists a natural moduli functor of families of $\mu$-stable $T$-equivariant torsion free sheaves on $X$ with characteristic function ${\bschi}$ \cite{Koo1}.\footnote{The paper \cite{Koo1} rather deals with Gieseker stability. The case of $\mu$-stability is easier.} Using the previous section, a fairly straight-forward GIT construction yields a quasi-projective $\C$-scheme corepresenting this functor. In this section we describe the outcome. For details and proofs (in a much more general setting) see \cite{Koo1}.

For a fixed characteristic function ${\bschi} = \{\chi_\alpha\}_{\alpha \in V(X)} \in \X$, the following definitions identify connected components in the domain of each $\chi_\alpha$ where $\chi_\alpha$ takes value 1. To each such connected component, we will then associate a moduli factor $\PP^1$ (up to gluing).
\begin{definition} \label{comp1s}
Denote the Euclidean norm on $\mathbb{R}^3$ by $|\!| \cdot |\!|$. Let ${\bschi} = \{\chi_\alpha\}_{\alpha \in V(X)} \in \X$ and consider $\chi_\alpha$. Let $S \subset X(T) = \Z^3$, then $S$ is called a \emph{connected component of 1's} associated to $\chi_\alpha$ when 
\begin{align*}
&\chi_\alpha(k_1,k_2,k_3) = 1, \ \mathrm{for  \ all \ } (k_1,k_2,k_3) \in S \\
&\mathrm{for \ all \ } (k_1,k_2,k_3) \in S \mathrm{ \ and \ } (l_1,l_2,l_3) \in \Z^3: \\
&\quad \mathrm{if \ } \chi_\alpha(l_1,l_2,l_3)=1 \mathrm{ \ and \ }  |\!|(k_1,k_2,k_3) - (l_1,l_2,l_3)|\!| = 1 \mathrm{ \ then \ } (l_1,l_2,l_3) \in S, 
\end{align*}
and $S$ does not contain a non-empty proper subset with these properties.
\end{definition}

\begin{definition} \label{vefcomps}
Let ${\bschi} = \{\chi_\alpha\}_{\alpha \in V(X)} \in \X$ and fix $\alpha \in V(X)$. Consider a connected component of 1's,  $\kappa$, associated to $\chi_\alpha$. 
\begin{itemize}
\item If $\kappa$ is bounded in $\Z^3$, then we refer to it as a \emph{vertex component} associated to $\chi_\alpha$. 
\item The connected component $\kappa$ is called a \emph{local face component} associated to $\chi_\alpha$ if it satisfies the following property. There exists a face $\rho \in F(X)$ containing the vertex $\alpha$ and with $v_\rho > 0$ such that if we choose coordinates for which the toric divisor $D_\rho$ corresponds to $\{x_1 = 0\}$ and write $u_1 := u_\rho$, $v_1 := v_\rho$, then we have $(k_1,k_2,k_3) \in \kappa$ for all integers $u_1 \leq k_1 < u_1 + v_1$ and $k_2,k_3 \gg 0$.
\item In all other cases $\kappa$ is called a \emph{local edge components} associated to $\chi_\alpha$.
\end{itemize}
\end{definition}

\begin{definition} \label{vefcomps2}
Let ${\bschi} = \{\chi_\alpha\}_{\alpha \in V(X)} \in \X$. The toric data $({\bf{u}},{\bf{v}},{\bf{p}})$ of any reflexive hull of a $T$-equivariant torsion free sheaf on $X$ with characteristic function ${\bschi}$ has the same values ${\bf{u}}$, ${\bf{v}}$. Consider all connected components of 1's of all $\{\chi_\alpha\}_{\alpha \in V(X)}$ at once. It is clear that connected components of 1's associated to $\chi_\alpha$, $\chi_\beta$ glue just like $\sigma$-families \eqref{glue}. Vertex components never glue, but local face or local edge components associated to one $\chi_\alpha$ may glue to local face or local edge components associated to another $\chi_\beta$. Note that local face components may glue to local edge components. After gluing, we refer to a global connected component of 1's containing a local face component as a \emph{face component} associated to $\bschi$. All other global connected components of 1's which are not vertex or face components are called \emph{edge components} associated to $\bschi$. We denote the number of face, edge, vertex components associated to $\bschi$ by $a,b,c$.
\end{definition}

Let the situation be as in the previous definition. To each connected component of 1's of $\bschi$ we associate a choice of $\C \subset \C^2$. This specifies a filtration $\{F_\alpha(k_1,k_2,k_3)\}_{\alpha \in V(X)}$, i.e.~a $T$-equivariant torsion free sheaf on $X$ with characteristic function $\bschi$.  Hence the closed points of 
\[
(\PP^1)^a \times (\PP^1)^b \times (\PP^1)^c
\]
are in bijective correspondence with all possible collections of $\sigma$-families of $T$-equivariant torsion free sheaves on $X$ with characteristic function ${\bschi}$. Moreover, two points correspond to isomorphic objects if and only if they lie in the same $\SL(2,\C)$-orbit. Here $\SL(2,\C)$ acts by matrix multiplication on each factor $\PP^1$. The $\SL(2,\C)$-equivariant line bundles on this space are in 1-1 correspondence with elements of \cite{Dol}
\[
\Z^a \times \Z^b \times \Z^c
\]
via the correspondence
\[
(\{a_i\}_{i=1}^{a}, \{b_j\}_{j=1}^{b}, \{c_k\}_{k=1}^{c}) \leftrightarrow \boxtimes_{i=1}^{a} \O_{\PP^1}(a_i) \boxtimes \boxtimes_{j=1}^{b} \O_{\PP^1}(b_j) \boxtimes \boxtimes_{k=1}^{c} \O_{\PP^1}(c_k).
\]
We want to pick an $\SL(2,\C)$-equivariant line bundle for which the properly GIT stable points exactly correspond to the collections of $\sigma$-families for which the associated sheaf is $\mu$-stable. Using the previous section, one can show that the following choice works \cite{Koo1}
\begin{align*}
a_i :=  \sum_\rho (H^2 D_{\rho}) v_{\rho},  \ b_{j} = 0, \ c_{k} = 0,
\end{align*}
where the sum is over all faces $\rho \in F(X)$ for which $v_{\rho} >0$ and the corresponding local face component is contained in the (global) face component indexed by $i$. The stability does not depend on the edge and vertex components but only on the face components. In particular, we get an open subset $U_{X}^{H}(\bschi) \subset (\PP^1)^a$ such that the closed points of 
\[
U_{X}^{H}(\bschi) \times (\PP^1)^b \times (\PP^1)^c
\]
precisely correspond to $\mu$-stable sheaves. Unfortunately the above $\SL(2,\C)$-equivariant line bundle is not ample. For ampleness, all $a_i, b_i,c_i$ must be positive. Therefore we fix $R \gg 0$ and take
\begin{align*}
a_i :=  R \sum_\rho (H^2 D_{\rho}) v_{\rho},  \ b_{i} = 1, \ c_{i} = 1,
\end{align*}
where the sum over $\rho$ is as above. Then the properly GIT stable locus is still
\[
U_{X}^{H}(\bschi) \times (\PP^1)^b \times (\PP^1)^c
\] 
and the GIT quotient
\[
\M_{X}^{H}(\bschi) := U_{X}^{H}(\bschi) \times (\PP^1)^b \times (\PP^1)^c \ \slash \ \SL(2,\C)
\]
is a quasi-projective variety whose closed points exactly correspond to the $T$-equivariant isomorphism classes of $T$-equivariant rank 2 $\mu$-stable torsion free sheaves on $X$ with characteristic function ${\bschi}$. The space $\M_{X}^{H}(\bschi)$ is projective when $\gcd(2,c_1 H^2) = 1$. Moreover, $\M_{X}^{H}(\bschi)$ is smooth, because $U_{X}^{H}(\bschi) \times (\PP^1)^b \times (\PP^1)^c$ is smooth and $\mathrm{PGL}(2,\C)$ acts without stabilizers. 
One can show that $\M_{X}^{H}(\bschi)$ indeed corepresents a natural moduli functor as follows \cite{Koo1}:
\begin{theorem} \label{equivmod}
Let $X$ be a smooth projective toric 3-fold with polarization $H$. Let $\bschi \in \X$ be the characteristic function of a $T$-equivariant rank 2 torsion free sheaf on $X$. Then 
\[
\M_{X}^{H}(\bschi) := U_{X}^{H}(\bschi) \times (\PP^1)^b \times (\PP^1)^c \ \slash \ \SL(2,\C)
\]
is a smooth quasi-projective variety corepresenting the moduli functor of families of $T$-equivariant $\mu$-stable torsion free sheaves on $X$ with characteristic function $\bschi$. 
\end{theorem}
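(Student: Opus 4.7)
The plan is to upgrade the GIT construction sketched just before the statement into a functorial result by following the strategy of \cite{Koo1}: first build an explicit parameter scheme, then linearize the $\SL(2,\C)$-action so that GIT stability matches $\mu$-stability, and finally verify the universal property on families. I will treat smoothness and quasi-projectivity as essentially a by-product of the construction.

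First I would fix the characteristic function $\bschi$ and assemble the parameter space. By Section \ref{fixedlocigeneral}, specifying a $T$-equivariant rank $2$ torsion free sheaf on $X$ with characteristic function $\bschi$ amounts to choosing, for every connected component of $1$'s of $\bschi$, a line in $\C^2$ — compatibly across charts under the gluing conditions \eqref{glue}. The key combinatorial input is that vertex components never glue, while local face and local edge components merge into $a$ face components, $b$ edge components, and $c$ vertex components (Definition \ref{vefcomps2}). Hence the space of such filtrations is exactly $(\PP^1)^a\times(\PP^1)^b\times(\PP^1)^c$, and two points define $T$-equivariantly isomorphic sheaves if and only if they lie in one $\SL(2,\C)$-orbit, where $\SL(2,\C)$ acts diagonally by simultaneous change of frame of the generic fibre $\C^2$. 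Working $T$-equivariantly and using descent for $T$-equivariant coherent sheaves along the affine cover $\{U_\alpha\}$, this gives a $T$-equivariant family of sheaves parameterized by $(\PP^1)^a\times(\PP^1)^b\times(\PP^1)^c$, which is $\SL(2,\C)$-equivariant by construction.

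Next I would identify the $\mu$-stable locus with a GIT-semistable locus. Using the criterion recalled at the end of Section \ref{equivsh}, $\mu$-stability of the reflexive hull depends only on the face components (since the filtrations $R_\rho(k)$ see only the data at infinity in the remaining coordinates), and translates into a Hilbert-Mumford numerical inequality for the $\SL(2,\C)$-action with respect to the line bundle indexed by $(a_i,b_j,c_k) = \bigl(R\sum_\rho(H^2D_\rho)v_\rho,1,1\bigr)$ as above. For $R\gg 0$ this line bundle is ample, all semistable points are properly stable, and $\mathrm{PGL}(2,\C)$ acts freely on the properly stable locus $U_X^H(\bschi)\times(\PP^1)^b\times(\PP^1)^c$. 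The geometric quotient therefore exists as a smooth quasi-projective variety, and its closed points are in bijection with $T$-equivariant isomorphism classes of $T$-equivariant $\mu$-stable torsion free sheaves with characteristic function $\bschi$. Smoothness is immediate from smoothness of the parameter space together with the free action; quasi-projectivity is the standard output of GIT.

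Finally, I would verify corepresentability of the moduli functor $\underline{\M}_X^H(\bschi)$. Given a family $\mathbf{F}$ over a scheme $S$, the locally constant character data give an open cover $S=\bigcup S_i$ trivializing the weight spaces; over each $S_i$ the triple filtrations of $\sigma$-families, combined with the gluing conditions \eqref{glue}, induce flags in a trivial rank $2$ bundle whose quotients at the jumping indices produce a classifying morphism $S_i\to(\PP^1)^a\times(\PP^1)^b\times(\PP^1)^c$. Composing with the GIT quotient yields a map $S_i\to\M_X^H(\bschi)$; these are independent of the choice of trivialization because different trivializations differ by an element of $\SL(2,\C)$, so they glue to a global classifying morphism $S\to\M_X^H(\bschi)$. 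Uniqueness with respect to any other scheme receiving maps from all families follows because the universal family on the parameter space determines the functorial map on closed points and on tangent vectors, hence on all of $S$. The main obstacle I expect is precisely this last step: one has to be careful that $T$-equivariant isomorphisms of families are handled correctly, i.e.\ that the $\SL(2,\C)$-action on the parameter space really absorbs all ambiguity in the choice of trivialization of the generic fibre; the cleanest way is to invoke the general $T$-equivariant GIT formalism of \cite{Koo1}, where the corresponding corepresentability statement is proved for equivariant Gieseker moduli in arbitrary rank and dimension and specializes to the $\mu$-stable rank $2$, dimension $3$ case at hand.
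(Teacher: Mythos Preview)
Your proposal is correct and follows exactly the approach the paper indicates: the paper does not give an independent proof of Theorem~\ref{equivmod} but states it as the outcome of the GIT construction described in Section~\ref{fixedlocigeneral} and refers to \cite{Koo1} for the details and proofs. Your three-step outline (parameter space $(\PP^1)^a\times(\PP^1)^b\times(\PP^1)^c$, matching GIT-stability with $\mu$-stability via the linearization with weights $(R\sum_\rho(H^2D_\rho)v_\rho,1,1)$, and corepresentability via classifying morphisms) is precisely the structure of that argument, and your final caveat about invoking the general $T$-equivariant formalism of \cite{Koo1} is exactly what the paper does.
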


\begin{remark} \label{a=3}
In the above theorem, the double dual map is given by 
$$
U_{X}^{H}(\bschi) \times (\PP^1)^b \times (\PP^1)^c \ \slash \ \SL(2,\C) \longrightarrow U_{X}^{H}(\bschi)  \ \slash \ \SL(2,\C),
$$
where we denote the latter quotient by $\N_{X}^{H}(\bschi)$. We deduce that if $\M_{X}^{H}(\bschi) \neq \varnothing$, then $a \geq 3$ (or else the connected component of reflexive hulls $\N_{X}^{H}(\bschi) = \varnothing$). We also deduce: all closed points of $\M_{X}^{H}(\bschi)$ have the same reflexive hull if and only if $a=3$. Suppose this is the case. Then $\M_{X}^{H}(\bschi) \cong (\PP^1)^b \times (\PP^1)^c$. Moreover, $\N_{X}^{H}(\bschi)$ is an isolated reduced point $\{[\ccR]\}$ of the $T$-fixed locus of the moduli space of $\mu$-stable reflexive sheaves on $X$. Therefore
$$
\Ext^1(\ccR,\ccR)^T = 0.
$$
We have a tautological line bundle $\O_{\PP^1}(1)$ on each factor $\PP^1$ of $\M_{X}^{H}(\bschi)$. Using the tautological inclusions 
\[
\O_{\PP^1}(-1) \subset \O_{\PP^1} \oplus \O_{\PP^1},
\]
it is easy to construct a universal family $\FF$ on $\M_{X}^{H}(\bschi)$ fitting in a short exact sequence
$$
0 \longrightarrow \FF \longrightarrow p_{X}^{*} \ccR \longrightarrow \Q \longrightarrow 0,
$$
where the cokernel $\Q$ is $\M_{X}^{H}(\bschi)$-flat. 
\end{remark}

Suppose now $X$ is a smooth projective toric 3-fold with polarization $H$ and let $\M_X := \M_{X}^{H}(2,c_1,c_2,c_3)$ denote the moduli space of rank $2$ $\mu$-stable torsion free sheaves on $X$ with indicated Chern classes. We consider its fixed locus $\M_X^T$.
Denote by $\X_{(2,c_1,c_2,c_3)}$ the collection of characteristic functions of $T$-equivariant rank 2 torsion free sheaves on $X$ with Chern classes $c_1, c_2, c_3$. Forgetting the $T$-equivariant structure defines a map
\begin{equation} \label{forget}
\bigsqcup_{{\bschi} \in \X_{(2,c_1,c_2,c_3)}} \M_{X}^{H}(\bschi) \longrightarrow \M_{X}^{T}.
\end{equation}
It is not hard to show this is a morphism \cite{Koo1}. Any element of $\M_{X}^{T}$ admits a $T$-equivariant structure, which is unique up to tensoring by a character of $X(T)$ \cite{Koo1}. The map \eqref{forget} is therefore surjective but not injective. It can be made injective as follows. Choose any vertex $\alpha_0 \in V(X)$ and denote by $\rho_1, \rho_2, \rho_3 \in F(X)$ the faces having $\alpha_0$ as a vertex. Define
\[
\X_{(2,c_1,c_2,c_3)}^{\slice} \subset \X_{(2,c_1,c_2,c_3)}
\]
as the subset of characteristic functions ${\bschi}$ for which the corresponding $({\bf{u}},{\bf{v}})$ satisfy
\begin{equation} \label{slice}
u_{\rho_1} = u_{\rho_2} = u_{\rho_3} = 0.
\end{equation}
Then 
\[
\bigsqcup_{{\bschi} \in \X_{(2,c_1,c_2,c_3)}^{\slice}} \M_{X}^{H}(\bschi) \longrightarrow \M_{X}^{T}
\]
is a bijective morphism. In fact, it is an isomorphism of schemes \cite[Cor.~4.10]{Koo1}.\footnote{This construction works for any rank $r$ and any smooth projective toric $n$-fold.} Combining with Theorem \ref{equivmod} gives:
\begin{theorem} \label{fixedlocithm}
Let $X$ be a smooth projective toric 3-fold with polarization $H$ and let $\M_X := \M_{X}^{H}(2,c_1,c_2,c_3)$. Then the forgetful map described above is an isomorphism of schemes
\[
\M_{X}^{T} \cong \bigsqcup_{{\bschi} \in \X_{(2,c_1,c_2,c_3)}^{\slice}} \M_{X}^{H}(\bschi).
\]
In particular, $\M_{X}^{T}$ is a smooth quasi-projective variety.
\end{theorem}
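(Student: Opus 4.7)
The plan is to combine Theorem \ref{equivmod}, which identifies each piece $\M_{X}^{H}(\bschi)$ scheme-theoretically, with the observation that the slice condition \eqref{slice} rigidifies the character-twist ambiguity inherent in lifting a $T$-fixed torsion free sheaf to a $T$-equivariant one. Since the characteristic function is locally constant on flat families of $T$-equivariant coherent sheaves (as noted in Section \ref{fixedloci1} and \cite{Koo1}), the moduli functor of $T$-equivariant $\mu$-stable rank 2 torsion free sheaves on $X$ with Chern classes $(c_1,c_2,c_3)$ decomposes as a disjoint union indexed by $\bschi \in \X_{(2,c_1,c_2,c_3)}$. By Theorem \ref{equivmod}, each piece is corepresented by the smooth quasi-projective $\M_{X}^{H}(\bschi)$.

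Next, I would establish surjectivity of the forgetful map on closed points. Standard descent (cf.~\cite{Kly1, Per1, Koo1}) shows that any $T$-fixed $\mu$-stable torsion free sheaf $\F$ on $X$ admits a $T$-equivariant structure, unique up to tensoring by a character $\chi \in X(T) = \Z^3$. Such a twist shifts the toric data $\{u_\rho\}_{\rho \in F(X)}$ of the reflexive hull by $\{\langle \chi, \rho \rangle\}_{\rho \in F(X)}$. Since the three rays $\rho_1,\rho_2,\rho_3$ at the chosen vertex $\alpha_0$ form a $\Z$-basis of $X(T)$ (the cone at $\alpha_0$ is smooth and full-dimensional), there is a \emph{unique} character placing $\F$ into the slice $u_{\rho_1}=u_{\rho_2}=u_{\rho_3}=0$. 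This yields bijectivity on closed points of the forgetful map from the slice-restricted disjoint union to $\M_X^T$.

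To upgrade the bijection to a scheme-theoretic isomorphism, I would construct a two-sided inverse at the level of moduli functors. The forgetful map is clearly a morphism (forgetting an equivariant structure preserves flatness). For the inverse, given a family of $T$-fixed sheaves over a base $S$, one constructs a canonical equivariant lift locally on $S$, then uses the uniqueness of the slice-normalized character to glue these local lifts into a global family of $T$-equivariant sheaves satisfying \eqref{slice}. The main obstacle is precisely this functorial gluing: one must verify that the slice condition determines the equivariant structure uniformly in families, not just pointwise. This functorial argument is carried out in \cite[Cor.~4.10]{Koo1} in much greater generality (any rank, any toric $n$-fold), and we simply invoke it.

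Finally, the smoothness and quasi-projectivity of $\M_X^T$ follow immediately: each $\M_{X}^{H}(\bschi)$ is smooth quasi-projective by Theorem \ref{equivmod}, and the set $\X_{(2,c_1,c_2,c_3)}^{\slice}$ is finite because fixing the Chern character via the formulas of Example \ref{P3reflch} and its generalization bounds $\{(u_\rho, v_\rho)\}_{\rho \in F(X)}$ and $\bsdelta$, while the slice condition pins down the remaining translation freedom. Thus the disjoint union is a finite disjoint union of smooth quasi-projective varieties, proving the final assertion.
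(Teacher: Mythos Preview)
Your proposal is correct and follows essentially the same approach as the paper: the paper states the theorem as a direct consequence of Theorem~\ref{equivmod} together with \cite[Cor.~4.10]{Koo1}, after observing that the slice condition~\eqref{slice} makes the forgetful map a bijective morphism. You have supplied more detail (the character-twist rigidification argument and the finiteness of the slice index set), but the core ingredients are identical.
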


\subsection{Quot schemes of reflexive sheaves} \label{3Dpartitions}

As discussed in the introduction, double duals of members of a flat family of torsion free sheaves do not need to form a flat family. However, a result of Koll\'ar \cite{Kol} implies the double dual map is constructible 
\[
(\cdot)^{**} : \M_{X}^{H}(2,c_1,c_2,c_3) \longrightarrow \bigsqcup_{c_{2}', c_{3}'} \N_{X}^{H}(2,c_1,c_{2}',c_{3}').
\] 
At the level of closed points, the fibre over $[\ccR] \in  \N_{X}^{H}(2,c_1,c_{2}',c_{3}')$ is the Quot scheme
\[
\Quot_X(\ccR,c_{2}'',c_{3}'')
\]
of quotients of $\ccR$ of dimension $\leq 1$, such that
\begin{align}
\begin{split} \label{Chernrel}
&c_{2}' = c_2 + c_{2}'' \\
&c_{3}' = c_3 + c_{3}'' + c_1 c_{2}''.
\end{split}
\end{align}
In this section, we develop a way of indexing the connected components of $\Quot_X(\ccR,c_{2}'',c_{3}'')^T$ by a new type of combinatorial objects called double box configurations. Each connected component is isomorphic to a product of $\PP^1$'s as we will show first.

Suppose $\ccR$ is any $T$-equivariant rank 2 reflexive sheaf on $X$ described by toric data $({\bf{u}},{\bf{v}},{\bf{p}})$ (Definition \ref{toricdata}). Denote by 
$
\X_{c_{2}'',c_{3}''}(\ccR) \subset \X
$
the collection of characteristic functions of $T$-equivariant subsheaves of $\ccR$ such that the cokernel has Chern classes $c_{2}''$, $c_{3}''$. The closed points of $\Quot_X(\ccR,c_{2}'',c_{3}'')^T$ are $T$-invariant submodules of $$\{H^0(U_\alpha,\ccR)\}_{\alpha \in V(X)}$$ satisfying the gluing conditions (\ref{glue}). This gives a decomposition into connected components
\begin{equation} \label{quotcomp}
\Quot_X(\ccR,c_{2}'',c_{3}'')^T \cong \bigsqcup_{\bschi \in \X_{c_{2}'',c_{3}''}(\ccR)} \cC_{\bschi}.
\end{equation}
\begin{proposition} \label{prodP1s}
Each $\cC_{\bschi}$ of \eqref{quotcomp} is isomorphic to a product of $\PP^1$'s, where the number of $\PP^1$'s is the number of vertex and edge components associated to $\bschi$.
\end{proposition}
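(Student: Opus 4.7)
The plan is to describe the fixed locus on the level of $T$-equivariant subsheaves via Klyachko's correspondence (Section \ref{fixedloci1}): a $T$-equivariant subsheaf $\F \subset \ccR$ with $\bschi_\F = \bschi$ is a choice of subspaces $F_\alpha(k_1,k_2,k_3) \subseteq R_\alpha(k_1,k_2,k_3) \subset \C^2$ of the prescribed dimensions, satisfying the filtration and gluing conditions \eqref{glue}. At lattice sites where $\chi_\alpha \in \{0,2\}$ the subspace is forced, so all moduli live at sites where $\chi_\alpha = 1$, where one picks a 1D subspace $q \subset \C^2$ contained in $R_\alpha$. The filtration inclusions force $q$ to be constant on each connected component of 1's of any $\chi_\alpha$, and the gluing \eqref{glue} identifies these choices across vertices whenever local components merge, so a single point of $\PP^1$ is attached to each global connected component of 1's in the sense of Definition \ref{vefcomps2}.

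I then split these global components into the two types. Since $\dim \cQ \leq 1$, we have $\F^{**} = \ccR$, so for a face component the infinity values force $q = p_\rho$, contributing no moduli. The essential lemma is that for any vertex or edge component $\kappa$ one has $R_\alpha(k_1,k_2,k_3) = \C^2$ at every lattice site of $\kappa$, so the choice of $q$ is an unconstrained point of $\PP^1$. Indeed, suppose $R_\alpha$ were 1D at some $(k_1^0,k_2^0,k_3^0) \in \kappa$; then some $R_{\rho_i}(k_i^0) = p_{\rho_i}$ is 1D, equivalently $u_{\rho_i} \leq k_i^0 < u_{\rho_i} + v_{\rho_i}$. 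Monotonicity of the triple filtration, together with $F_\alpha(k_1,k_2,k_3) \subseteq R_{\rho_i}(k_i^0) = p_{\rho_i}$ at all nearby sites with $k_i = k_i^0$, and the identification $F_\alpha(k_1,\infty,\infty) = R_{\rho_1}(k_1)$ (etc.) forced by $\F^{**} = \ccR$, would force $\chi_\alpha \equiv 1$ throughout the entire infinity-strip $\{u_{\rho_i} \leq k_i < u_{\rho_i}+v_{\rho_i},\ \text{other two coordinates} \gg 0\}$. This strip is lattice-connected to $\kappa$, so $\kappa$ would be a local face component in the sense of Definition \ref{vefcomps}, contradicting the hypothesis.

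Finally, I upgrade the resulting bijection $\cC_\bschi \leftrightarrow (\PP^1)^{b+c}$ to a scheme isomorphism by exhibiting mutually inverse morphisms exactly as in Remark \ref{a=3}. The forward map comes from a universal family of $T$-equivariant subsheaves of $\ccR$ over $(\PP^1)^{b+c}$, built by taking the tautological subbundle $\O_{\PP^1}(-1) \hookrightarrow \O_{\PP^1}^{\oplus 2}$ on each factor and plugging it, via the Klyachko dictionary, into the 1D slot at every lattice site of the corresponding edge or vertex component (with face-component slots filled by the fixed $p_\rho$). The inverse reads off, from the universal subsheaf over $\cC_\bschi$, the rank-$1$ subspace placed at one chosen lattice site of each edge or vertex component, landing in $\mathrm{Gr}(1,2) = \PP^1$. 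The main obstacle I expect is the key lemma in the second paragraph, where the combinatorial dichotomy of Definitions \ref{vefcomps}--\ref{vefcomps2} must be translated into the local-geometric statement that $R_\alpha = \C^2$ on every vertex/edge component, so that no ``hidden'' 1D constraint from $\ccR$ silently collapses a putative $\PP^1$ factor to a point.
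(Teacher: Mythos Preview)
Your argument is correct and reaches the same conclusion as the paper, but the presentation is genuinely more direct than what the paper does. The paper's proof simply invokes the GIT machinery of \cite{Koo1}: it sets up a moduli functor of $T$-equivariant quotients, constructs the GIT quotient that corepresents it, and then compares with $\Quot_X(\ccR)^T$ via a forgetful morphism. In that framework, the fact that face components contribute no moduli while vertex and edge components each contribute a free $\PP^1$ is absorbed into the choice of GIT linearisation and the properly-stable locus computation; the combinatorial reason is never spelled out.

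You instead isolate and prove the key geometric lemma directly: on any vertex or edge component $\kappa$ one has $R_\alpha(k_1,k_2,k_3)=\C^2$, so the choice of a line in $R_\alpha$ is unconstrained, whereas on a face component the asymptotic identification $F_\alpha(k,\infty,\infty)=R_\rho(k)$ coming from $\F^{**}=\ccR$ pins the line to $p_\rho$. Your contradiction argument (if $R_\alpha$ were $1$-dimensional somewhere on $\kappa$, then monotonicity plus the double-dual identification forces $\kappa$ to swallow an entire face strip) is exactly the content that the GIT setup encodes, but made transparent. What the paper's route buys is a clean functorial statement (the moduli-functor comparison and the uniqueness of the $T$-equivariant structure on a $T$-fixed quotient), which makes the scheme-theoretic isomorphism automatic; what your route buys is a self-contained elementary proof that does not require the reader to chase through \cite{Koo1}. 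Your final paragraph, building the universal family over $(\PP^1)^{b+c}$ from the tautological subbundles exactly as in Remark~\ref{a=3}, is the correct way to promote the closed-point bijection to an isomorphism of schemes and matches how the paper handles the analogous step there.
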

\begin{proof}
This can be proved using the methods of \cite{Koo1}. In fact, this case is much easier than Theorem \ref{fixedlocithm}. Step 1: introduce a moduli functor of $T$-equivariant families of quotients of $\ccR$ with Chern classes $c_{2}'',c_{3}''$ as in \cite[Sect.~3.1]{Koo1}. Step 2: describe these families via a family version of Klyachko's filtrations like in \cite[Sect.~3.2]{Koo1}. Step 3: construct the moduli spaces $\cC_{\bschi}$ as GIT quotients and show they corepresent the previous moduli functors \cite[Sect.~3.3]{Koo1}. Step 4: lift the action of $T$ on $X$ to $\Quot_X(\ccR,c_{2}'',c_{3}'')$ (done in the proof of \cite[Prop.~4.1]{Koo1}). Step 4: show the forgetful morphism is an isomorphism. This goes similar to \cite[Sect.~4.3]{Koo1} but much easier: this time elements of $\Quot_X(\ccR,c_{2}'',c_{3}'')^T$ admit a \emph{unique} $T$-equivariant structure. 
\end{proof}

\begin{remark} \label{nofacemod}
Note that we do not assign factors of $\PP^1$ to the face components in Proposition \ref{prodP1s}. This is because the reflexive hull $\ccR$ is kept fixed so does not have moduli.
\end{remark}

Fix $\alpha \in V(X)$ and $\ccR_\alpha := \ccR|_{U_\alpha}$. Our goal is to give a combinatorial description of characteristic functions $\chi$ of rank 2 $T$-equivariant subsheaves of $\ccR_\alpha$. We denote the collection of such characteristic functions by $\X(\ccR_\alpha)$. Let $\rho_1, \rho_2, \rho_3 \in F(X)$ be the faces with vertex $\alpha$. Suppose the labeling is chosen such that the toric divisor $D_{\rho_i}$ in chart $U_\alpha$ corresponds to the coordinate hyperplane $\{x_i=0\}$. Then $\chi$ gives rise to integers (Definition \ref{toricdata})
\begin{align*}
u_i &:= u_{\rho_i} \in \Z, \\ 
v_i &:= v_{\rho_i} \geq 0.
\end{align*}
Moreover, $\ccR_\alpha$ is described by toric data (Definition \ref{toricdata})
\begin{align*}
u_i &\in \Z, \\ 
v_i &\geq 0, \\ 
p_i &:= p_{\rho_i} \in \PP^1.
\end{align*}
This data is represented by Figure 1. The sheaf $\ccR_\alpha$ is singular if and only if $v_1, v_2, v_3 >0$ and $p_1, p_2, p_3$ are mutually distinct (Proposition \ref{rank2singular}). We discuss the case $\ccR_\alpha$ is singular. The (easier) case $\ccR_\alpha$ is locally free is treated in Remark \ref{deco}. In what follows, we let $$\ccR_{\alpha\beta} := \ccR|_{U_{\alpha\beta}},$$ where $U_{\alpha\beta} = U_\alpha \cap U_\beta$ for all $\alpha\beta \in E(X)$. Recall that (Section \ref{equivsh})
$$
R_{\alpha\beta_1}(k_2,k_3) = R_{\alpha}(\infty,k_2,k_3)
$$
for all $k_2, k_3 \in \Z$ (and similarly for $\ccR_{\alpha\beta_2}$, $\ccR_{\alpha\beta_3}$). Note that
\begin{equation*}
R_{\alpha\beta_1}(k_2,k_3) = \left\{\begin{array}{cc} \C^2 & \mathrm{for \ all \ } k_2 \geq u_2+v_2 \ \mathrm{and \ } k_3 \geq u_3 + v_3 \\ p_2 & \mathrm{for \ all \ } u_2 \leq k_2 < u_2+v_2 \ \mathrm{and \ } k_3 \geq u_3+v_3 \\ p_3 & \mathrm{for \ all \ } k_2 \geq u_2+v_2 \ \mathrm{and \ } u_3 \leq k_3 < u_3+v_3 \\  0 & \mathrm{otherwise}. \end{array}\right.
\end{equation*} 
In this way $\ccR_{\alpha\beta_1}$ determines the integers $u_2,u_3,v_2,v_3$ (and similarly for  $\ccR_{\alpha\beta_2}$, $\ccR_{\alpha\beta_3}$). 
 
\begin{figure} 
\begin{displaymath}
\xy
(0,0)*{\bullet} ; (-40,-20)*{} **\dir{-} ; (-42,-22)*{x_1} ; (0,0)*{} ; (70,0)*{} **\dir{-} ; (74,0)*{x_2} ; (0,0)*{} ; (0,70)*{} **\dir{-} ; (0,74)*{x_3} ; (5.3,-5)*{(u_1,u_2,u_3)}
 ; (-20,-10)*{} ; (15,-10) **\dir{--} ; (15,-10) ; (50,-10)*{} **[yellow]\dir{=} ; (35,0)*{} ; (15,-10)*{} **\dir{--} ; (15,-10)*{} ; (-5,-20)*{} **[yellow]\dir{=} ; (-20,-10)*{} ; (-20,20)*{} **\dir{--} ; (-20,20)*{} ; (-20,60)*{} **[cyan]\dir{=} ; (15,-10)*{} ; (15,20)*{} **[yellow]\dir{=} ; (15,20)*{} ; (15,60)*{} **[red]\dir{=} ; (-20,20)*{} ; (15,20)*{} **[cyan]\dir{=} ; (15,20)*{} ; (60,20)*{} **[red]\dir{=} ; (0,30)*{} ; (-20,20)*{} **\dir{--} ; (-20,20)*{} ; (-40,10)*{} **[cyan]\dir{=} ; (0,30)*{} ; (35,30)*{} **\dir{--} ; (35,30)*{} ; (70,30)*{} **[green]\dir{=} ; (35,30)*{} ; (15,20)*{} **[green]\dir{=} ; (15,20)*{} ;  (-5,10)*{} **[red]\dir{=} ; (35,0)*{} ; (35,30)*{} **\dir{--} ; (35,30)*{} ; (35,70)*{} **[green]\dir{=} ; (15,20)*{\bullet} ; (40,15)*{(u_1+v_1,u_2+v_2,u_3+v_3)} 
; (42,35)*{\langle p_1 \rangle_{\C}} ; (-27,23)*{\langle p_2 \rangle_{\C}} ; (20,-15)*{\langle p_3 \rangle_{\C}} ; (35,30)*{\circ} ; (-20,20)*{\circ} ; (15,-10)*{\circ}
\endxy 
\end{displaymath}
\caption{Toric data $u_i, v_i, p_i$ lying in the character lattice $X(T) = \Z^3$. There are three local face components with weight spaces of dimension 1: the green (generated by $p_1$), blue  (generated by $p_2$), and yellow  (generated by $p_3$) regions. The red region has weight spaces of dimension 2. All other points of the character lattice have weight spaces of dimension 0.}
\end{figure}

We start with some definitions. Given a point ${\bf{k}} = (k_1,k_2)$ in $\R^2$, we write 
\[
C({\bf{k}}) := \{(k_1+l_1,k_2+l_2) \ | \ l_i \in \R_{\geq 0} \}.
\]
\begin{definition}
A \emph{2D partition} based at $(m_1,m_2) \in \Z^2$ is a subset $\lambda \subset \Z^2$ satisfying the following condition. If $(k_1,k_2) \in \lambda$ then $k_1 \geq m_1$, $k_2 \geq m_2$, and $(l_1,k_2) \in \lambda$, $(k_1,l_2) \in \lambda$ for all $m_1 \leq l_1 \leq k_1$, $m_2 \leq l_2 \leq k_2$. We call $\lambda$ \emph{finite} if $|\lambda| < \infty$.  
\end{definition}

\begin{definition} \label{doublesquare}
Let $(i;i',i'') = (1;2,3)$, $(2;1,3)$, or $(3;1,2)$. Consider a triple of \emph{finite} 2D partitions ${\bslambda} = (\lambda_1, \lambda_2,\lambda_3)$ based respectively at $(u_{i'},u_{i''}+v_{i''})$, $(u_{i'}+v_{i'},u_{i''})$, $(u_{i'}+v_{i'},u_{i''}+v_{i''})$ and subject to the following condition. Let $C_1 := C(u_{i'},u_{i''}+v_{i''})$, $C_2 := C(u_{i'}+v_{i'},u_{i''})$, and $D:=C(u_{i'}+v_{i'},u_{i''}+v_{i''})$. Define 
\begin{align*}
\bslambda_\mathrm{in} &:= \lambda_1 \cap \lambda_2 \cap \lambda_3, \\
\bslambda_\mathrm{out} &:= (\lambda_1 \cup \lambda_2 \cup \lambda_3) \cap D.
\end{align*} 
\emph{Condition}: each element of $\bslambda_\mathrm{out} \setminus \bslambda_\mathrm{in}$ belongs to \emph{exactly two} of the partitions $\lambda_1, \lambda_2, \lambda_3$. We define an equivalence relation $\sim$ on the collection of such triples ${\bslambda} = (\lambda_1, \lambda_2,\lambda_3)$ as follows. Write ${\bslambda} \sim {\bsmu}$ whenever:
\begin{enumerate}
\item $\lambda_j \cap (C_j \setminus D) = \mu_j \cap (C_j \setminus D)$ for all $j=1,2$,
\item $\bslambda_\mathrm{in} = \bsmu_\mathrm{in}$,
\item $\bslambda_\mathrm{out} = \bsmu_\mathrm{out}$.
\end{enumerate}
We refer to such equivalence classes as \emph{double square configurations} in $\ccR_{\alpha\beta_i}$ and denote the collection of such equivalence classes by $\Lambda(\ccR_{\alpha\beta_i})$. For any $\bslambda \in \Lambda(\ccR_{\alpha\beta_i})$, we define the \emph{size} $|\bslambda|$ as
\[
|\bslambda| := \Big( \sum_{i=1}^{3} |\lambda_i| \Big) - |\bslambda_{\mathrm{out}}|.
\]
Note that $|\bslambda|$ is independent of the choice of representative of the equivalence class.
\end{definition}

\begin{example} 
Let $(u_1,u_2)=(0,0)$ and $(v_1,v_2)=(3,1)$. Consider the following triples of 2D partitions $$(\includegraphics[width=.4in]{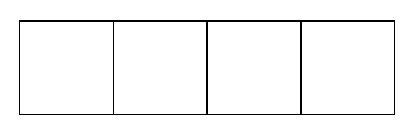}, \includegraphics[width=.21in]{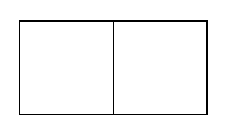}, \includegraphics[width=.12in]{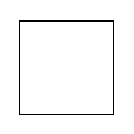}), (\includegraphics[width=.4in]{1+1+1+1}, \includegraphics[width=.21in]{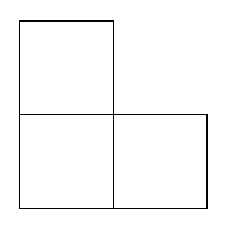}, \varnothing),   (\includegraphics[width=.3in]{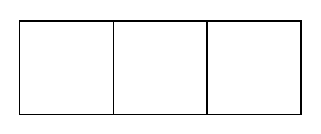}, \includegraphics[width=.21in]{2+1}, \includegraphics[width=.12in]{s}). $$ 
For each of these triples $\bslambda$, we have that $\lambda_1 \cap (C_1 \setminus D) = \includegraphics[width=.3in]{1+1+1}$, $\lambda_2 \cap (C_2 \setminus D) = \includegraphics[width=.21in]{1+1}$, $\bslambda_\mathrm{in} = \varnothing$, and $\bslambda_\mathrm{out} = \includegraphics[width=.12in]{s}$. Therefore all three triples are equivalent and determine the same double square configuration.
\end{example}

Given a point ${\bf{k}} = (k_1,k_2,k_3)$ in $\R^3$, we write 
\[
C({\bf{k}}) := \{(k_1+l_1,k_2+l_2,k_3+l_3) \ | \ l_i \in \R_{\geq 0} \}.
\]
Referring to Figure 1, we are interested in the following cones
\begin{align}  
\begin{split} \label{CCCD}
C_1 &:= C(u_1,u_2+v_2,u_3+v_3), \\
C_2 &:= C(u_1+v_1,u_2,u_3+v_3), \\
C_3 &:= C(u_1+v_1,u_2+v_2,u_3), \\
D &:= C(u_1+v_1,u_2+v_2,u_3+v_3).
\end{split}
\end{align}
For all $(i;i',i'') = (1;2,3), (2;1,3), (3;1,2)$, let $P_i : \R^3 \rightarrow \R^2$, $P_i(k_1,k_2,k_3) = (k_{i'},k_{i''})$ denote projection. 

\begin{definition}
A \emph{3D partition} based at $(m_1,m_2,m_3) \in \Z^3$ is a subset $\pi \subset \Z^3$ satisfying the following condition. If $(k_1,k_2,k_3) \in \pi$, then $k_1 \geq m_1$, $k_2 \geq m_2$, $k_3 \geq m_3$, and $(l_1,k_2,k_3) \in \pi$, $(k_1,l_2,k_3) \in \pi$, $(k_1,k_2,l_3) \in \pi$ for all $m_1 \leq l_1 \leq k_1$, $m_2 \leq l_2 \leq k_2$, $m_3 \leq l_3 \leq k_3$. We call $\pi$ \emph{finite} if $|\pi| < \infty$. 
\end{definition}

\begin{definition} \label{doubleboxconf}
Fix outgoing double square configurations ${\bslambda}_i \in \Lambda(\ccR_{\alpha\beta_i})$ for all $i=1,2,3$. Consider a triple of (not necessarily finite) 3D partitions ${\bspi} = (\pi_1, \pi_2, \pi_3)$ based at $(u_1,u_2+v_2,u_3+v_3)$, $(u_1+v_1,u_2,u_3+v_3)$, $(u_1+v_1,u_2+v_2,u_3)$ subject to the two conditions described below. Define 
\begin{align*}
\bspi_\mathrm{in} &:= \pi_1 \cap \pi_2 \cap \pi_3, \\
\bspi_\mathrm{out} &:= (\pi_1 \cup \pi_2 \cup \pi_3) \cap D.
\end{align*}
Take $N \gg 0$ sufficiently large such that the following projections do not depend on $N$
\begin{align*}
\lambda_{1}(\pi_1) &:= P_1(\pi_1 \setminus [u_1,u_1+N] \times \Z^2 ), \\
\lambda_{1}(\pi_2) &:= P_1(\pi_2 \setminus [u_1,u_1+N] \times \Z^2), \\
\lambda_{1}(\pi_3) &:= P_1(\pi_3 \setminus [u_1,u_1+N] \times \Z^2), \\
\lambda_{2}(\pi_1) &:= P_2(\pi_1 \setminus \Z \times [u_2, u_2+N] \times \Z), \\
\lambda_{2}(\pi_2) &:= P_2(\pi_2 \setminus \Z \times [u_2, u_2+N] \times \Z), \\
\lambda_{2}(\pi_3) &:= P_2(\pi_3 \setminus \Z \times [u_2, u_2+N] \times \Z), \\
\lambda_{3}(\pi_1) &:= P_3(\pi_1 \setminus \Z^2 \times [u_3,u_3+N]), \\
\lambda_{3}(\pi_2) &:= P_3(\pi_2 \setminus \Z^2 \times [u_3,u_3+N]), \\
\lambda_{3}(\pi_3) &:= P_3(\pi_3 \setminus \Z^2 \times [u_3,u_3+N]). 
\end{align*}


\noindent \emph{Condition 1}: Each element of $\bspi_\mathrm{out} \setminus \bspi_\mathrm{in}$ belongs to \emph{exactly two} partitions $\pi_i$. \\
\noindent  \emph{Condition 2}:
\begin{align*}
(\lambda_1(\pi_2),\lambda_1(\pi_3),\lambda_1(\pi_1)) &\sim {\bslambda}_1, \\
(\lambda_2(\pi_1),\lambda_2(\pi_3),\lambda_2(\pi_2)) &\sim {\bslambda}_2, \\
(\lambda_3(\pi_1),\lambda_3(\pi_2),\lambda_3(\pi_3)) &\sim {\bslambda}_3.
\end{align*}
We define an equivalence relation $\sim$ on the collection of such triples as follows. Write ${\bspi} \sim {\bsvpi}$ whenever:
\begin{enumerate}
\item $\pi_i \cap (C_i \setminus D) = \varpi_i \cap (C_i \setminus D)$ for all $i=1,2,3$,
\item $\bspi_\mathrm{in} = \bsvpi_\mathrm{in}$,
\item $\bspi_\mathrm{out} = \bsvpi_\mathrm{out}$.
\end{enumerate}
We refer to such equivalence classes as \emph{double box configurations} in $\ccR_\alpha$ and denote the collection of such equivalence classes by $\Pi(\ccR_\alpha, {\bslambda}_1,  {\bslambda}_2,  {\bslambda}_3)$. For any $\bspi \in \Pi(\ccR_\alpha,\bslambda_1, \bslambda_2, \bslambda_3)$, we define the \emph{size} of $\bspi$ as
\[
|\bspi| := \Big( \sum_{i=1}^{3} |\pi_i| \Big) - |\bspi_{\mathrm{out}}|,
\]
where $|\cdot|$ on the RHS denotes the renormalized volume.\footnote{The renormalized volume of a 3D partition $\pi$ is defined as $|\pi| = \sum_{\square \in \pi} \left( 1 - \# \mathrm{legs \ containing \ } \square \right)$. See \cite{MNOP1} for details.} Note that $|\bspi|$ is independent of the choice of representative of the equivalence class.
\end{definition}

The purpose of this definition is to give a new description of the characteristic function $\chi_\alpha$ of a rank 2 $T$-equivariant torsion free sheaf $\F_\alpha \subset \ccR_\alpha$ on $U_\alpha$. Since 
$$
\chi_{\F_\alpha} \leq \chi_{\F_{\alpha}^{**}} = \chi_{\ccR_\alpha},
$$
the difference $\chi_{\ccR_\alpha} - \chi_{\F_\alpha}$ indicates regions of dimension 0,1,2. Double box configurations are a combinatorial way of indexing all possible differences that can occur in this way. The precise statement is:
\begin{proposition} \label{lem1}
Let $\ccR_\alpha$ be a $T$-equivariant rank 2 reflexive sheaf on toric chart $U_\alpha \cong \C^3$. Let $\ccR_{\alpha\beta_1} := \ccR_\alpha|_{\C^* \times \C \times \C}$, $\ccR_{\alpha\beta_2} := \ccR_\alpha|_{\C \times \C^* \times \C}$, and $\ccR_{\alpha\beta_3} := \ccR_\alpha|_{\C \times \C \times \C^*}$. Denote by $\X(\ccR_\alpha)$ the collection of characteristic functions of $T$-equivariant rank 2 torsion free sheaves on $U_\alpha$ with $T$-equivariant reflexive hull $\ccR_\alpha$. Then there is a natural bijection
\[
\X(\ccR_\alpha) \cong \bigsqcup_{{\scriptsize{\begin{array}{c} {\bslambda}_1 \in \Lambda(\ccR_{\alpha\beta_1}) \\ {\bslambda}_2 \in \Lambda(\ccR_{\alpha\beta_2}) \\ {\bslambda}_3 \in \Lambda(\ccR_{\alpha\beta_3}) \end{array}}}} \Pi(\ccR_\alpha,  {\bslambda}_1,  {\bslambda}_2,  {\bslambda}_3).
\] 
\end{proposition}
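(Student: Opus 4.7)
The plan is to exhibit the bijection by an explicit construction in both directions, using what I will call ``special representatives'' of characteristic functions: $T$-equivariant subsheaves $F_\alpha \subset \ccR_\alpha$ in which $F_\alpha(k) \subset \C^2$ always lies in $\{0, p_1, p_2, p_3, \C^2\}$ at every $k \in D$. Such a representative of a given $\bschi \in \X(\ccR_\alpha)$ exists because at each vertex $1$-component of $\bschi$ the $1$-dimensional subspace $F_\alpha(k)$ can be specialised, within the $\PP^1$-parameter of Proposition~\ref{prodP1s}, to one of the three points $p_j \in \PP(\C^2)$.

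For the forward map, I pick a special representative $F_\alpha$ of $\bschi$ and set $\pi_i := \{k \in C_i : p_i \not\subset F_\alpha(k)\}$. Each $\pi_i$ is a 3D partition based at the corner of $C_i$ by monotonicity of the triple filtration together with monotonicity of the containments $p_i \subset R_\alpha(\cdot)$ along $C_i$. At a point $k \in \bspi_{\mathrm{out}} \setminus \bspi_{\mathrm{in}}$, we have $k \in D$ and $F_\alpha(k) \in \{p_1, p_2, p_3\}$, so the singular hypothesis (the $p_j$ mutually distinct) forces $F_\alpha(k) = p_j$ for a single $j$, placing $k$ in exactly the two partitions indexed by $\{1,2,3\} \setminus \{j\}$; this is Condition 1 of Definition~\ref{doubleboxconf}. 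Condition 2 reproduces the double square configurations $\bslambda_i$ by restricting $F_\alpha$ to each edge $U_\alpha \cap U_{\beta_i}$ and invoking the analogous (easier) 2-dimensional bijection. Different special representatives of the same $\bschi$ differ only in the choice of index $j$ at each interior boundary point, so the resulting triples agree modulo the relation $\sim$.

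For the inverse, given an equivalence class $[\bspi]$ with matching edge data $\bslambda_1, \bslambda_2, \bslambda_3$, I reconstruct a $\sigma$-family region by region: $F_\alpha = 0$ outside $\bigcup_i C_i$; inside $E_i$, $F_\alpha(k)$ equals $p_i$ or $0$ depending on whether $k \notin \pi_i$ or $k \in \pi_i$; inside $D$, $F_\alpha = \C^2$ on the complement of $\bspi_{\mathrm{out}}$; on $\bspi_{\mathrm{out}} \setminus \bspi_{\mathrm{in}}$, $F_\alpha(k) = p_j$ for the unique index $j \notin \{i : k \in \pi_i\}$; and on $\bspi_{\mathrm{in}}$, $F_\alpha = 0$. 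I verify the triple-filtration axioms across each interface, check torsion-freeness, and use Condition 2 to match the flags at infinity with those of $\ccR_\alpha$, so that the reflexive hull is $\ccR_\alpha$. The characteristic function of this $F_\alpha$ depends only on $[\bspi]$ and gives the inverse assignment.

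The main technical obstacle is verifying that the two maps are mutually inverse. For ``forward then inverse'', the interesting case occurs at a vertex $1$-component of $\bschi$: the special representative has $F_\alpha(k) = p_j$, placing $k$ in $\bspi_{\mathrm{out}} \setminus \bspi_{\mathrm{in}}$, so the inverse map correctly returns $\chi(k) = 1$; the complementary case $F_\alpha(k) = 0$ inside $\bspi_{\mathrm{in}}$ yields $\chi(k) = 0$, which is what distinguishes the two types of double box configurations in $D$ above the same triple of edge data. For ``inverse then forward'', different representatives within an equivalence class differ in the distribution of $j$-labels at boundary points of $D$, and one has to check that the induced characteristic function is invariant under this redistribution, which follows from the observation that $\chi_\alpha$ on the interior of $D$ is determined solely by the invariants $\bspi_{\mathrm{in}}$ and $\bspi_{\mathrm{out}}$.
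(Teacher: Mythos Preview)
Your proof is correct and is essentially the same construction as the paper's: both build the forward map by recording the zero locus of $\chi$ on each $E_i$ and on $D$, then distribute every connected $1$-component of $\chi|_D$ among two of the three partitions, with the resulting ambiguity absorbed by the equivalence relation $\sim$. Your device of passing through a ``special representative'' $F_\alpha$ with values in $\{0,p_1,p_2,p_3,\C^2\}$ and setting $\pi_i=\{k\in C_i:p_i\not\subset F_\alpha(k)\}$ is exactly the paper's combinatorial choice (which two $\pi_i$'s receive a given $\kappa$) rephrased sheaf-theoretically, and it has the pleasant side effect that the $3$D-partition property of each $\pi_i$ is immediate from monotonicity of the triple filtration --- a verification the paper leaves implicit.
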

\begin{proof}
Let $\chi \in \X(\ccR_\alpha)$ be the characteristic function of a $T$-equivariant rank 2 torsion free sheaf on $U_\alpha$ with reflexive hull $\ccR_\alpha$ described by toric data $({\bf{u}},{\bf{v}},{\bf{p}})$. Consider the regions $C_1,C_2,C_3,D$ of \eqref{CCCD}. The region $\chi|_{C_i \setminus D}$ has zeros and ones. These zeros form a 3D partition $\pi_{i}^{(0)}$. The region $\chi|_{D}$ has zeros, ones, and twos. The zeros give rise to a 3D partition $\bspi_{\mathrm{in}}$. The partition $\bspi_{\mathrm{in}}$ has the property that
$$
\pi_{1}^{(1)} := \pi_{1}^{(0)} \cup \bspi_{\mathrm{in}}, \ \pi_{2}^{(1)} := \pi_{2}^{(0)} \cup \bspi_{\mathrm{in}}, \ \pi_{3}^{(1)} := \pi_{3}^{(0)} \cup \bspi_{\mathrm{in}}
$$
are 3D partitions.
Finally, consider the connected components of 1's of $\chi|_{D}$. Such a connected component $\kappa$ has the property that $\bspi_{\mathrm{in}} \cup \kappa$ is a 3D partition. Moreover, at least two of 
$$
\pi_{1}^{(1)} \cup \kappa, \ \pi_{2}^{(1)} \cup \kappa, \ \pi_{3}^{(1)} \cup \kappa
$$
are 3D partitions. If only two of these form a 3D partition, say (without loss of generality) $\pi_{1}^{(1)} \cup \kappa, \pi_{2}^{(1)} \cup \kappa$, then define
$$
\pi_{1}^{(2)} := \pi_{1}^{(1)} \cup \kappa, \ \pi_{2}^{(2)} := \pi_{2}^{(1)} \cup \kappa, \ \pi_{3}^{(2)} := \pi_{3}^{(1)}.
$$
Otherwise choose any two, say $\pi_{1}^{(1)}, \ \pi_{2}^{(1)}$, and define 
$$
\pi_{1}^{(2)} := \pi_{1}^{(1)} \cup \kappa, \ \pi_{2}^{(2)} := \pi_{2}^{(1)} \cup \kappa, \ \pi_{3}^{(2)} := \pi_{3}^{(1)}.
$$
Proceeding in this way for all connected components $\kappa$, we end up with 3D partitions
$$
\pi_1 := \pi_{1}^{(0)} \cup \bspi_{\mathrm{in}} \cup \kappa_{i_1} \cup \cdots , \pi_2 := \pi_{2}^{(0)} \cup \bspi_{\mathrm{in}} \cup \kappa_{j_1} \cup \cdots, \pi_3 := \pi_{3}^{(0)} \cup \bspi_{\mathrm{in}} \cup \kappa_{k_1} \cup \cdots.
$$
Then $\bspi = (\pi_1, \pi_2, \pi_3)$ forms a double box configuration and the different choices made in the construction lead to equivalent double box configurations. We described a map 
\[
\X(\ccR_\alpha) \longrightarrow \bigsqcup_{{\scriptsize{\begin{array}{c} {\bslambda}_1 \in \Lambda(\ccR_{\alpha\beta_1}) \\ {\bslambda}_2 \in \Lambda(\ccR_{\alpha\beta_2}) \\ {\bslambda}_3 \in \Lambda(\ccR_{\alpha\beta_3}) \end{array}}}} \Pi(\ccR_\alpha,  {\bslambda}_1,  {\bslambda}_2,  {\bslambda}_3).
\] 
It is easy to check this is a bijection. 
\end{proof}

Next we assign moduli to each double box configuration. In the language of characteristic functions: we want to assign a moduli factor $\PP^1$ to each connected component of 1's which is \emph{not} a local face component. The reason we do not assign moduli to local face components is because the reflexive hull $\ccR_\alpha$ is fixed (Remark \ref{nofacemod}). A precise way of phrasing this in terms of double box configurations is as follows:
\begin{definition} \label{modfactor}
To any $\bspi \in \Pi(\ccR_\alpha,\bslambda_1, \bslambda_2, \bslambda_3)$ we associate a space $\cC_{\bspi}$ as follows. Analogous to Definition \ref{comp1s}, the partition $\bspi_\mathrm{out}$ can be written as a union of connected components. Define the regions
\begin{align*}
S_1 &:= \{u_1+v_1-1\} \times [u_2+v_2,\infty] \times [u_3+v_3,\infty], \\
S_2 &:= [u_1+v_1,\infty] \times \{u_2+v_2-1\} \times [u_3+v_3,\infty], \\
S_3 &:= [u_1+v_1,\infty] \times [u_2+v_2,\infty] \times \{u_3+v_3-1\}. 
\end{align*}
Let $P_i : \R^3 \rightarrow \R^2$, $i=1,2,3$ denote projections as before. Let $\kappa$ be a connected component of boxes contained in precisely two 3D partitions among $\pi_1,\pi_2,\pi_3$. If
\begin{align} \label{proj}
P_i(\kappa) \subset P_i ( \pi_i \cap S_i ),
\end{align}
for all $i=1,2,3$, then we associate a copy of $\PP^1$ to it. (Note that this is independent of choice of representative of the equivalence class.) Otherwise the connected component of boxes has no moduli assigned to it. We define the \emph{moduli factor} associated to $\bspi$ as the $n$-fold product of $\PP^1$, where $n$ is the number of connected components of boxes $\kappa$ satisfying (\ref{proj})
\[
\cC_{\bspi} := \underbrace{\PP^1 \times \cdots \times \PP^1}_{n}.
\]
\end{definition}

We illustrate these definitions in the following examples.
\begin{example}
Let ${\bspi} \in \Pi(\ccR_\alpha,(\varnothing,\varnothing,\varnothing),(\varnothing,\varnothing,\varnothing),(\varnothing,\varnothing,\varnothing))$ be given by Figure 2. The green, blue, yellow 3D partitions $\pi_1,\pi_2,\pi_3$ are representatives of $\bspi = (\pi_1,\pi_2,\pi_3)$. The red boxes correspond to $\bspi_{\mathrm{in}}$ and are contained in all three 3D partitions $\pi_1,\pi_2,\pi_3$. The white boxes correspond to $\bspi_{\mathrm{out}}$ and are contained in precisely two of $\pi_1, \pi_2, \pi_3$. Those with moduli have a label $m$. Note that only one of the three connected components of white boxes satisfies (\ref{proj}). Therefore $\cC_{\bspi} \cong \PP^1$. 
\end{example}

\begin{figure} 
\includegraphics[width=3.5in]{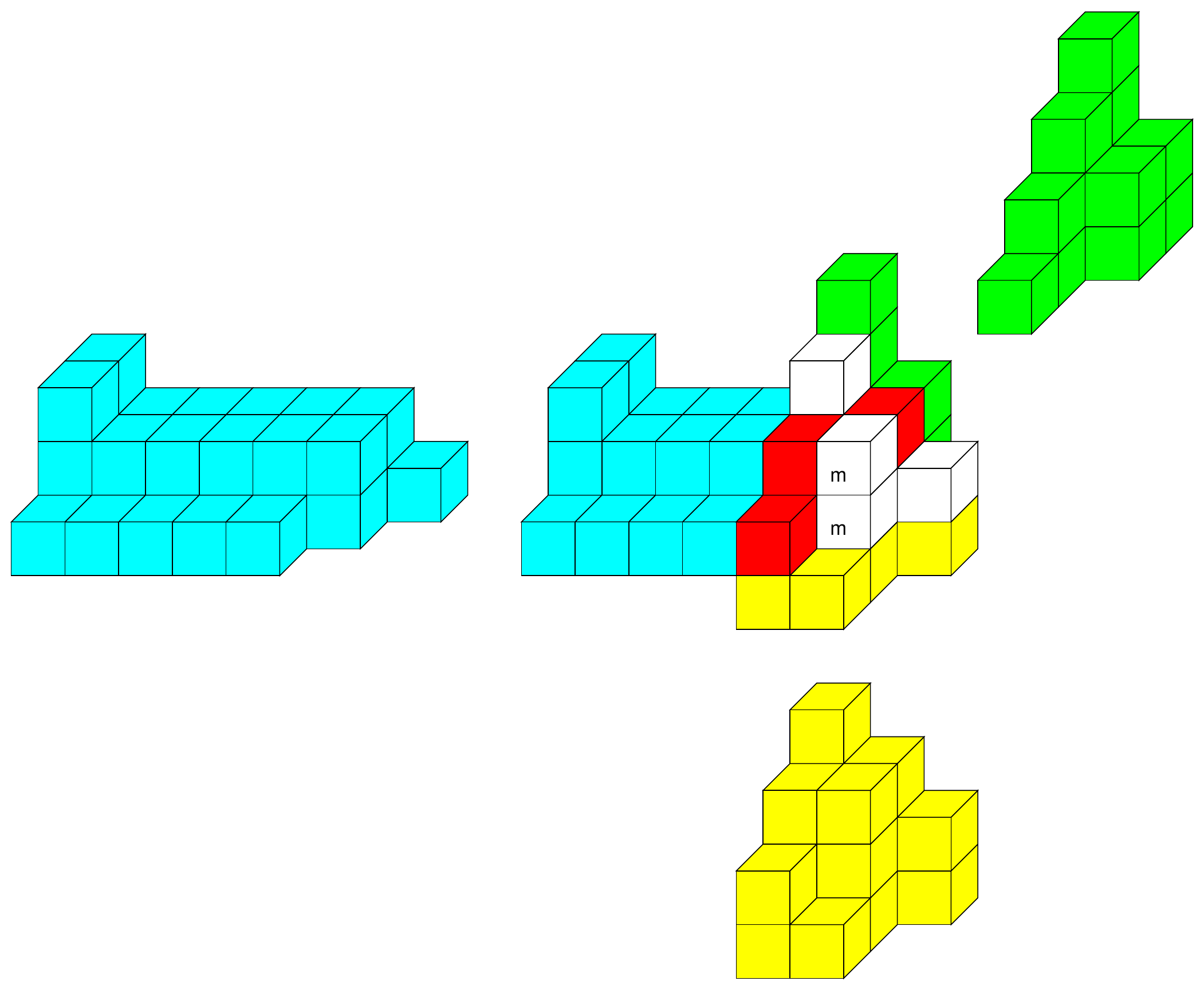}
\caption{Example with $\cC_{{\bspi}} \cong \PP^1$.}
\end{figure}

\begin{example} \label{moduliinlegs}
Let $\bspi \in \Pi(\ccR_\alpha, (\includegraphics[width=.3in]{1+1+1},\includegraphics[width=.2in]{2+1}, \varnothing),  (\varnothing, \includegraphics[width=.12in]{s}, \varnothing),  (\includegraphics[width=.3in]{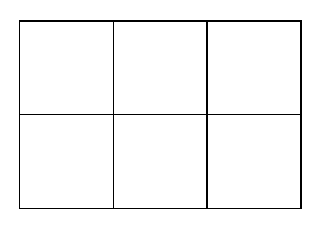},\includegraphics[width=.2in]{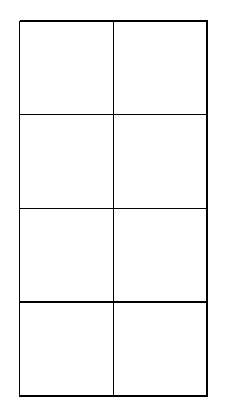},\includegraphics[width=.2in]{2+1}))$ be given by Figure 3. The dots indicate infinite legs. One connected component (of vertex type) fails (\ref{proj}) so has no moduli. The other two connected components (of edge type) satisfy (\ref{proj}) so $\cC_{\bspi} \cong \PP^1 \times \PP^1$.
\end{example}

\begin{figure} 
\includegraphics[width=4in]{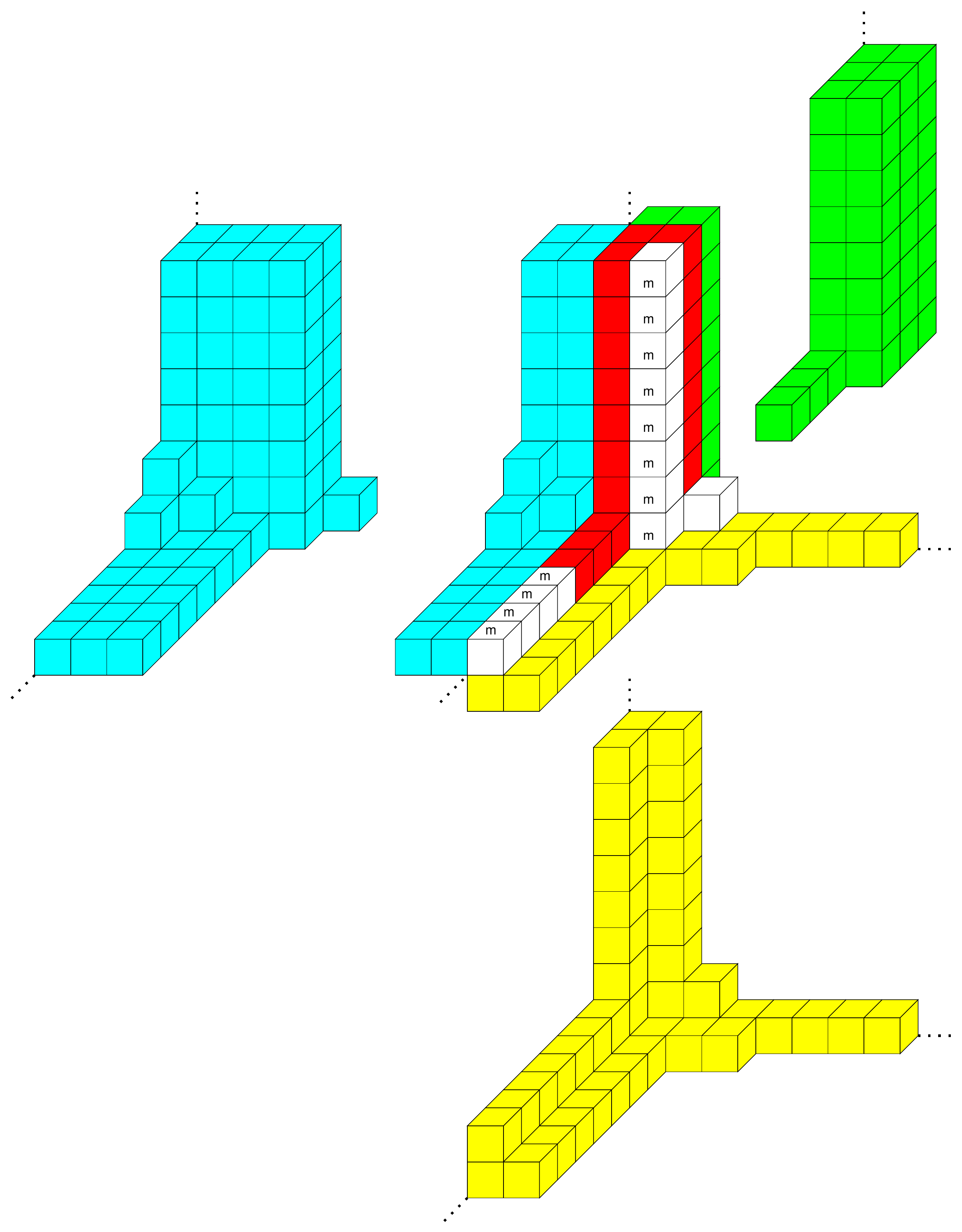}
\caption{Example with $\cC_{\bspi} \cong \PP^1 \times \PP^1$.}
\end{figure}


\begin{example} \label{littlebox}
Let $\bspi \in \Pi(\ccR_\alpha, (\includegraphics[width=.4in]{1+1+1+1}, \includegraphics[width=.21in]{1+1}, \varnothing),  (\varnothing, \includegraphics[width=.12in]{s}, \varnothing),  (\includegraphics[width=.3in]{2+2+2},\includegraphics[width=.2in]{4+4},\includegraphics[width=.2in]{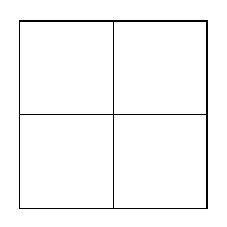}))$ be given by Figure 4 (left). Then $\cC_{\bspi} \cong \PP^1$. However, adding one box in the corner Figure 4 (right) kills the moduli and results in $\cC_{\bspi} \cong pt$. Note that in this example $\pi_{\mathrm{in}} = \varnothing$.
\end{example} 

\begin{figure} 
\includegraphics[width=6in]{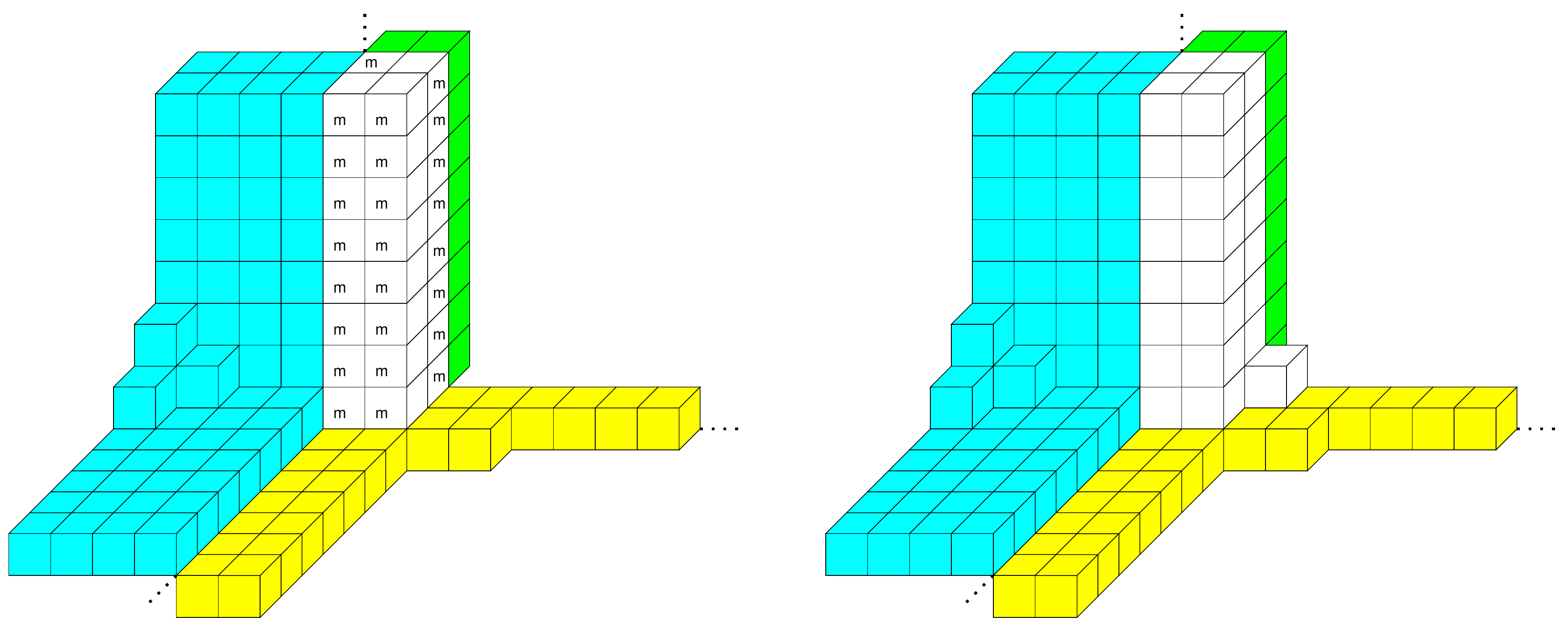}
\caption{Example with $\cC_{\bspi} \cong \PP^1$ (left) and $\cC_{\bspi} \cong pt$ (right).}
\end{figure}

We learn an important lesson from Example \ref{littlebox}: \emph{if we are given a double box configuration $\bspi \in \Pi(\ccR_\alpha, \bslambda_1,  \bslambda_2,  \bslambda_3)$, then we cannot decide from the $\bslambda_i$ alone whether there are edge moduli}. Informally: the ``corners'' influence the ``legs''. This means we ``cannot split'' vertex and edge contributions. A similar phenomenon occurs when discussing the vertex/edge formalism of rank 2 DT theory in Section \ref{vertex/edge} (see Example \ref{withlegs}). We are led to consider ``global'' double box configurations in the following sense.
\begin{definition} \label{Z}
Consider a smooth projective toric 3-fold $X$ and a $T$-equivariant rank 2 reflexive sheaf $\ccR$ on $X$ defined by toric data $({{\bf{u}}, {\bf{v}},\bf{p}})$ such that in each chart $\ccR_\alpha := \ccR|_{U_\alpha}$ is singular. (The general case, where some $\ccR_\alpha$ can be locally free, is discussed in Remark \ref{deco}.) For each edge $\alpha\beta \in E(X)$, fix a double square configuration 
$$
\hat{\bslambda} := \{\bslambda_{\alpha\beta}\}_{\alpha\beta \in E(X)}.
$$ 
For each $\alpha \in V(X)$ we denote its neighbouring vertices by $\beta_1, \beta_2, \beta_3$. The labeling is chosen such that $C_{\alpha \beta_i}$ is defined by $\{x_{i'} = x_{i''} = 0\}$ for all $i = 1,2,3$ and $i',i'' \in \{1,2,3\} \setminus \{i\}$ distinct. Fix a double box configuration 
\[
\bspi_\alpha \in \Pi(\ccR_\alpha, \bslambda_{\alpha\beta_1},  \bslambda_{\alpha\beta_2},  \bslambda_{\alpha\beta_3}).
\]
Then the collection $\hat{\bspi} := \{\bspi_\alpha\}_{\alpha \in V(X)}$ describes the characteristic function $\bschi$ of a $T$-equivariant rank 2 torsion free sheaf on $X$ with reflexive hull $\ccR$ by Proposition \ref{lem1}. We denote the collection of all such double box configurations by $\Pi(\ccR,\hat{\bslambda})$. To each $\bspi_\alpha$ we associated a moduli factor $\cC_{\bspi_\alpha}$ (Definition \ref{modfactor}). Then
\[
\cC_{\bschi} \cong \cC_{\hat{\bspi}} := \prod_{\alpha \in V(X)} \cC_{\bspi_\alpha} / \sim.
\]
Here $x \sim y$ when for some $\alpha\beta \in E(X)$, there is a local edge component containing $x$ in chart $U_\alpha$ and a local edge component containing $y$ in chart $U_\beta$ and both components match under gluing.\footnote{Note that it might happen that an edge component with moduli gets matched with an edge component without moduli, in which case the relation $\sim$ kills the moduli.} We define the \emph{weight} of $\hat{\bspi}$ as
\[
\omega(\hat{\bspi}) := e(\cC_{\hat{\bspi}}),
\]
where $e(\cdot)$ denotes topological Euler characteristic. Recall that for each $\alpha\beta \in E(X)$, we have (Section \ref{toric3folds})
\[
N_{C_{\alpha\beta}/X} \cong \O_{\PP^1}(m_{\alpha\beta}) \oplus \O_{\PP^1}(m_{\alpha \beta}^{\prime}).
\]
For each $\alpha\beta \in E(X)$, there are two faces $\rho_{1,\alpha\beta}$, $\rho_{2,\alpha\beta}$ which share $\alpha\beta$ as an edge and two disjoint faces $\rho_{3,\alpha\beta}$, $\rho_{4,\alpha\beta}$ connected by the edge $\alpha\beta$. Suppose face $\rho_{3,\alpha\beta}$ contains vertex $\alpha$ and $\rho_{4,\alpha\beta}$ contains $\beta$. We define the \emph{generating function} associated to $X$, $\ccR$, and $\bslambda_{\alpha\beta}$
\begin{align*}
\sfZ_{X, \ccR, \hat{\bslambda}}(q) &:= \sum_{\hat{\bspi} \in \Pi(\ccR, \hat{\bslambda})} \omega(\hat{\bspi}) q^{\chi(\hat{\bspi})}, \\
\chi(\hat{\bspi}) &:=  \sum_{\alpha \in V(X)} |\bspi_\alpha| + \sum_{\alpha\beta \in E(X)} \Big( f_{m_{\alpha\beta},m_{\alpha\beta}'}(\bslambda_{\alpha\beta}) + g_{u_{\rho_{3,\alpha\beta}},u_{\rho_{4,\alpha\beta}},v_{\rho_{3,\alpha\beta}},v_{\rho_{4,\alpha\beta}}}(\bslambda_{\alpha\beta}) \Big), \\
f_{m,m'}(\bslambda) &:= \sum_{i=1}^{3} \sum_{(k_1,k_2) \in \lambda_i} (-m k_1 - m' k_2 +1) - \sum_{(k_1,k_2) \in \bslambda_{\mathrm{out}}} (-m k_1 - m' k_2 +1), \\
g_{u,u',v,v'}(\bslambda) &:= - |\bslambda| (u+u'+v+v') + |\lambda_3|(v+v'),
\end{align*}
for any $\bslambda = (\lambda_1,\lambda_2,\lambda_3)$. There is one issue with this definition: $f_{m,m'}(\bslambda)$, $g_{u,u',v,v'}(\bslambda)$ depend on choice of representative of $\bslambda$. In the formula for $\chi(\hat{\bspi})$, it is assumed that for all $\alpha \in V(X)$, we choose representatives $(\pi_{\alpha,1},\pi_{\alpha,2},\pi_{\alpha,3})$ of $\bspi_\alpha$ with the following property: \\

\noindent For any $\alpha\beta \in E(X)$, suppose without loss of generality that $C_{\alpha\beta}$ is given by $\{x_2=x_3=0\}$ in chart $U_\alpha$ and $\{x_{2}'=x_{3}'=0\}$ in chart $U_\beta$. Suppose the toric data of $\ccR_\alpha$ in chart $U_\alpha$ is given by $(u_1,u_2,u_3)$, $(v_1,v_2,v_3)$ and in chart $U_\beta$ by $(u_1',u_2',u_3')$, $(v_1',v_2',v_3')$. Then for all $i=1,2,3$
\begin{align*}
&P_1(\pi_{\alpha,1} \setminus [u_1,u_1+N] \times \Z^2) = P_1(\pi_{\beta,1} \setminus [u_1',u_1'+N] \times \Z^2) \\
&P_1(\pi_{\alpha,2} \setminus [u_1,u_1+N] \times \Z^2) = P_1(\pi_{\beta,2} \setminus [u_1',u_1'+N] \times \Z^2) \\
&P_1(\pi_{\alpha,3} \setminus [u_1,u_1+N] \times \Z^2) = P_1(\pi_{\beta,3} \setminus [u_1',u_1'+N] \times \Z^2),
\end{align*}


where $P_1 : \R^3 \rightarrow \R^2$ denotes projection as before and $N \gg 0$. \\

\noindent This says that not only do $\bspi_\alpha$ and $\bspi_\beta$ have matching asymptotics (namely $\bslambda_{\alpha\beta}$), but the legs of their constituent partitions $\pi_{\alpha,i}$, $\pi_{\beta,i}$ have matching asymptotics too. This choice makes it more convenient to do the \v{C}ech calculation below. Even though $f_{m,m'}(\bslambda)$, $g_{u,u',v,v'}(\bslambda)$ depend on choice of representatives, the formula for $\chi(\hat{\bspi})$ does not by the following proposition:
\end{definition}

\begin{proposition} \label{formulachi}
Let $X$ be a smooth projective toric 3-fold. Let $\ccR$ be a $T$-equivariant rank 2 reflexive sheaf on $X$ described by toric data $({\bf{u}},{\bf{v}},{\bf{p}})$ and let $\ccR \twoheadrightarrow \cQ$ be a $T$-equivariant quotient of dimension $\leq 1$. Let $\hat{\bspi} = \{ \bspi_\alpha \}_{\alpha \in V(X)}$ correspond to the characteristic function of the kernel of $\ccR \twoheadrightarrow \cQ$, where 
\[
\bspi_\alpha \in \Pi(\ccR_\alpha, \bslambda_{\alpha\beta_1},  \bslambda_{\alpha\beta_2},  \bslambda_{\alpha\beta_3}).
\]
Suppose for all $\alpha \in V(X)$ we choose representatives of $\bspi_\alpha$ as described in Definition \ref{Z}. Then
\[
\chi(\cQ) = \chi(\hat{\bspi}).
\]
Moreover
\[
c_{3}(\cQ) = 2 \chi(\cQ) - \sum_{\alpha\beta \in E(X)} |\bslambda_{\alpha\beta}| (c_1(X) C_{\alpha\beta}).
\]
\end{proposition}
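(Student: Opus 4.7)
The plan is to prove both identities by a $T$-equivariant Čech-type calculation on the affine toric open cover $\{U_\alpha\}_{\alpha \in V(X)}$ of $X$, generalising the rank 1 vertex/edge formalism of MNOP. Proposition \ref{lem1} identifies the $T$-character of $H^0(U_\alpha, \F) \subset H^0(U_\alpha, \ccR)$ (where $\F$ is the kernel of $\ccR \twoheadrightarrow \cQ$) with the data of the double box configuration $\bspi_\alpha = (\pi_{\alpha,1}, \pi_{\alpha,2}, \pi_{\alpha,3})$ and its distinguished subsets $\bspi_{\alpha,\mathrm{in}}, \bspi_{\alpha,\mathrm{out}}$. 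The matching-asymptotics convention in Definition \ref{Z} is what makes the Čech alternating sum of local characters well-defined.

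First I would compute the vertex contribution to $\chi(\cQ)$. At each vertex $\alpha$, the character of $H^0(U_\alpha,\ccR) - H^0(U_\alpha,\F)$ assigns to each weight $(k_1,k_2,k_3)$ the number $\#\{i : (k_1,k_2,k_3) \in \pi_{\alpha,i}\}$ (this uses that $p_1,p_2,p_3$ are distinct in the singular case, so removing two of them already kills the entire $\C^2$ weight space). Summing over all weights and grouping the three partitions with inclusion–exclusion against $\bspi_{\alpha,\mathrm{in}}$ and $\bspi_{\alpha,\mathrm{out}}$ exactly reproduces the renormalised volume $|\bspi_\alpha|$, once the contributions from the infinite legs are peeled off and ascribed instead to the edges.

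Next I would analyse the restriction of $\cQ$ to each compact $T$-invariant curve $C_{\alpha\beta} \cong \PP^1$. Using the transition \eqref{transf}, a weight $t_1^{k_1}t_2^{k_2}t_3^{k_3}$ at vertex $\alpha$ lying in the leg along $\alpha\beta$ transforms to $t_1^{-k_1}t_2^{k_2 - m_{\alpha\beta}k_1}t_3^{k_3 - m'_{\alpha\beta}k_1}$ at vertex $\beta$. Hence each planar box $(k_1,k_2)$ appearing in the triple of partitions $\bslambda_{\alpha\beta} = (\lambda_1,\lambda_2,\lambda_3)$ contributes a copy of $\O_{\PP^1}(-m_{\alpha\beta}k_1 - m'_{\alpha\beta}k_2)$ to $\cQ|_{C_{\alpha\beta}}$, whose Euler characteristic is $-m_{\alpha\beta}k_1 - m'_{\alpha\beta}k_2 + 1$. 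The same inclusion–exclusion with $\bslambda_{\mathrm{out}}$ that occurred at vertices now gives exactly $f_{m_{\alpha\beta},m'_{\alpha\beta}}(\bslambda_{\alpha\beta})$. The correction $g_{u,u',v,v'}(\bslambda_{\alpha\beta})$ arises because the edge character is based not at the origin but at the shifts dictated by the toric data $(u_\rho, v_\rho)$ on the two faces $\rho_{3,\alpha\beta}$ and $\rho_{4,\alpha\beta}$; tracking the offsets through the character formula reproduces the $-|\bslambda|(u+u'+v+v') + |\lambda_3|(v+v')$ shape of $g$. Summing vertex and edge characters gives $\chi(\cQ) = \chi(\hat{\bspi})$.

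For the second identity I would apply Hirzebruch–Riemann–Roch. Since $\dim \cQ \leq 1$, one has $c_1(\cQ) = 0$, whence $c_3(\cQ) = 2\,\ch_3(\cQ)$ and $\ch_2(\cQ) = -c_2(\cQ)$. The scheme-theoretic 1-dimensional support of $\cQ$ is $\sum_{\alpha\beta \in E(X)} |\bslambda_{\alpha\beta}|[C_{\alpha\beta}]$: the multiplicity along $C_{\alpha\beta}$ equals the total generic rank of $\cQ$ at the generic point of $C_{\alpha\beta}$, which counts boxes in the leg weighted $2$ on $\bslambda_{\mathrm{in}}$ (both rank $2$ sheets removed) and $1$ elsewhere — precisely $|\bslambda_{\alpha\beta}|$. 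Plugging into
$$\chi(\cQ) = \int_X \ch_3(\cQ) + \tfrac{1}{2}\ch_2(\cQ) \cdot c_1(X)$$
and rearranging yields the claimed formula for $c_3(\cQ)$.

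The main obstacle is the bookkeeping in the Čech step. The rank 1 MNOP vertex/edge compensation is already delicate; in rank 2 one has three interacting 3D partitions per vertex with $\bspi_{\mathrm{in}}, \bspi_{\mathrm{out}}$ overlap patterns, and the matching representatives in Definition \ref{Z} must be used to guarantee that the legs of each $\pi_{\alpha,i}$ meet the legs of $\pi_{\beta,i}$ weight-for-weight across every edge. One must verify that the inclusion–exclusion built into $|\bspi_\alpha|$, into $|\bslambda_{\alpha\beta}|$, into $f_{m,m'}$, and into $g_{u,u',v,v'}$ all conspire so that each weight of $\cQ$ is counted exactly once in the total.
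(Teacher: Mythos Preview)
Your proposal is correct and follows essentially the same approach as the paper: a \v{C}ech calculation on the toric affine cover $\{U_\alpha\}$ (generalising \cite[Lem.~5]{MNOP1}) to compute the $T$-representation $\bigoplus_\alpha H^0(U_\alpha,\cQ) - \bigoplus_{\alpha\beta} H^0(U_{\alpha\beta},\cQ)$ and then setting $t_1=t_2=t_3=1$, followed by Hirzebruch--Riemann--Roch for the $c_3$ formula. Your write-up in fact supplies more detail than the paper's terse proof, including the origin of the $f$ and $g$ terms and the identification $\ch_2(\cQ)=\sum_{\alpha\beta}|\bslambda_{\alpha\beta}|[C_{\alpha\beta}]$.
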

\begin{proof}
This follows from a \v{C}ech calculation using the $T$-invariant open affine cover $U_\alpha$, $\alpha \in V(X)$ like in \cite[Lem.~5]{MNOP1}. The upshot is to choose representatives of each $\bspi_\alpha$ as in the previous definition and then calculate the $T$-representation
$$
\bigoplus_{\alpha \in V(X)} H^0(U_\alpha,\cQ) - \bigoplus_{\alpha\beta \in E(X)} H^0(U_{\alpha\beta}, \cQ).
$$
Cancelling infinite terms and finally setting $t_1=t_2=t_3=1$ gives the formula for $\chi(\cQ)$. The formula for $c_3(\cQ)$ follows from Hirzebruch-Riemann-Roch.
\end{proof}

\begin{remark} \label{openZ}
In Definition \ref{Z}, we fixed a smooth \emph{projective} toric 3-fold $X$. Suppose $Y$ is any smooth toric 3-fold with vertices $V(Y)$, edges $E(Y)$, and $T$-equivariant rank 2 reflexive sheaf $\ccR$ on $Y$.\footnote{When $Y$ is non-compact, one can still associate an open polyhedron to it, which is a subset of the polyhedron of any toric compactification of $X$ (Definition \ref{toriccompact}).} Denote by $E_{c}(Y)$ the edges which correspond to \emph{compact} lines $\PP^1 \cong C_{\alpha\beta} \subset Y$. Suppose for each $\alpha\beta \in E_c(Y)$, we fix a double square configuration $\bslambda_{\alpha\beta}$, then one can still consider
\begin{align*}
\sfZ_{Y, \ccR, \hat{\bslambda}}(q) &:= \sum_{\hat{\bspi} \in \Pi(\ccR, \hat{\bslambda})} \omega(\hat{\bspi}) q^{\chi(\hat{\bspi})}, \\
\chi(\hat{\bspi}) &:= \sum_{\alpha \in V(Y)} |\bspi_\alpha| + \sum_{\alpha\beta \in E_c(Y)} \Big( f_{m_{\alpha\beta},m_{\alpha\beta}'}(\bslambda_{\alpha\beta}) + g_{u_{\rho_{3,\alpha\beta}},u_{\rho_{4,\alpha\beta}},v_{\rho_{3,\alpha\beta}},v_{\rho_{4,\alpha\beta}}}(\bslambda_{\alpha\beta}) \Big),
\end{align*}
where $\rho_{3,\alpha\beta}$ and $\rho_{4,\alpha\beta}$ are defined as in Definition \ref{Z}. In applications, $Y$ has toric compactification $X$ and $\ccR$ is the restriction of a $T$-equivariant rank 2 reflexive sheaf on $X$. Then $\sfZ_{Y, \ccR, \hat{\bslambda}}(q)$ is part of a generating function on $X$.

Sometimes, for computational reasons, it is useful to consider an analog of $\sfZ_{Y, \ccR, \hat{\bslambda}}(q)$, where one fixes a double square configuration $\bslambda_{\alpha\beta}$ for all (not necessarily compact!) edges $\alpha\beta \in E(Y)$
\begin{equation}
\sfZ_{Y, \ccR, \hat{\bslambda}}^{\comb}(q) := \sum_{\hat{\bspi} \in \Pi(\ccR, \hat{\bslambda})} \omega(\hat{\bspi}) q^{ \sum_{\alpha \in V(Y)} |\bspi_\alpha|}.
\end{equation}
When both defined, $\sfZ_{Y, \ccR, \hat{\bslambda}}^{\comb}(q)$ and $\sfZ_{Y, \ccR, \hat{\bslambda}}(q)$ coincide up to an overall multiplicative factor of $q$ to some power. This power involves information of the normal bundle to the lines $C_{\alpha\beta}$, whereas $\sfZ_{Y, \ccR, \hat{\bslambda}}^{\comb}(q)$ does not depend on the geometry but only on $\ccR$ and $\hat{\bslambda}$.
\end{remark}

\begin{remark} \label{deco}
It is not hard to redo this section in the case $\ccR_\alpha$ is locally free, i.e.~$v_i=0$ for some $i$ or $p_i = p_j$ for some $i,j$ (Proposition \ref{rank2singular}). We think of these as degenerate cases. Then $\ccR_\alpha$ is $T$-equivariantly decomposable. In this setting, a double box configuration is defined as a pair of partitions $\bspi = (\pi_1, \pi_2)$ without any conditions, where we impose an equivalence relation on such pairs as before (and similar for double square configurations). After adapting the formulae for $|\bspi|$, $|\bslambda|$ accordingly, the formula of Proposition \ref{formulachi} holds for any $T$-equivariant rank 2 reflexive sheaf on $X$. The details are straight-forward and left to the reader.
\end{remark}

\subsection{Combinatorial formulae} \label{combsection}

Let $Y = \C^3$ and $\ccR$ a $T$-equivariant rank 2 reflexive sheaf on $Y$ with toric data $({\bf{u}},{\bf{v}},{\bf{p}})$. The following generating function was introduced in Remark \ref{openZ}
\begin{equation*}
\mathsf{Z}_{\C^3, \ccR, \hat{\bslambda}}^{\comb}(q) := \sum_{\bspi \in \Pi(\ccR, \hat{\bslambda})} \omega(\bspi) q^{ |\bspi|}.
\end{equation*}
Here $\hat{\bslambda} = (\bslambda_1,\bslambda_2,\bslambda_3)$ are three double square configurations describing fixed asymptotics along the $x_1$-, $x_2$-, and $x_3$-axes. In Section \ref{Euler} we will see that these generating functions form the building blocks for calculating \eqref{tocalc} for rank $r=2$ and toric 3-folds $X$ in examples. 

Recall that $\ccR$ is singular if and only if all $v_i>0$ and all $p_i$ are mutually distinct (Proposition \ref{rank2singular}). In the case $\ccR$ is locally free, $\sfZ_{\C^3, \ccR, \hat{\bslambda}}^{\comb}(q)$ decomposes. Each double square configurations $\bslambda_i$ is given by a pair of 2D partitions $(\lambda_i,\mu_i)$ and
\begin{equation} \label{splitZ}
\mathsf{Z}_{\C^3, \ccR, \hat{\bslambda}}^{\comb}(q) = \mathsf{W}_{\lambda_1,\lambda_2,\lambda_3}(q) \mathsf{W}_{\mu_1,\mu_2,\mu_3}(q), 
\end{equation}
where $\sfW_{\lambda_1,\lambda_2,\lambda_3}(q)$ denotes the usual topological vertex first discovered in \cite{AKMV}.\footnote{By definition $\sfW_{\lambda_1,\lambda_2,\lambda_3}(q) = \sum_{\pi} q^{|\pi|}$, where the sum is over all 3D partitions $\pi$ with asymptotics $\lambda_1,\lambda_2,\lambda_3$ and $|\pi|$ denotes the renormalized volume mentioned in Definition \ref{doubleboxconf}.} This product structure follows from the fact that $\ccR$ splits $T$-equivariantly as a sum of two $T$-equivariant line bundles (Proposition \ref{rank2singular}), which gives an extra $\C^*$-scaling action on the factors. Using this extra $\C^*$, \eqref{splitZ} follows.

The interesting case is when $\ccR$ is singular. The generating functions $\sfZ_{\C^3, \ccR, \hat{\bslambda}}^{\comb}(q)$ can then be computed by relating double box configurations to double dimers. In the case of no legs (or small legs) this leads to closed formulae. The following two theorems are proved in a forthcoming combinatorial paper \cite{GKY}. The first two authors recently found a geometric proof motivated by wall-crossing and Hall algebra methods \cite{GK2, GK3}.
\begin{theorem} \cite{GKY, GK2} \label{Ben}
Let $\ccR$ be a singular $T$-equivariant rank 2 reflexive sheaf on $\C^3$ described by toric data $({\bf{u}}, {\bf{v}}, {\bf{p}})$. Then\footnote{Here we take $\hat{\bslambda} = (\bslambda_1,\bslambda_2,\bslambda_3)$ with each $\bslambda_i = (\varnothing,\varnothing,\varnothing)$.}
\[
\mathsf{Z}_{\C^3, \ccR, \varnothing, \varnothing, \varnothing}^{\comb}(q)  = M(q)^2 \prod_{i=1}^{v_1} \prod_{j=1}^{v_2} \prod_{k=1}^{v_3} \frac{1-q^{i+j+k-1}}{1-q^{i+j+k-2}},
\]
where $M(q) = \prod_{k>0} 1/(1-q^k)^k$ denotes the MacMahon function.
\end{theorem}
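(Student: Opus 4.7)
The plan is to establish a weight-preserving bijection between the double box configurations counted by $\mathsf{Z}_{\C^3,\ccR,\varnothing,\varnothing,\varnothing}^{\comb}(q)$ and configurations of a double dimer model on a suitable planar region, then invoke an exact (product) formula for the resulting partition function. In the classical Kenyon correspondence, 3D partitions in $\C^3$ correspond bijectively to perfect matchings on a region of the honeycomb lattice, with $q^{|\pi|}$ tracking a boundary-corrected area statistic. I would set up the analog for a rank 2 singular reflexive sheaf $\ccR$: a double box configuration $\bspi = (\pi_1, \pi_2, \pi_3)$ should correspond to a pair of such matchings superimposed on a honeycomb region carrying a ``defect'' of shape $v_1 \times v_2 \times v_3$ encoding the singular locus of $\ccR$ (whose length is $v_1 v_2 v_3$ by Proposition \ref{rank2singular}).

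The crucial input is that the Euler characteristic weight $\omega(\hat{\bspi}) = e(\cC_{\hat{\bspi}})$ should match the standard double dimer weight $2^{\#\mathrm{loops}}$. This is plausible: each connected component $\kappa$ of $\bspi_{\mathrm{out}} \setminus \bspi_{\mathrm{in}}$ satisfying condition \eqref{proj} carries a moduli factor $\PP^1$ contributing $e(\PP^1) = 2$, exactly as a closed loop in a double dimer superposition contributes weight $2$. Granting the bijection and weight match, the double dimer partition function should decouple into the product of two single dimer partition functions on the unobstructed honeycomb (each yielding $M(q)$, so $M(q)^2$ in total) times a correction accounting for the defect. I expect this correction to be exactly the $q$-enumeration of plane partitions fitting inside the $v_1 \times v_2 \times v_3$ box, which by MacMahon's boxed formula \cite[(7.109)]{Sta} equals $\prod_{i=1}^{v_1}\prod_{j=1}^{v_2}\prod_{k=1}^{v_3} (1-q^{i+j+k-1})/(1-q^{i+j+k-2})$.

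The main obstacle is constructing the bijection so that the two-of-three incidence condition of Definition \ref{doubleboxconf} (each box in $\bspi_{\mathrm{out}} \setminus \bspi_{\mathrm{in}}$ lies in exactly two of the $\pi_i$), together with the equivalence relation on triples, translates into admissible double dimer superpositions. Particular care is needed near the singular core, where condition \eqref{proj} fails and the moduli collapse to a point rather than $\PP^1$; this suggests that the boxed MacMahon correction arises precisely from ``frozen'' loops forced to enclose the singularity, which must be weighted as $1$ rather than $2$. As sanity checks I would first verify the case $v_1 = v_2 = v_3 = 1$, where the product reduces to $1 + q$ and can be compared against a direct enumeration of small double box configurations, and second verify the degeneration to the locally free case (letting some $v_i \to 0$ or applying Remark \ref{deco}), where the formula must reduce to $M(q)^2$, matching \eqref{locfree}. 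Should the combinatorial route prove too delicate, the alternative via Hall algebras suggested in \cite{GK2} would interpret the boxed factor as a wall-crossing contribution between $\ccR$-framed stable pairs and Quot scheme quotients, with the singularity length surfacing as an Euler-characteristic contribution localized at the singular point.
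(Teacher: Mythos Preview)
Your proposal captures the essential strategy of the paper's combinatorial proof: pass from double box configurations to double dimer configurations on the honeycomb lattice, match the Euler characteristic weight $e(\cC_{\hat{\bspi}}) = 2^{\#\PP^1\text{'s}}$ with the loop weight $2^{\#\text{loops}}$, and identify the resulting partition function as $M(q)^2$ times MacMahon's boxed count. Your intuition that moduli factors $\PP^1$ correspond to closed loops, and that moduli collapse near the singular core accounts for the boxed factor, is on target.

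Two refinements against the paper's outline are worth noting. First, the correspondence from double box configurations to double dimer configurations is not a bijection but \emph{many-to-one} and weight-preserving: several inequivalent double box configurations can map to the same dimer configuration, with the coefficient of $q$ summing correctly. Second, the actual evaluation of the double dimer partition function is not a naive ``decoupling'' into two single dimer models. The paper invokes the Kenyon--Wilson machinery for tripartite boundary conditions, which expresses the double dimer partition function as a specific rational function in single dimer partition functions; each of those is then computed via vertex operators and Fock space techniques (after Okounkov--Reshetikhin--Vafa), from which the factor $M(q)^2$ emerges. The remainder is then reinterpreted as a single-dimer problem equivalent to the classical $q$-count of boxed plane partitions. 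Your sketch of the geometric alternative via Hall algebras is also accurate and matches the second proof route the paper mentions.
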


\begin{theorem} \cite{GKY, GK3} \label{Benwithlegs}
Let $\ccR$ be a singular $T$-equivariant rank 2 reflexive sheaf on $\C^3$ described by toric data $({\bf{u}}, {\bf{v}}, {\bf{p}})$. Suppose $(v_1,v_2,v_3) = (1,1,1)$ and $$(\bslambda_1,\bslambda_2,\bslambda_3) = ((\varnothing,\varnothing,\varnothing),(\includegraphics[width=.15in]{s},\varnothing,\varnothing),(\varnothing,\varnothing,\varnothing)).$$ Then
\[
\sfZ_{\C^3, \ccR, \hat{\bslambda}}^{\comb}(q) =M(q)^2 \frac{1+q^2}{1-q}.
\]
\end{theorem}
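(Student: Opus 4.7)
The strategy is to build on the double-dimer-model reformulation of \cite{GKY} used to prove Theorem \ref{Ben}, extending it to incorporate the single-box leg imposed by $\bslambda_2 = (\square, \varnothing, \varnothing)$. The case $(v_1, v_2, v_3) = (1,1,1)$ is favourable because the singular locus of $\ccR$ is a single reduced point, so the local structure near the singularity is constrained enough that the ``junction'' between the leg and the double-dimer sea can be enumerated by hand.

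First I would unpack Definition \ref{doubleboxconf} in the present setting: the condition $(\lambda_2(\pi_1), \lambda_2(\pi_3), \lambda_2(\pi_2)) \sim \bslambda_2$ with $\bslambda_2 = (\square, \varnothing, \varnothing)$ forces $\pi_1$ to carry a single-box infinite leg along $+x_2$, while $\pi_2$ and $\pi_3$ are finite and all other $\lambda_i(\pi_j)$ are empty. Thus $\bspi_{\mathrm{out}}$ is compactly supported in the $x_1$ and $x_3$ directions and unbounded only as a thin tube along $+x_2$, and every configuration in $\Pi(\ccR, \hat{\bslambda})$ is determined by its ``core'' near the corner $(u_1, u_2, u_3)$ together with a stack of 3D-partition-like excitations along the leg.

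Next I would translate $\Pi(\ccR, \hat{\bslambda})$ into a double-dimer ensemble on the hexagonal brane-tiling graph $G$ associated to $\ccR$ via \cite{GKY}: the triple $(\pi_1, \pi_2, \pi_3)$ modulo equivalence corresponds to a pair of dimer covers of $G$, the single-box leg corresponds to a half-infinite boundary strand running along the $x_2$-direction, the renormalized volume $|\bspi|$ corresponds to the standard area functional on the dimer side, and the Euler-characteristic weight $\omega(\hat{\bspi}) = e(\cC_{\hat{\bspi}})$ corresponds to the ``$2$ per movable loop'' double-dimer weight, where the movability condition encodes exactly \eqref{proj}. With this translation the generating function factors as follows: the ambient double-dimer sea away from the leg contributes $M(q)^2$ (the universal factor also appearing in Theorem \ref{Ben}); the infinite leg contributes the geometric sum $1/(1-q)$ coming from stacking excitations above the tube; and the finite junction where the strand meets the sea near the singularity contributes a polynomial which a direct finite enumeration should show equals $1 + q^2$, giving the claimed formula.

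The main obstacle is the junction computation. One must enumerate the finitely many double-dimer configurations near the singular point that link the incoming boundary strand to the bulk sea consistently, and for each such configuration compute its weight $\omega(\hat{\bspi})$ by tracking which connected components of $\bspi_{\mathrm{out}}$ satisfy \eqref{proj} and therefore contribute a $\PP^1$-factor rather than a rigid point. As a sanity check, turning off the singularity (degenerating to $\ccR = L_1 \oplus L_2$) collapses the product via \eqref{splitZ} and the standard one-legged topological vertex $\sfW_{\square, \varnothing, \varnothing}(q) = M(q)/(1-q)$ to give $\sfZ^{\comb} = M(q)^2/(1-q)$, so the singular correction $1 + q^2$ is the leg-analogue of the legless singular correction $1 + q$ in Theorem \ref{Ben}, with the exponent in the second monomial shifted by the degree of the leg. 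An alternative proof, in the spirit of \cite{GK2, GK3}, would derive the ratio $(1+q^2)/(1+q)$ between the legged and legless singular corrections from a single wall-crossing identity in a Hall algebra of $\ccR$-quotients, bypassing the junction enumeration entirely.
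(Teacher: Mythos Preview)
The paper does not itself prove this theorem; it is stated with citations to the companion papers \cite{GKY} and \cite{GK3}, and only a brief outline of the proof strategy for the legless Theorem~\ref{Ben} is given at the end of Section~\ref{combsection}. So there is no in-paper proof to compare against directly. What can be compared is your proposal versus the strategy the paper sketches for the legless case and attributes (implicitly) to the legged case as well.

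Your proposal is broadly in the right spirit: translate $\Pi(\ccR,\hat{\bslambda})$ to a double-dimer ensemble, identify the universal $M(q)^2$ factor, and isolate the remaining rational function. However, your route diverges from the one the paper outlines. The paper's sketch uses the Kenyon--Wilson tripartite-boundary technology to express the double-dimer partition function as a rational function in \emph{single}-dimer partition functions, and then evaluates those via vertex-operator/Fock-space methods; the residual factor emerges from this algebraic simplification, not from a hand enumeration at the junction. Your plan instead posits an a priori factorization $M(q)^2 \cdot \frac{1}{1-q} \cdot (\text{junction polynomial})$ and proposes to compute the last factor by direct case analysis. That is a more elementary route, but it has two soft spots. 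First, the clean separation of ``bulk $M(q)^2$'', ``leg $1/(1-q)$'', and ``junction'' is not obviously rigorous: in the singular case the leg interacts with the corner (as Example~\ref{littlebox} warns), so one cannot simply strip off a geometric series without justifying that the interaction is captured entirely by a finite correction. Second, the transfer of the weight $\omega(\hat{\bspi})$ to the dimer side is the delicate part of \cite{GKY} already in the legless case; with a leg present one must check that the ``movable loop'' interpretation of \eqref{proj} still matches the double-dimer loop count, and your proposal asserts this rather than verifies it.

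Your sanity check against the locally free degeneration and the Hall-algebra alternative via \cite{GK3} are both reasonable, but neither substitutes for the missing junction computation. If you carry out the enumeration explicitly and confirm $1+q^2$, your argument would be a genuine (and more elementary) alternative to the Kenyon--Wilson/vertex-operator machinery the paper invokes.
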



Denote by $\Quot_{\C^3}(\ccR,n)$ the Quot scheme of quotients
$$
\ccR \twoheadrightarrow \cQ
$$
where $\cQ$ is $0$-dimensional and has length $n$. The following is a corollary of Theorem \ref{Ben}:
\begin{corollary} \label{QuotC3}
Let $\ccR$ be a $T$-equivariant rank 2 reflexive sheaf on $\C^3$ described by toric data $({\bf{u}}, {\bf{v}}, {\bf{p}})$. Then 
$$
\sum_{n = 0}^{\infty} e(\Quot_{\C^3}(\ccR,n)) q^n = \left\{ \begin{array}{cc} M(q)^2 & \mathrm{if \ } \ccR \mathrm{ \ is \ locally \ free } \\ M(q)^2 \prod_{i=1}^{v_1} \prod_{j=1}^{v_2} \prod_{k=1}^{v_3} \frac{1-q^{i+j+k-1}}{1-q^{i+j+k-2}} & \mathrm{if \ } \ccR \mathrm{ \ is \ singular}. \end{array} \right.
$$
\end{corollary}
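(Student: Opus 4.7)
My plan is to reduce the corollary to Theorem \ref{Ben} (and its locally free analog \eqref{splitZ}) via $T$-localization on the Quot scheme.

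First I would apply the standard fact that $e(\Quot_{\C^3}(\ccR,n)) = e(\Quot_{\C^3}(\ccR,n)^T)$, since the $T$-action on $\Quot_{\C^3}(\ccR,n)$ has all orbits with contractible closures (or equivalently, $\Quot_{\C^3}(\ccR,n)$ admits a $\C^*$-action with fixed locus equal to $\Quot_{\C^3}(\ccR,n)^T$ whose flow retracts the scheme onto the fixed locus). Then I would decompose the $T$-fixed locus into connected components as in \eqref{quotcomp}: since we only impose that the cokernel $\cQ$ is $0$-dimensional of length $n$ (so $c_2'' = 0$ and $c_3'' = n$ via Hirzebruch-Riemann-Roch), we get
\[
\Quot_{\C^3}(\ccR,n)^T \cong \bigsqcup_{\bschi \in \X_{0,n}(\ccR)} \cC_{\bschi}.
\]
By Proposition \ref{prodP1s}, each $\cC_{\bschi}$ is a product of $\PP^1$'s, and by Proposition \ref{lem1}, characteristic functions of $T$-equivariant subsheaves of $\ccR$ with $0$-dimensional cokernel are in bijection with double box configurations $\bspi \in \Pi(\ccR, \varnothing, \varnothing, \varnothing)$ whose constituent 3D partitions $\pi_i$ are all \emph{finite} (this is precisely what it means for the cokernel to be $0$-dimensional, since all three asymptotic double squares are empty).

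Next I would identify the Euler characteristic $e(\cC_{\bspi})$ with the weight $\omega(\bspi)$ from Definition \ref{Z}, and observe that for $0$-dimensional cokernels Proposition \ref{formulachi} (with all $\bslambda_{\alpha\beta} = \varnothing$ and only one vertex $\alpha$) gives $\chi(\cQ) = |\bspi|$, so $n = |\bspi|$. Summing over $n$,
\[
\sum_{n=0}^\infty e(\Quot_{\C^3}(\ccR,n))\,q^n = \sum_{\bspi \in \Pi(\ccR,\varnothing,\varnothing,\varnothing)} \omega(\bspi)\, q^{|\bspi|} = \sfZ^{\comb}_{\C^3,\ccR,\varnothing,\varnothing,\varnothing}(q).
\]
In the singular case, Theorem \ref{Ben} gives the stated MacMahon-times-box-product formula directly. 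In the locally free case, the splitting \eqref{splitZ} with trivial asymptotics $(\lambda_i,\mu_i) = (\varnothing,\varnothing)$ yields $\sfW_{\varnothing,\varnothing,\varnothing}(q)^2 = M(q)^2$, which is the well-known result.

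The main obstacle of the argument is really bundled into Theorem \ref{Ben}: the matching between Euler-characteristic counts of the product-of-$\PP^1$ components $\cC_{\bspi}$ and the double-dimer computation. In the present corollary, however, all that work has been done upstream. The only genuine verification here is that under $(\bslambda_1,\bslambda_2,\bslambda_3) = (\varnothing,\varnothing,\varnothing)$ the double box configurations counted by $\sfZ^{\comb}$ correspond exactly (with matching weights) to the $T$-fixed components of $\Quot_{\C^3}(\ccR,n)$ parameterizing $0$-dimensional quotients of length $n$, and that $|\bspi| = \chi(\cQ) = n$. Both points follow from Propositions \ref{prodP1s}, \ref{lem1}, and \ref{formulachi} applied in the single-chart setting $X = U_\alpha = \C^3$.
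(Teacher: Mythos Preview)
Your proposal is correct and follows exactly the implicit argument the paper intends: the corollary is stated immediately after Theorem~\ref{Ben} as a direct consequence, and your unpacking via $T$-localization, Proposition~\ref{prodP1s}, Proposition~\ref{lem1}, and Definition~\ref{Z} is precisely how the identification $\sum_n e(\Quot_{\C^3}(\ccR,n))\,q^n = \sfZ^{\comb}_{\C^3,\ccR,\varnothing,\varnothing,\varnothing}(q)$ is made, after which Theorem~\ref{Ben} (singular case) and \eqref{splitZ} (locally free case) finish the job. One cosmetic remark: your justification for $e(\Quot) = e(\Quot^T)$ is phrased a bit loosely---the cleanest statement is simply that for any finite-type $\C$-scheme with a torus action, the Euler characteristic equals that of the fixed locus---but this does not affect the correctness of the argument.
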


Since Theorems \ref{Ben} and \ref{Benwithlegs} play such a central role in our calculations, we give a brief outline of the proofs to give a flavour of the ideas involved. This sketch is by no means meant to be self-contained. \\


\noindent \textbf{Combinatorial proof.} We first replace our double box configurations with different combinatorial objects, namely, configurations of the hexagonal-lattice double dimer model with certain tripartite boundary conditions, as defined in~\cite{KW2}.  This is essentially done by separating the double-dimer model into two single-dimer models, and then using the standard correspondence between the single dimer model on the honeycomb lattice and plane partitions, though there is some work to do in order to see that the ``tripartite'' boundary conditions of~\cite{KW2} are the correct ones.  Figure 5 is the double dimer model we associate to the example of the double box configuration in Figure 2.
\begin{figure}
\includegraphics[width=2in]{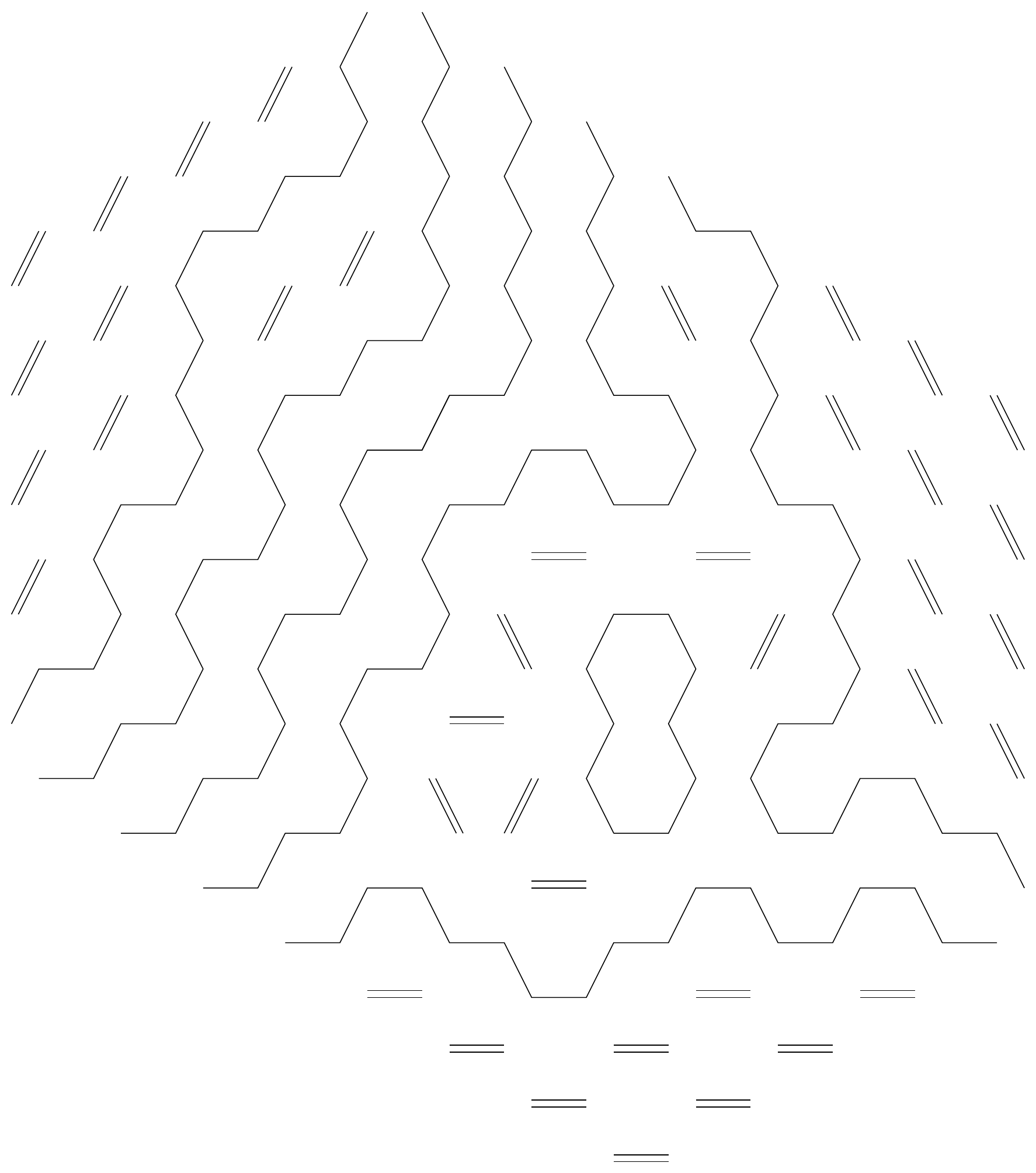}
\caption{Double dimer model associated to Fig.~2}
\end{figure}

We put these dimer models into a generating function by weighting each with $cq^m$ for some $c, m\in \mathbb{Z}_{\ge 0}$ as we did in the case of double box configurations. The correspondence between double box configurations and double dimer models is many to one, but it is weight-preserving, and it does preserve the generating functions. Weight preserving here means each single dimer configuration contributes the same power of $q$ as any of the double box configurations in its preimage, and that the coefficient of $q$ is the sum of the coefficients of $q$ of all double
box configurations in its preimage.

We then compute the generating function of this double dimer model using the technology of Kenyon-Wilson \cite{KW1, KW2}. It is a certain rational function in partition functions of single dimer models on the hexagonal lattice. We realize each of these single dimer model partition functions as a product of vertex operators, and use Fock space computational techniques due to Okounkov \cite{Oko, OR1, OR2, ORV} to simplify the result and identify the factor $M(q)^2$ in the generating function. We then reinterpret the resulting quantity as the generating function of a new single-dimer problem - namely the classical $q$-count of boxed plane partitions~\cite{Mac}. This last step accounts for the final product in Theorem \ref{Ben}. \\

\noindent \textbf{Geometric proof.} The geometric proof proceeds as follows. We first realize the left hand sides of Theorems \ref{Ben}, \ref{Benwithlegs} as generating functions of the Euler characteristics of Quot schemes of reflexive sheaves $\mathcal{R}$ as in Corollary \ref{QuotC3}. Subsequently we apply Hall algebra/wall-crossing arguments of  \cite{Bri, Joy, KS, ST} in order to to rewrite this as $M(q)^2$ times the generating function $$\sum_{n = 0}^{\infty} e(\operatorname{Hilb}^n(\operatorname{Sing}(\mathcal{R}),n)) q^n,$$ where $\operatorname{Sing}(\mathcal{R})$ is a certain 0-dimensional subscheme of $\mathbb{C}^3$ supported at the point where $\mathcal{R}$ is singular.  This generating function accounts for the final product in Theorem \ref{Ben}.

\section{Applications to Euler characteristics} \label{Euler}

Let $X$ be a smooth projective toric 3-fold with polarization $H$. Let $\M_{X}^{H}(2,c_1,c_2,c_3)$ be the moduli space of rank 2 $\mu$-stable torsion free sheaves on $X$ with indicated Chern classes. Recall that we do \emph{not} consider strictly $\mu$-semistable sheaves. Consider the generating function
\[
\sfZ_{X,H,c_1,c_2}(q) := \sum_{c_3 \in \Z}  e(\M_{X}^{H}(2,c_1,c_2,c_3)) q^{c_3}.
\]
In the introduction, we noted that the double dual map is a constructible morphism
\[
(\cdot)^{**} : \M_{X}^{H}(2,c_1,c_2,c_3) \longrightarrow \bigsqcup_{c_{2}', c_{3}'} \N_{X}^{H}(2,c_1,c_{2}',c_{3}').
\] 
Here $\N_{X}^{H}(2,c_1,c_{2}',c_{3}')$ is empty unless 
\[
\frac{c_{1}^{2} H}{4} \leq c_{2}' H \leq c_2 H,
\]
where the first is the Bogomolov inequality \cite[Thm.~7.3.1]{HL} and the second follows from ampleness of $H$. Moreover, for a given $c_{2}'$ there exists a constant $K(X,H,c_1,c_{2}')$ such that $\N_{X}^{H}(2,c_1,c_{2}',c_{3}')$ is empty unless  \cite[Prop.~3.6]{GK1}
\begin{equation} \label{bound}
0 \leq c_{3}' \leq K(X,H,c_1,c_2').
\end{equation}

The generating function $\sfZ_{X,H,c_1,c_2}(q)$ is a Laurent series in $q^{-1}$. Using the methods of the previous section, it can be expressed in terms of Euler characteristics of Quot schemes as follows:
\begin{itemize}
\item Find a GIT representation of all connected components $\cC$ of all reflexive fixed loci $\N_{X}^{H}(2,c_1,c_{2}',c_{3}')^T$. These are configuration spaces of points on $\PP^1$ or open subsets thereof. This was shown in \cite{GK1}. E.g.~for $X=\PP^3$, each connected component $\cC$ is isomorphic to either a reduced isolated point $pt$ or $\C^{*} \setminus \{pt\}$ \cite{GK1}. 
\item Compute the Euler characteristics $e(\cC)$. Euler characteristics of configuration spaces of points on $\PP^1$ have been computed in general by Klyachko \cite{Kly3}.
\item Let $\cC$ be a connected component of $\N_{X}^{H}(2,c_1,c_{2}',c_{3}')^T$. The toric data $({\bf{u}},{\bf{v}},{\bf{p}})$ of any $T$-equivariant reflexive sheaf $[\ccR] \in \cC$ has fixed ${\bf{u}}, {\bf{v}}$ and (in general) varying ${\bf{p}}$. We stress the fact that ${\bf{u}}$, ${\bf{v}}$ are constant over the connected component $\cC$ by writing ${\bf{u}}(\cC)$, ${\bf{v}}(\cC)$. The components of the vectors ${\bf{u}}(\cC)$, ${\bf{v}}(\cC)$ are indexed by rays $\rho \in F(X)$ (Definition \ref{toricdata})
\begin{align*}
{\bf{u}}(\cC) = \{ u_{\rho}(\cC) \}_{\rho \in F(X)}, \ {\bf{v}}(\cC) = \{ v_{\rho}(\cC) \}_{\rho \in F(X)}.
\end{align*}
The Chern classes $c_2', c_3'$ can be expressed in terms of 
$({\bf{u}}(\cC), {\bf{v}}(\cC), {\bf{p}}(\cC))$ as explained in Section \ref{equivsh}. In particular for any $\alpha \in V(X)$ either all restrictions $\ccR|_{U_\alpha}$ of elements of $[\ccR] \in \cC$ have a singularity at the origin or not. In the former case the length of the singularity is the same for all elements $[\ccR] \in \cC$ and is given by the formula of Proposition \ref{rank2singular}. Since $c_3'$ is the sum of the lengths of the singularities over all charts (Proposition \ref{rank2singular}), all elements $[\ccR] \in \cC$ have the same third Chern class.


At the level of closed points, the fibre of $(\cdot)^{**}$ over $[\ccR] \in \cC$ is $\Quot_X(\ccR,c_{2}'',c_{3}'')^T$
\begin{align*}
c_{2}' &= c_2 + c_{2}'', \\ 
c_{3}' &= c_3 + c_{3}'' + c_1 c_{2}''.
\end{align*}
Proposition \ref{prodP1s} implies that all fibres $\Quot_X(\ccR,c_{2}'',c_{3}'')^T$ over $[\ccR] \in \cC$ are isomorphic and we may write
\[
\Quot_X(\cC,c_{2}'',c_{3}'')^T := \Quot_X(\ccR,c_{2}'',c_{3}'')^T
\]
for the isomorphism class. The connected components of the fibres $\Quot_X(\cC,c_{2}'',c_{3}'')^T$ are all products of $\PP^1$'s and were described in Section \ref{3Dpartitions}. 
\item Putting everything together gives the following structure formula
\begin{equation} \label{general}
\sfZ_{X,H,c_1,c_2}(q) = \sum_{c_3} \sum_{c_{2}', c_{3}'} \sum_{\cC \subset \N_{X}^{H}(2,c_1,c_{2}',c_{3}')^T}  \sum_{\cC' \subset \Quot_X(\cC,c_{2}'-c_2,c_{3}'-c_3 - c_1 (c_{2}' - c_2))^T} e(\cC) e(\cC') q^{c_3}.
\end{equation}
\end{itemize}

In the rest of this section we work out this generating function in the following cases: 
\begin{itemize}
\item $c_2$ is minimal in the sense of Proposition \ref{minc2a}. 
\item $X = \PP^3$ and $c_2=1,2$.
\end{itemize}

\subsection{Minimal second Chern class} \label{sectionmin}

We start with two propositions.
\begin{proposition} \label{minc2a}
Let $X$ be a smooth projective toric 3-fold with polarization $H$ and fix $r$ and $c_1$. Let $c_2$ be such that $c_2 H$ is minimal with the property that there exists a rank $r$ $\mu$-stable torsion free sheaf on $X$ with Chern classes $c_1, c_2$. Then for any such sheaf the cokernel $\cQ = \F^{**} / \F$ is 0-dimensional.
\end{proposition}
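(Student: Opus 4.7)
The plan is to compare the second Chern class of $\F$ with that of its reflexive hull $\ccR := \F^{**}$ and exploit the minimality hypothesis on $c_2 H$. First I would observe that $\ccR$ is again a rank $r$ $\mu$-stable torsion free sheaf on $X$: stability passes to the double dual because for any subsheaf $\cE \subset \ccR$ the preimage $\cE \cap \F$ has the same slope as $\cE$ (they agree outside codimension $\geq 2$) and is a subsheaf of $\F$, so one can test stability of $\ccR$ against $\F$. Moreover, since $\F \hookrightarrow \ccR$ is an isomorphism outside a closed subset of codimension $\geq 2$, we have $c_1(\ccR) = c_1(\F) = c_1$.

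Next I would use the short exact sequence
\[
0 \longrightarrow \F \longrightarrow \ccR \longrightarrow \cQ \longrightarrow 0
\]
to compare Chern characters. From $\ch(\F) = \ch(\ccR) - \ch(\cQ)$ and the identity $\ch_2 = \tfrac{1}{2}c_1^2 - c_2$, together with $c_1(\F) = c_1(\ccR)$, one gets
\[
c_2(\F) = c_2(\ccR) + \ch_2(\cQ).
\]
The sheaf $\cQ$ has dimension $\leq 1$, so $\ch_0(\cQ) = \ch_1(\cQ) = 0$ and $\ch_2(\cQ) \in H^{2,2}(X,\Q)$ is represented by the (weighted) fundamental cycle of the 1-dimensional part of the support of $\cQ$. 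In particular $\ch_2(\cQ)$ is an effective class, and by ampleness of $H$,
\[
\ch_2(\cQ) \cdot H \geq 0,
\]
with equality if and only if $\cQ$ has no 1-dimensional component, i.e.~$\cQ$ is $0$-dimensional.

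Putting these together yields
\[
c_2(\ccR) \cdot H = c_2(\F) \cdot H - \ch_2(\cQ) \cdot H = c_2 H - \ch_2(\cQ) \cdot H.
\]
If $\cQ$ were not $0$-dimensional, the right-hand side would be strictly less than $c_2 H$. But $\ccR$ is itself a rank $r$ $\mu$-stable torsion free sheaf with $c_1(\ccR) = c_1$, so this would contradict the minimality of $c_2 H$ among such sheaves. Hence $\cQ$ must be $0$-dimensional. The main (and only slightly subtle) step is checking the sign/positivity of $\ch_2(\cQ)\cdot H$; everything else is formal Chern-class bookkeeping and the stability transfer $\F$ $\mu$-stable $\Leftrightarrow$ $\ccR$ $\mu$-stable.
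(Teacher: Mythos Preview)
Your proof is correct and follows essentially the same route as the paper's: both compare $c_2(\F)\cdot H$ with $c_2(\ccR)\cdot H$ via the short exact sequence $0\to\F\to\ccR\to\cQ\to 0$, use ampleness of $H$ to see that a $1$-dimensional $\cQ$ would force $c_2(\ccR)\cdot H < c_2(\F)\cdot H$, and then invoke $\mu$-stability of $\ccR$ to contradict minimality. The paper phrases the Chern-class comparison in terms of the $c_i''$ of $\cQ$ (so that $(-c_2'')H>0$), while you write it equivalently as $\ch_2(\cQ)\cdot H>0$; these are the same statement.
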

\begin{proof}
Let $\F$ be as stated. Since $\F$ is $\mu$-stable, its Chern classes satisfy the Bogomolov inequality \cite[Thm.~7.3.1]{HL}:
$$
c_2 H \geq \frac{(r-1)c_{1}^{2} H}{2r}.
$$
Since $c_2 H$ is bounded below, it makes sense to speak of $c_2$ satisfying the property that $c_2 H$ is the smallest value for which there exist rank $r$ $\mu$-stable torsion free sheaves on $X$ with Chern classes $c_1, c_2$. Suppose $\cQ$ is 1-dimensional. Use the same notation for Chern classes $c_{i}', c_{i}''$ as in the beginning of this section. Then $(-c_{2}'') H > 0$ by ampleness, so
$$
c_2' H = c_2 H + c_2'' H < c_2 H.
$$
Therefore $\F^{**}$ is a rank $r$ $\mu$-stable torsion free (reflexive) sheaf on $X$ with Chern classes $c_1, c_2'$ contradicting minimality.
\end{proof} 

\begin{proposition} \label{minc2b}
Let $X$ be a smooth projective toric 3-fold with polarization $H$ and fix $c_1$. Let $c_2$ be such that $c_2 H$ is minimal with the property that there exists a rank 2 $\mu$-stable torsion free sheaf on $X$ with Chern classes $c_1, c_2$. For each $\alpha \in V(X)$, denote by $\rho_{1,\alpha}, \rho_{2,\alpha}, \rho_{3,\alpha} \in F(X)$ the faces sharing vertex $\alpha$. Then
\begin{align*}
\sfZ_{X,H,c_1,c_2}(q) = &M(q^{-2})^{2e(X)} \sum_{c_{3}'} \sum_{\cC \subset \N_{X}^{H}(2,c_1,c_{2},c_{3}')^T} e(\cC) q^{c_{3}'} \\
&\times \prod_{{\scriptsize{\begin{array}{c} \alpha \in V(X) \\ \forall [\ccR] \in \cC : \ccR|_{U_\alpha} \ \mathrm{is \ singular} \end{array}}}} \prod_{i=1}^{v_{\rho_{1,\alpha}}(\cC)} \prod_{j=1}^{v_{\rho_{2,\alpha}}(\cC)} \prod_{k=1}^{v_{\rho_{3,\alpha}}(\cC)} \frac{1-(q^{-2})^{i+j+k-1}}{1-(q^{-2})^{i+j+k-2}}.
\end{align*}
\end{proposition}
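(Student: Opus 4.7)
The plan is to specialize the general structure formula \eqref{general} to the minimal-$c_2$ setting and evaluate the Quot contribution chart by chart using Corollary \ref{QuotC3}.

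First, by Proposition \ref{minc2a} the cokernel $\cQ := \F^{**}/\F$ is $0$-dimensional for every $\F \in \M_X^H(2,c_1,c_2,c_3)$, so in the notation of \eqref{Chernrel} one has $c_2'' = 0$ and only the term $c_2' = c_2$ contributes to \eqref{general}. For a $0$-dimensional sheaf of length $n$, the identity $\ch_3 = c_3/2$ (valid when $c_1 = c_2 = 0$) gives $c_3(\cQ) = 2n$, so $c_3'' = 2n$ and $c_3 = c_3' - 2n$. Reindexing the structure formula then yields
\begin{equation*}
\sfZ_{X,H,c_1,c_2}(q) = \sum_{c_3'} q^{c_3'} \sum_{\cC \subset \N_X^H(2,c_1,c_2,c_3')^T} e(\cC) \left( \sum_{n \geq 0} e(\Quot_X(\cC,0,2n)^T) \, (q^{-2})^n \right).
\end{equation*}

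Second, I would decompose the inner Quot generating function chart by chart. Pick any $[\ccR] \in \cC$. Since a $T$-invariant $0$-dimensional sheaf on $X$ is supported on the fixed points $\{X_\alpha\}_{\alpha \in V(X)}$, any $T$-equivariant $0$-dimensional quotient of $\ccR$ splits as $\cQ = \bigoplus_\alpha \cQ_\alpha$ with $\cQ_\alpha$ supported at $X_\alpha$, and the corresponding kernel is determined locally on each chart $U_\alpha$. This gives
\begin{equation*}
\Quot_X(\ccR,n)^T = \bigsqcup_{\sum_\alpha n_\alpha = n} \prod_{\alpha \in V(X)} \Quot_{\C^3}(\ccR|_{U_\alpha}, n_\alpha)^T,
\end{equation*}
and, using $e(Y) = e(Y^T)$ on both sides, the generating function factorises as a product over $\alpha \in V(X)$ of the local generating functions $\sum_{n_\alpha} e(\Quot_{\C^3}(\ccR|_{U_\alpha},n_\alpha))(q^{-2})^{n_\alpha}$.

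Third, I would invoke Corollary \ref{QuotC3}: the local factor equals $M(q^{-2})^2$ when $\ccR|_{U_\alpha}$ is locally free, and $M(q^{-2})^2$ times the triple box product in the variables $v_{\rho_{i,\alpha}}(\cC)$ when $\ccR|_{U_\alpha}$ is singular. For a smooth projective toric 3-fold, $|V(X)|$ equals the number of maximal cones, which equals $e(X)$; hence the combined $M(q^{-2})^2$ contributions from all vertices give $M(q^{-2})^{2e(X)}$, and the only remaining chart-dependent factors are the box-product corrections at the singular vertices. Substituting back produces the stated formula.

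The step I expect to require the most care is the compatibility of the chart decomposition with variation of $[\ccR]$ over $\cC$: the integers $v_{\rho_{i,\alpha}}(\cC)$ are constant on $\cC$, but singularity of $\ccR|_{U_\alpha}$ also requires the three $p_{\rho_{i,\alpha}}$ to be pairwise distinct (Proposition \ref{rank2singular}), an open condition in the $\mathbf{p}$-parameters. The quantifier ``$\forall [\ccR] \in \cC$'' in the product is what makes the expression well-defined in the generic case; where necessary one uses Proposition \ref{prodP1s}, which asserts that the $T$-fixed Quot scheme over a fixed component $\cC$ is a disjoint union of products of $\PP^1$'s whose Euler characteristic depends only on $\cC$, to justify that the local generating function takes a single value over $\cC$ and matches the formula above. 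The rest of the argument is bookkeeping: combine the reindexing of Step~1, the product decomposition of Step~2, and the evaluation of Step~3.
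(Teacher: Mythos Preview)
Your proposal is correct and follows essentially the same route as the paper's proof: both start from the structure formula \eqref{general}, invoke Proposition~\ref{minc2a} to force $c_2''=0$, split the $0$-dimensional Quot contribution chart by chart, and evaluate each local factor via Theorem~\ref{Ben} (your Corollary~\ref{QuotC3} is exactly its Quot-scheme reformulation). The paper compresses all of this into one line, citing Proposition~\ref{formulachi} to handle the $q$-exponent bookkeeping you work out by hand, and citing Theorem~\ref{Ben} directly; your extra discussion of the dependence on $[\ccR]\in\cC$ is a legitimate point the paper leaves implicit (it is covered by the paragraph preceding \eqref{general}, where Proposition~\ref{prodP1s} is used to show the fixed Quot schemes are constant over~$\cC$).
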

\begin{proof}
Combining \eqref{general}, Prop.~\ref{minc2a}, Prop.~\ref{formulachi} gives
\[
\sfZ_{X,H,c_1,c_2}(q) = \sum_{c_{3}'} \sum_{\cC \subset \N_{X}^{H}(2,c_1,c_{2},c_{3}')^T} e(\cC) q^{c_{3}'} \prod_{\alpha \in V(X)}  \sfZ_{U_\alpha, \ccR|_{U_\alpha}, \varnothing,\varnothing,\varnothing}(q^{-2}),
\]
where $\sfZ_{U_\alpha, \ccR|_{U_\alpha}, \varnothing,\varnothing,\varnothing}(q^{-2})$ was introduced in Remark \ref{openZ}. The latter generating function is given by Theorem \ref{Ben}. 
\end{proof}

If the polyhedron $\Delta(X)$ is ``small'' (defined below), the connected components $\cC$ in the previous proposition turn out to be isolated $T$-fixed points. 
\begin{definition} \label{small}
Let $\Delta$ be a convex polyhedron such that each vertex has exactly 3 edges. We view $\Delta$ up to arbitrary stretching of edges and angles between edges (but not contracting them to a point). We say that $\Delta$ is \emph{small} if the following property holds. \\

\noindent \emph{Condition}: For any way of colouring one face red and three faces blue (and not colouring any of the other faces), the red face must share an edge with a blue face and at least two blue faces must share an edge.
\end{definition} 

\begin{example}
Any convex polyhedron such that each vertex has exactly 3 edges and $\leq 6$ faces is small. There are 4 of these: tetrahedron, cube, prism over a triangle, prism over a triangle with one corner cut off. Hence $\rk \Pic(X) \leq 3$ ensures $\Delta(X)$ is small. Polyhedra with $\geq 7$ faces are generally not small. However, a prism over a 5-gon has 7 faces but is small. Therefore 16 out of 18 smooth toric Fano 3-folds have small $\Delta(X)$ (the two with $\rk \Pic(X) = 5$ do not have small $\Delta(X)$ \cite{Bat, WW}).
\end{example}

\begin{theorem} \label{Eulermin}
Let $X$ be a smooth projective toric 3-fold such that $\Delta(X)$ is small. Let $H$ be a polarization and $c_1$ a first Chern class such that $\gcd(2,c_1H^2)=1$. Let $c_2$ be such that $c_2 H$ is minimal with the property that there exists a rank 2 $\mu$-stable torsion free sheaf on $X$ with Chern classes $c_1, c_2$. For each $\alpha \in V(X)$, denote by $\rho_{1,\alpha}, \rho_{2,\alpha}, \rho_{3,\alpha} \in F(X)$ the faces sharing vertex $\alpha$. Then the connected components of $\N_{X}^{H}(2,c_1,c_{2},c_{3}')^T$ are isolated reduced points for all $c_3'$. Moreover
\begin{align*}
\sfZ_{X,H,c_1,c_2}(q) = &M(q^{-2})^{2e(X)} \sum_{c_{3}'} \sum_{[\ccR] \in \N_{X}^{H}(2,c_1,c_{2},c_{3}')^T} q^{c_{3}'} \\
&\times \prod_{{\scriptsize{\begin{array}{c} \alpha \in V(X) \\ \ccR|_{U_\alpha} \ \mathrm{is \ singular} \end{array}}}} \prod_{i=1}^{v_{\rho_{1,\alpha}}(\ccR)} \prod_{j=1}^{v_{\rho_{2,\alpha}}(\ccR)} \prod_{k=1}^{v_{\rho_{3,\alpha}}(\ccR)} \frac{1-(q^{-2})^{i+j+k-1}}{1-(q^{-2})^{i+j+k-2}}.
\end{align*}
In particular, $\sfZ_{X,H,c_1,c_2}(q)$ equals $M(q^{-2})^{2e(X)}$ times a Laurent polynomial in $q$.
\end{theorem}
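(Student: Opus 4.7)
The plan is to derive the theorem from Proposition \ref{minc2b} by showing that, under the smallness assumption on $\Delta(X)$ together with the hypothesis $\gcd(2, c_1 H^2) = 1$, every connected component $\cC \subset \N_X^H(2, c_1, c_2, c_3')^T$ appearing in Proposition \ref{minc2b} is a single isolated reduced point. Once this is established, setting $e(\cC) = 1$ and reindexing the inner sum by $[\ccR] \in \N_X^H(2, c_1, c_2, c_3')^T$ gives the displayed formula. The ``Laurent polynomial'' claim then follows at once: each triple product $\prod_{i,j,k}(1 - q^{-2(i+j+k-1)})/(1 - q^{-2(i+j+k-2)})$ is the generating function of plane partitions in a $v_1 \times v_2 \times v_3$ box \cite[(7.109)]{Sta}, hence a polynomial in $q^{-2}$, and the range of $c_3'$ for which $\N_X^H(2, c_1, c_2, c_3')$ is non-empty is finite by \eqref{bound}.

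First I would use the GIT description of Section \ref{fixedlocigeneral}, applied to the reflexive case as in Remark \ref{a=3}, to realize each connected component of $\N_X^H(\bschi)$ as $U_X^H(\bschi)/\SL(2,\C)$ with $U_X^H(\bschi) \subset (\PP^1)^l$ open, where $l = \#\{\rho \in F(X) : v_\rho > 0\}$ agrees with the number $a$ of face components of $\bschi$ in this reflexive setting. Since $\dim \SL(2,\C) = 3$, the quotient is a single reduced point exactly when $l = 3$; when $l \geq 4$ the component has positive dimension, and when $l \leq 2$ stability forces it to be empty. The crux is therefore the combinatorial statement $l = 3$ for every $\bschi$ occurring in the sum.

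I would prove this by contradiction: suppose some $\bschi \in \X_{(2, c_1, c_2, c_3')}^{\slice}$ with $\N_X^H(\bschi) \neq \varnothing$ satisfies $l \geq 4$, and pick four faces $\rho_1, \rho_2, \rho_3, \rho_4$ with $v_{\rho_i} > 0$. Coloring $\rho_1$ red and $\rho_2, \rho_3, \rho_4$ blue, the smallness hypothesis forces $\rho_1$ to share an edge with some blue face and two of the blue faces to share an edge. Using these adjacencies I would construct a $T$-equivariant coherent subsheaf $\F \subset \ccR$ by a Klyachko-type modification of the flags along the toric divisors $D_{\rho_i}$ that lowers one of the $v_{\rho_i}$ by $1$ while preserving $c_1$; this produces a $\mu$-stable torsion free sheaf with strictly smaller $c_2 \cdot H$ (the $\gcd$ hypothesis rules out strict $\mu$-semistability), contradicting the minimality of $c_2 H$.

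The main obstacle is the last paragraph: rigorously engineering the toric subsheaf modification from the adjacencies produced by smallness, in such a way that the stability criterion of Section \ref{equivsh} is maintained and the slope strictly decreases. Once $l = 3$ is in place, the remainder is routine: substitute into Proposition \ref{minc2b}, observe that each factor in the product is a box-partition polynomial, and use \eqref{bound} to conclude the generating function is $M(q^{-2})^{2e(X)}$ times a Laurent polynomial in $q$.
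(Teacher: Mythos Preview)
Your overall strategy is correct and matches the paper: reduce to Proposition \ref{minc2b}, show every component of $\N_X^H(2,c_1,c_2,c_3')^T$ has exactly $a=3$ face components (hence is an isolated reduced point), and deduce the Laurent polynomial statement from the boxed-partition interpretation and the bound \eqref{bound}. The reductions and the polynomiality argument are fine.

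There are two problems in the core step. First, a minor one: your identification $l=\#\{\rho: v_\rho>0\}=a$ is not correct in general. Face components are obtained by \emph{gluing} local face components (Definition \ref{vefcomps2}), so $a\leq l$; two faces $\rho,\rho'$ with $v_\rho,v_{\rho'}>0$ that share an edge and satisfy $p_\rho=p_{\rho'}$ contribute to a single face component. The GIT quotient lives in $(\PP^1)^a$, not $(\PP^1)^l$, and the weight attached to a face component is a \emph{sum} of contributions $(D_\rho H^2)v_\rho$ over the faces it contains.

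Second, and more seriously, your proposed contradiction---modifying the flags to lower some $v_{\rho_i}$ by $1$ while keeping $c_1$---does not work as stated (changing $v_\rho$ changes $c_1$), and it is not the mechanism the paper uses. The paper instead degenerates the \emph{point configuration}: it proves a weight lemma (Lemma \ref{lemmasmall}) asserting that for any $a\geq 4$ face components $\Theta_1,\ldots,\Theta_a$ covering a small polyhedron with weights $w_i=\sum_{\rho\subset\Theta_i}(D_\rho H^2)v_\rho$, two adjacent $\Theta_i,\Theta_j$ satisfy $w_i+w_j\leq w/2$. The gcd hypothesis makes this strict, so setting $q_i=q_j$ (and keeping all other $q_k$ distinct) yields a $\mu$-\emph{stable} torsion free sheaf $\F$ still with Chern classes $c_1,c_2$. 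Because $\Theta_i$ and $\Theta_j$ share an edge, merging $q_i=q_j$ adds a nonempty infinite leg of $1$'s to the characteristic function, so the reflexive hull $\F^{**}$ has $\mathrm{ch}_2'H>\mathrm{ch}_2H$, i.e.\ $c_2'H<c_2H$, contradicting minimality. This is the missing idea in your proposal: the contradiction comes from a GIT degeneration on the configuration space of face-component points, governed by a purely combinatorial weight inequality on the polyhedron, not from a direct modification of the $v_\rho$.
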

\begin{proof}
Let $\ccR$ be a $T$-equivariant rank 2 $\mu$-stable reflexive sheaf on $X$ with Chern classes $c_1$, $c_2$, toric data $({\bf{u}},{\bf{v}},{\bf{p}})$, and characteristic function $\bschi$. Suppose ${\bschi}$ has $a$ face components (Definitions \ref{vefcomps}, \ref{vefcomps2}). We claim $a=3$. The case $a<3$ cannot happen, because then $\ccR$ is decomposable hence $\mu$-unstable. If $a>3$, then the connected component $\cC$ of the $T$-fixed locus containing $\ccR$ has positive dimension. We show that this leads to a contradiction with $\Delta(X)$ is small and $c_2$ minimal in the remainder of the proof. Once this is established, the formula of the theorem follows from Proposition \ref{minc2b}. Note that for fixed $c_1,c_2$, the third Chern class $c_3'$ is bounded by \eqref{bound}. Since the triple product of Theorem \ref{Ben} is a polynomial (counting boxed 3D partitions) polynomiality follows.

Suppose at $a > 3$, then the moduli space $\mathcal{N}_{X}^{H}(\bschi)$ of $T$-equivariant rank 2 $\mu$-stable reflexive sheaves on $X$ with characteristic function $\bschi$ has dimension $>0$. The space of equivariant reflexive sheaves $\mathcal{N}_{X}^{H}(\bschi)$ of Remark \ref{a=3} is realized as the GIT quotient of a non-empty Zariski open subset $U_{X}^{H} \subset (\PP^1)^a$ by $\SL(2,\C)$. We denote its elements by $(q_1, \ldots, q_a)$. Let $V \subset (\PP^1)^a$ denote the Zariski open subset of elements $(q_1, \ldots, q_a)$ with all $q_i$ mutually distinct. 
Therefore, there is a $c_3'$ such that $V / \SL(2,\C)$ is a positive dimensional Zariski open subset 
$$
V / \SL(2,\C) \subset \N_{X}^{H}(2,c_1,c_{2},c_{3}')^T \subset \M_{X}^{H}(2,c_1,c_{2},c_{3}')^T.
$$
Consider the classical GIT problem of $a$ (ordered) points $(q_1, \ldots, q_a)$ on $\PP^1$.\footnote{This problem is discussed in \cite[Ch.~11]{Dol}.} We assign weights $w_1, \ldots, w_a$ to these points as follows. The characteristic function ${\bschi}$ has $a$ face components. For each face component, there is a maximal number of faces $\rho_{i_1}, \ldots, \rho_{i_k}$ such that the corresponding local face components are glued together in ${\bschi}$. In particular, $p_{\rho_{i_1}} = \cdots = p_{\rho_{i_k}}$. 
To this face component we assign weight
$$
w_i := (D_{\rho_{i_1}} H^2) v_{\rho_{i_1}} + \cdots + (D_{\rho_{i_k}} H^2) v_{\rho_{i_k}} > 0.
$$
Set $w := \sum_{i=1}^a w_i$, then each $w_i$ is smaller than $\frac{w}{2}$. 

Next we use the assumption $\Delta(X)$ is small. By Lemma \ref{lemmasmall} below, there exists a GIT semistable point configuration $(q_1, \ldots, q_a)$ such that there are $i,j$ with the following properties: 
\begin{itemize}
\item $q_i = q_j$ is distinct from all other $q_k$ and all other $q_k$ are mutually distinct,
\item there exists a chart $U_\alpha$ containing the local face components corresponding to both $i$ and $j$.
\end{itemize}

\noindent Informally: on the polyhedron $\Delta(X)$, there are two adjacent faces whose corresponding points on $\PP^1$ can become equal. The unions of faces $\Theta_i$ of the lemma correspond to the unions of local face components glued together by $\bschi$. Note that $w_i+w_j \leq \frac{w}{2}$ is a strict inequality, because $\gcd(2,c_1 H^2)=1$. Therefore, the above point configuration $(q_1, \ldots, q_a)$ corresponds to a $\mu$-stable torsion free sheaf $\F$ with characteristic function $\bschi$ (and hence Chern classes $c_1, c_2$). 
The reflexive hull $\F^{**}$ is $\mu$-stable with Chern classes  $c_1, c_2'$ for some $c_2'$. Since $q_i, q_j$ are equal, the characteristic function of $\F^{**}$ is obtained from the characteristic function ${\bschi}$ of $\F$ by adding a (non-empty) infinite leg of 1's. Therefore $\mathrm{ch}_2'H > \mathrm{ch}_2H$, or equivalently $c_2'H < c_2H$, contradicting minimality of $c_2$.
\end{proof}

\begin{lemma} \label{lemmasmall}
Let $a \geq 4$, $w_1, \ldots, w_a \in \Z_{>0}$, $w := \sum_{i=1}^{a} w_i$, and assume
$$
w_i \leq \frac{w}{2},
$$
for all $i=1, \ldots, a$. Let $\Delta$ be a small polyhedron. Let $\{\Theta_i \ : \ i =1, \ldots, a \}$ be a collection, such that 
\begin{itemize}
\item $\Theta_i$ is a union of faces of $\Delta$, for all $i=1, \ldots, a$.
\item $\Theta_i$ minus the vertices is connected, for all $i=1, \ldots, a$. 
\item $\Theta_i$ and $\Theta_j$ have no faces in common, for each $i, j =1, \ldots, a$, $i \neq j$,
\item $\bigcup_{i=1}^{a} \Theta_i = \Delta$.
\end{itemize}
Then there exist $\Theta_i, \Theta_j$, which share an edge and
$$
w_i+w_j \leq \frac{w}{2}.
$$
\end{lemma}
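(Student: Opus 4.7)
The plan is to argue by contradiction, assuming that every pair $\Theta_i, \Theta_j$ which shares an edge satisfies $w_i + w_j > w/2$. The first step is to unpack the definition of smallness: it is easily seen to be equivalent to the two conditions (a) each face of $\Delta$ has at most two non-neighbours in the face-adjacency graph of $\Delta$ (else one could pick that face as red and three non-neighbours as blue), and (b) no three faces of $\Delta$ are pairwise non-adjacent (else pick any fourth face as red). Choosing one representative face from each $\Theta_i$, property (b) transfers verbatim to the piece-adjacency graph $G$ of the $\Theta_i$'s: no three pieces are pairwise non-adjacent in $G$.

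Next, I would introduce $T := \{i : w_i \le w/4\}$. Since $\sum_i w_i = w$, at most three indices can have $w_i > w/4$, so $|T| \ge a - 3$. Any two members of $T$ have weight sum at most $w/2$, so if some such pair shares an edge we are done; otherwise $T$ is independent in $G$. Combined with Step~1, this forces $|T| \le 2$, hence $a \le 5$. Only the cases $(a, |T|) \in \{(4,1), (4,2), (5,2)\}$ remain.

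For the $|T|=2$ cases, write $T = \{\Theta_1, \Theta_2\}$; property (b) applied to $\{\Theta_1, \Theta_2, \Theta_i\}$, together with $\Theta_1 \not\sim \Theta_2$, forces each heavy piece $\Theta_i$ to be adjacent to $\Theta_1$ or $\Theta_2$. When $a=4$, a small check on the resulting bipartite adjacencies produces two disjoint edges whose endpoints cover all four pieces (or else some $\Theta_j$ has no neighbour in $G$, impossible); summing the two ``$>w/2$'' inequalities gives the contradiction $w > w$. When $a=5$, averaging $w_i > w/2 - w_2$ over $i=3,4,5$ and using $w_3+w_4+w_5 = w - w_1 - w_2$ together with $w_1 \le w_2$ yields $w_2 > w/4$, contradicting $\Theta_2 \in T$.

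The main obstacle is the final case $(a,|T|)=(4,1)$. Let $\Theta_*$ be the unique light piece. The sub-cases where $\Theta_*$ is adjacent to all three or exactly two heavy pieces close by weight-summing: the first gives $w_* > w/4$, the second forces the non-adjacent heavy piece to have weight below $w_*$. The remaining sub-case, where $\Theta_*$ is adjacent to exactly one heavy piece $\Theta_j$ (and the other two heavy pieces $\Theta_k, \Theta_l$ are non-adjacent to $\Theta_*$), is not ruled out by weights alone. Here I would apply the ``$w>w$'' trick to the candidate disjoint edges $\Theta_* \sim \Theta_j$ and $\Theta_k \sim \Theta_l$ to conclude $\Theta_k \not\sim \Theta_l$; smallness property (a) then forces $\Theta_*, \Theta_k, \Theta_l$ each to consist of a single face of $\Delta$, and these three singleton faces are pairwise non-adjacent in $\Delta$, directly contradicting smallness property (b).
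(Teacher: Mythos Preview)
Your proof is correct, but it takes a substantially different and longer route from the paper's argument. The paper's proof is very short: after reducing to the case where each $\Theta_i$ is a single face, it sorts the weights $w_1 \le \cdots \le w_a$, observes that $w_1 + w_i \le w/2$ for all $i \le a-1$, and then applies the \emph{definition} of smallness directly as a colouring condition. For $a \ge 5$, colour the face with weight $w_1$ red and those with weights $w_2, \ldots, w_{a-1}$ blue (at least three blues); the red--blue adjacency guaranteed by smallness gives the desired pair. For $a = 4$, either $w_1 + w_4 \le w/2$ (colour $w_1$ red, the rest blue, use the red--blue clause) or else $w_2 + w_3 \le w/2$ (colour $w_1, w_2, w_3$ blue and use the blue--blue clause). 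That is the entire argument.

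Your approach instead extracts graph-theoretic consequences (a) and (b) of smallness, then runs a contradiction argument via the set $T$ of light pieces and case analysis on $(a, |T|)$. This is more laborious but has the merit of isolating which combinatorial features of small polyhedra actually matter. One remark: your final sub-case is more elaborate than necessary. Once you establish $\Theta_k \not\sim \Theta_l$, the pieces $\Theta_*, \Theta_k, \Theta_l$ are already pairwise non-adjacent, directly contradicting property (b) at the level of pieces (which you noted transfers from faces via representatives). The detour through property (a) to force singleton faces is correct but unnecessary; in fact your whole argument goes through using only (b) for pieces together with connectedness of the polyhedron.
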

\begin{proof}
It suffices to prove the case where each $\Theta_i$ is equal to a single face. Assume without loss of generality $w_1 \leq w_2 \leq \cdots \leq w_a$. Then 
$$
w_1 + w_i \leq \frac{w}{2},
$$
for all $i=2, \ldots, a-1$. We first prove the lemma for $a \geq 5$. Colour the face with label $w_1$ red and those with label $w_2, \ldots, w_{a-1}$ blue. The result follows from Definition \ref{small}. For $a=4$ we need a slightly more refined analysis. \\

\noindent Case 1: $w_1+w_4 \leq \frac{w}{2}$. Then we colour the face with label $w_1$ red and those with label $w_2, w_3, w_4$ blue. The result again follows from Definition \ref{small}. \\

\noindent Case 2: $w_1+w_4 \geq \frac{w}{2}$. Then $w_2+w_3 \leq \frac{w}{2}$. In this case, we colour the faces with label $w_1, w_2, w_3$ blue. The result follows from Definition \ref{small}. 
\end{proof}

\subsection{Applications to projective 3-space} \label{P3}

In this section, we calculate $\sfZ_{X,H,c_1,c_2}(q)$ for $X = \PP^3$ and $c_2=1,2$. We also classify the reflexive hulls for $c_2=3$. The generating function does not depend on $H$ and we drop $X$ and $H$ from the notation. We take $c_1=-1$ and also suppress it from the notation. Note that rank and degree are coprime and there are no strictly $\mu$-semistable sheaves. We use the notation of Example \ref{P3reflch}.

\begin{example} \label{c2=1}
Let $c_2=1$. Using the methods of Section \ref{fixedloci}, one can see that $c_2=1$ is the minimal value for which there exist rank 2 $\mu$-stable torsion sheaves on $\PP^3$ with Chern classes $c_1$, $c_2$ (see also \cite{GK1}). There are exactly four such sheaves which are $T$-equivariant and reflexive. Each of them has a length 1 singularity at one of the torus fixed points and no further singularities. The toric data $({\bf{u}},{\bf{v}},{\bf{p}})$ of these reflexive hulls is described as follows:
\begin{enumerate}
\item[(i)] For any choice $i,j,k,l \in \{1,2,3,4\}$ with $i,j,k,l$ mutually distinct
\begin{align*}
&u_1=u_2=u_3=0, \ u_4 = -1, \\
&v_i = 0, \ v_j = v_k =  v_l = 1, \\
&p_{j}, \ p_{k}, \ p_{l} \ \mathrm{mutually \ distinct}.
\end{align*}
Recall that $u_2=u_3=u_4=0$ is forced by equation (\ref{slice}). Note that for fixed $\bf{u}$ and $\bf{v}$, different choices of $p_j, p_k, p_l$ give $T$-equivariantly isomorphic reflexive hulls. 
\end{enumerate}
Theorem \ref{Eulermin} gives
\[
\sfZ_{c_2=1}(q) = 4(q^{-1}+q)M(q^{-2})^8. 
\]
We also note that any connected component $\cC$ of any $\M_{\PP^3}(2,-1,1,c_3)^T$ has constant reflexive hulls. This is useful for applications to DT theory in Part II.
\end{example}

\begin{example} \label{c2=2}
Let $c_2=2$. We describe all possible reflexive hulls of elements of
$$
\bigsqcup_{c_3} \M_{\PP^3}(2,-1,2,c_3)^T.
$$ 
Like the previous example, these reflexive hulls are isolated reduced points. However this time legs can appear, i.e.~$c_2$ and $c_{2}'$ may differ. There are three types of reflexive hulls. Their toric data $({\bf{u}},{\bf{v}},{\bf{p}})$ is described as follows:
\begin{enumerate}
\item[(i)] For any choice $i,j,k,l \in \{1,2,3,4\}$ with $i,j,k,l$ mutually distinct
\begin{align*}
&u_1=u_2=u_3=0, \ u_4 = -2 \\
&v_{i} = v_{j} = v_{k} = 1, \ v_{l} = 2, \\ 
&p_{i} = p_{j} \ \mathrm{and} \ p_{i}, \ p_{k}, \ p_{l} \ \mathrm{mutually \ distinct}.
\end{align*}
These reflexive hulls have $c_{2}'=2$.
\item[(ii)] For any choice $i,j,k,l \in \{1,2,3,4\}$ with $i,j,k,l$ mutually distinct
\begin{align*}
&u_1=u_2=u_3=0, \ u_4 = -2, \\
&v_{i} = 0, \ v_{j} = 1, \ v_{k} = v_{l} = 2, \\
&p_{j}, \ p_{k}, \ p_{l} \ \mathrm{mutually \ distinct}.
\end{align*}
These reflexive hulls have $c_{2}'=2$.
\item[(iii)] The reflexive hulls of Example \ref{c2=1}. Recall that these reflexive hulls have $c_{2}'=1$.
\end{enumerate}
Any $T$-equivariant rank 2 $\mu$-stable torsion free sheaves $\F$ on $\PP^3$ with Chern classes $c_1=-1$, $c_2=2$ has one of the above as its reflexive hull. In cases 1 and 2 the cokernel $\F^{**} / \F$ is 0-dimensional, whereas in case 3 the cokernel is 1-dimensional. Using formula \eqref{general}, Proposition \ref{formulachi}, and Theorems \ref{Ben}, \ref{Benwithlegs} one can compute the following formula
\begin{align*}
\sfZ_{c_2=2}(q) &= 12 M(q^{-2})^8 \Bigg (\underbrace{q^4(1+q^{-2}+(q^{-2})^2)^2}_{\mathrm{type \ (i) \ contribution}} +\underbrace{q^4(1+q^{-2}+2(q^{-2})^2+(q^{-2})^3+(q^{-2})^4)}_{\mathrm{type \ (ii) \ contribution}} \\
&\qquad\qquad\qquad\quad+\underbrace{2q^{4} \frac{1+(q^{-2})^2}{(1 -  q^{-2})^2} + q^{4} \frac{1+q^{-2}}{(1-q^{-2})^2}}_{\mathrm{type \ (iii) \ contribution}} \Bigg) \\
&=12 \Bigg( \frac{2q^{-4}-q^{-2}+1-4q^2+3q^4+5q^8}{(1-q^2)^2} \Bigg) M(q^{-2})^{8}. 
\end{align*}
We also note that any connected component $\cC$ of any $\M_{\PP^3}(2,-1,2,c_3)^T$ has constant reflexive hulls.
\end{example}

\begin{example} \label{c2=3}
Let $c_2=3$. We describe the connected components $\cC$ of 
$$
\bigsqcup_{c_{2}',c_{3}'} \N_{\PP^3}(2,-1,c_{2}',c_{3}')^T
$$ 
containing reflexive hulls of elements of 
\begin{equation} \label{eqc2is3}
\bigsqcup_{c_3} \M_{\PP^3}(2,-1,3,c_3)^T.
\end{equation}
This time, $\cC$ no longer needs to be isolated. The toric data $({\bf{u}},{\bf{v}},{\bf{p}})$ of these connected components is classified as follows:
\begin{enumerate}
\item[(i)] For any choice $i,j,k,l \in \{1,2,3,4\}$ with $i,j,k,l$ mutually distinct
\begin{align*}
&u_1=u_2=u_3=0, \ u_4 = -2,  \\
&v_{i} = v_{j} = v_{k} = 1, \ v_{l} = 2, \\ 
&p_{i}, \ p_{j}, \ p_{k}, \ p_{l} \ \mathrm{mutually \ distinct}.
\end{align*}
These reflexive hulls have $c_{2}'=3$ and form non-isolated connected components isomorphic to $\C^* \setminus pt$. See also \cite{GK1}.
\item[(ii)] For any choice $i,j,k,l \in \{1,2,3,4\}$ with $i,j,k,l$ mutually distinct
\begin{align*}
&u_1=u_2=u_3=0, \ u_4 = -3, \\
&v_{i} = 1, \ v_{j} = 2, \ v_{k} = 1, \ v_{l} = 3, \\
&p_{i} = p_{j} \ \mathrm{and} \ p_{i}, \ p_{k}, \ p_{l} \ \mathrm{mutually \ distinct}.
\end{align*}
These reflexive hulls have $c_{2}'=3$ and correspond to isolated reduced points.
\item[(iii)] For any choice $i,j,k,l \in \{1,2,3,4\}$ with $i,j,k,l$ mutually distinct
\begin{align*}
&u_1=u_2=u_3=0, \ u_4 = -3,  \\
&v_{i} = 0, \ v_{j} = 1, \ v_{k} = v_{l} = 3, \\
&p_{j}, \ p_{k}, \ p_{l} \ \mathrm{mutually \ distinct}.
\end{align*}
These reflexive hulls have $c_{2}'=3$ and correspond to isolated reduced points.
\item[(iv)] The reflexive hulls of Example \ref{c2=2}. Recall that these reflexive hulls have $c_{2}'=1$ or $c_{2}'=2$ and correspond to isolated reduced points.
\end{enumerate}
Any $T$-equivariant rank 2 $\mu$-stable torsion free sheaf $\F$ on $\PP^3$ with Chern classes $c_1=-1$, $c_2=3$ has one of the above as its reflexive hull. This time not all connected components of \eqref{eqc2is3} have constant reflexive hulls. Connected components with an element having a reflexive hull appearing in (i) do not, whereas those having a reflexive hull as in (ii) and (iii) do. For (iv) some do whereas others do not.
\end{example}

\hfill
\newline

\noindent {\Large{\textit{{Part II: Virtual}}}} \\
\addcontentsline{toc}{section}{\textit{Part II: Virtual}}

\section{Rank 2 Donaldson-Thomas type invariants}

In this section we define rank 2 DT type invariants on a toric Calabi-Yau 3-fold by $T$- and $T_0$-localization, where $T_0 \subset T$ is the torus preserving the Calabi-Yau volume form. We start with a discussion on the moduli space and $T$-equivariant Serre duality.

\subsection{Moduli space} \label{moduli}

Let $Y$ be a smooth toric Calabi-Yau 3-fold. Recall the notation used for toric varieties from Section \ref{toric3folds}. Then $Y$ is non-compact so has no good notion of moduli space of stable sheaves on it. We therefore want to compactify $Y$.
\begin{definition} \label{toriccompact}
Let $Y$ be a smooth toric Calabi-Yau 3-fold. Let $X$ be a smooth projective toric 3-fold. We say $X$ is a \emph{toric compactification} of $Y$ if $Y \subset X$ is a union of some of the invariant affine open subsets $\{U_\alpha\}_{\alpha \in V(X)}$ of $X$. In this context, we denote by $V(Y)$ the collection of vertices $\alpha$ for which $U_\alpha \subset Y$. Moreover, we write $E_c(Y)$ for the edges 
spanned by vertices of $V(Y)$. The subscript $c$ indicates that we consider edges 
corresponding to \emph{compact} lines in $Y$ only.
\end{definition}
Let $\alpha \in V(Y)$ and use coordinates $(x_1,x_2,x_3)$ on $U_\alpha$ as in Section \ref{toric3folds}. Define the subtorus $T_0 \subset T$ by $t_1 t_2 t_3 = 1$.\footnote{The torus $T_0$ preserves the Calabi-Yau volume form on $Y$.} Equation (\ref{Klocal}) implies that there exists a $T_0$-equivariant isomorphism
\[
K_{X}|_{Y} \cong \O_Y.
\]
Let $\alpha\beta \in E_c(Y)$. Then the degree of $N_{C_{\alpha \beta}/Y}$ is $-2$ by the Calabi-Yau property, i.e.~
\[
m_{\alpha \beta} + m_{\alpha \beta}^{\prime} = -2, 
\]
where $m_{\alpha\beta}$, $m_{\alpha\beta}'$ were defined in Section \ref{toric3folds}.
\begin{example} \label{extoricCY}
Important examples of toric compactifications are 
\begin{align*}
\C^3 &\subset \PP^3, \\
\mathrm{Tot}(\O_{\PP^1}(k) \oplus \O_{\PP^1}(-2-k)) &\subset \PP(\O_{\PP^1}(k) \oplus \O_{\PP^1}(-2-k) \oplus \O_{\PP^1}), \\
\mathrm{Tot}(K_S) &\subset \PP(K_S \oplus \O_S),
\end{align*}
where $\mathrm{Tot}$ denotes the total space of the indicated vector bundle, $k \in \Z$, and $S$ is any smooth projective toric surface. These compactifications satisfy $H^0(K_{X}^{-1}) \neq 0$.
\end{example}

Let $X$ be a smooth projective $3$-fold over $\C$ with polarization $H$. We denote by $\M_X := \M_{X}^{H}(r,c_1,c_2,c_3)$ the moduli space of $\mu$-stable torsion free sheaves on $X$ of rank $r$ and Chern classes $c_1, c_2, c_3$. As mentioned in the introduction, we do not consider strictly $\mu$-semistable sheaves but work directly on the open subset of $\mu$-stable sheaves.
Suppose $X$ is a toric compactification of a toric Calabi-Yau 3-fold $Y$ (Definition \ref{toriccompact}). We define a moduli space of $\mu$-stable sheaves on $Y$ using the compactification $X$ as follows 
\[
\M_{Y \subset X}:= \M_{Y \subset X}^{H}(r,c_1, c_2, c_3):=\{ [\F] \in \M_X \ | \  \F^{**} / \F \ \mathrm{has \ support \ in \ } Y \}.
\]
\begin{proposition} \label{openness}
$\M_{Y \subset X}^{H}(r,c_1, c_2, c_3) \subset \M_{X}^{H}(r,c_1, c_2, c_3)$ is open and $T$-invariant.
\end{proposition}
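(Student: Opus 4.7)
The plan is to prove $T$-invariance and openness separately.

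For $T$-invariance, observe that $Y = \bigcup_{\alpha \in V(Y)} U_\alpha$ is a union of $T$-invariant open subsets, so $Y$ itself is $T$-invariant. For $t \in T$ the pullback by the automorphism $t : X \to X$ commutes with double dualization, so $(t^*\F)^{**}/(t^*\F) \cong t^*(\F^{**}/\F)$; its support equals the preimage $t^{-1}(\mathrm{supp}(\F^{**}/\F))$, which is contained in $Y$ iff $\mathrm{supp}(\F^{**}/\F) \subset Y$ (using $t(Y)=Y$).

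For openness I would reformulate the defining condition in terms of projective dimension and then apply a properness argument. On the smooth $3$-fold $X$, Auslander--Buchsbaum applied to the regular local ring $\O_{X,x}$, together with the fact that a torsion free stalk is reflexive iff it has depth $\geq 2$, implies: $\F$ is reflexive at $x$ iff it has projective dimension $\leq 1$ at $x$ iff $\ext^i_{\O_X}(\F,\O_X)_x = 0$ for $i=2,3$. Hence $[\F] \in \M_{Y \subset X}$ precisely when $\F$ has projective dimension $\leq 1$ at every point of the closed subset $X \setminus Y$. Let $\FF$ denote the universal family on $\M_X \times X$, let $p : \M_X \times X \to \M_X$ be the projection, and set
\[
S := \{(b,x) \in \M_X \times X : \F_b \text{ has projective dimension } > 1 \text{ at } x \}.
\]
If $S$ is closed, then $S' := S \cap (\M_X \times (X \setminus Y))$ is closed in $\M_X \times X$, and since $X \setminus Y$ is projective (being closed in the projective variety $X$), the restriction $p|_{\M_X \times (X\setminus Y)}$ is proper. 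Hence $p(S')$ is closed in $\M_X$, and its complement is exactly $\M_{Y \subset X}$, which is therefore open.

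The only delicate step is the closedness of $S$, i.e.\ a fiberwise upper semi-continuity statement for projective dimension in the flat family $\FF$. I would prove this locally: choose a finite free resolution $F_\bullet \to \FF \to 0$ on a neighborhood in $\M_X \times X$. Flatness of $\FF$ over $\M_X$ guarantees that $F_\bullet|_{\{b\}\times X}$ is again a locally free resolution of $\F_b$, so $\hom(F_\bullet,\O_{\M_X \times X})$ computes $\ext^i_{\O_{\M_X \times X}}(\FF,\O_{\M_X \times X})$ and its support meets $\{b\}\times X$ in $\mathrm{supp}\,\ext^i_{\O_X}(\F_b,\O_X)$. One concludes $S = \bigcup_{i\geq 2}\mathrm{supp}\,\ext^i_{\O_{\M_X \times X}}(\FF,\O_{\M_X \times X})$, which is closed. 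I expect verifying this fiberwise compatibility of the relative Ext sheaves (a standard but slightly technical exercise in cohomology and base change) to be the only nontrivial ingredient; the rest of the argument is formal.
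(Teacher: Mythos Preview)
Your argument for $T$-invariance is the same as the paper's. For openness, however, you take a genuinely different and more elementary route.

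The paper argues by the valuative criterion: given a curve in $\M_X$ whose generic point lies in the complement, it shows the special point does too. This requires Koll\'ar's flattening result for families of reflexive hulls, flat limits inside Greb--Toma's moduli of $\mu$-semistable sheaves, and flat limits inside a relative Quot scheme; it then uses separatedness of these moduli to identify the limit. Your approach bypasses all of this. You translate ``$\F^{**}/\F$ supported in $Y$'' into the local-algebraic statement ``$\F$ has projective dimension $\leq 1$ along $X\setminus Y$'' via Auslander--Buchsbaum, and then reduce openness to the closedness of a degeneracy locus plus properness of $X\setminus Y$. This is conceptually cleaner and avoids the heavy machinery of \cite{Kol} and \cite{GT} invoked by the paper.

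Two small points to tighten. First, $\M_X$ need not carry a global universal sheaf; you should either pass to an \'etale cover where one exists, or run the argument on the Quot scheme before taking the GIT quotient, since openness descends. Second, since $\M_X$ may be singular, a \emph{finite} locally free resolution of $\FF$ on $\M_X\times X$ need not exist; but you only need one to degree $4$, which always exists locally. With these in hand your base-change claim does go through: for a bounded-above complex $C^\bullet$ of free modules over a local ring $R$, the maximal index $i$ with $H^i(C^\bullet)\neq 0$ equals the maximal index with $H^i(C^\bullet\otimes_R k)\neq 0$ (split off the exact free tail, then apply right-exactness of $\otimes$ and Nakayama). Applying this twice, along $R=\O_{\M_X\times X,(b,x)}\to \O_{X,x}\to k(x)$, identifies your set $S$ with $\bigcup_{i\geq 2}\mathrm{supp}\,\ext^i_{\O_{\M_X\times X}}(\FF,\O_{\M_X\times X})$, which is closed.
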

\begin{proof}
We first prove $\M_{Y\subset X}$ is $T$-invariant. Let $[\F] \in \M_{Y\subset X}$ and $t\in T$ be closed points. Then $t$ induces an automorphism $t : X \rightarrow X$. By definition, $\F$ fits into the short exact sequence $$0\longrightarrow \F \longrightarrow \F^{**}\longrightarrow \cQ \longrightarrow 0,$$ where $\text{Supp}(\cQ) \subset Y$. This sequence remains exact after applying $t^*$. Since $t^*$ commutes with taking (double) dual, we have 
$$
(t^*\F)^{**} / t^* \F \cong t^* \cQ \ \mathrm{and} \ \text{Supp}(t^*\cQ) = t(\text{Supp}(\cQ)).
$$
Since $\mathrm{Supp}(\cQ) \subset Y$ and $Y$ is $T$-invariant, the result follows.

Next, we prove $\M_{Y\subset X} $ is open in $\M_X$ by proving that its complement is closed. Suppose $C$ is a smooth quasi-projective curve and let $0$ be a point on $C$. Suppose we have a morphism $C \rightarrow \M_X$ such $C \setminus 0$ maps to $\M_X \setminus \M_{Y \subset X}$. It suffices to show $0$ also maps to $\M_X \setminus \M_{Y \subset X}$. Let $\F$ be the family induced by this map. We know $\text{Supp}((\F_t)^{**}/\F_t)$ intersects $X \setminus Y$ for all closed points $t \in C \setminus 0$ and we want to show $\text{Supp}((\F_0)^{**}/\F_0)$  intersects $X \setminus Y$. 

After possibly removing a finite number of points from $C \setminus 0$, the reflexive hulls $(\F_t)^{**}$, $t \in C \setminus 0$ form a flat family \cite{Kol}. We can take the flat limit of this family inside the (proper) moduli space of Gieseker semistable torsion free sheaves containing $(\F_t)^{**}$ with $t \in C \setminus 0$ to fill in the missing fiber over $0$. This gives a flat family $\ccR$ over $C$ such that $\ccR_t=(\F_t)^{**}$ for all closed points $t \in C \setminus 0$, but $\ccR_0$ may not be equal to $(\F_0)^{**}$ (because reflexive hulls of members of a flat family may jump). In particular, $\ccR_0$ may not be reflexive. A priori, $\ccR_0$ is only Gieseker semistable and hence $\mu$-semistable.   
 
Over $C \setminus 0$ we also have the flat family of quotients given by the short exact sequence 
\begin{equation} \label{Rg}
0\longrightarrow \F_t \longrightarrow \ccR_t \longrightarrow \cQ_t \to 0.
\end{equation}
We take the flat limit of these quotients inside the corresponding relative Quot scheme 
\begin{equation} \label{proper}
\mathrm{Quot}_{X \times C/C}(\ccR,P) \longrightarrow C,
\end{equation}
where $P$ is the Hilbert polynomial of $\ccR_0$ with respect to a polarization on $X$. The flat limit exists, because the map \eqref{proper} is proper. We obtain a short exact sequence $$0\longrightarrow \F' \longrightarrow \ccR \longrightarrow \cQ \to 0$$ extending \eqref{Rg}. We claim that $\F'_0 \cong \F_0$. Since $\F'_0$ and $\ccR_0$ only differ in codimension $\geq 2$, $\F'_0$ is $\mu$-semistable. There exists a projective moduli scheme of $\mu$-semistable sheaves $\M^{\mu ss}_{X}$ constructed by D.~Greb and M.~Toma \cite{GT}. Seperatedness of $\M^{\mu ss}_{X}$, implies equality of the equivalence classes of $\F_0$, $\F'_0$ in $\M^{\mu ss}_{X}$. This translates into $$[gr^{\mu}(\F_0)]^{**} \cong [gr^{\mu}(\F'_0)]^{**},$$ where $gr^{\mu}(\cdot)$ denotes the graded object associated to a choice of $\mu$-Jordan-H\"older filtration and $[\cdot]^{**}$ denotes double dual \cite[Thm.~5.5]{GT}.\footnote{For any $\mu$-semistable sheaf $\F$ and choice of $\mu$-Jordan-H\"older filtration of $\F$, the graded object $gr^{\mu}(\F)$ depends on the choice of filtration. However its double dual $[gr^{\mu}(\F)]^{**}$ is independent of this choice (up to isomorphism) \cite[Cor.~1.6.10]{HL}.} Since $\F_0$ is $\mu$-stable this equality implies $\F'_0$ cannot be strictly $\mu$-semistable. Since $\F'_0$ is $\mu$-stable, separateness of $\M_{X}$ implies $\F_0 \cong \F'_0$.

Although $\ccR_0$ and $(\F_0)^{**}$ may not be equal, we claim that $(\ccR_0)^{**} \cong (\F_0)^{**}$. We proved $\F_0$ injects into $\ccR_0$ with cokernel $\cQ_0$, so they are isomorphic on the complement of $\text{Supp}(\cQ_0)$. Since $\text{Supp}(\cQ_0)$ has codimension $\geq 2$, their reflexive hulls are completely determined on $X\setminus \text{Supp}(\cQ_0)$ and $(\ccR_0)^{**} \cong (\F_0)^{**}$ \cite[Prop.~1.6]{Har2}. From the chain of the inclusions $\F_0\to \ccR_0\to (\F_0)^{**}$, we deduce
$$
\text{Supp}(\ccR_0/\F_0) \subset \text{Supp}((\F_0)^{**}/\F_0).
$$
So if $\text{Supp}(\ccR_0/\F_0)$ intersects $X \setminus Y$, we are done. For this, it suffices to show that the locus of quotients in $\text{Quot}_{X \times C /C}(\ccR,P)$ whose support intersect $X\setminus Y$ forms a closed subset. 

Let $\G$ be any $S$-flat family of torsion free sheaves on $X\times S$ for arbitrary base $S$. Suppose the Hilbert polynomial of the members of the family is $P$ with respect to some polarization on $X$. Consider the corresponding relative Quot scheme $$Q:=\text{Quot}_{X \times S/S}(\G,P).$$ Let $\cQ$ be the universal quotient over $X\times Q$. Then both $\text{Supp}(\cQ)$ and $(X\setminus Y) \times Q$ are closed subsets of $X \times Q$ and so is their intersection $Z$. Since $X$ is proper, the image of $Z$ under projection to $Q$ gives a closed subset.
\end{proof}

In the case $r=1$, the moduli space $\M_{X}^{H}(1,c_1,c_2,c_3)$ is independent of the polarization $H$. Moreover, it is isomorphic to a Hilbert scheme $I_\chi (X,\beta)$ of ideal sheaves $I_C \subset \O_X$ of closed subschemes $C$ of dimension $\leq 1$ with $[C]=\beta$ and $\chi(\O_C) = \chi$. Here $\beta$ is Poincar\'e dual to $c_2$ and $\chi$ is determined by $c_1$, $c_2$, $c_3$. In this case, $\M_{Y \subset X}$ is the open subset of closed subschemes $C$ with support in $Y$. From this description, it is clear that $\M_{Y \subset X}$ is \emph{independent} of the choice of toric compactification $X$. There is no such independence in the case $r>1$. In general, not only does $\M_{Y \subset X}$ depend on the choice of toric compactification $X$, it also depends on the choice of polarization $H$ on $X$.\footnote{In fact, for certain choices of $H$ the moduli space $\M_X$ can be empty. For examples, see \cite{GK1}.} We think of $\M_{Y \subset X}$ as a moduli space of rank $r$ $\mu$-stable torsion free sheaves on $Y$.

\subsection{Serre duality}

On $\M_{Y \subset X}$, we have a type of ``Serre duality in the $K$-group'' as we will see in Proposition \ref{SD} below. Later we define DT type invariants of $Y$ by virtual localization on $\M_{Y \subset X}$. Roughly speaking, Proposition \ref{SD} states that at the $K$-group level, DT theory on $\M_{Y \subset X}$ is symmetric. This will allow us to specialize the equivariant parameters $s_1+s_2+s_3 = 0$ in Proposition-Definition \ref{propdef}. We start with some notation and a technical lemma.
\begin{definition} \label{baroper}
Let $X$ be a smooth projective variety, $\FF$ a $B$-flat family of coherent sheaves on $X$ over a base scheme $B$, and $\L$ a line bundle on $X \times B$. We use the common notation
\begin{align*}
R\hom_{p_{B}}(\FF,\FF \otimes \L) := Rp_{B*} R\hom(\FF,\FF \otimes \L).
\end{align*}
Moreover, we denote the corresponding $K$-group class by
\begin{align*}
\langle \FF, \FF \otimes \L \rangle &:= [R\hom_{p_{B}}(\FF,\FF \otimes \L)] \in K_0(B).
\end{align*}
The corresponding trace free parts are denoted by $R\hom_{p_{B}}(\FF,\FF \otimes \L)_0$, $\langle \FF, \FF \otimes \L \rangle_0$. In the $K$-group, we often omit square brackets $[\cdot]$ 
for brevity. Note that in the case $\FF = p_{X}^{*} \F$ and $\L = p_{X}^{*} L$, we have 
$$
\langle p_{X}^{*} \F, p_{X}^{*} \F \otimes p_{X}^{*} L \rangle_0 = \langle \F ,\F \otimes L \rangle_0 \otimes \O_B.
$$ 
On $K_0(B)$, we have an involution $\overline{(\cdot)}$, induced by taking the \emph{derived dual} $(\cdot)^\vee$. If $X$ is toric and $\FF$ is $T$-equivariant, then $\langle \FF, \FF \otimes \L \rangle$ is an element of the $T$-equivariant $K$-group $K_{0}^{T}(B)$ where we endow $B$ with the trivial $T$-action. Since $(\cdot)^{\vee\vee} \cong (\cdot)$, the operation $\overline{(\cdot)}$ is an involution on $K_{0}^{T}(B)$. For any coherent sheaf $\F$ on $B$, we have
\[
\overline{\F \otimes t_{1}^{w_1} t_{2}^{w_2} t_{3}^{w_3}} = \F^\vee \otimes t_{1}^{-w_1} t_{2}^{-w_2} t_{3}^{-w_3} \in K_{0}^{T}(B). 
\]
\end{definition}

\begin{lemma} \label{hulls}
Let $X$ be a smooth projective variety with polarization $H$ and let $\M_X = \M_{X}^{H}(r,c_1, \ldots, c_n)$ be the moduli space of $\mu$-stable torsion free sheaves on $X$ with indicated Chern classes. Let $\FF$ be a $B$-flat family of torsion free sheaves in $\M_{X}$ over a variety $B$, such that all members $\FF_b$, $b \in B_{\mathrm{cl}}$ have the same reflexive hull $\ccR$. Then there exists a line bundle $L$ on $B$ and a short exact sequence
$$
0 \longrightarrow \FF \longrightarrow \ccR \boxtimes L \longrightarrow \Q \longrightarrow 0,
$$
where $\Q$ is $B$-flat. For each $b \in B_{\mathrm{cl}}$, this induces a short exact sequence
$$
0 \longrightarrow \FF_b \longrightarrow \ccR \longrightarrow \Q_b \longrightarrow 0,
$$
where the first map is the natural inclusion $\FF_b \hookrightarrow (\FF_b)^{**} \cong \ccR$.
\end{lemma}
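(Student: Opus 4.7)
The plan is to construct the morphism $\FF \to \ccR \boxtimes L$ by studying the relative Hom sheaf via cohomology and base change. First I would show that for every $b \in B_{\mathrm{cl}}$, the space $\Hom(\FF_b, \ccR)$ is one-dimensional and spanned by the natural inclusion $\FF_b \hookrightarrow (\FF_b)^{**} \cong \ccR$. Both $\FF_b$ and $\ccR$ are $\mu$-stable torsion free of the same rank and slope (the reflexive hull preserves slope, as recalled in Section \ref{equivsh}), so any non-zero morphism $\phi : \FF_b \to \ccR$ is necessarily injective. Restricting to the open complement $U$ of $\mathrm{Supp}((\FF_b)^{**}/\FF_b)$, which has codimension $\geq 2$, the natural inclusion becomes an isomorphism and $\phi|_U$ therefore defines an endomorphism of $\ccR|_U$. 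Reflexivity of $\ccR$ extends this uniquely to an endomorphism of $\ccR$, which by $\mu$-stability of $\ccR$ must be a scalar multiple of the identity; since any morphism to the torsion free sheaf $\ccR$ vanishing on the open dense $U$ is zero, $\phi$ equals this scalar multiple of the natural inclusion globally.

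Next I would consider the complex $E := Rp_{B*} R\hom(\FF, p_X^*\ccR)$ on $B$. It is perfect because $X$ is smooth projective, $\FF$ is $B$-flat, and $p_X^*\ccR$ admits a finite locally free resolution pulled back from $X$, so that $R\hom(\FF, p_X^*\ccR)$ is represented by a bounded complex of $B$-flat coherent sheaves. Its derived fiber at any $b \in B_{\mathrm{cl}}$ computes $R\Hom(\FF_b, \ccR)$, whose $H^0$ has constant dimension $1$ by the previous step. Cohomology and base change then gives that $\mathcal{H} := p_{B*}\hom(\FF, p_X^*\ccR)$ is a line bundle on $B$ whose formation commutes with arbitrary base change. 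Setting $L := \mathcal{H}^{-1}$, the identity $\id_{\mathcal{H}}$ corresponds via the adjunctions $p_B^* \dashv p_{B*}$ and $\otimes \dashv \hom$ to a canonical morphism
$$
\Phi : \FF \longrightarrow p_X^*\ccR \otimes p_B^* L = \ccR \boxtimes L,
$$
whose fiber at any $b \in B_{\mathrm{cl}}$ is a non-zero scalar multiple (depending on a local trivialization of $\mathcal{H}$) of the natural inclusion $\FF_b \hookrightarrow \ccR$.

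Since $\Phi$ is fiberwise injective and both $\FF$ and $\ccR \boxtimes L$ are $B$-flat, the cokernel $\Q := \coker(\Phi)$ is $B$-flat by the standard Tor long-exact-sequence argument, yielding the desired short exact sequence; the nonzero fiberwise scalars can be absorbed into the chosen identification $(\FF_b)^{**} \cong \ccR$. The main technical hurdle is the cohomology and base change step. Because $B$ is only assumed to be a variety (possibly singular), it is cleaner to resolve $p_X^*\ccR$ rather than $\FF$ (since $X$ is smooth projective but $X \times B$ need not be smooth), which keeps everything inside the category of bounded complexes of $B$-flat sheaves. Once perfectness of $E$ is in hand, the constant one-dimensionality of $H^0$ on fibers together with the vanishing of $H^{<0}$ on fibers forces $\mathcal{H}^0(E)$ to be a line bundle by the classical base change theorem.
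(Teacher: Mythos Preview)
Your approach is genuinely different from the paper's and the overall strategy is sound. The paper invokes Koll\'ar's theory of flat hulls \cite{Kol} to produce a map $\phi:\FF \to \GG$ to a $B$-flat family with fibers the reflexive hulls, and then identifies $\GG \cong \ccR \boxtimes L$ via the (constant) classifying map to a moduli space $\N_X$ of $\mu$-stable reflexive sheaves. Your route through the one-dimensionality of $\Hom(\FF_b,\ccR)$ together with cohomology and base change is more self-contained: it avoids both Koll\'ar's theorem and the construction of $\N_X$, and it makes transparent exactly where $\mu$-stability enters.

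There is, however, an error in your justification that $E := Rp_{B*} R\hom(\FF, p_X^*\ccR)$ is perfect. Resolving the \emph{target} $p_X^*\ccR$ by locally frees $p_X^*E^\bullet$ does not represent $R\hom(\FF,p_X^*\ccR)$ as a bounded complex of $B$-flat sheaves: for locally free $L$ one has $R\hom(\FF,L) \cong L \otimes^L R\hom(\FF,\O_{X\times B})$, and $R\hom(\FF,\O_{X\times B})$ may be unbounded when $B$ (hence $X\times B$) is singular. A clean fix uses relative duality for the smooth proper morphism $p_B$: set $K := \FF \otimes^L p_X^*(\ccR^\vee \otimes K_X)$, which \emph{is} a bounded complex of $B$-flat coherent sheaves since $\ccR^\vee \otimes K_X$ is perfect on the smooth $X$ and $\FF$ is $B$-flat; then $Rp_{B*}K$ is perfect and commutes with base change, and Grothendieck duality gives $E \cong (Rp_{B*}K)^\vee[-\dim X]$. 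Alternatively, one may cite a base-change theorem for relative Ext directly. With this correction the remainder of your argument goes through unchanged.
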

\begin{proof}
For $\FF$ \emph{any} $B$-flat family, Koll\'ar \cite{Kol} constructs a stratification $S_i$ of $B$ such that over each stratum $B_i$, $\FF|_{X \times B_i}$ has a $B_i$-flat hull
$$
\FF|_{X \times B_i} \rightarrow \GG_i.
$$ 
Here $\GG_i$ is a $B_i$-flat quasi-coherent sheaf and the morphism restricts to the natural inclusion $\FF_b \hookrightarrow (\FF_{b})^{**}$ for all $b \in B_{\mathrm{cl}}$. By the proof of [Kol, Thm.~21], the stratification only depends on the Hilbert polynomials of $(\mathbb{F}_b)^{**}$ for all $b\in B_{\mathrm{cl}}$. In our case, all members of $\FF$ have isomorphic reflexive hulls, so Koll\'ar's stratification is trivial and we denote the hull by $\phi: \FF \to \GG$. Then $\GG$ is flat over $B$ by the triviality of the stratification and $\phi_b:\FF_b\to \GG_b \cong (\FF_b)^{**}$ is the natural inclusion into the reflexive hull for all $b\in B_{\mathrm{cl}}$. We claim $\phi : \FF \rightarrow \GG$ is injective with $B$-flat cokernel $\Q$. Consider the exact sequence defined by taking kernel and cokernel
$$
0 \longrightarrow \mathbb{K} \longrightarrow \FF \stackrel{\phi}{\longrightarrow} \GG \longrightarrow \Q \longrightarrow 0.
$$ 
Fibres over $b \in B_{\mathrm{cl}}$ give short exact sequences
$$
0 \longrightarrow \FF_b \longrightarrow (\FF_{b})^{**} \longrightarrow \Q_b \longrightarrow 0,
$$
where the first map is the natural inclusion. Since $\FF$ is $B$-flat and $(\FF_{b})^{**} \cong \ccR$ by assumption, we conclude that the Hilbert polynomials of $\Q_b$ are constant. Hence $\Q$ is $B$-flat. This implies $\mathbb{K}$ is $B$-flat. Therefore $\mathbb{K}_b \cong 0$ for all closed points $b \in B$, so $p_{B*} \mathbb{K} \cong 0$ and $\mathbb{K} \cong 0$.

Finally, we show $\GG \cong \ccR \boxtimes L$. Since $\FF_b$ is $\mu$-stable, its reflexive hull $\ccR$ is $\mu$-stable.
Therefore, $\ccR$ is an element of a moduli space of $\mu$-stable reflexive sheaves $\N_X$ on $X$. The $B$-flat family $\GG$ gives a morphism
$$
f : B \longrightarrow \N_X.
$$
Suppose $\N_X$ has a universal family $\R$. Then 
$$\GG \cong (f \times 1_X)^* \R \otimes p_{B}^{*}L,$$ for some line bundle $L$ on $B$. Since $f$ factors through the closed point $b=\{[\ccR]\}$ the result follows from $\R|_b \cong \ccR$. In general, $\N_X$ may not have a universal family. In this case, an \'etale cover of $\N_X$ does have a universal family \cite[4.D.VI]{HL}. Working on the \'etale cover, the same result can be obtained. 
\end{proof}

\begin{proposition} \label{SD}
Let $Y$ be a smooth toric Calabi-Yau 3-fold and $X$ be a toric compactification of $Y$. Let $H$ be a polarization on $X$ and $\M_{Y \subset X} := \M_{Y \subset X}^{H}(r,c_1, c_2, c_3)$, $\M_X := \M_{X}^{H}(r,c_1, c_2, c_3)$. Let $\FF$ be a $B$-flat family of torsion free sheaves in $\M_X$ over a variety $B$. Assume all members $\FF_b$, $b \in B_{\mathrm{cl}}$ lie in $\M_{Y \subset X}$ and have the same reflexive hull $\ccR$. Then
$$
\langle \ccR, \ccR \rangle_0 \otimes \O_B-\langle\FF,\FF \rangle_0 = - \overline{\big( \langle \ccR, \ccR \rangle_0 \otimes \O_B-\langle \FF,\FF \rangle_0 \big)} \in K_0(B). 
$$
If $\FF$ is a $T_0$-equivariant family, then the same equality holds in $K_{0}^{T_0}(B)$. If $\FF$ is a $T$-equivariant family, then
$$
\langle \ccR, \ccR \rangle_0 \otimes \O_B -\langle \FF,\FF \rangle_0 = - \overline{\big( \langle \ccR, \ccR \rangle_0 \otimes \O_B - \langle \FF,\FF \rangle_0 \big)} \otimes (t_{1} t_{2} t_{3})^{-1} \in K_{0}^{T}(B). 
$$
\end{proposition}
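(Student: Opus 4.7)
The plan is to reduce the identity to Serre duality for the cokernel $\Q$, which lives on the Calabi-Yau part $Y$. First I would invoke Lemma~\ref{hulls} to obtain a line bundle $L$ on $B$ together with a $B$-flat short exact sequence
$$
0 \longrightarrow \FF \longrightarrow \ccR \boxtimes L \longrightarrow \Q \longrightarrow 0
$$
such that $\Q_b \cong (\FF_b)^{**}/\FF_b$ for every $b \in B_{\mathrm{cl}}$. The hypothesis $\FF_b \in \M_{Y \subset X}$ forces each $\Q_b$ to be supported in $Y$, so $\Q$ is set-theoretically supported on $Y \times B$. In the $K$-group one then has $[\FF] = [\ccR \boxtimes L] - [\Q]$, and bilinearity combined with the identity $\langle \ccR \boxtimes L, \ccR \boxtimes L \rangle = \langle \ccR, \ccR \rangle \otimes \O_B$ produces
\begin{equation*}
A \;:=\; \langle \ccR, \ccR \rangle \otimes \O_B - \langle \FF, \FF \rangle \;=\; \langle \ccR \boxtimes L, \Q \rangle + \langle \Q, \ccR \boxtimes L \rangle - \langle \Q, \Q \rangle.
\end{equation*}
The trace part of $\langle \ccR, \ccR \rangle$ (resp. $\langle \FF, \FF \rangle$) contributes the same class $[Rp_{B*}\O_{X\times B}]$ to both terms of $A$, so $A$ also equals the trace-free difference $\langle \ccR, \ccR \rangle_0 \otimes \O_B - \langle \FF, \FF \rangle_0$.

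The central step is relative Grothendieck-Serre duality for the projection $p_B: X \times B \to B$, with $\dim X = 3$, which in $K_0^T(B)$ takes the form
$$
\overline{\langle \F, \G \rangle} \;=\; -\langle \G, \F \otimes p_X^* K_X \rangle
$$
for any $T$-equivariant perfect complexes $\F, \G$ on $X \times B$. Applied to each of the three summands of $A$, the twists $\F \otimes p_X^* K_X$ or $\G \otimes p_X^* K_X$ all occur paired against $\Q$ and hence can be computed on $Y \times B$. There, gluing the local descriptions \eqref{Klocal} via a $T_0$-invariant global volume form on the Calabi-Yau $Y$ gives a $T$-equivariant isomorphism $p_X^* K_X |_{Y \times B} \cong \O_{Y \times B} \otimes t_1 t_2 t_3$, so the factor $p_X^* K_X$ pulls out as a character twist by $t_1 t_2 t_3$. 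This yields
$$
\overline{A} \;=\; -A \otimes (t_1 t_2 t_3) \quad \text{in } K_0^T(B),
$$
which after applying bar again becomes the desired $T$-equivariant identity $A = -\overline{A} \otimes (t_1 t_2 t_3)^{-1}$. Restricting to the subtorus $T_0 \subset T$ where $t_1 t_2 t_3 = 1$ (respectively forgetting equivariance) then immediately recovers the $T_0$-equivariant and non-equivariant identities.

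The main obstacle is the equivariant Serre-duality bookkeeping — in particular, verifying that $p_X^* K_X$ restricts on $Y \times B$ to the trivial bundle carrying the global $T$-character $t_1 t_2 t_3$. This is precisely where the Calabi-Yau hypothesis on $Y$ is essential: a nowhere-vanishing holomorphic volume form on $Y$ is $T_0$-invariant but has $T$-weight $t_1 t_2 t_3$, and it glues the chart-by-chart identifications of \eqref{Klocal} into a single global trivialization. Once this global trivialization is in hand, the remainder of the argument is a routine manipulation in $K$-theory.
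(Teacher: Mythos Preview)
Your proof is correct and follows essentially the same route as the paper: invoke Lemma~\ref{hulls}, expand the difference $\langle \ccR,\ccR\rangle_0 \otimes \O_B - \langle \FF,\FF\rangle_0$ in terms of the cokernel $\Q$, then apply relative Serre duality and use that $\Q$ is supported on $Y\times B$ where $K_X$ trivializes with $T$-character $t_1t_2t_3$. The only cosmetic difference is that the paper derives the decomposition via the exact triangles induced by $0\to\FF\to\ccR\boxtimes L\to\Q\to 0$ (obtaining $A=\langle\Q,\ccR\boxtimes L\rangle+\langle\FF,\Q\rangle$), whereas you expand bilinearly to get $A=\langle\ccR\boxtimes L,\Q\rangle+\langle\Q,\ccR\boxtimes L\rangle-\langle\Q,\Q\rangle$; these agree since $\langle\FF,\Q\rangle=\langle\ccR\boxtimes L,\Q\rangle-\langle\Q,\Q\rangle$.
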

\begin{proof}
Consider the short exact sequence of Lemma \ref{hulls}
\begin{equation} \label{FRQfam}
0 \longrightarrow \FF \longrightarrow \ccR \boxtimes L \longrightarrow \Q \longrightarrow 0.
\end{equation}
From this short exact sequence, we get exact triangles
\begin{align}
R\hom_{p_{B}}(\FF,\FF) &\longrightarrow R\hom_{p_{B}}(\FF,\ccR \boxtimes L) \longrightarrow R\hom_{p_{B}}(\FF,\Q), \label{sesA} \\
R\hom_{p_B}(\Q,\ccR \boxtimes L ) &\longrightarrow R\Hom(\ccR,\ccR) \otimes \O_B \longrightarrow R\hom_{p_B}(\FF, \ccR \boxtimes L). \label{sesB}
\end{align}
 The natural map $$\O_X \rightarrow \hom(\ccR,\ccR)$$ induces 
$$
R\Gamma(\O_X) \longrightarrow R \Hom(\ccR,\ccR) \longrightarrow R \Hom(\ccR,\ccR)_0.
$$
We define
$$
A^{\mdot} := \mathrm{Cone}\{R\Gamma(\O_X) \otimes \O_B \rightarrow R\hom_{p_B}(\FF, \ccR \boxtimes L)\},
$$
where the map is the natural composition 
$$
R\Gamma(\O_X) \otimes \O_B \longrightarrow R\Hom(\ccR,\ccR) \otimes \O_B 
\longrightarrow R\hom_{p_B}(\FF,\ccR \boxtimes L).
$$
This leads to the following equalities in the $K$-group
\begin{align*}
\langle \FF,\FF \rangle_0 &= [A^{\mdot}] - \langle \FF,\Q \rangle \\
&= \langle \ccR, \ccR \rangle_0 \otimes \O_B - \langle \Q, \ccR \boxtimes L \rangle - \langle \FF,\Q \rangle.
\end{align*}
The first equality follows from \eqref{sesA} and the definition of $A^\mdot$. The second equality follows from \eqref{sesB} and the definition of $A^\mdot$. Repeating the same reasoning on \eqref{FRQfam} tensored by $p_{X}^{*} K_X$ gives
\begin{align*}
\langle \FF,\FF \otimes p_{X}^{*} K_X \rangle_0  =& \langle \ccR,\ccR \otimes K_X \rangle_0 \otimes \O_B - \langle \Q, \ccR \boxtimes L \boxtimes K_X \rangle - \langle \FF,\Q \otimes p_{X}^{*} K_X \rangle \\
=& \langle \ccR,\ccR \otimes K_X \rangle_0 \otimes \O_B - \langle \Q,\ccR \boxtimes L \rangle \otimes t_1t_2t_3 - \langle \FF,\Q \rangle \otimes t_1t_2t_3,
\end{align*}
where we used that $\Q$ is supported in $Y \times B \subset X \times B$. We deduce
\begin{align*}
&\langle \FF,\FF \rangle_0 - \langle \FF,\FF \otimes p_{X}^{*} K_X \rangle_0 \otimes (t_1t_2t_3)^{-1} \\
&= \langle \ccR,\ccR \rangle_0 \otimes \O_B - \langle \ccR,\ccR \otimes K_X \rangle_0 \otimes \O_B \otimes (t_1t_2t_3)^{-1}.
\end{align*}
(Equivariant) Serre duality gives the results.
\end{proof}

\subsection{Donaldson-Thomas invariants} \label{DTinv}

When $X$ is a smooth projective 3-fold with $H^0(K_{X}^{-1}) \neq 0$ (e.g.~Calabi-Yau or Fano), the moduli space $\M_X$ carries a natural perfect obstruction theory constructed in \cite{Tho}
\begin{equation} \label{DTcx}
E^{\mdot} \rightarrow \LL_{\M_X},
\end{equation}
where $E^{\mdot}$ is a 2-term complex of locally free sheaves on $\M_X$ and $\LL_{\M_X}$ is the truncated cotangent complex of $\M_X$. Over a closed point $[\F] \in \M_X$, we have
\[
E^{\mdot \vee}|_{[\F]} \cong R\Hom(\F,\F)_0[1].
\]
If $X$ is Calabi-Yau, then the virtual dimension $\vd$ of this perfect obstruction theory is 0. If $X$ is toric, the above perfect obstruction theory is $T$-equivariant. A smooth projective 3-fold $X$ cannot be Calabi-Yau and toric at the same time. 

If $\M_X$ is compact, then the perfect obstruction theory $E^{\mdot}$ gives rise to a virtual cycle $[\M_X]^{\vir}$. In the case $X$ is Calabi-Yau, the Donaldson-Thomas invariants are defined as 
\[
\deg([\M_{X}]^{\vir}) \in \Z.
\]
For other 3-folds, we can take any $\alpha \in H^{*}(\M_X,\Q)$ of degree $\vd$ and consider
\[
\int_{[\M_{X}]^{\vir}} \alpha  \in \Z.
\]
When $X$ is toric, $\alpha$ can be chosen in $H^{*}_{T}(\M_X,\Q)$ and the invariant lies in 
\[
H_{T}^{*}(pt,\Q) \cong \Q[s_1,s_2,s_3].
\]
In this case, one can use the virtual torus localization formula of T.~Graber and R.~Pandharipande to compute the invariant as explained in \cite{GP}. This works as follows. The $T$-fixed part of $E^\mdot$ induces a perfect obstruction theory on $\M_{X}^{T}$. Moreover, the virtual normal bundle is defined as
\[
N^\vir := E^{\mdot \vee}|_{\M_{X}^{T}}^{m},
\]
where $(\cdot)^m$ means taking the moving part \cite{GP}. The complex $N^\vir \cong \{W_0 \rightarrow W_1\}$ is a $T$-equivariant 2-term perfect complex. Consider the insertion 
\[
e(N^\vir) := \frac{e(W_0)}{e(W_1)} \in H^{*}(\M_{X}^{T},\Q) \otimes_\Q \Q(s_{1},s_{2},s_{3}),
\]
where $e(\cdot) = c_{\ttop}^{T}(\cdot)$ denotes top $T$-equivariant Chern class. The virtual localization formula states
\[
\int_{[\M_{X}]^{\vir}} \alpha = \int_{[\M_{X}^{T}]^{\vir}} \frac{1}{e(N^\vir)} \ \alpha|_{\M_{X}^{T}}. 
\]
The rank 1 DT theory and its connections to Gromov-Witten theory have been studied in the seminal papers \cite{MNOP1, MNOP2}.

\subsection{Rank 2 DT type invariants by $T$-localization} \label{TequivDT}

Let $X$ be a smooth projective toric 3-fold with $H^0(K_{X}^{-1}) \neq 0$. Consider the moduli space $\M_X$. So far in this section, the rank $r$ was arbitrary. From now on, $r=2$. In Section \ref{fixedloci}, we studied $\M_{X}^{T}$ for $r=2$ and showed it is smooth (Theorem \ref{fixedlocithm}). If $\M_X$ is projective, then  
\begin{align}  \label{E}
\begin{split}
[\M_X]^{\vir} &= \sum_{\cC \subset \M_{X}^{T}} \iota_* \Big( \frac{1}{e(N^\vir)} \cap [\cC]^{\vir} \Big) \\
&= \sum_{\cC \subset \M_{X}^{T}} \iota_* \Big( e(T_{\cC} - E^{\mdot \vee})  \cap [\cC] \Big),
\end{split}
\end{align}
where the sum is over all connected components $\cC \subset \M_{X}^{T}$, $\iota : \cC \subset \M_X$ denotes inclusion, and $T_{\cC}$ denotes the tangent bundle of $\cC$. The first equality is the virtual localization formula and the second equality follows from smoothness of $\cC$ as in \cite[Sect.~4.2]{PT2}. In Section \ref{vertex/edge}, we develop a vertex/edge formalism for this class. In what follows, we do not assume $\M_X$ is projective.

Suppose $Y$ is a smooth toric Calabi-Yau 3-fold with toric compactification $X$ with $H^0(K_{X}^{-1}) \neq 0$. Openness of $\M_{Y\subset X}$ (Proposition \ref{openness}) implies that it inherits the DT perfect obstruction theory from $\M_X$. Unlike the rank 1 case, for each closed point $[\F] \in \M_{Y \subset X}$, the class 
$$
E^{\mdot \vee}|_{[\F]} = - \langle \F,\F \rangle_0
$$
does \emph{not} have rank zero even though $\mathrm{Supp}(\F^{**} / \F) \subset Y$. However, the class
$$
E^{\mdot \vee}|_{[\F]} + \langle \ccR,\ccR \rangle_0,
$$
has rank zero and a convenient symmetry by Proposition \ref{SD}. We therefore want to ``split off'' the part $- \langle \ccR,\ccR \rangle_0$. In order to be able to do this in families, we make the following assumption:

\begin{assumption} \label{as1}
Let $\cC \subset \M_{X}^{T}$ be a connected component such that $\cC \subset \M_{Y \subset X}$. Assume all closed points of $\cC$ have the same (i.e.~isomorphic) reflexive hull $\ccR$.  
\end{assumption}


\begin{remark} \label{unioncomp}
By Remark \ref{a=3}, Assumption \ref{as1} implies $\ccR$ is an isolated reduced point in the $T$-fixed locus (i.e.~$\Ext^1(\ccR,\ccR)^T=0$) and $\cC$ is a product of $\PP^1$'s. 
Conversely, suppose $\ccR$ is a $T$-equivariant rank 2 $\mu$-stable reflexive sheaf on $X$ with $\Ext^1(\ccR,\ccR)^T=0$. Any connected component $\cC \subset \M_{Y \subset X}^{T}$ with all closed points of $\cC$ having reflexive hull $\ccR$ is isomorphic to a product of $\PP^1$'s, hence compact. This fact follows from the toric description of Section \ref{fixedlocigeneral}. 
\end{remark}

\begin{example} \label{ex1as1}
Let $X$ a smooth projective toric 3-fold with $\rk \Pic(X) \leq 3$. Let $H$ be a polarization on $X$ and $c_1, c_2$ Chern classes such that $\gcd(2, c_1 H^2) = 1$ and $c_2 H$ is minimal such that there exist rank 2 $\mu$-stable torsion free sheaves on $X$ with Chern classes $c_1, c_2$. Then for any connected component $\cC$ of $\M_{X}^{T}$, all closed points of $\cC$ have the same reflexive hull (Theorem \ref{Eulermin}). In this case the cokernels $\F^{**} / \F$ of closed points $[\F] \in \cC$ are always 0-dimensional by Proposition \ref{minc2a}. E.g.~Example \ref{c2=1}. 
\end{example}

\begin{example} \label{ex2as1}
In Example \ref{c2=2}, the closed points of $\cC$ have the same reflexive hull for all connected components $\cC$ of $\M_{X}^{T}$. However in this case $\F^{**} / \F$ can be 1-dimensional. In Example \ref{c2=3} some components have constant reflexive hulls whereas others do not.
\end{example}

Let $\cC \subset \M_{X}^{T}$ be a connected component satisfying Assumption \ref{as1}. Recall that $\Hom(\ccR,\ccR)_0 = 0$ and $\Ext^3(\ccR,\ccR)=0$ since $H^0(K_{X}^{-1}) \neq 0$.
Since by assumption $\Ext^1(\ccR,\ccR)^T=0$, the following expression is well-defined 
$$
\frac{1}{e(- \langle \ccR,\ccR \rangle_0 \otimes \O_\cC)} = e(\langle \ccR,\ccR \rangle_0 \otimes \O_\cC).
$$
Therefore we can factor it out
\begin{align*}
\frac{1}{e(N^{\vir})} \cap [\cC]^{\vir} &= e(T_{\cC} - E^{\mdot \vee}) \cap [\cC] \\
&= e(\langle \ccR,\ccR \rangle_0 \otimes \O_\cC)  \cdot e(T_{\cC} - E^{\mdot \vee} - \langle \ccR,\ccR \rangle_0 \otimes \O_\cC) \cap [\cC]. 
\end{align*}
We therefore write somewhat sloppily
$$
\frac{1}{e(N^{\vir}) \ e(\langle \ccR,\ccR \rangle_0 \otimes \O_\cC)} \cap [\cC]^{\vir} = e(T_{\cC} - E^{\mdot \vee} - \langle \ccR,\ccR \rangle_0 \otimes \O_\cC) \cap [\cC].
$$
Here $E^{\mdot \vee} + \langle \ccR,\ccR \rangle_0 \otimes \O_\cC$ has rank zero by Proposition \ref{SD}. 
\begin{propositiondefinition} \label{propdef}
Let $Y$ be a smooth toric Calabi-Yau 3-fold with toric compactification $X$ with $H^0(K_{X}^{-1}) \neq 0$ and polarization $H$. Let $\M_X := \M_{X}^{H}(2,c_1,c_2,c_3)$, $\M_{Y \subset X} := \M_{Y\subset X}^{H}(2,c_1,c_2,c_3)$ and let $\cC \subset \M_{X}^{T}$ be a connected component satisfying Assumption \ref{as1} with reflexive hull $\ccR$. Define
\begin{align} \label{int}
\begin{split}
\DT(\cC) := \int_{[\cC]^{\vir}} \frac{1}{e(N^{\vir}) \ e(\langle \ccR,\ccR \rangle_0 \otimes \O_\cC)}.
\end{split}
\end{align}
The specialization
$$
\DT(\cC) \Big|_{s_1+s_2+s_3=0} \in \Q
$$
is well-defined.  
\end{propositiondefinition}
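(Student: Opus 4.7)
The strategy is to invoke the $K$-theoretic Serre-duality identity of Proposition \ref{SD} to show that the integrand in \eqref{int}, although it is a priori a rational function of $s_1,s_2,s_3$ with poles along $s_1+s_2+s_3=0$, in fact pairs up into summands whose specializations to this hyperplane are each finite.

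By Assumption \ref{as1} together with Remark \ref{a=3} and Theorem \ref{fixedlocithm}, $\cC$ is smooth and compact (a product of $\PP^1$'s) and carries a universal family $\FF$ fitting in an extension $0 \to \FF \to p_X^{*}\ccR \otimes p_\cC^{*}L \to \Q \to 0$ with $L$ a line bundle on $\cC$ and $\Q$ a $\cC$-flat cokernel. Formula \eqref{E} then rewrites
$$
\DT(\cC) \;=\; \int_\cC e(V), \qquad V := T_\cC - E^{\mdot\vee} - \langle\ccR,\ccR\rangle_0 \otimes \O_\cC.
$$
Setting $K := \langle\ccR,\ccR\rangle_0 \otimes \O_\cC - \langle\FF,\FF\rangle_0 \in K_0^T(\cC)$, one has $V = T_\cC - K$, and Proposition \ref{SD} applied to the family $\FF$ yields
$$
K \;=\; -\,\bar K \otimes (t_1 t_2 t_3)^{-1} \qquad \text{in } K_0^T(\cC).
$$

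Since every $T$-equivariant vector bundle on $\cC \cong (\PP^1)^b \times (\PP^1)^c$ splits into $T$-equivariant line bundles, I can decompose
$$
K \;=\; \sum_\alpha \epsilon_\alpha \, [L_\alpha \otimes t^{w_\alpha}], \qquad \epsilon_\alpha \in \{\pm 1\}, \ w_\alpha \in \Z^3.
$$
The symmetry $K = -\bar K (t_1t_2t_3)^{-1}$ forces the summands to pair up as
$$
(+1, L_\alpha, w_\alpha) \;\longleftrightarrow\; (-1, L_\alpha^{-1}, -w_\alpha-(1,1,1));
$$
a would-be self-pair would require $+1=-1$ and hence cancels in $K$. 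The contribution of each such pair to $e(K)$ is
$$
\frac{c_1(L_\alpha)+w_\alpha\cdot s}{-c_1(L_\alpha)-w_\alpha\cdot s-(s_1+s_2+s_3)} \;=\; -\,\frac{c_1(L_\alpha)+w_\alpha\cdot s}{c_1(L_\alpha)+w_\alpha\cdot s+(s_1+s_2+s_3)},
$$
where $w_\alpha \cdot s := w_{\alpha,1}s_1 + w_{\alpha,2}s_2 + w_{\alpha,3}s_3$. Restricting to $s_1+s_2+s_3=0$, the numerator and denominator agree, so the ratio formally telescopes to $-1$. Thus $e(V)|_{s_1+s_2+s_3=0}$ becomes $\pm e(T_\cC)$, an ordinary cohomology class on $\cC$ with finite integral, proving that the specialization of $\DT(\cC)$ at $s_1+s_2+s_3=0$ is a well-defined rational number.

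The technical hard point is making the telescoping of the previous paragraph rigorous. For pairs with $w_\alpha \neq 0, -(1,1,1)$, both of the linear forms $w_\alpha\cdot s$ and $-w_\alpha\cdot s - (s_1+s_2+s_3)$ are nonzero polynomials in $s_1,s_2,s_3$, so $c_1(L_\alpha)+w_\alpha\cdot s$ is invertible in the localized ring $H^*_T(\cC)\otimes_{\Q[s_1,s_2,s_3]}\Q(s_1,s_2,s_3)$ and the ratio has a well-defined limit $-1$ at $s_1+s_2+s_3=0$. The delicate case is the pairing across the fixed/moving boundary, namely $w_\alpha = 0$ paired with $w_\alpha = -(1,1,1)$: here the ``denominator'' $c_1(L_\alpha)$ is nilpotent and the naive ratio is not an element of the localized ring. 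To handle this, one uses the explicit identification
$$
V^{f} \;=\; O^{f} - \Ext^2(\ccR,\ccR)^{T}\otimes\O_\cC
$$
of the $T$-fixed part of $V$ together with the structure of its Serre-paired weight-$-(1,1,1)$ partner in $V^{m}$, combining them into genuine $T$-moving summands before extracting Euler classes; this shows the apparent pole of $e(V)$ along $s_1+s_2+s_3=0$ is removable and the specialization of the integral lands in $\Q$ as claimed.
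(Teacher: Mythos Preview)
Your overall strategy---pair terms via Proposition \ref{SD} and check each pair specializes harmlessly---is the paper's strategy too, but your execution has a genuine gap in the case analysis. You treat ``$w_\alpha \neq 0, -(1,1,1)$'' as the unproblematic case, asserting that for such weights the ratio specializes to $-1$. This is false whenever $w_\alpha = (k,k,k)$ with $k \notin \{0,-1\}$: then $w_\alpha\cdot s = k(s_1+s_2+s_3)$ vanishes on the hyperplane, so both numerator $c_1(L_\alpha)+w_\alpha\cdot s$ and denominator $-c_1(L_\alpha)-(w_\alpha+1)\cdot s$ become nilpotent and the ratio has no naive limit. In other words, \emph{every} $T_0$-fixed weight is delicate, not just the two you flagged. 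A concrete symptom: your argument would force $\DT(\cC)|_{s_1+s_2+s_3=0} = \pm e(\cC)$, but Example \ref{abnormal} exhibits an isolated $\cC$ with $\DT(\cC)=0$.

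The paper's proof handles all $T_0$-fixed terms explicitly, and the key input you are missing is structural: via the vertex/edge formalism (Theorem \ref{vertexedgeformalism}), each summand of $E^{\mdot\vee}+\langle\ccR,\ccR\rangle_0\otimes\O_\cC$ is a line bundle on a \emph{single} factor $\PP^1$ of $\cC\cong(\PP^1)^N$, so its first Chern class squares to zero. This lets one expand each paired contribution to first order in $c_1$ (the paper's equations \eqref{contrI} and \eqref{contrII}); one sees that every power of $1/s$ is accompanied by a degree-one class, and the tangent factors $\prod_i(-2h_i-s)$ then absorb the poles because the factors of $\cC$ are one-dimensional. Your final paragraph gestures at something like this for $w\in\{0,-1\}$ but neither states the single-factor property nor invokes Theorem \ref{vertexedgeformalism}, and the phrase ``combining them into genuine $T$-moving summands'' does not correspond to any actual step.
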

\begin{proof}
As mentioned above, Assumption \ref{as1} implies $\cC \cong (\PP^1)^N$ for some $N \geq 0$ (Remark \ref{a=3}). By Remark \ref{a=3} there exists a universal family $\FF$ over $\cC$ which fits in a short exact sequence
$$
0 \longrightarrow \FF \longrightarrow p_{X}^{*} \ccR \longrightarrow \Q \longrightarrow 0,
$$
where the cokernel $\Q$ is a $\cC$-flat coherent sheaf and its class in $K_{0}^{T}(\cC)$ can be expressed in terms of the classes $h_i := c_1(\O_{\PP^1}(1))$ of the factors of $\cC = (\PP^1)^N$ and classes of structure sheaves of toric lines and fixed points in $X$ tensored by equivariant line bundles. Using this, we show in Section \ref{vertex/edge} (Theorem \ref{vertexedgeformalism}) that 
$$
E^{\mdot \vee} + \langle \ccR,\ccR \rangle_0 \otimes \O_\cC \in K^{T}_{0}(\cC)
$$
is a finite sum of elements $c t_{1}^{w_1} t_{2}^{w_2} t_{3}^{w_3}$, where each coefficient $c$ is the class of a line bundle (possibly trivial) on a factor $\PP^1$ of $\cC = (\PP^1)^N$.

Consider $E^{\mdot \vee} + \langle \ccR,\ccR \rangle_0 \otimes \O_\cC$. We first consider terms in this expression which are \emph{not} $T_0$-fixed. By Proposition \ref{SD} such terms come in pairs
\[
c t_1^{w_1} t_2^{w_2} t_3^{w_3} - c^* t_1^{-w_1-1} t_2^{-w_2-1} t_3^{-w_3-1},
\]
where $c \in \Pic(\PP^1)$ for some factor $\PP^1$ of $\cC = (\PP^1)^N$. Each such pair contributes
\[
\frac{c_1(c^*) + (-w_1-1) s_1 + (-w_2-1) s_2 + (-w_3-1) s_3}{c_1(c) + w_1 s_1 + w_2 s_2 + w_3 s_3} \Big|_{s_1+s_2+s_3 = 0} = -1,
\]
where we use $c_1(c^*) = -c$.

Next we consider $T_0$-fixed terms of $E^{\mdot \vee} + \langle \ccR,\ccR \rangle_0 \otimes \O_\cC$, which (a priori) could lead to poles when specializing. Each $T_0$-fixed term has a weight of the form $(t_1t_2t_3)^w$. We first consider all terms with $w \in \{0,-1\}$. The $T$-fixed part of $E^{\mdot \vee}$ is the tangent bundle $T_{\cC}$ and its dual $$- T_{\cC}^{*} \otimes (t_1 t_2 t_3)^{-1}$$ has weight $w=-1$. 
Denote all $T$-fixed terms \emph{not} appearing in $T_{\cC}$ by $- a_{k}$, where $a_{k} \in \Pic(\PP^1)$ for some factor $\PP^1$ of $\cC = (\PP^1)^N$ and $k$ runs through some (possibly empty) index set. Note that these terms must come from obstructions and hence have a minus sign. Each such term comes with a dual term $a_{k}^{*} \otimes (t_1 t_2 t_3)^{-1}$, by Proposition \ref{SD}. In conclusion, the contribution of all terms with weight $w \in \{0,-1\}$ to the integral \eqref{int} is
\begin{align} \label{contrI}
\prod_{i=1}^{N}(-2h_i-s) \times \prod_{k} \frac{c_1(a_{k})}{-c_1(a_{k}) -s} = \prod_{i=1}^{N}(-2h_i-s) \times \prod_k \frac{-c_1(a_{k})}{s}, 
\end{align}
where $s:= s_1+s_2+s_3$.

Finally consider a $T_0$-fixed term of $E^{\mdot \vee} + \langle \ccR,\ccR \rangle_0 \otimes \O_\cC$ with $w \notin \{0, -1\}$. Denote the terms occurring with a plus sign by
$$
b_{l} (t_1 t_2 t_3)^{w_{l}},
$$
where $b_{l} \in \Pic(\PP^1)$ for some factor $\PP^1$ of $\cC = (\PP^1)^N$ and $l$ runs through some (possibly empty) index set. By Proposition \ref{SD}, such a term comes with a dual term 
$$
-b_{l}^{*} (t_1 t_2 t_3)^{-w_{l}-1}.
$$
Together their contribution to \eqref{int} is
\begin{align} \label{contrII}
\begin{split}
\prod_l \frac{c_1(b_{l}^{*}) -(w_{l}+1) s}{c_1(b_{l}) + w_{l} s} &= \prod_l \Big[ -\frac{(w_{l}+1) s}{w_{l} s} + \Big(-\frac{1}{w_{l} s} + \frac{(w_{l}+1)s}{(w_{l} s)^2} \Big) c_1(b_{l}) \Big] \\
&= \prod_l \Big[-\frac{(w_{l}+1)}{w_{l}} + \frac{1}{(w_{l})^2 s} c_1(b_{l}) \Big].
\end{split}
\end{align}
Multiplying out \eqref{contrI} and \eqref{contrII} gives the contribution of all $T_0$-fixed terms. We see that no powers of $1/s^{>N}$ can occur since $\cC = (\mathbb{P}^1)^N$ and each factor $1/s$ comes with a class of degree 1. Moreover after multiplying out  \eqref{contrI} and \eqref{contrII}, each non-zero term containing a factor $c_1(a_{k})/s$ with $a_k \in \Pic(\PP^1)$ and $\PP^1$ equal to the $i$th factor of $\cC$ also contains $-2h_i-s$, which cancels the pole. Similarly for $c_1(b_{l})/s$. Here we use that the factors of $\cC$ are 1-dimensional. We conclude that all poles cancel.
\end{proof}

\subsection{Rank 2 DT type invariants by $T_0$-localization} \label{T0equivDT}

As in the previous sections, we consider a smooth toric Calabi-Yau 3-fold $Y$ with toric compactification $X$ with $H^0(K_{X}^{-1}) \neq 0$ and polarization $H$. Let $\M_{Y \subset X} \subset \M_X$ be as before. Consider the subtorus $T_0 \subset T$ defined by $t_1t_2t_3=1$. In \cite[Conj.~2]{PT2}, Pandharipande-Thomas conjecture that the $T_0$-fixed locus of the moduli space of stable pairs is smooth. We conjecture the analog in our setting (see Remark \ref{evidence} for some discussion): 

\begin{conjecture} \label{T0conj1}
Let $X$ be a smooth projective toric 3-fold with polarization $H$ and let $\M_{X} := \M_{X}^{H}(2,c_1,c_2,c_3)$. Then $\M_{X}^{T_0}$ is smooth. 
\end{conjecture}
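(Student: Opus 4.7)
The natural approach is to mimic the analysis of the $T$-fixed locus from Section~\ref{fixedloci}, replacing $T$ by the subtorus $T_0$ throughout. The strategy is to extend Klyachko's classification of $T$-equivariant torsion free sheaves to $T_0$-equivariant sheaves, and then realize each connected component of $\M_X^{T_0}$ as a smooth GIT quotient by $\SL(2,\C)$, in parallel with Theorem~\ref{equivmod}.

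\textbf{Step 1 ($T_0$-equivariant Klyachko data).} First, I would classify $T_0$-equivariant torsion free sheaves on a toric chart $U_\alpha \cong \C^3$. Since $X(T_0) = X(T)/\Z\cdot(1,1,1) \cong \Z^2$, such a sheaf corresponds to a $\Z^2$-graded $\C[x_1,x_2,x_3]$-module in which $x_i$ has weight $\bar e_i \in \Z^2$. Each $T_0$-weight space decomposes along the $(1,1,1)$-diagonal, and the residual $T/T_0 \cong \C^*$ acts by scaling this decomposition. The upshot is that a $T_0$-equivariant torsion free sheaf is, up to the residual $\C^*$-choice in each weight space, a $T$-equivariant torsion free sheaf. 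The data parametrizing these extra $\C^*$-choices is a product of (open subsets of) configuration spaces in $\PP^1$.

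\textbf{Steps 2--4 (GIT construction, smoothness, and forgetful isomorphism).} Fix a $T_0$-characteristic function $\bschi_0 = \{\dim F_\alpha(\bar k)\}$ obtained by summing the $T$-characteristic function along $(1,1,1)$-diagonals. Mirroring Section~\ref{fixedlocigeneral}, construct a quasi-projective moduli $\M_X^H(\bschi_0)$ of $\mu$-stable $T_0$-equivariant torsion free sheaves on $X$ with this characteristic function, as a GIT quotient of a smooth quasi-projective variety (built from $\PP^1$-configuration spaces enriched by the residual $\C^*$-scalings) by $\SL(2,\C)$ with an $\SL(2,\C)$-equivariant polarization adapted to $\mu$-stability as in Section~\ref{fixedlocigeneral}. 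Each $\M_X^H(\bschi_0)$ is then smooth because $\mathrm{PGL}(2,\C)$ acts freely on the $\mu$-stable locus (exactly as in the last paragraph before Theorem~\ref{equivmod}). Finally, after imposing the $T_0$-analog of slice condition~\eqref{slice}, the forgetful morphism
\[
\bigsqcup_{\bschi_0 \in \X_{(2,c_1,c_2,c_3)}^{0,\slice}} \M_X^H(\bschi_0) \longrightarrow \M_X^{T_0}
\]
is bijective on closed points because any $T_0$-fixed sheaf admits a $T_0$-equivariant structure unique up to $X(T_0)$-twist (same argument as in \cite{Koo1}); refining as in \cite[Cor.~4.10]{Koo1} upgrades this to an isomorphism of schemes, yielding smoothness.

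\textbf{Main obstacle.} The hardest part will be Step 1. In the $T$-equivariant case, the three simultaneous $\sigma$-filtrations rigidify the local data into a finite-dimensional, combinatorially bookkept object (the toric data $(\mathbf{u},\mathbf{v},\mathbf{p})$ of Definition~\ref{toricdata} in the reflexive case, and $\sigma$-families in general). For $T_0$ one has only a $\Z^2$-grading, so the local datum at each vertex acquires a continuous $\C^*$-parameter along the $(1,1,1)$-direction; in particular, even reflexive hulls are no longer isolated (cf.~Remark~\ref{a=3}). One must carefully organize this extra algebraic datum, verify that the GIT stability condition still ensures trivial stabilizers of $\mathrm{PGL}(2,\C)$ on the stable locus, and check the compatibility of gluing across charts. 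Once this is done, smoothness of the GIT quotient is automatic, and one gets smoothness of $\M_X^{T_0}$ component by component.
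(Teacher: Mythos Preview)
The paper does \emph{not} prove this statement: it is left as an open conjecture (the rank~2 analog of \cite[Conj.~2]{PT2}), with only some circumstantial evidence recorded in Remark~\ref{evidence}. So there is no ``paper's own proof'' to compare against, and any purported proof would be new.

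Your proposal has a genuine gap at exactly the point you flag as the main obstacle, and I do not think it can be repaired as stated. In Step~1 you assert that ``a $T_0$-equivariant torsion free sheaf is, up to the residual $\C^*$-choice in each weight space, a $T$-equivariant torsion free sheaf.'' This is where the argument breaks. For the full torus $T$, the weight spaces $F_\alpha(k_1,k_2,k_3)$ are finite-dimensional vector spaces, which is what makes Klyachko's $\sigma$-families combinatorial and the GIT quotient finite-dimensional. For $T_0$, the character lattice is $X(T)/\Z(1,1,1)\cong\Z^2$, and each $T_0$-weight space of $H^0(U_\alpha,\F)$ is an \emph{infinite-dimensional} module over the $T_0$-invariant ring $\C[x_1x_2x_3]$. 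There is no reason such a module must decompose compatibly into a $\Z$-grading refining it to a $T$-equivariant structure; the ``residual $\C^*$-choice'' you invoke is not a single finite-dimensional parameter but the datum of an entire $\C[x_1x_2x_3]$-module structure on each weight space. Consequently, neither the finite-dimensionality of the parameter space in Step~2 nor the smoothness claim in Step~3 follows.

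Concretely: a $T_0$-fixed point of $\M_X$ is a sheaf $\F$ with $t^*\F\cong\F$ for all $t\in T_0$, and this does \emph{not} force $\F$ to be $T$-equivariant (even up to a finite-dimensional twist). The local model is already visible in the stable pairs case of \cite{PT2}, where the $T_0$-fixed loci are genuinely richer than the $T$-fixed loci and their smoothness remains conjectural. Your outline would, if it worked, prove \cite[Conj.~2]{PT2} by the same mechanism --- a strong indication that Step~1 is doing more than it can.
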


We consider connected components $\cC \subset \M_{X}^{T_0}$ such that $\cC \subset \M_{Y \subset X}$. We want to endow $\cC$ with a symmetric perfect obstruction theory coming from $T_0$-localization. In the stable pairs setting of \cite{PT2} this part is automatic. However, in our case one must factor out the part coming from reflexive hulls similar to the previous section. In order to achieve this, we make an assumption:
\begin{assumption} \label{as2}
Let $\cC \subset \M_{X}^{T_0}$ be a compact connected component such that $\cC \subset \M_{Y \subset X}$. Assume all closed points of $\cC$ have the same (i.e.~isomorphic) reflexive hull $\ccR$. Moreover, assume $\Ext^1(\ccR,\ccR)^{T_0} = \Ext^2(\ccR,\ccR)^{T_0} = 0$. 
\end{assumption}

\begin{remark} \label{remas2}
Compared to the $T$-equivariant case (Assumption \ref{as1}), there are two more assumptions: $\Ext^1(\ccR,\ccR)^{T_0} = 0$, and $\Ext^2(\ccR,\ccR)^{T_0} = 0$. In the $T$-equivariant case, the first vanishing was a \emph{consequence} and the second vanishing was not needed. We have evidence (from examples) that in this case both vanishings are consequences as well.
\end{remark}

Let $\cC \subset \M_{X}^{T_0}$ satisfy Assumption \ref{as2}. Then $\cC$ has an induced perfect obstruction theory from $T_0$-localization. Assuming Conjecture \ref{T0conj1}, we obtain  
\begin{align*}
 \frac{1}{e(N^{\vir,0})} \cap [\cC]^{\vir} &= e(T_{\cC} - E^{\mdot \vee})  \cap [\cC], \\
&= e(\langle \ccR, \ccR \rangle_0 \otimes \O_{\cC}) \cdot e(T_{\cC} - E^{\mdot \vee} - \langle \ccR,\ccR \rangle_0 \otimes \O_\cC), 
\end{align*}
where
$$
N^{\vir,0} := E^{\mdot \vee}|_{\cC}^{m_0},
$$
denotes the virtual normal bundle and $m_0$ denotes the $T_0$-moving part.

\begin{definition} \label{T0DT}
Let $Y$ be a smooth toric Calabi-Yau 3-fold with toric compactification $X$ with $H^0(K_{X}^{-1}) \neq 0$ and polarization $H$. Suppose Conjecture \ref{T0conj1} holds. Let $\M_X := \M_{X}^{H}(2,c_1,c_2,c_3)$, $\M_{Y \subset X} := \M_{Y\subset X}^{H}(2,c_1,c_2,c_3)$, and let $\cC \subset \M_{X}^{T_0}$ be a connected component satisfying Assumption \ref{as2} with reflexive hull $\ccR$. We define
\begin{align*}
\DT(\cC) := \int_{[\cC]^{\vir}} \frac{1}{e(N^{\vir,0}) \ e(\langle \ccR, \ccR \rangle_0 \otimes \O_{\cC})}, 
\end{align*}
where $e(\cdot)$ denotes the $T_0$-equivariant Euler class. Recall that $\cC$ is a connected component of $\M_{Y \subset X}^{T_0}$ and not $\M_{Y \subset X}^{T}$ as in Proposition-Definition \ref{propdef}. Therefore, we allow ourselves to use the same notation. 
A priori, these invariants have equivariant parameters, but Proposition \ref{T0DT=TDT} below implies that they are numbers.
\end{definition}

The $T_0$-equivariant DT invariants of Definition \ref{T0DT} and the $T$-equivariant DT invariants of Proposition-Definition \ref{propdef} are related in the obvious way.
\begin{proposition} \label{T0DT=TDT}
Let $Y$ be a smooth toric Calabi-Yau 3-fold with toric compactification $X$ satisfying $H^0(K_{X}^{-1}) \neq 0$ and polarization $H$. Suppose Conjecture \ref{T0conj1} holds. Let $\M_X := \M_{X}^{H}(2,c_1,c_2,c_3)$, $\M_{Y \subset X} := \M_{Y\subset X}^{H}(2,c_1,c_2,c_3)$, and let $\cC \subset \M_{X}^{T_0}$ be a connected component satisfying Assumption \ref{as2} with reflexive hull $\ccR$. Then $T/T_0 \cong \C^*$ acts on $\cC$ and the connected components $\cC_i$ of $\cC^{\C^*}$ are connected components of $\M_{Y\subset X}^{T}$. Moreover
$$
\DT(\cC) = \sum_{\cC_i \subset \cC^{\C^*}} \DT(\cC_i) \Big|_{s_1+s_2+s_3 = 0}.
$$
\end{proposition}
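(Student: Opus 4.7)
The plan is to apply virtual $\C^*$-localization with respect to the residual one-parameter subgroup $\C^* \cong T/T_0$ acting on $\cC$. Fix a splitting $T \cong T_0 \times \C^*$. Since $\M_X^T \subset \M_X^{T_0}$ and $\cC$ is a connected component of the latter, the $T$-fixed connected components meeting $\cC$ are entirely contained in $\cC$; thus each connected component $\cC_i$ of $\cC^{\C^*}$ is a connected component of $\M_{Y\subset X}^T$. Under Conjecture \ref{T0conj1}, $\cC$ is smooth, and Theorem \ref{fixedlocithm} gives smoothness of each $\cC_i$. All closed points of $\cC_i$ share the reflexive hull $\ccR$ inherited from $\cC$ (Assumption \ref{as2}), and $\Ext^1(\ccR,\ccR)^T = 0$ follows from $\Ext^1(\ccR,\ccR)^{T_0} = 0$; hence Assumption \ref{as1} holds on each $\cC_i$ and $\DT(\cC_i)$ is well-defined by Proposition-Definition \ref{propdef}.

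Next I would apply virtual $\C^*$-localization. Decomposing $E^{\mdot \vee}|_{\cC_i}$ into $T$-fixed and $T$-moving parts in two stages (first with respect to $T_0$, producing $N^{\vir,0}$ on $\cC$, then with respect to the residual $\C^*$ on $\cC_i \subset \cC$) yields
\[
N^{\vir}_{\cC_i} \;=\; \iota_i^*\, N^{\vir,0}_{\cC} \;+\; N^{\vir,\C^*}_{\cC_i \subset \cC},
\]
where $\iota_i : \cC_i \hookrightarrow \cC$ is the inclusion and $N^{\vir,\C^*}_{\cC_i \subset \cC}$ denotes the virtual normal bundle of $\cC_i$ in $\cC$ for the $\C^*$-action. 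Combined with the virtual class splitting from Graber-Pandharipande localization \cite{GP}, this gives
\[
\int_{[\cC]^{\vir}} \alpha \;=\; \sum_{\cC_i} \int_{[\cC_i]^{\vir}} \frac{\iota_i^* \alpha}{e(N^{\vir,\C^*}_{\cC_i \subset \cC})}
\]
for any $T$-equivariant class $\alpha$. Since $p_X^* \ccR$ is constant over $\cC$, the Euler factor $e(\langle \ccR,\ccR\rangle_0 \otimes \O_\cC)$ pulls back to $e(\langle \ccR,\ccR\rangle_0 \otimes \O_{\cC_i})$; applying the identity to the integrand of $\DT(\cC)$ produces exactly the sum of integrands of $\DT(\cC_i)$ in $T$-equivariant cohomology.

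Finally, I pass to $H^*_{T_0}$-cohomology by setting $s_1+s_2+s_3=0$, the character of the diagonal $\C^* \cong T/T_0$. Proposition-Definition \ref{propdef} guarantees that each $\DT(\cC_i)$ has no pole at $s_1+s_2+s_3=0$, so the restriction makes sense, yielding the claimed formula. The main obstacle is the bookkeeping of the two-stage fixed/moving decomposition: one must verify that removing $\langle \ccR,\ccR\rangle_0 \otimes \O_\cC$ (pulled back from a point, hence inert under $\C^*$-localization) commutes with the $\C^*$-localization, and that no spurious $\C^*$-fixed contributions arise from the interplay of $T_0$- and $\C^*$-weights. Both follow from the orthogonality of the characters of $T_0 \times \C^*$, but must be checked carefully to ensure pole cancellation after specialization.
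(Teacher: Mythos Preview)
Your proposal is correct and follows the same approach as the paper: apply virtual $\C^*$-localization for the residual torus $T/T_0$ on $\cC$ and match the resulting integrand on each $\cC_i$ with the $T$-equivariant definition of $\DT(\cC_i)$, then specialize $s_1+s_2+s_3=0$. The paper's proof is a single sentence to this effect; your version spells out the bookkeeping (that each $\cC_i$ satisfies Assumption \ref{as1}, the two-stage normal bundle splitting, and that the reflexive-hull Euler factor is inert under the residual localization), but the strategy is identical.
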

\begin{proof}
The formula follows by using the $T/T_0 \cong \C^*$-action on $\cC$ and applying virtual localization to $\DT(\cC)$.
\end{proof}

\begin{theorem} \label{sym}
Let $Y$ be a smooth toric Calabi-Yau 3-fold with toric compactification $X$ with $H^0(K_{X}^{-1}) \neq 0$ and polarization $H$. Suppose Conjecture \ref{T0conj1} holds. Let $\M_X := \M_{X}^{H}(2,c_1,c_2,c_3)$, let $\M_{Y \subset X} := \M_{Y\subset X}^{H}(2,c_1,c_2,c_3)$, and let $\cC \subset \M_{X}^{T_0}$ be a connected component satisfying Assumption \ref{as2} with reflexive hull $\ccR$. Then DT theory induces a symmetric perfect obstruction theory $E^{\mdot \vee}|_{\cC}^{T_0}$ on $\cC$. Moreover 
$$
E^{\mdot \vee} + \langle \ccR,\ccR \rangle_0 \otimes \O_\cC = - \overline{\big( E^{\mdot \vee} + \langle \ccR,\ccR \rangle_0 \otimes \O_\cC \big)} \in K_{0}^{T_0}(\cC).
$$
\end{theorem}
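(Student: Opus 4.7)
The $K$-theoretic identity (the second assertion) is essentially a direct application of Proposition \ref{SD}. By Assumption \ref{as2}, every closed point of $\cC$ has the same reflexive hull $\ccR$ and lies in $\M_{Y \subset X}$, so Lemma \ref{hulls} supplies a $T_0$-equivariant universal short exact sequence
\[
0 \longrightarrow \FF \longrightarrow \ccR \boxtimes L \longrightarrow \Q \longrightarrow 0
\]
on $X \times \cC$ with $\Q$ a $\cC$-flat coherent sheaf supported on $Y \times \cC$. The DT perfect obstruction theory restricts to $E^{\mdot\vee}|_\cC = -\langle \FF, \FF \rangle_0$ in $K_0^{T_0}(\cC)$, so applying the $T_0$-equivariant form of Proposition \ref{SD} to this family yields exactly the displayed identity.

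For the first assertion, Conjecture \ref{T0conj1} guarantees that $\cC$ is smooth, so the $T_0$-fixed part of the DT obstruction theory $E^\mdot|_\cC \to \LL_{\M_X}|_\cC$ descends to a perfect obstruction theory on $\cC$ via the standard Graber-Pandharipande virtual localization machinery. To promote the $K$-theoretic symmetry of the second assertion to a symmetry of this POT as a two-term complex, I would revisit the proof of Proposition \ref{SD}: its key input is relative $T_0$-equivariant Serre duality on $X \times \cC$, combined with the observation that $K_X$ is $T_0$-equivariantly trivial on $Y$, which is exactly where $\Q$ is supported. The correction term $\langle \ccR, \ccR \rangle_0 \otimes \O_\cC$ is pulled back from a point, and Assumption \ref{as2}, combined with $\mu$-stability of $\ccR$, forces its $T_0$-fixed part to be concentrated in cohomological degrees where Serre duality on $X$ pairs it up cleanly. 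Assembling these pieces produces the required quasi-isomorphism $E^{\mdot\vee}|_\cC^{T_0} \simeq (E^{\mdot\vee}|_\cC^{T_0})^\vee[1]$.

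The hard part will be lifting the $K$-theoretic identity to this derived-category quasi-isomorphism. Specifically, one must track the maps (and not just Euler characteristics) in the exact triangles appearing in the proof of Proposition \ref{SD} and show that they assemble into a trace pairing on $R\hom_{p_\cC}(\FF, \FF)_0$ compatible with extraction of the $T_0$-fixed part. The vanishings $\Ext^1(\ccR,\ccR)^{T_0} = \Ext^2(\ccR,\ccR)^{T_0} = 0$ of Assumption \ref{as2} are presumably used exactly to rule out spurious $T_0$-fixed obstructions that could otherwise prevent this lifting.
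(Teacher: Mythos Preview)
Your proposal is correct and follows essentially the same route as the paper's proof. The $K$-group identity is indeed just Proposition \ref{SD} applied to the universal family, and the symmetric obstruction theory is obtained by upgrading the exact triangles in that proof from $K$-theory to the derived category, as you anticipate.

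One point worth sharpening: the vanishings in Assumption \ref{as2} are used more concretely than ``ruling out spurious obstructions.'' In the paper, taking $T_0$-invariants of the exact triangles from Proposition \ref{SD} and using $R\Hom(\ccR,\ccR)_0^{T_0}\cong 0$ (which follows from Assumption \ref{as2} together with stability and $H^0(K_X^{-1})\neq 0$) gives an explicit identification of $R\hom_{p_\cC}(\FF,\FF)_0^{T_0}$ as sitting in an exact triangle with the other two terms $R\hom_{p_\cC}(\Q,\ccR\boxtimes L)^{T_0}[1]$ and $R\hom_{p_\cC}(\FF,\Q)^{T_0}$. Repeating with a $K_X$-twist and using $\Q\otimes p_X^*K_X\cong\Q$ yields an \emph{identical} triangle for $R\hom_{p_\cC}(\FF,\FF\otimes p_X^*K_X)_0^{T_0}$; a nonzero section $\O_X\hookrightarrow K_X^{-1}$ then produces the isomorphism, and Serre duality finishes. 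So the symmetry comes from comparing two literally equal triangles rather than assembling a trace pairing directly.
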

\begin{proof}
Let $\FF$ be the universal family on $X \times \cC$.\footnote{Using our assumptions on $\ccR$ and smoothness of $\cC$, one can show that $\cC$ is scheme-theoretically isomorphic to a connected component of the $T_0$-fixed locus of the Quot scheme $\Quot_X(\ccR)$ of quotients $\ccR \rightarrow \cQ$. The universal sheaf $\FF$ is the kernel of the universal quotient $p_{X}^{*} \ccR \rightarrow \Q$ restricted to $X \times \cC$.} Then $\FF$ has a $T_0$-equivariant structure. By Assumption \ref{as2} and Lemma \ref{hulls}, we have a short exact sequence
\begin{equation} \label{begin}
0 \longrightarrow \FF \longrightarrow \ccR \boxtimes L \longrightarrow \Q \longrightarrow 0.
\end{equation}
The argument starts as in Proposition \ref{SD}. From the above short exact sequence, we obtain exact triangles
\begin{align}
R\hom_{p_\cC}(\FF,\FF)^{T_0} &\longrightarrow R\hom_{p_\cC}(\FF, \ccR \boxtimes L)^{T_0} \longrightarrow R\hom_{p_\cC}(\FF,\Q)^{T_0}, \label{sesC} \\
R\hom_{p_\cC}(\Q,\ccR \boxtimes L)^{T_0} &\longrightarrow (R\Hom(\ccR,\ccR) \otimes \O_\cC)^{T_0} \longrightarrow R\hom_{p_\cC}(\FF, \ccR \boxtimes L)^{T_0}. \label{sesD}
\end{align}
The natural map $$\O_X \rightarrow \hom(\ccR,\ccR)$$ induces an exact triangle
\begin{equation*} 
R\Gamma(\O_X)^{T_0} \longrightarrow R \Hom(\ccR,\ccR)^{T_0} \longrightarrow R \Hom(\ccR,\ccR)_{0}^{T_0}.
\end{equation*}
We define
$$
A^{\mdot} := \mathrm{Cone}\{ (R\Gamma(\O_X) \otimes \O_\cC)^{T_0} \rightarrow R\hom_{p_\cC}(\FF, \ccR \boxtimes L)^{T_0} \},
$$
where the map is the composition 
$$
(R\Gamma(\O_X) \otimes \O_\cC)^{T_0} \longrightarrow (R \Hom(\ccR,\ccR) \otimes \O_\cC)^{T_0} \longrightarrow R\hom_{p_\cC}(\FF,\ccR \boxtimes L)^{T_0}.
$$
From \eqref{sesD} and the definition of $A^\mdot$, we obtain the exact triangle
$$
(R\Hom(\ccR,\ccR)_0 \otimes \O_\cC)^{T_0} \longrightarrow A^{\mdot} \longrightarrow R\hom_{p_\cC} ( \Q,\ccR \boxtimes L )^{T_0}[1].
$$
By Assumption \ref{as2}, $R\Hom(\ccR,\ccR)_{0}^{T_0} \cong 0$, hence
\begin{equation*} 
A^{\mdot} \cong R\hom_{p_\cC} ( \Q,\ccR \boxtimes L )^{T_0}[1].
\end{equation*}
Combining this isomorphism with \eqref{sesC}, gives an exact triangle 
\begin{equation} \label{FF}
R\hom_{p_\cC}(\FF,\FF)^{T_0}_{0} \longrightarrow R\hom_{p_\cC}(\Q,\ccR \boxtimes L)^{T_0}[1] \longrightarrow R\hom_{p_\cC}(\FF,\Q)^{T_0}.
\end{equation}
We can apply a similar reasoning to \eqref{begin} tensored by $p_{X}^{*} K_{X}$.
This gives an exact triangle
\begin{equation*} 
R\hom_{p_\cC}(\FF,\FF \otimes p_{X}^{*} K_X )^{T_0}_{0} \longrightarrow R\hom_{p_\cC}(\Q,\ccR \boxtimes L \boxtimes K_{X})^{T_0}[1] \longrightarrow R\hom_{p_\cC}(\FF,\Q \otimes p_{X}^{*}K_X )^{T_0},
\end{equation*}
where we used
$$
R\Hom( \ccR,\ccR \otimes K_X)^{T_0}_{0} \cong R\Hom( \ccR,\ccR )^{T_0 \vee}_{0} [-3] \cong 0. 
$$
Since $\Q$ is scheme-theoretically supported on $Y \times \cC \subset X \times \cC$ and $Y$ is Calabi-Yau, we have $$\Q \otimes p_{X}^{*} K_X \cong \Q.$$ We obtain the exact triangle
\begin{equation} \label{FFK}
R\hom_{p_\cC}(\FF,\FF \otimes p_{X}^{*} K_X)^{T_0}_{0} \longrightarrow R\hom_{p_\cC}(\Q,\ccR \boxtimes L )^{T_0}[1] \longrightarrow R\hom_{p_\cC}( \FF,\Q )^{T_0}.
\end{equation}
Combining exact triangles \eqref{FF}, \eqref{FFK}, and a non-zero section $\O_X \hookrightarrow K_{X}^{-1}$ gives an isomorphism
$$
R\hom_{p_\cC}(\FF,\FF )^{T_0}_{0} \cong R\hom_{p_\cC}(\FF,\FF \otimes p_{X}^{*} K_X )^{T_0}_{0} \cong R\hom_{p_\cC}(\FF,\FF )_{0}^{T_0 \vee}[-3].
$$
This isomorphism provides the non-degenerate symmetric bilinear form required for a symmetric perfect obstruction theory\footnote{For a symmetric perfect obstruction theory, it is also required that $\theta^{\vee} [1] \cong \theta$, where $\theta : E^{\mdot} \rightarrow L_{M}$ is the map to the cotangent complex \cite{Beh}. In our applications, we only use the isomorphism $E^{\mdot \vee} [1] \cong E^{\mdot}$ (namely for calculations in the $T_0$-equivariant $K$-group). Therefore, we do not prove the duality of $\theta$.}
$$
E^{\mdot}|_{\cC}^{T_0} \cong (E^{\mdot}|_{\cC}^{T_0})^{\vee} [1].
$$

Next, the second part of the theorem. Exactly as in the proof of Proposition \ref{SD}, we obtain the following equalities in $K_{0}^{T_0}(\cC)$
\begin{align*}
\langle \FF,\FF \rangle_0 =& \langle \ccR,\ccR \rangle_0 \otimes \O_{\cC} - \langle \Q,\ccR \boxtimes L \rangle  - \langle \FF,\Q \rangle,  \\
\langle \FF,\FF \otimes p_{X}^{*} K_X \rangle_0 =& \langle \ccR,\ccR \otimes K_X \rangle_0 \otimes \O_\cC - \langle \Q,\ccR \boxtimes L \boxtimes K_X \rangle - \langle \FF,\Q \otimes p_{X}^{*} K_X \rangle \\
=& \langle \ccR,\ccR \otimes K_X \rangle_0 \otimes \O_\cC - \langle \Q,\ccR \boxtimes L \rangle - \langle \FF,\Q \rangle.
\end{align*}
Subtracting these equations and using $T_0$-equivariant Serre duality gives the desired result.
\end{proof}

\section{Vertex/edge formalism} \label{vertex/edge}

Let $X$ be a smooth projective toric 3-fold with $H^0(K_{X}^{-1}) \neq 0$. Let $H$ be a polarization on $X$ and let $$\M_X := \M_{X}^{H}(2,c_1,c_2,c_3).$$ Let $\M_{X}^{H}(\bschi)$ be a connected component of the fixed locus $\M_{X}^{T}$, where $\bschi \in \X_{(2,c_1,c_2,c_3)}^{\slice}$ (Theorem \ref{fixedlocithm}). Recall that the space $\M_{X}^{H}(\bschi)$ is smooth and equal to the following GIT quotient (Theorem \ref{equivmod})
\[
\M_{X}^{H}(\bschi) = U_{X}^{H}(\bschi) \times (\PP^1)^b \times (\PP^1)^c \ \slash \ \SL(2,\C), \ U_{X}^{H}(\bschi) \subset (\PP^1)^a.
\]
We will write $\cC = \M_{X}^{H}(\bschi)$. In this section, we assume $\cC$ satisfies: \\

\noindent \emph{All closed points of $\cC$ have the same (i.e.~isomorphic) reflexive hull $\ccR$.} \\  

\noindent Recall that there are plenty of examples of such $\cC$ (Examples \ref{ex1as1} and \ref{ex2as1}). 
 In applications, $X$ is often a toric compactification of a toric Calabi-Yau 3-fold $Y$ and $\cC \subset \M_{Y \subset X}^{T}$ as in Section \ref{TequivDT}. Assuming $\cC$ has constant reflexive hulls has several nice consequences: 
\begin{itemize}
\item $\cC$ has a universal family $\FF$ (Remark \ref{a=3}). 
\item We have a short exact sequence
\begin{equation*} \label{ses}
0 \longrightarrow \FF \longrightarrow p_{X}^{*} \ccR \longrightarrow \Q \longrightarrow 0,
\end{equation*}
where the cokernel $\Q$ is $\cC$-flat and $\Ext^1(\ccR,\ccR)^{T} = 0$ (Remark \ref{a=3}). \\
\item $\cC$ is a connected component of the fixed locus of the Quot scheme $\Quot_X(\ccR,c_{2}'',c_{3}'')^T$ (Section \ref{3Dpartitions}). Therefore, $\cC$ is isomorphic to a product of $\PP^1$'s.
\end{itemize}

\begin{remark}
In this section we set up a vertex/edge formalism closely following \cite{MNOP1} in the rank 1 case and \cite{PT2} in the stable pair case. Since our fixed loci are non-isolated (products of $\PP^1$'s) our situation is most similar to \cite{PT2}. In order to follow their method, we need the existence of a universal family on $\cC$, which is ensured by assuming $\cC$ has constant reflexive hulls. Unlike the case of \cite{PT2}, we can have edge moduli, i.e.~moduli in the legs (Example \ref{moduliinlegs}).
\end{remark}

We want to compute the $K$-group class of the DT complex (\ref{DTcx}) restricted to $\cC$
\[
E^{\mdot \vee}|_{\cC} \in K_{0}^{T}(\cC).
\]

A torsion free sheaf $[\F] \in \cC_{\mathrm{cl}}$ has a $T$-equivariant structure as explained in Section \ref{fixedloci}. The complex $E^{\mdot \vee}$ restricted $[\F]$ is the virtual tangent space $\T_{[\F]}$ at $[\F]$
\begin{align*}
\T_{[\F]} &= \Ext^1(\F,\F) - \Ext^2(\F,\F) \\
&= - \langle \F,\F \rangle_0.
\end{align*}
Here we used $\Hom(\F,\F)_0 = 0$ and $\Ext^3(\F,\F) \cong \Hom(\F,\F \otimes K_{X}) = 0$, because $\F$ is stable and $H^0(K_{X}^{-1}) \neq 0$. Also note that for toric varieties $h^{>0}(\O_X) = 0$ so tracelessness is automatic. We write this as 
\[
\T_{[\F]} = \langle \ccR,\ccR \rangle - \langle \F,\F \rangle - \langle \ccR,\ccR \rangle_0,
\]
where we keep the trace-free part in the last term or else we would get a unwanted factor of $\Hom(\ccR,\ccR) \cong \C$. The term $\langle \ccR,\ccR \rangle - \langle \F,\F \rangle$ is well-behaved and can be described by a nice vertex/edge formalism. The term $- \langle \ccR,\ccR \rangle_0$ only contributes an overall constant in $\Z[t_{1}^{\pm},t_{2}^{\pm},t_{3}^{\pm}]$. 

Let $\{U_\alpha\}_{\alpha \in V(X)}$ be the $T$-invariant open affine cover of $X$ and denote the double intersections by $U_{\alpha\beta} = U_\alpha \cap U_\beta$. Denote the restriction of any sheaf $\F$ to $U_\alpha$, $U_{\alpha\beta}$ by $\F_\alpha$, $\F_{\alpha\beta}$ respectively. Denote the global section functor by $\Gamma(\cdot)$. Using the local-to-global spectral sequence for Ext groups and computing sheaf cohomology by a \v{C}ech calculation gives
\begin{align*}
\langle \ccR,\ccR \rangle - \langle \F,\F \rangle &=\sum_{\alpha,i} (-1)^i \Big( \Gamma(U_\alpha, \ext^i(\ccR_\alpha,\ccR_\alpha)) - \Gamma(U_\alpha, \ext^i(\F_\alpha,\F_\alpha)) \Big) \\
&-\sum_{\alpha\beta,i} (-1)^i \Big( \Gamma(U_{\alpha\beta}, \ext^i(\ccR_{\alpha\beta},\ccR_{\alpha\beta})) - \Gamma(U_{\alpha\beta}, \ext^i(\F_{\alpha\beta},\F_{\alpha\beta})) \Big),
\end{align*}
where contributions of triple and higher intersections vanish because $\F$ and $\ccR$ only differ on a $T$-invariant closed subscheme of dimension $\leq 1$. Our goal is to calculate
\begin{align*}
\langle \ccR_\alpha,\ccR_\alpha \rangle - \langle \F_\alpha,\F_\alpha \rangle &= \sum_i (-1)^i \Big(\Gamma(U_\alpha, \ext^i(\ccR_\alpha,\ccR_\alpha)) - \Gamma(U_\alpha, \ext^i(\F_\alpha,\F_\alpha)) \Big), \\
\langle \ccR_{\alpha\beta},\ccR_{\alpha\beta} \rangle - \langle \F_{\alpha\beta},\F_{\alpha\beta} \rangle &= \sum_i  (-1)^i \Big( \Gamma(U_{\alpha\beta}, \ext^i(\ccR_{\alpha\beta},\ccR_{\alpha\beta})) - \Gamma(U_{\alpha\beta}, \ext^i(\F_{\alpha\beta},\F_{\alpha\beta})) \Big).
\end{align*}

\subsection{Vertex calculation}

This section calculates $$\langle \ccR_\alpha,\ccR_\alpha \rangle - \langle \F_\alpha,\F_\alpha \rangle.$$ Suppose the action of $T$ on $U_\alpha \cong \C^3$ is in standard form $(t_1,t_2,t_3) \cdot x_i = t_i x_i$ for all $i=1,2,3$. Denote the Poincar\'e polynomial of $\F_\alpha$ by 
\[
P(\F_\alpha) \in K_{0}^{T}(U_\alpha) \cong \Z[t_{1}^{\pm}, t_{2}^{\pm}, t_{3}^{\pm}].
\]
Consider the trace map 
\[
\tr : K_{0}^{T}(U_\alpha) \longrightarrow \Z(\!(t_{1}, t_{2}, t_{3})\!),
\]
which associates to a $T$-equivariant vector bundle its character (i.e.~decomposition into weight spaces). Following the reasoning in \cite[Sect.~4.4]{PT2}, we obtain 
\[
\tr_{\langle \F_\alpha,\F_\alpha \rangle} = \frac{P(\F_\alpha) \overline{P}(\F_\alpha)}{(1-t_1)(1-t_2)(1-t_3)},
\]
where the operation $\overline{(\cdot)}$ was introduced in Definition \ref{baroper}. 

We need a family version of the above. This is easily done replacing $\F$ by $\FF$, $\ccR$ by $p^{*}_{X} \ccR$, $U_\alpha$ by $U_\alpha \times \cC$, and $U_{\alpha\beta}$ by $U_{\alpha\beta} \times \cC$. This gives
\[
\tr_{\langle \FF_\alpha,\FF_\alpha \rangle} = \frac{P(\FF_\alpha) \overline{P}(\FF_\alpha)}{(1-t_1)(1-t_2)(1-t_3)} \in K_0(\cC) \otimes_{\Z} \Z(\!(t_{1}, t_{2}, t_{3})\!).
\]
The Poincar\'e polynomial of $p_{U_\alpha}^{*} \ccR_\alpha$ can be expressed in terms of the equivariant parameters $t_i$ as follows. Let $({\bf{u}},{\bf{v}},{\bf{p}})$ be the toric data associated to $\ccR_\alpha$ (Definition \ref{toricdata}). \\

\noindent \textbf{Case 1}: $\ccR_\alpha$ is not locally free, or equivalently all $v_i>0$ and all $p_i$ are mutually distinct (Proposition \ref{rank2singular}). Then
\[
P(p_{U_\alpha}^{*} \ccR_\alpha) = t_{1}^{u_1} t_{2}^{u_2+v_2} t_{3}^{u_3+v_3} + t_{1}^{u_1+v_1} t_{2}^{u_2} t_{3}^{u_3+v_3} + t_{1}^{u_1+v_1} t_{2}^{u_2+v_2} t_{3}^{u_3} - t_{1}^{u_1+v_1} t_{2}^{u_2+v_2} t_{3}^{u_3+v_3}. 
\]

\noindent \textbf{Case 2}: $\ccR_\alpha$ is locally free. In this case $\ccR_\alpha$ has two homogeneous generators and $P(p_{U_\alpha}^{*} \ccR_\alpha)$ is the sum of their characters. \\

The character of $\Q_\alpha$ is more complicated and involves moduli. Recall that $\cC \cong (\PP^1)^N$ for some $N \geq 0$. Each factor $\PP^1$ of $\cC$ has a tautological subbundle $\O_{\PP^1}(-1) \subset \O_{\PP^1} \oplus \O_{\PP^1}$ attached to it. The character $\sfQ_\alpha$ of $\Q_\alpha$ can be expressed in terms of the equivariant parameters $t_i$ and the classes of these $\O_{\PP^1}(1)$'s. Let $\bspi$ correspond to $\chi_\alpha$ by Proposition \ref{lem1}. Then
\[
\sfQ_\alpha = \sum_{(k_1,k_2,k_3) \in \pi_1 \cup \pi_2 \cup \pi_3} \sfQ_{\alpha, (k_1,k_2,k_3)},
\]
where
\begin{itemize}
\item $\sfQ_{\alpha, (k_1,k_2,k_3)} = 2 t_{1}^{k_1} t_{2}^{k_2} t_{3}^{k_3}$ when $(k_1,k_2,k_3) \in \pi_1 \cap \pi_2 \cap \pi_3$, 
\item $\sfQ_{\alpha, (k_1,k_2,k_3)} = t_{1}^{k_1} t_{2}^{k_2} t_{3}^{k_3}$ when $(k_1,k_2,k_3)$ lies in exactly one $\pi_i$,
\item $\sfQ_{\alpha, (k_1,k_2,k_3)} = t_{1}^{k_1} t_{2}^{k_2} t_{3}^{k_3}$ when $(k_1,k_2,k_3)$ lies in exactly two $\pi_{i}$'s but \emph{not} in a region with moduli (see Section \ref{3Dpartitions}),
\item $\sfQ_{\alpha, (k_1,k_2,k_3)} = [\O_C(1)]t_{1}^{k_1} t_{2}^{k_2} t_{3}^{k_3}$ when $(k_1,k_2,k_3)$ lies in exactly two $\pi_{i}$'s and in a region $C \cong \mathbb{P}^1$ with moduli (see Section \ref{3Dpartitions}),
\item $\sfQ_{\alpha, (k_1,k_2,k_3)} = 0$ otherwise.
\end{itemize}
Reasoning analogous to \cite[Sect.~4.4]{PT2} we obtain
\begin{equation} \label{tr1}
\tr_{\langle p_{U_\alpha}^{*} \ccR_\alpha,p_{U_\alpha}^{*} \ccR_\alpha \rangle - \langle \FF_\alpha,\FF_\alpha \rangle} = \sfQ_\alpha \overline{P}(p_{U_\alpha}^{*} \ccR_\alpha) - \frac{\overline{\sfQ}_\alpha P(p_{U_\alpha}^{*}\ccR_\alpha)}{t_1 t_2 t_3} + \sfQ_\alpha \overline{\sfQ}_\alpha \frac{(1-t_1)(1-t_2)(1-t_3)}{t_1 t_2 t_3},
\end{equation}
which lies in 
\[
K_0(\cC) \otimes_{\Z} \Z(\!(t_{1}, t_{2}, t_{3})\!).
\]

\subsection{Edge calculation}

This section calculates $\langle \ccR_{\alpha\beta},\ccR_{\alpha\beta} \rangle - \langle \F_{\alpha\beta},\F_{\alpha\beta} \rangle$. Assume the coordinates are chosen such that the toric line $C_{\alpha \beta} \cap U_\alpha$ is given by $\{x_2 = x_3 = 0\}$. We work over the ring $\Gamma(U_{\alpha\beta}) \cong \C[x_{1}^{\pm 1},x_2,x_3]$ and define the formal $\delta$-function
\[
\delta(t) := \sum_{k \in \Z} t^k. 
\]
Let $\bspi$ correspond to $\chi_\alpha$ by Proposition \ref{lem1}. Denote the limiting double square configuration along the $x_1$-axis arising from $\bspi$ by $\bslambda = (\lambda_1,\lambda_2,\lambda_3)$. Denoting the character of $\Q_{\alpha\beta}$ by $\sfQ_{\alpha\beta}$, we obtain
\[
\sfQ_{\alpha\beta} = \sum_{(k_2,k_3) \in \lambda_1 \cup \lambda_2 \cup \lambda_3} \sfQ_{\alpha\beta,(k_2,k_3)},
\]
where 
\begin{itemize}
\item $\sfQ_{\alpha\beta,(k_2,k_3)} = 2 t_{2}^{k_2} t_{3}^{k_3}$ if $(k_2,k_3) \in \lambda_1 \cap \lambda_2 \cap \lambda_3$,
\item $\sfQ_{\alpha\beta,(k_2,k_3)} = t_{2}^{k_2} t_{3}^{k_3}$ if $(k_2,k_3)$ lies in exactly one $\lambda_i$,
\item $\sfQ_{\alpha\beta,(k_2,k_3)} = t_{2}^{k_2} t_{3}^{k_3}$ if $(k_2,k_3)$ lies in exactly two $\lambda_i$'s but not in a region with moduli,
\item $\sfQ_{\alpha\beta,(k_2,k_3)} = [\O_C(1)]t_{2}^{k_2} t_{3}^{k_3}$ if $(k_2,k_3)$ lies in exactly two $\lambda_i$'s and in a region $C \cong \PP^1$ with moduli,
\item $\sfQ_{\alpha\beta,(k_2,k_3)} = 0$ otherwise.
\end{itemize}
Recall from Example \ref{littlebox} that from the data of $\bslambda$ alone, we cannot determine whether a square lying in exactly two $\lambda_i$'s has moduli or not. This depends on what happens ``in the corners''. This already indicates we cannot split the vertex and edge contributions as easily as in the rank 1 case. We come back to this in the examples of the next section.

A reasoning similar to the previous section gives
\begin{align} \label{Galphabeta}
\begin{split}
&-\tr_{\langle p_{U_{\alpha\beta}}^{*}\ccR_{\alpha\beta}, p_{U_{\alpha\beta}}^{*} \ccR_{\alpha\beta} \rangle - \langle \FF_{\alpha\beta},\FF_{\alpha\beta}\rangle} = \delta(t_1) \sfG_{\alpha\beta}, \\
&\sfG_{\alpha\beta} := - \sfQ_{\alpha\beta} \overline{P}(\ccR_{\alpha\beta}) - \frac{\overline{\sfQ}_{\alpha\beta} P(\ccR_{\alpha\beta})}{t_2 t_3} + \sfQ_{\alpha\beta} \overline{\sfQ}_{\alpha\beta} \frac{(1-t_2)(1-t_3)}{t_2 t_3},
\end{split}
\end{align}
where $P(\ccR_{\alpha\beta})$ denotes the Poincar\'e polynomial of $\ccR_{\alpha\beta}$.

\subsection{Redistribution}

Exactly as in \cite{MNOP1} and \cite{PT2}, the terms of the previous two sections can be redistributed so they become Laurent \emph{polynomials} in $t_1, t_2, t_3$. Denote the vertices neighbouring $\alpha \in V(X)$ by $\beta_1, \beta_2, \beta_3$.  
Define
\begin{align} \label{defVE}
\begin{split}
\sfV_\alpha &:= \tr_{\langle p_{U_\alpha}^{*} \ccR_\alpha,p_{U_\alpha}^{*} \ccR_\alpha \rangle - \langle \FF_\alpha,\FF_\alpha \rangle} + \sum_{i=1}^{3} \frac{\sfG_{\alpha \beta_i}(t_{i'},t_{i''})}{1-t_i}, \\
\sfE_{\alpha\beta} &:= t_{1}^{-1} \frac{\sfG_{\alpha\beta}(t_2,t_3)}{1-t_{1}^{-1}} - \frac{\sfG_{\alpha\beta}(t_2 t_{1}^{-m_{\alpha\beta}},t_3 t_{1}^{-m_{\alpha\beta}'})}{1-t_{1}^{-1}},
\end{split}
\end{align}
where $\{t_1,t_2,t_3\} = \{t_{i},t_{i'},t_{i''}\}$. Here the labelling is such that $C_{\alpha \beta_i} \cap U_\alpha$ is given by $\{x_{i'} = x_{i''} = 0\}$ for all $i,i',i'' \in \{1,2,3\}$ mutually distinct. Recall the definition of $m_{\alpha\beta}$, $m'_{\alpha\beta}$ from Section \ref{toric3folds}. This redistribution does not lead to over-counting due to the choice of the second term in $\sfE_{\alpha\beta}$ (see \eqref{transf}). The reason these are Laurent polynomials is the same as \cite[Lem.~4]{PT2}. We summarize this section in one theorem.
\begin{theorem} \label{vertexedgeformalism}
Let $X$ be a smooth projective toric 3-fold with $H^0(K_{X}^{-1}) \neq 0$ and polarization $H$. Let $\M_X := \M_{X}^{H}(2,c_1,c_2,c_3)$. Suppose $\cC \subset \M_{X}^{T}$ is a connected component of the fixed locus such that all its closed points have reflexive hull $\ccR$. Then the $T$-character of DT theory $\T_{\FF} := E^{\mdot \vee}|_{\cC} \in K_{0}^{T}(\cC)$ is given by 
\begin{equation} \label{trT} 
\tr_{\T_{\FF}} = \tr_{- \langle \ccR,\ccR \rangle_0} + \sum_{\alpha \in V(X)} \sfV_\alpha + \sum_{\alpha\beta \in E(X)} \sfE_{\alpha\beta},
\end{equation}
where $\tr_{- \langle \ccR,\ccR \rangle_0} \in \Z[t_{1}^{\pm 1},t_{2}^{\pm 1},t_{3}^{\pm 1}]$ and for all $\alpha,\beta \in V(X)$ 
\begin{align*}
\sfV_\alpha &\in K_0(\cC) \otimes_{\Z} \Z[t_{1}^{\pm 1},t_{2}^{\pm 1},t_{3}^{\pm 1}], \\
\sfE_{\alpha\beta} &\in K_0(\cC) \otimes_{\Z} \Z[t_{1}^{\pm 1},t_{2}^{\pm 1},t_{3}^{\pm 1}].
\end{align*}
\end{theorem}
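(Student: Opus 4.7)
The starting point is the identity $\T_{\FF} = \langle \ccR,\ccR \rangle - \langle \FF,\FF \rangle - \langle \ccR,\ccR \rangle_0$ already noted at the start of the section, which holds in $K_0^T(\cC)$ because $H^0(K_X^{-1}) \neq 0$ kills the $\Ext^3$ term and stability kills the $\Hom_0$ term. Since the isolated term $-\langle \ccR,\ccR \rangle_0$ is pulled back from a point, its trace is already an element of $\Z[t_1^{\pm},t_2^{\pm},t_3^{\pm}]$, so all that remains is to prove the asserted decomposition for $\langle \ccR,\ccR \rangle - \langle \FF,\FF \rangle$ and to check polynomiality of the redistributed vertex and edge pieces.

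First I would run a \v{C}ech calculation with respect to the $T$-invariant cover $\{U_\alpha\}_{\alpha \in V(X)}$: by the local-to-global spectral sequence for $\ext^\bullet$, together with the vanishing of contributions from triple and higher intersections (because $\FF$ and $p_X^*\ccR$ agree off a $T$-invariant subscheme of dimension $\leq 1$, so $\ext^\bullet$ of the difference is supported on that subscheme and has empty triple intersections with the toric atlas), one obtains
\[
\langle \ccR,\ccR \rangle - \langle \FF,\FF \rangle = \sum_{\alpha} \bigl(\langle p_{U_\alpha}^*\ccR_\alpha,p_{U_\alpha}^*\ccR_\alpha\rangle - \langle \FF_\alpha,\FF_\alpha \rangle\bigr) - \sum_{\alpha\beta} \bigl(\langle p_{U_{\alpha\beta}}^*\ccR_{\alpha\beta},p_{U_{\alpha\beta}}^*\ccR_{\alpha\beta}\rangle - \langle \FF_{\alpha\beta},\FF_{\alpha\beta}\rangle\bigr).
\]
The vertex and edge summands are computed in (\ref{tr1}) and (\ref{Galphabeta}): they give, respectively, the expression $\tr_{\langle p_{U_\alpha}^* \ccR_\alpha,p_{U_\alpha}^* \ccR_\alpha \rangle - \langle \FF_\alpha,\FF_\alpha \rangle}$ and $-\delta(t_1)\sfG_{\alpha\beta}$ in terms of the Poincar\'e polynomials of $\ccR$ and the characters $\sfQ_\alpha$, $\sfQ_{\alpha\beta}$ of the universal quotient.

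The heart of the argument is then the formal identity
\[
\delta(t_1) = \frac{1}{1-t_1} + \frac{t_1^{-1}}{1-t_1^{-1}},
\]
interpreted via the two expansions of $\delta(t)$ at $0$ and $\infty$, which lets one split the edge term between the two neighbouring vertices. Concretely, I would substitute this identity into $-\delta(t_1)\sfG_{\alpha\beta}$ and observe: the piece $\sfG_{\alpha\beta}(t_2,t_3)/(1-t_1)$ is naturally associated to the chart $U_\alpha$ (where $C_{\alpha\beta}$ is $\{x_2=x_3=0\}$), while by the transition rule \eqref{transf} the piece $t_1^{-1}\sfG_{\alpha\beta}(t_2,t_3)/(1-t_1^{-1})$ is, when rewritten in the coordinates of $U_\beta$ using the shift $(t_2,t_3)\mapsto (t_2 t_1^{-m_{\alpha\beta}},t_3 t_1^{-m'_{\alpha\beta}})$, precisely what should be grouped with the $U_\beta$-vertex. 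This is exactly what Definition (\ref{defVE}) encodes, with the second summand in $\sfE_{\alpha\beta}$ compensating the double-counting. This reshuffling gives the displayed formula \eqref{trT}.

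The main obstacle is the polynomiality statement for $\sfV_\alpha$ and $\sfE_{\alpha\beta}$: a priori each piece is a Laurent \emph{series}, not a Laurent polynomial, because $\sfQ_\alpha$ is supported on an infinite set of weights (the legs) and the individual terms $\sfQ_\alpha \overline P(\ccR_\alpha)$, $\sfG_{\alpha\beta}/(1-t_i)$ are only formal series. I would show polynomiality following \cite[Lem.~4]{PT2}: near each axis the character $\sfQ_\alpha$ asymptotically equals $\sfQ_{\alpha\beta_i}/(1-t_i)$ (as sets, two of the three $\pi_j$ agree with the corresponding $\lambda_j$ for $k_i\gg 0$ along the $x_i$-leg), and a direct comparison shows that the infinite tail of $\tr_{\langle p_{U_\alpha}^*\ccR_\alpha,p_{U_\alpha}^*\ccR_\alpha\rangle-\langle \FF_\alpha,\FF_\alpha\rangle}$ along the $x_i$-axis is exactly $-\sfG_{\alpha\beta_i}(t_{i'},t_{i''})/(1-t_i)$; adding the $\sfG_{\alpha\beta_i}/(1-t_i)$ correction in \eqref{defVE} therefore kills the tail and leaves a Laurent polynomial. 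The analogous cancellation for $\sfE_{\alpha\beta}$ is the observation that after substituting the $U_\beta$-coordinates, the singular parts of the two summands at $t_1=1$ match. The fact that each coefficient lives in $K_0(\cC)$ rather than $\C$ is built in, since $\sfQ_\alpha$ and $\sfQ_{\alpha\beta}$ are written as $\Z$-linear combinations of classes $[\O_{\PP^1}(1)]$ from the tautological bundles on the factors of $\cC \cong (\PP^1)^N$. This completes the proof.
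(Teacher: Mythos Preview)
Your proposal is correct and follows essentially the same approach as the paper: the theorem is presented there as a summary of the entire section, and your argument recapitulates exactly those steps---the \v{C}ech/local-to-global decomposition with vanishing triple intersections, the vertex and edge trace formulae \eqref{tr1} and \eqref{Galphabeta}, the $\delta$-function splitting behind the redistribution \eqref{defVE}, and the appeal to \cite[Lem.~4]{PT2} for Laurent polynomiality.
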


\section{Applications to toric Calabi-Yau 3-folds}

Let $Y$ be a smooth toric Calabi-Yau 3-fold and let $X$ be a toric compactification with $H^0(K_{X}^{-1}) \neq 0$ and polarization $H$. We fix $c_1, c_2$ and consider the moduli spaces
\begin{equation} \label{MX}
\M_X := \bigsqcup_{c_3} \M_{X}^{H}(2,c_1,c_2,c_3).
\end{equation}
In Section \ref{moduli}, we introduced the $T$-invariant open subset
$$
\M_{Y \subset X} := \bigsqcup_{c_3} \M_{Y \subset X}^{H}(2,c_1,c_2,c_3)
$$
of torsion free sheaves $\F$ with $\F^{**} / \F$ supported in $Y$. We fix a connected component $\cC \subset \M_{Y \subset X}^{T}$ satisfying Assumption \ref{as1}: \\

\noindent \emph{All closed points of $\cC$ have the same (i.e.~isomorphic) reflexive hull $\ccR$.} \\  


We introduced invariants $\DT(\cC)$ in Proposition-Definition \ref{propdef}. We are interested in the ``Calabi-Yau specialization'' $s_1+s_2+s_3=0$ of these invariants. In this section, we conjecture a formula for generating functions of these invariants in terms of $\sfZ_{Y,\ccR|_Y, \hat{\bslambda}}(q)$ of Remark \ref{openZ}. 

Suppose $[\F] \in \cC$ is a closed point with characteristic function corresponding to double box configurations $\hat{\bspi}$ (Proposition \ref{lem1}), where
$$
\bspi_\alpha \in \Pi(\ccR_{\alpha}, \bslambda_{\alpha\beta_1},  \bslambda_{\alpha\beta_2},  \bslambda_{\alpha\beta_3}),
$$
for each $\alpha \in V(X)$ with neighbouring vertices $\beta_1, \beta_2, \beta_3$. Since the cokernel $\cQ = \ccR / \F$ lies entirely in $Y$, the asymptotic double square configurations $\bslambda_{\alpha\beta}$ are zero unless $\alpha\beta \in E_c(Y)$ (i.e.~$C_{\alpha\beta} \subset Y$) and $\bspi_\alpha = 0$ unless $\alpha \in V(Y)$. Note that $\hat{\bslambda}$ determines the second Chern class and $\hat{\bspi}$ determines the third Chern class (Section \ref{equivsh}, Proposition \ref{formulachi}).

\begin{mainconjecture} \label{mainconj}
Let $Y$ be a smooth toric Calabi-Yau 3-fold with toric compactification $X$ satisfying $H^0(K_{X}^{-1}) \neq 0$. Fix a polarization $H$ on $X$ and Chern classes $c_1$, $c_2$. Let $\ccR$ be a $T$-equivariant rank 2 $\mu$-stable reflexive sheaf on $X$ with toric data $({\bf{u}},{\bf{v}},{\bf{p}})$. Suppose any connected component $\cC$ of $\bigsqcup_{c_3} \M_{Y \subset X}^{H}(2,c_1,c_2,c_3)^T$ has constant reflexive hulls $\ccR$.
For each $\alpha\beta \in E_c(Y)$, there are two faces $\rho_{1, \alpha\beta}$, $\rho_{2,\alpha\beta}$ which share $\alpha\beta$ as an edge and two disjoint faces $\rho_{3,\alpha\beta}$, $\rho_{4,\alpha\beta}$ connected by the edge $\alpha\beta$.  
Then
\begin{align*}
&\sum_{c_3} \sum_{\cC \subset \M_{Y \subset X}^{H}(2,c_1,c_2,c_3)^T} \DT(\cC)\Big|_{s_1+s_2+s_3=0} q^{c_3} = \\
&\sum_{\hat{\bslambda}} \sfZ_{Y,\ccR|_Y, \hat{\bslambda}}(q^{-2}) q^{c_{3}(\ccR)} \prod_{\alpha\beta \in E_c(Y)} (-1)^{|\bslambda_{\alpha\beta}| (m_{\alpha\beta} (v_{\rho_{1,\alpha\beta}} + v_{\rho_{2,\alpha\beta}} + 1) +v_{\rho_{3,\alpha\beta}} + v_{\rho_{4,\alpha\beta}})} q^{|\bslambda_{\alpha\beta}| C_{\alpha\beta} (c_1(X) + c_1)} ,
\end{align*}
where $\cC$ runs over all connected components and $\hat{\bslambda} = \{\bslambda_{\alpha\beta} \in \Lambda(\ccR_{\alpha\beta})\}_{\alpha\beta \in E_c(Y)}$ runs over all double square configurations giving rise to second Chern class $c_2$.
\end{mainconjecture}

The assumptions of this conjecture are satisfied in numerous interesting cases e.g.~Examples \ref{ex1as1} and \ref{ex2as1}.
We provide the following evidence for Main Conjecture \ref{mainconj}:
\begin{itemize}
\item The examples of Sections \ref{nolegs}, \ref{O(-1,-1)}.
\item For connected components $\cC$, ``with expected obstructions'' we compute the invariants $\DT(\cC)|_{s_1+s_2+s_3=0}$ using methods of \cite{MNOP1} (Section \ref{good}). The answers are compatible with Main Conjecture \ref{mainconj}. 
\item Assuming two analogs of conjectures in the stable pairs case \cite[Conj.~2, Conj.~3]{PT2}, we prove Main Conjecture \ref{mainconj} using $T_0$-localization (Section \ref{T0loc}).
\end{itemize}

\begin{remark}
Note that the formula of Conjecture \ref{mainconj} only depends on the compactification through whether $\ccR$ is $\mu$-stable or not and through an overall multiplicative factor of $q^{\cdots}$. In particular, the formula
\begin{align*}
\sum_{\hat{\bslambda}} \sfZ_{Y,\ccR|_Y, \hat{\bslambda}}(q^{-2}) \prod_{\alpha\beta \in E_c(Y)} (-1)^{|\bslambda_{\alpha\beta}| (m_{\alpha\beta} (v_{\rho_{1,\alpha\beta}} + v_{\rho_{2,\alpha\beta}} + 1) +v_{\rho_{3,\alpha\beta}} + v_{\rho_{4,\alpha\beta}})} 
\end{align*}
does not depend on the compactification. We expect this formula to be of the form $M(q)^{2e(Y)}$ times a rational function in $q$, which is invariant under $q \leftrightarrow q^{-1}$ (up to an overall multiplicative factor). We prove this expectation in the case all $\hat{\bslambda} = \varnothing$ (no legs), where the rational function is in fact a Laurent polynomial (Remark \ref{invariance}).
\end{remark}

\subsection{Rank 2 equivariant vertex without legs} \label{nolegs}

In this section, we consider $Y = \C^3$ and $\ccR$ an arbitrary $T$-equivariant rank 2 reflexive sheaf on $\C^3$ described by toric data $({\bf{u}},{\bf{v}},{\bf{p}})$. We will see below that $\ccR$ is always the restriction of a $T$-equivariant rank 2 $\mu$-stable reflexive sheaf on a (polarized) toric compactification $X$ of $\C^3$ with $H^0(K_{X}^{-1}) \neq 0$ (Lemma \ref{(P^1)^3}). Nevertheless, we want to think of the results of this section as being intrinsic to $\C^3$. Since $E_c(\C^3) = \varnothing$, we choose all legs empty. Let $\bspi \in \Pi(\ccR,\varnothing,\varnothing,\varnothing)$. Then $\cC_{\bspi}$ is the moduli space of $T$-equivariant rank 2 torsion free sheaves on $\C^3$ with characteristic function corresponding to $\bspi$ (Proposition \ref{lem1}). Even though we did not start with a compact 3-fold, the vertex expression \eqref{tr1} still makes sense for $\cC_{\bspi}$ and is denoted by $\sfV_{\bspi}$.
\begin{example} \label{normal} Suppose ${\bf{v}}=(1,1,1)$ and $p_1,p_2,p_3$ are mutually distinct. Consider $\bspi$ given by
$$\sfQ_{\bspi}=t_{{1}}t_{{3}}+t_{{2}}t_{{3}}+{t_{{1}}}^{2}t_{{3}}+{t_{{2}}}^{2}t_{{3}
}+t_{{1}}t_{{2}}t_{{3}}+{t_{{1}}}^{3}t_{{3}}+{t_{{1}}}^{2}t_{{2}}t_{{3
}}+{t_{{2}}}^{2}t_{{1}}t_{{3}},$$ 
so $|\bspi|=8$ (see Figure 6). 
Then we have 
\begin{align*}
\sfV_{\bspi}=&t_{{3}}+t_{{1}}t_{{3}}+t_{{2}}t_{{3}}+{\frac {{t_{{1}}}^{3}}{t_{{2}}t_
{{3}}}}-{\frac {1}{t_{{1}}t_{{2}}}}-3\,{\frac {1}{t_{{1}}t_{{3}}}}-3\,
{\frac {1}{t_{{2}}t_{{3}}}}-{\frac {1}{{t_{{1}}}^{2}t_{{3}}}}+{\frac {
{t_{{1}}}^{3}}{{t_{{2}}}^{3}}}+{\frac {{t_{{2}}}^{2}}{{t_{{1}}}^{4}}}+
{t_{{3}}}^{-1}\\&+3\,{t_{{1}}}^{-1}+3\,{t_{{2}}}^{-1}+{t_{{1}}}^{-2}-{t_{
{2}}}^{-3}-{\frac {t_{{2}}}{{t_{{1}}}^{3}t_{{3}}}}+{\frac {t_{{1}}t_{{
3}}}{t_{{2}}}}-{\frac {t_{{1}}}{t_{{2}}t_{{3}}}}+{\frac {t_{{2}}t_{{3}
}}{t_{{1}}}}-{\frac {{t_{{1}}}^{3}}{{t_{{2}}}^{3}t_{{3}}}}-{\frac {{t_
{{2}}}^{2}}{{t_{{1}}}^{4}t_{{3}}}}\\&+{\frac {{t_{{2}}}^{2}}{t_{{1}}t_{{3
}}}}-{\frac {{t_{{1}}}^{2}}{{t_{{2}}}^{2}t_{{3}}}}+{\frac {{t_{{1}}}^{
2}t_{{3}}}{t_{{2}}}}-{\frac {{t_{{1}}}^{2}}{{t_{{2}}}^{3}t_{{3}}}}-{
\frac {{t_{{2}}}^{2}}{{t_{{1}}}^{3}t_{{3}}}}+2\,{\frac {t_{{1}}t_{{2}}
}{t_{{3}}}}-{t_{{1}}}^{-4}-{\frac {1}{{t_{{2}}}^{2}t_{{1}}}}-{\frac {1
}{{t_{{1}}}^{2}t_{{2}}}}+2\,{\frac {{t_{{1}}}^{2}}{t_{{3}}}}\\&+{\frac {{
t_{{2}}}^{2}}{t_{{3}}}}-2\,{\frac {1}{{t_{{1}}}^{2}{t_{{2}}}^{2}}}+2\,
{\frac {t_{{2}}}{{t_{{1}}}^{2}}}+{\frac {t_{{1}}}{t_{{3}}}}+2\,{\frac 
{t_{{1}}}{{t_{{2}}}^{2}}}+{\frac {t_{{3}}}{t_{{1}}}}+{\frac {t_{{3}}}{
t_{{2}}}}-{\frac {1}{t_{{1}}t_{{2}}{t_{{3}}}^{2}}}-{\frac {1}{{t_{{1}}
}^{2}t_{{2}}{t_{{3}}}^{2}}}-{\frac {1}{t_{{1}}{t_{{2}}}^{2}{t_{{3}}}^{
2}}}\\&-{\frac {1}{t_{{2}}{t_{{3}}}^{2}}}-{\frac {1}{{t_{{2}}}^{2}{t_{{3}
}}^{2}}}-{\frac {1}{{t_{{1}}}^{2}{t_{{3}}}^{2}}}-{\frac {1}{t_{{1}}{t_
{{3}}}^{2}}}+{\frac {{t_{{1}}}^{2}}{{t_{{2}}}^{2}}}+{\frac {{t_{{1}}}^
{2}}{{t_{{2}}}^{3}}}+{\frac {{t_{{2}}}^{2}}{{t_{{1}}}^{3}}}+{\frac {t_
{{2}}}{{t_{{1}}}^{3}}}-2\,{\frac {1}{{t_{{1}}}^{3}t_{{2}}}}-{\frac {1}
{t_{{1}}{t_{{2}}}^{3}}}+{\frac {t_{{2}}}{t_{{3}}}}\\&+{\frac {t_{{1}}}{t_
{{2}}}}-2\,{\frac {t_{{1}}}{{t_{{2}}}^{2}t_{{3}}}}-2\,{\frac {t_{{2}}}
{{t_{{1}}}^{2}t_{{3}}}}-{\frac {1}{{t_{{1}}}^{3}{t_{{3}}}^{2}}}
\end{align*} and hence 
$$\frac{1}{e(\sfV_{\bspi})} \Big|_{s_1+s_2+s_3=0}=1=\omega(\bspi),$$
where $\omega(\bspi)$ is the weight defined in Definition \ref{Z}.
\end{example}

\begin{figure} 
\includegraphics[width=2in]{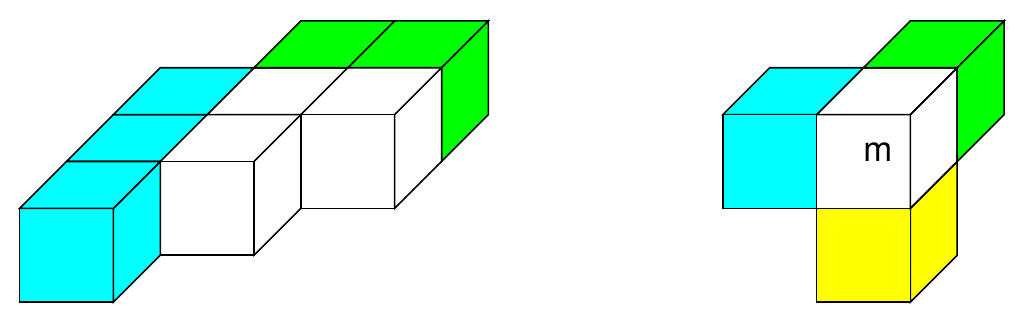}
\caption{Double box configuration of Examples \ref{normal} and \ref{normal2}.}
\end{figure}

Next we consider an example with moduli.
\begin{example} \label{normal2}
Suppose ${\bf{v}}=(1,1,1)$ and $p_1,p_2,p_3$ are mutually distinct. We consider an example with $|\bspi|=4$ and $\cC_{\bspi} \cong \PP^1$, namely 
$$\sfQ_{\bspi}=t_1t_2+t_1t_3+t_2t_3+[\O_{\PP^1}(1)] t_1t_2t_3.$$ 
See Figure 6.
Then we have 
$$
\sfV_{\bspi}=2[\O_{\PP^1}(1)]-1-2\,{\frac{[\O_{\PP^1}(-1)]}{t_{{1}}t_{{2}}t_{{3}}}}+{\frac {1}{t_{{1}}t_{{2}} t_3}}+\cdots,$$
where $\cdots$ indicates the $T_0$-moving terms. Since $T_{\cC_{\bspi}} = 2[\O_{\PP^1}(1)]-1$, we obtain
$$
T_{\cC_{\bspi}} - \sfV_{\bspi} = 2\,{\frac{[\O_{\PP^1}(-1)]}{t_{{1}}t_{{2}}t_{{3}}}}-{\frac {1}{t_{{1}}t_{{2}} t_3}}+\cdots. 
$$
A direct calculation shows that $\cdots$ produces a sign $-1$. We conclude 
$$\int_{\cC_{\bspi}} e(T_{\cC_{\bspi}} - \sfV_{\bspi}) \big|_{s_1+s_2+s_3=0}=2=\omega(\bspi).$$
\end{example}

These two examples suggest 
$$
\int_{\cC_{\bspi}} e(T_{\cC_{\bspi}} - \sfV_{\bspi}) \big|_{s_1+s_2+s_3=0}  \stackrel{?}{=} \omega(\bspi),
$$
for any $\bspi \in \Pi(\ccR,\varnothing,\varnothing,\varnothing)$. However this is \emph{false} as is shown by the following example.
\begin{example} \label{abnormal} 
Suppose ${\bf{v}}=(1,1,1)$ and $p_1,p_2,p_3$ are mutually distinct. We consider $\bspi$ and $\bspi'$ given by  
\begin{align*}
\sfQ_{\bspi}&=t_{{1}}t_{{3}}+t_{{2}}t_{{3}}+{t_{{1}}}^{2}t_{{3}}+{t_{{2}}}^{2}t_{{3}
}+t_{{1}}t_{{2}}t_{{3}}+t_{{1}}t_{{2}}+{t_{{1}}}^{2}t_{{2}}t_{{3}}+{t_
{{2}}}^{2}t_{{1}}t_{{3}}, \\
\sfQ_{\bspi'}&=t_{{1}}t_{{3}}+t_{{2}}t_{{3}}+{t_{{1}}}^{2}t_{{3}}+{t_{{2}}}^{2}t_{{3}
}+t_{{1}}t_{{2}}t_{{3}}+{t_{{1}}}^{2}{t_{{2}}}^{2}t_{{3}}+{t_{{1}}}^{2
}t_{{2}}t_{{3}}+{t_{{2}}}^{2}t_{{1}}t_{{3}},
\end{align*}
so $|\bspi|=|\bspi'|=8$ (see Figure 7). Note that neither $\cC_{\bspi}$ nor $\cC_{\bspi'}$ has moduli. We obtain 
\begin{align*}
\sfV_{\bspi}&=\frac{1}{t_1t_2t_3}-1+\cdots, \\
\sfV_{\bspi'}&=t_1t_2t_3 -\frac{1}{t_1^2t_2^2t_3^2}+\cdots,
\end{align*}
where $\cdots$ indicates the $T_0$-moving terms. Further calculation shows 
\begin{align*}
\frac{1}{e(\sfV_{\bspi})} \Big|_{s_1+s_2+s_3=0}=0 \neq \omega(\bspi) = 1, \\
\frac{1}{e(\sfV_{\bspi'})} \Big|_{s_1+s_2+s_3=0}=2 \neq \omega(\bspi') = 1.
\end{align*}
However, we do have
$$
\Big(\frac{1}{e(\sfV_{\bspi})} + \frac{1}{e(\sfV_{\bspi'})} \Big)\Big|_{s_1+s_2+s_3=0}= 2 =\omega(\bspi) +\omega(\bspi').
$$
In \cite{GKY}, we associate to any double box configuration $\bspi$ a double dimer model $\mathbb{D}(\bspi)$. It turns out that in this example
$$
\mathbb{D}(\bspi) = \mathbb{D}(\bspi').
$$
\end{example}

\begin{figure} 
\includegraphics[width=2in]{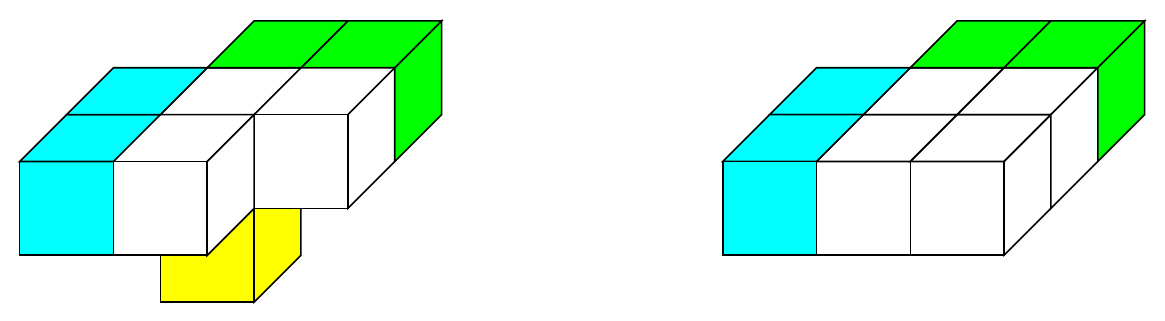}
\caption{Double box configurations of Example \ref{abnormal}.}
\end{figure}

In analogy to \cite{MNOP1, PT2}, we define the rank 2 equivariant vertex measure:
\begin{definition} \label{measurevertex}
Let $\ccR$ be a $T$-equivariant rank 2 reflexive sheaf on $\C^3$. For each $\bspi \in \Pi(\ccR, \varnothing, \varnothing, \varnothing)$ define the \emph{rank 2 equivariant vertex measure} associated to $\bspi$ by
\[
\mathsf{w}(\bspi) := \int_{\cC_{\bspi}} e(T_{\cC_{\bspi}} - \sfV_{\bspi}).
\]
Define the \emph{rank 2 equivariant vertex} as
\begin{equation} \label{W}
\sfW_{\ccR,  \varnothing, \varnothing, \varnothing}(q) := \sum_{\bspi \in \Pi(\ccR,  \varnothing, \varnothing, \varnothing)} \mathsf{w}(\bspi) q^{|\bspi|} \in \Z(s_1,s_2,s_3)[\![q]\!].
\end{equation}
\end{definition}

It is not a priori clear the ``Calabi-Yau specialization'' $s_1+s_2+s_3=0$ of the rank 2 equivariant vertex is well-defined.
\begin{proposition} \label{welldef1}
Let $\ccR$ be a $T$-equivariant rank 2 reflexive sheaf on $\C^3$. Then 
\begin{equation*} 
\sfW_{\ccR,  \varnothing, \varnothing, \varnothing}(q) \Big|_{s_1+s_2+s_3=0} \in \Z[\![q]\!]
\end{equation*}
is well-defined.
\end{proposition}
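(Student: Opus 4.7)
The plan is to reduce the statement to the well-definedness of the specialization already established in Proposition-Definition \ref{propdef}. First, I will invoke Lemma \ref{(P^1)^3} (as referenced in the excerpt) to extend $\ccR$ to a $T$-equivariant rank 2 $\mu$-stable reflexive sheaf $\widetilde{\ccR}$ on some smooth projective toric compactification $X$ of $Y := \C^3$ with $H^0(K_X^{-1}) \neq 0$. Fix such an $X$ together with a polarization $H$, and let $\alpha_0 \in V(X)$ denote the unique vertex with $U_{\alpha_0} = Y$.

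Second, for each $\bspi \in \Pi(\ccR,\varnothing,\varnothing,\varnothing)$, I will attach a global characteristic function $\bschi(\bspi) \in \X^{\slice}$ on $X$ by placing $\bspi$ at $\alpha_0$ and letting the remaining charts be governed by $\widetilde{\ccR}$ itself (forced by the empty leg condition through the gluing rule \eqref{glue}). By Theorem \ref{fixedlocithm} and the analysis of $\Quot$-fixed loci in Section \ref{3Dpartitions}, this identifies $\cC_{\bspi}$ with a connected component $\cC \subset \M_{Y \subset X}^{H}(2,c_1,c_2,c_3)^{T}$ (for suitable Chern classes determined by $\bspi$), and every closed point of $\cC$ has the common reflexive hull $\widetilde{\ccR}$. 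Hence $\cC$ satisfies Assumption \ref{as1}.

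Third, a short bookkeeping step using the vertex/edge formalism of Theorem \ref{vertexedgeformalism} turns $\mathsf{w}(\bspi)$ into $\DT(\cC)$. Since every $\bslambda_{\alpha\beta} = \varnothing$, each $\sfQ_{\alpha\beta}$ is zero, so every $\sfG_{\alpha\beta}$ of \eqref{Galphabeta} and therefore every $\sfE_{\alpha\beta}$ vanishes. For the same reason $\sfV_\alpha = 0$ for all $\alpha \neq \alpha_0$, because the cokernel $\Q_\alpha$ is trivial there. Formula \eqref{trT} then collapses to
\[
E^{\mdot\vee}\res{\cC} + \langle \widetilde{\ccR}, \widetilde{\ccR}\rangle_0 \otimes \O_{\cC} \;=\; \sfV_{\alpha_0} \;=\; \sfV_{\bspi} \quad \text{in } K_{0}^{T}(\cC),
\]
so using smoothness of $\cC$ and \eqref{E} one obtains $\mathsf{w}(\bspi) = \int_{\cC_\bspi} e(T_{\cC_\bspi} - \sfV_{\bspi}) = \DT(\cC)$ as $T$-equivariant integrals. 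Proposition-Definition \ref{propdef} then supplies the specialization $\DT(\cC)|_{s_1+s_2+s_3=0} \in \Q$.

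Finally, for any $n \geq 0$ the set $\{\bspi \in \Pi(\ccR,\varnothing,\varnothing,\varnothing) : |\bspi| = n\}$ is finite, since empty legs force every constituent $\pi_i$ to be a finite 3D partition whose size is bounded in terms of $n$ and $({\bf u},{\bf v})$. Therefore each coefficient of $\sfW_{\ccR,\varnothing,\varnothing,\varnothing}(q)$ is a \emph{finite} sum of $\DT(\cC)$'s, each admitting the specialization, giving a well-defined element of $\Z[\![q]\!]$ (with possibly rational coefficients a posteriori reducible to $\Z$ via the $K$-theoretic symmetry of Proposition \ref{SD}). The only non-routine input is the existence of the stable compactification $\widetilde{\ccR}$; the rest is the vanishing of $\sfE_{\alpha\beta}$ and off-vertex $\sfV_\alpha$ contributions already built into the vertex/edge formalism, and no new pole-cancellation argument beyond that of Proposition-Definition \ref{propdef} is required.
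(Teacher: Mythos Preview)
Your proposal is correct and follows the same approach as the paper. The paper's own proof is just two sentences---invoke Lemma \ref{(P^1)^3} and observe that each $\cC_{\bspi}$ satisfies Assumption \ref{as1}, reducing to Proposition-Definition \ref{propdef}---whereas you spell out in detail why $\mathsf{w}(\bspi) = \DT(\cC)$ via the vertex/edge formalism (vanishing of all $\sfE_{\alpha\beta}$ and off-vertex $\sfV_\alpha$), which is exactly the content implicit in the paper's phrase ``this is a special case of Proposition-Definition \ref{propdef}.''
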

\begin{proof}
By Lemma \ref{(P^1)^3} below, this is a special case of Proposition-Definition \ref{propdef}. We only have to note that each $\cC_{\bspi}$ satisfies Assumption \ref{as1}.
\end{proof}

\begin{lemma} \label{(P^1)^3}
Let $\ccR$ be a $T$-equivariant rank 2 reflexive sheaf on $Y=\C^3$. We can view $Y$ as a $T$-invariant affine open subset of $X = (\PP^1)^3$. There exists a polarization $H$ on $X$ and a $T$-equivariant rank 2 $\mu$-stable reflexive sheaf $\tilde{\ccR}$ on $X$ such that $\tilde{\ccR}|_Y \cong \ccR$ and $\tilde{\ccR}$ restricted to any other $T$-invariant affine open subset is locally free.
\end{lemma}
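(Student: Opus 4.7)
I would prove the lemma by explicit construction using Klyachko's toric data. Label the six rays of $X = (\PP^1)^3$ by $\rho_1, \rho_2, \rho_3$ (spanning $Y = U_{\alpha_0}$) and $\rho_4, \rho_5, \rho_6$ (opposite to $\rho_1, \rho_2, \rho_3$ respectively). The toric data of $\ccR$ is $(u_i, v_i, p_i)_{i=1,2,3}$, and the extension $\tilde{\ccR}$ is specified by choosing additional data $(u_{\rho_j}, v_{\rho_j}, p_{\rho_j})_{j=4,5,6}$. First I would try the naive extension $v_{\rho_j} = 0$ for $j = 4, 5, 6$ (with arbitrary $u_{\rho_j}$). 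Every vertex $\alpha \neq \alpha_0$ has at least one of $\rho_4, \rho_5, \rho_6$ among its three rays, so at least one $v$-value for the rays through $\alpha$ vanishes; by Proposition~\ref{rank2singular} this guarantees $\tilde{\ccR}|_{U_\alpha}$ is locally free.

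For $\mu$-stability, take a polarization $H = a_1 h_1 + a_2 h_2 + a_3 h_3$, where $h_i$ is the pullback of the point class from the $i$th $\PP^1$ factor. Using $h_i^2 = 0$ and $h_1 h_2 h_3 = 1$, a direct computation gives $D_{\rho_i} H^2 = 2 a_{i'} a_{i''}$ whenever $\{i, i', i''\} = \{1,2,3\}$, and the same identity for $i \in \{4,5,6\}$ since $D_{\rho_i}$ and $D_{\rho_{i+3}}$ are linearly equivalent. Because $v_{\rho_j} = 0$ for $j = 4, 5, 6$, the stability criterion of Section~\ref{equivsh} reduces to the three weighted triangle inequalities $v_i a_{i'} a_{i''} < v_j a_{j'} a_{j''} + v_k a_{k'} a_{k''}$, which must hold whenever the test subspace $q$ equals $p_i$ for some $i \in \{1,2,3\}$. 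In the singular case, where all $v_i > 0$ and $p_1, p_2, p_3$ are mutually distinct, choosing $a_i$ proportional to $v_i$ makes all three weighted terms equal so the strict triangle inequality $1 < 2$ holds, proving $\tilde{\ccR}$ is $\mu$-stable.

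The hard part will be the locally free case, namely when some $v_i = 0$ or some $p_i$ coincide: the naive extension with $v_{\rho_j} = 0$ then yields a decomposable sheaf, which cannot be $\mu$-stable. The plan here is to enrich the extension with nontrivial flags on some opposite rays in order to break the decomposition without introducing singularities away from $\alpha_0$. For instance, if $v_1 = 0$, one can set $v_{\rho_4} > 0$ with $p_{\rho_4}$ distinct from the remaining nonzero $p_i$; at each non-$\alpha_0$ vertex either the newly activated ray $\rho_4$ is absent or some other ray still carries $v = 0$, so local freeness is preserved via Proposition~\ref{rank2singular}. If two of the $p_i$ coincide a similar trick introduces a counter-flag on a selected opposite ray. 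The resulting $\mu$-stability check again comes down to a triangle inequality on the weights $v_\rho D_\rho H^2$, which can be arranged by varying $a_1, a_2, a_3$ (possibly asymmetrically). Handling every degenerate configuration uniformly, while elementary in each subcase, is the main bookkeeping challenge of the proof.
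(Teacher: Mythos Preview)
Your treatment of the singular case matches the paper's proof exactly: extend by $v_{\rho_j}=0$ for $j=4,5,6$, compute $D_{\rho_i}H^2 = 2a_{i'}a_{i''}$, and take $a_i = v_i$ so that the three weighted terms are equal. The paper then dismisses the locally free cases as ``similar'' without details; you go further and correctly observe that the naive extension is decomposable, hence unstable, in those cases.

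However, your proposed fix for the example $v_1=0$, $v_2,v_3>0$, $p_2\neq p_3$ contains an error. You activate $\rho_4$ (the ray \emph{opposite} to the vanishing $\rho_1$) with $p_{\rho_4}$ distinct from $p_2,p_3$, claiming that at every non-$\alpha_0$ vertex either $\rho_4$ is absent or some other ray carries $v=0$. This fails at the vertex $(\rho_4,\rho_2,\rho_3)$ of $(\PP^1)^3$: there all three $v$-values are positive and all three $p$-values are mutually distinct, so by Proposition~\ref{rank2singular} the restriction of $\tilde\ccR$ to that chart is singular, contradicting the lemma's requirement. The correct move is to activate a ray opposite to one of the \emph{nonzero} $v_i$, e.g.\ $\rho_5$ opposite $\rho_2$, with $p_{\rho_5}$ distinct from $p_2,p_3$: every vertex containing $\rho_5$ then also contains $\rho_1$ or $\rho_4$ (both with $v=0$), so local freeness is preserved everywhere, while $\rho_2,\rho_3,\rho_5$ give three distinct face components and stability follows for a suitable $H$. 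The remaining degenerate subcases require analogous (and, as you anticipate, somewhat tedious) bookkeeping.
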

\begin{proof}
Let $({\bf{u}},{\bf{v}},{\bf{p}})$ be the toric data of $\ccR$ and denote the pull-back of the class of the point via projection to each of the three factors of $X = (\PP^1)^3$ by $D_1,D_2,D_3$. We prove the case $\ccR$ is singular, i.e.~$v_1,v_2,v_3>0$ and $p_1,p_2,p_3$ are mutually distinct (Proposition \ref{rank2singular}). The degenerate cases can be treated in a similar fashion. Let 
$$
H = a_1D_1+a_2D_2+a_3D_3 
$$ 
be any polarization on $X$, i.e.~$a_1,a_2,a_3 \in \Z_{>0}$. Define $\tilde{\ccR}$ by the following toric data. For each of the faces $\rho \in \Delta(X)$ corresponding to the chart $Y = \C^3$, we take $u_\rho$, $v_\rho$, and $p_\rho$ equal to the corresponding value of $u_i$, $v_i$, and $p_i$ coming from $\ccR$. For each other $\rho \in \Delta(X)$, we take $u_\rho=v_\rho=0$. Then clearly $$\tilde{\ccR} |_Y \cong \ccR$$ and $\tilde{\ccR}$ restricted to the other standard charts is locally free (Proposition \ref{rank2singular}). It suffices to ensure $\tilde{\ccR}$ is $\mu$-stable. This is equivalent to (Section \ref{equivsh})
$$
(D_i H^2) v_i < (D_j H^2) v_j + (D_k H^2) v_k,
$$
for all $i,j,k \in \{1,2,3\}$ mutually distinct. These inequalities are equivalent to
$$
2a_j a_k v_i < 2a_i a_k v_j + 2a_i a_j v_k,
$$
for all $i,j,k \in \{1,2,3\}$ mutually distinct. Taking $a_1 = v_1$, $a_2 = v_2$, $a_3 = v_3$ proves the lemma.
\end{proof}

\begin{conjecture} \label{edgelessconj}
Let $\ccR$ be a $T$-equivariant rank 2 reflexive sheaf on $\C^3$. Then 
\begin{equation*}
\sfW_{\ccR,  \varnothing, \varnothing, \varnothing}(q) \Big|_{s_1+s_2+s_3=0}  = \sfZ_{\C^3,\ccR, \varnothing, \varnothing, \varnothing}(q).
\end{equation*}
\end{conjecture}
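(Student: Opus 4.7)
By Proposition \ref{formulachi}, Corollary \ref{QuotC3}, and Theorem \ref{Ben}, the right-hand side equals $\sum_{n\geq 0} e(\Quot_{\C^3}(\ccR,n))\,q^{n}$. Since $\sfW_{\ccR,\varnothing,\varnothing,\varnothing}(q) = \sum_{\bspi} \mathsf{w}(\bspi)\,q^{|\bspi|}$ and by Proposition \ref{lem1} the moduli space $\cC_{\bspi}$ is precisely the connected component of $\Quot_{\C^3}(\ccR,n)^{T}$ indexed by $\bspi$ (for $n = |\bspi|$), it suffices to prove for every $n\geq 0$:
$$
\sum_{\bspi \in \Pi(\ccR,\varnothing,\varnothing,\varnothing),\,|\bspi|=n} \mathsf{w}(\bspi)\Big|_{s_1+s_2+s_3=0} \;=\; e\bigl(\Quot_{\C^3}(\ccR,n)\bigr).
$$

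\textbf{Step 1 (compactify and pass to $T_0$-fixed components).} The first step is to invoke Lemma \ref{(P^1)^3} to realize $\ccR$ as the restriction to $Y = \C^3$ of a $T$-equivariant $\mu$-stable reflexive sheaf $\tilde{\ccR}$ on $X = (\PP^1)^3$ that is locally free on every other chart. Then $\Quot_{\C^3}(\ccR,n)$ sits inside $\bigsqcup_{c_2',c_3'}\M_{Y \subset X}^{H}(2,c_1,c_2',c_3')$ as a $T$-invariant open subscheme. Each $T$-fixed component has constant reflexive hull $\tilde{\ccR}$, so Assumption \ref{as1} is automatic and $\mathsf{w}(\bspi)$ equals $\DT(\cC_{\bspi})$ in the sense of Proposition-Definition \ref{propdef}. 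Letting $T_0 \subset T$ be the Calabi-Yau subtorus and decomposing the $T_0$-fixed locus of $\Quot_{\C^3}(\ccR,n)$ into connected components $\{\cD_k\}$, Proposition \ref{T0DT=TDT} yields
$$
\sum_{\bspi \,:\, \cC_{\bspi} \subset \cD_k^{\C^*}} \mathsf{w}(\bspi)\Big|_{s_1+s_2+s_3=0} \;=\; \DT(\cD_k).
$$

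\textbf{Step 2 (symmetric obstruction theory reduces $\DT(\cD_k)$ to $e(\cD_k)$).} Granting the two analogs of \cite[Conj.~2]{PT2} -- namely smoothness of $\cD_k$ and the Ext-vanishings $\Ext^{1}(\tilde\ccR,\tilde\ccR)^{T_0}=\Ext^{2}(\tilde\ccR,\tilde\ccR)^{T_0}=0$ of Assumption \ref{as2} -- Theorem \ref{sym} equips $\cD_k$ with a $T_0$-equivariant symmetric perfect obstruction theory. Because $\cD_k$ is smooth and compact and the rank of $E^{\mdot \vee}+\langle\tilde\ccR,\tilde\ccR\rangle_0\otimes\O_{\cD_k}$ is zero, the same cancellation of moving weights performed in the proof of Proposition-Definition \ref{propdef} forces the $s_1+s_2+s_3=0$ specialisation of $\DT(\cD_k)$ to be $\pm e(\cD_k)$. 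The sign is pinned to $+1$ using the vertex/edge formalism of Theorem \ref{vertexedgeformalism}: no legs are present (so $\sfE_{\alpha\beta}$ does not contribute), the only non-trivial vertex is the one corresponding to $\ccR$, and the four-term structure of $P(\ccR)$ yields an even-dimensional symmetric pairing, matching the absence of signs in Theorem \ref{Ben} and all the examples of Section \ref{nolegs}.

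\textbf{Step 3 and main obstacle.} Summing Step 2 over $k$ and applying the standard $T_0$-localization for topological Euler characteristic,
$$
e\bigl(\Quot_{\C^3}(\ccR,n)\bigr) \;=\; e\bigl(\Quot_{\C^3}(\ccR,n)^{T_0}\bigr) \;=\; \sum_{k} e(\cD_k),
$$
delivers the required identity and thereby the conjecture. The essential difficulty, as Example \ref{abnormal} already makes clear, is that $\mathsf{w}(\bspi)$ need \emph{not} equal the combinatorial weight $\omega(\bspi)$ individually: only sums over $\C^*$-``buddies'' inside a single $T_0$-component match up. This forces the argument to live at the level of $T_0$-components, and hence requires (i) smoothness of $\cD_k$, (ii) the Ext-vanishings of Assumption \ref{as2}, and (iii) the sign bookkeeping in Step 2. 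Items (i) and (ii) are precisely the transplanted Pandharipande-Thomas conjectures; granting them reduces the conjecture to the essentially routine combination of virtual and Euler-characteristic localization outlined above.
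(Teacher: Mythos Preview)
The statement you are addressing is a \emph{Conjecture} in the paper; there is no proof in the paper to compare against. What the paper offers is conditional evidence: Theorem \ref{Tthm} proves the contribution of a single component under the extra hypothesis of ``expected obstructions'', and Theorem \ref{T0thm} together with Remark \ref{finalrem} derives the full statement \emph{assuming} Conjectures \ref{T0conj1} and \ref{T0conj2} plus the Ext-vanishings of Assumption \ref{as2}. Your outline is essentially this second, conditional, argument: compactify via Lemma \ref{(P^1)^3}, pass to $T_0$-fixed components, use the symmetry of Theorem \ref{sym} to get $\DT(\cD_k)=(-1)^{\dim(\cD_k)+\rk(N^+)}e(\cD_k)$, and sum. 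So your approach and the paper's conditional approach coincide.

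There is, however, a genuine gap in your Step 2. You grant smoothness of the $T_0$-fixed locus (the analog of \cite[Conj.~2]{PT2}, i.e.\ Conjecture \ref{T0conj1}) and the Ext-vanishings, but you do \emph{not} grant the analog of \cite[Conj.~3]{PT2}, namely Conjecture \ref{T0conj2}. That conjecture is exactly what pins down the sign. Theorem \ref{T0thm1} gives $\DT(\cD_k)=(-1)^{\dim(\cD_k)+\rk(N^+)}e(\cD_k)$, and to show this sign is $+1$ the paper (proof of Theorem \ref{T0thm}) writes $\rk(N^+)$ in terms of $\sfV_\alpha^{+}(1,1,1)$ and $\mathbf{Constant}(\sfV_\alpha^{+})$, uses \eqref{V111} to see $\sfV_\alpha^{+}(1,1,1)\equiv 0\ (\mathrm{mod}\ 2)$ in the legless case, and then invokes Conjecture \ref{T0conj2} to identify $\dim(\cD_k)\equiv\mathbf{Constant}(\sfV_\alpha^{+})\ (\mathrm{mod}\ 2)$. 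Your sentence about ``the four-term structure of $P(\ccR)$ yielding an even-dimensional symmetric pairing'' does not establish this last parity; indeed Example \ref{abnormal} shows that $T_0$-fixed terms of $\sfV_\alpha$ beyond $T_\cC-\Omega_\cC\otimes(t_1t_2t_3)^{-1}$ do occur, so one cannot read off $\dim(\cD_k)$ from $\sfV_\alpha^{+}$ without further input. In short, your argument needs three unproved inputs (Conjecture \ref{T0conj1}, Conjecture \ref{T0conj2}, and the $T_0$-Ext vanishings for the specific $\tilde\ccR$), not two, and the one you omit is precisely the one controlling the sign you claim to compute.
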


\begin{remark}
Recall from Section \ref{combsection} that $\sfZ_{\C^3,\ccR, \varnothing, \varnothing, \varnothing}(q)$ is just the generating function of Quot schemes
$$
\sum_{n = 0}^{\infty} e(\Quot_{\C^3}(\ccR,n)) q^n.
$$
There are two essentially different cases of Conjecture \ref{edgelessconj}. 
\begin{enumerate}
\item $\ccR$ is locally free, i.e.~some $v_i =0$ or some $p_i$ are equal (Proposition \ref{rank2singular}). 
Then Conjecture \ref{edgelessconj} and equation \eqref{splitZ} imply
\begin{equation*}
\sfW_{\ccR,  \varnothing, \varnothing, \varnothing}(q) \Big|_{s_1+s_2+s_3=0}  = M(q)^2.
\end{equation*}
\item $\ccR$ is singular, i.e.~all $v_i>0$ and $p_i$ are mutually distinct. In this case \eqref{W} only depends on ${\bf{v}}$ (by \eqref{tr1}) and we define 
\begin{equation*} 
\sfW_{v_1,v_2,v_3, \varnothing, \varnothing, \varnothing}(q) := \sfW_{\ccR,  \varnothing, \varnothing, \varnothing}(q). 
\end{equation*}
The right hand side of Conjecture \ref{edgelessconj} is computed in Theorem \ref{Ben}. Conjecture \ref{edgelessconj} is an interesting non-trivial \emph{combinatorial identity} stating
$$
\sfW_{v_1,v_2,v_3, \varnothing, \varnothing, \varnothing}(q) \Big|_{s_1+s_2+s_3=0}   = M(q)^2 \prod_{i=1}^{v_1} \prod_{j=1}^{v_2} \prod_{k=1}^{v_3} \frac{1-q^{i+j+k-1}}{1-q^{i+j+k-2}}.
$$
\end{enumerate}
\end{remark}

\begin{remark} \label{invariance}
Conjecture \ref{edgelessconj} implies Main Conjecture \ref{mainconj} in the case all $\hat{\bslambda} = \varnothing$ (no legs). In this case, the LHS of Main Conjecture \ref{mainconj} equals $\sfZ_{Y,\ccR|_Y}(q^{-2})$ up to an overall multiplicative factor of $q$ to some power. Since there are no legs, the formula of the previous remark shows the generating function is $M(q)^{2e(Y)}$ times a Laurent polynomial in $q^{-2}$. As can be checked by a small calculation, this Laurent polynomial is invariant under $q \leftrightarrow q^{-1}$ (up to an overall multiplicative factor).
\end{remark}

\begin{remark} \label{buddies1}
In Example \ref{abnormal}, we saw that in general $\mathsf{w}(\bspi) \neq \omega(\bspi)$. However, we do expect the following refinement of Conjecture \ref{edgelessconj} to be true. Let $\ccR$ be a $T$-equivariant rank 2 reflexive sheaf on $\C^3$. In \cite{GKY}, we associate a double dimer model $\mathbb{D}(\bspi)$ to any $\bspi \in \Pi(\ccR,\varnothing,\varnothing,\varnothing)$. For any double dimer model $\mathbb{D}$, we conjecture 
\begin{equation*}
 \sum_{{\footnotesize{\begin{array}{c} \bspi \in \Pi(\ccR,  \varnothing, \varnothing, \varnothing) \\ \mathbb{D}(\bspi) = \mathbb{D} \end{array}}}} \mathsf{w}(\bspi) \Big|_{s_1+s_2+s_3=0} q^{|\bspi|} =  \sum_{{\footnotesize{\begin{array}{c} \bspi \in \Pi(\ccR,  \varnothing, \varnothing, \varnothing) \\ \mathbb{D}(\bspi) = \mathbb{D} \end{array}}}} \omega(\bspi) q^{|\bspi|}. 
\end{equation*}
In \cite{GKY}, we refer to elements $\Pi(\ccR,  \varnothing, \varnothing, \varnothing)$ with the same double dimer model as \emph{buddies}. In Section \ref{T0loc}, we give a (conjectural) geometric characterization of buddies: $\bspi_1, \bspi_2$ are buddies if and only if $\cC_{\bspi_1}$, $\cC_{\bspi_2}$ are $T / T_0$-fixed connected components inside the same $T_0$-fixed connected component (Remark \ref{buddies2}). 
\end{remark}

\subsection{Rank 2 equivariant vertex/edge for one leg} \label{O(-1,-1)} 

For $Y = \C^* \times \C^2$, one can fix a double square configuration $\bslambda$ along $\C^*$ (Definition \ref{doublesquare}). Unlike the rank 1 case, the edge term \eqref{defVE} along this leg is not determined by $\bslambda$ alone. Therefore, $\sfE_{\bslambda}$ is not well-defined. The reason is that whether the leg has moduli or not depends on what happens in the ``corners attached to the leg'' (Example \ref{littlebox}). In this section, we consider 
$$
Y = \mathrm{Tot}(\O_{\PP^1}(-1) \oplus \O_{\PP^1}(-1)).
$$ 
Let $\ccR$ be a $T$-equivariant rank 2 reflexive sheaf on $Y$ described by toric data $(\mathbf{u},\mathbf{v},\mathbf{p})$. Let $0,1$ label the two vertices of $\Delta(Y)$. The polyhedron $\Delta(Y)$ has four non-compact faces. Two of them share the unique compact edge and we denote them by $\rho_1, \rho_2$. The other two we denote by $\rho_3, \rho_4$, where $\rho_3$ has vertex $0$ and $\rho_4$ has vertex $1$. We will see that $\ccR$ is always the restriction of a $T$-equivariant rank 2 $\mu$-stable reflexive sheaf on a (polarized) toric compactification $X$ of $Y$ (Lemma \ref{F1}). In order for this extension of $\ccR$ to have no $T$-fixed moduli, we assume $$v_{\rho_4} = 0.$$  

Fix a double square configuration $\bslambda$ along the compact leg and $\hat{\bspi} = \{\bspi_\alpha\}_{\alpha=0,1}$, where
$$
\bspi_\alpha \in \Pi(\ccR_\alpha,\bslambda,\varnothing,\varnothing).
$$
Then $\hat{\bspi}$ has at most 3 face components and can have several leg and vertex components of 1's (Definition \ref{vefcomps2}). Therefore, there are no moduli associated to the face components. In particular, the moduli space $\cC_{\bspi}$ (Definition \ref{Z}) decomposes according to vertex and edge components
$$
\cC_{\bspi} \cong \cC_{\mathrm{vert}} \times \cC_{\mathrm{edge}}.
$$
Note that the expression for the edge in \eqref{Galphabeta}, \eqref{defVE} makes sense without reference to a compactification of $X$. We write the corresponding expression as $\sfE_{\hat{\bspi}}$. It is tempting to define an equivariant edge measure 
\begin{equation} \label{edgemeasure}
\int_{\cC_{\mathrm{edge}}} e(T_{\cC_{\mathrm{edge}}} - \sfE_{\hat{\bspi}}).
\end{equation}
However, this may not be well-defined as illustrated by the following example.
\begin{example} \label{withlegs} 
Suppose $\ccR$ is given by toric data $(\mathbf{u},\mathbf{v},\mathbf{p})$ with $(v_{\rho_1}, v_{\rho_2}, v_{\rho_3}, v_{\rho_4}) = (1,1,1,0)$ and $p_1, p_2, p_3$ mutually distinct. We consider the following three double box configurations $\hat{\bspi}, \hat{\bspi}', \hat{\bspi}''$ with leg along the zero section of $Y$
\begin{align*}
(\sfQ_{\bspi_{0}}, \sfQ_{\bspi_{1}}) &= \left(t_1t_2+\frac{t_3(t_1+t_2+2t_1t_2)}{1-t_3}, \frac{t_1t_3^{-1}+t_2t_3^{-1}+2t_1t_2t_3^{-2}}{1-t_3^{-1}} \right), \\
(\sfQ_{\bspi_{0}'}, \sfQ_{\bspi_{1}'})&= \left(t_1t_2+\frac{t_3(t_1+t_2+[\O_{\PP^1}(1)]t_1t_2)}{1-t_3}, \frac{t_1t_3^{-1}+t_2t_3^{-1}+[\O_{\PP^1}(1)]t_1t_2t_3^{-2}}{1-t_3^{-1}} \right), \\
(\sfQ_{\bspi_{0}''}, \sfQ_{\bspi_{1}''})&= \left(\frac{t_3(t_1+t_2+t_1t_2)}{1-t_3}, \frac{t_1t_3^{-1}+t_2t_3^{-1}+t_1t_2t_3^{-1}}{1-t_3^{-1}} \right).
\end{align*}
The double box configurations in chart $U_0$ are depicted in Figure 8.
One can compute
\begin{align*}
(\sfV_{\bspi_0},\sfV_{\bspi_1}) =& \left(-\frac{1}{t_1^2}-\frac{1}{t_1}-\frac{1}{t_2^2}-\frac{1}{t_2}+\frac{1}{t_1 t_3}+\frac{1}{t_2 t_3}+\frac{t_1}{t_2 t_3}+\frac{t_2}{t_1 t_3},0 \right), \\
(\sfV_{\bspi_{0}'}, \sfV_{\bspi_{1}'}) =& \left(-\frac{1}{t_1^2}+\frac{1}{t_1}-\frac{[\O_{\PP^1}(1)]}{t_1}-\frac{1}{t_2^2}+\frac{1}{t_2}-\frac{[\O_{\PP^1}(1)]}{t_2}-\frac{2}{t_1 t_2}+\frac{[\O_{\PP^1}(1)]}{t_1 t_2}+\frac{2}{t_3} \right. \\
&\left. -\frac{[\O_{\PP^1}(-1)]}{t_3}-\frac{1}{t_1 t_3}+\frac{[\O_{\PP^1}(-1)]}{t_1 t_3}-\frac{1}{t_2 t_3}+\frac{[\O_{\PP^1}(-1)]}{t_2 t_3}+\frac{t_1}{t_2 t_3}+\frac{t_2}{t_1 t_3}, 0 \right), \\
(\sfV_{\bspi_{0}''}, \sfV_{\bspi_{1}''})=& \left(-1-\frac{1}{t_1}-\frac{1}{t_2}+\frac{1}{t_1 t_3}+\frac{1}{t_2 t_3}+\frac{1}{t_1 t_2 t_3},0 \right), \\
\sfE_{\hat{\bspi}}=&\frac{2 t_1}{t_2}+\frac{2 t_2}{t_1}-\frac{2}{t_1^2 t_3}-\frac{2}{t_2^2 t_3}, \\
\sfE_{\hat{\bspi}'}=&2 [\O_{\PP^1}(1)]-1+\frac{[\O_{\PP^1}(1)] t_1}{t_2}+\frac{[\O_{\PP^1}(1)] t_2}{t_1}-\frac{[\O_{\PP^1}(-1)]}{t_1^2 t_3}-\frac{[\O_{\PP^1}(-1)]}{t_2^2 t_3}\\
&-\frac{2 [\O_{\PP^1}(-1)]}{t_1 t_2 t_3}+\frac{1}{t_1 t_2 t_3}, \\
\sfE_{\hat{\bspi}''}=&1+\frac{t_1}{t_2}+\frac{t_2}{t_1}-\frac{1}{t_1^2 t_3}-\frac{1}{t_2^2 t_3}-\frac{1}{t_1 t_2 t_3}.
\end{align*}
Interestingly, $\sfE_{\hat{\bspi}''}$ has a positive $T$-fixed part, so $1/e(\sfE_{\hat{\bspi}''})$ is not well-defined. Therefore \eqref{edgemeasure} is in general not well-defined. As a consistency check: an explicit calculation using these expressions leads to the sign predicted by Main Conjecture \ref{mainconj}. 
\end{example}

\begin{figure} 
\includegraphics[width=2.5in]{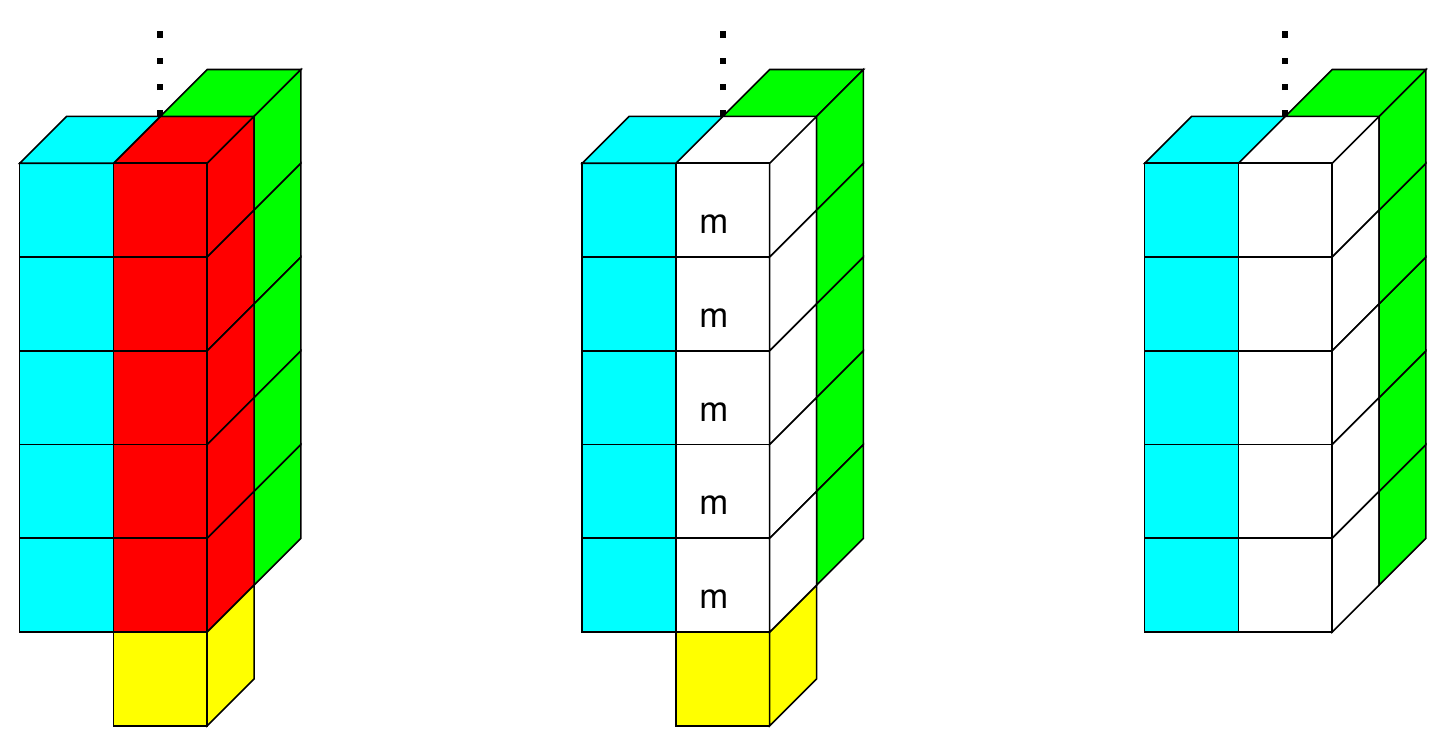}
\caption{Double box configurations $\bspi_{0}, \bspi_{0}', \bspi_{0}''$ of Example \ref{withlegs}.}
\end{figure}

In the previous example, the obstruction term in $ \sfE_{\hat{\bspi}''}$ cancels after adding $\sfV_{\bspi_{0}''}$, $\sfV_{\bspi_{1}''}$. We should therefore not separate these three terms. This motivates the following definition.
\begin{propositiondefinition}
Let $Y = \mathrm{Tot}(\O_{\PP^1}(-1) \oplus \O_{\PP^1}(-1))$ and label the vertices and faces of $\Delta(Y)$ as before. Let $\ccR$ be a $T$-equivariant rank 2 reflexive sheaf on $Y$ described by toric data $(\mathbf{u},\mathbf{v},\mathbf{p})$ with $v_{\rho_4} = 0$. Let $\bslambda$ be a double square configuration along the compact edge and let $\hat{\bspi} = \{\bspi_\alpha\}_{\alpha = 0,1}$, where
$$
\bspi_\alpha \in \Pi(\ccR_\alpha,\bslambda,\varnothing,\varnothing), \ \mathrm{for} \ \alpha=0,1.
$$
Then 
$$
\mathsf{w}(\hat{\bspi}) := \int_{\cC_{\hat{\bspi}}} e(T_{\cC_{\hat{\bspi}}} - (\sfV_{\bspi_0} + \sfV_{\bspi_1} + \sfE_{\hat{\bspi}})) \in \Q(s_1,s_2,s_3)
$$
is the rank 2 equivariant resolved conifold measure associated to $\hat{\bspi}$. The specialization 
$$
\mathsf{w}(\hat{\bspi}) \Big|_{s_1+s_2+s_3=0} \in \Q
$$
is well-defined. Define the generating function of these measures as
\begin{equation*} 
\sfW_{\ccR, \bslambda, \varnothing, \varnothing}(q) = \sum_{\hat{\bspi} \in \Pi(\ccR, \bslambda, \varnothing, \varnothing)} \mathsf{w}(\hat{\bspi}) q^{|\bspi| + f_{-1,-1}(\bslambda) + g_{u_{\rho_3}, u_{\rho_4}, v_{\rho_3}, 0}(\bslambda)} \in \Z(s_1,s_2,s_3)[\![q]\!],
\end{equation*}
where $f_{-1,-1}(\bslambda)$, $g_{u_{\rho_3}, u_{\rho_4}, v_{\rho_3}, 0}(\bslambda)$ were defined in Definition \ref{Z}.
\end{propositiondefinition}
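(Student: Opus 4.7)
The plan is to reduce this statement to Proposition-Definition \ref{propdef} by embedding the resolved conifold setup into a suitable projective compactification. First I would invoke the analog of Lemma \ref{(P^1)^3} (the referenced Lemma \ref{F1}) to produce a smooth projective toric $3$-fold $X$ with $H^0(K_X^{-1}) \neq 0$ and a polarization $H$ such that $Y \subset X$ is a toric compactification, together with a $T$-equivariant rank $2$ $\mu$-stable reflexive sheaf $\tilde{\ccR}$ on $X$ whose restriction to $Y$ is isomorphic to $\ccR$ and whose restriction to every other chart $U_\alpha$, $\alpha \notin V(Y)$, is locally free. The assumption $v_{\rho_4} = 0$ is exactly what guarantees that the toric data at the non-$Y$ vertices can be chosen so that the extension has no further $T$-fixed moduli (i.e.\ all relevant $v_\rho$ for faces not sharing an edge with $E_c(Y)$ vanish), giving an isolated reduced reflexive point $[\tilde{\ccR}]$ in the fixed locus.

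Next I would lift the double box configuration $\hat{\bspi} = (\bspi_0, \bspi_1)$ on $Y$ to a global configuration on $X$ by declaring $\bspi_\alpha = \varnothing$ for all $\alpha \in V(X) \setminus V(Y)$ and $\bslambda_{\alpha\beta} = \varnothing$ for all $\alpha\beta \in E(X) \setminus E_c(Y)$. By Proposition \ref{lem1} this determines the characteristic function $\bschi$ of a $T$-equivariant subsheaf $\F \subset \tilde{\ccR}$ with $\tilde{\ccR}/\F$ supported on $Y$; by Theorem \ref{fixedlocithm} together with Proposition \ref{prodP1s} and Remark \ref{a=3}, the corresponding connected component $\cC \subset \M_{Y \subset X}^H(2,c_1,c_2,c_3)^T$ is a product of $\PP^1$'s, all of whose closed points have reflexive hull $\tilde{\ccR}$. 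Thus $\cC$ satisfies Assumption \ref{as1}, and $\cC \cong \cC_{\hat{\bspi}}$.

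I would then apply the vertex/edge decomposition of Theorem \ref{vertexedgeformalism} to compute $E^{\mdot \vee}|_\cC + \langle \tilde{\ccR}, \tilde{\ccR} \rangle_0 \otimes \O_\cC$ in $K_0^T(\cC)$. Because the cokernel $\tilde{\ccR}/\F$ is scheme-theoretically supported in $Y$, the vertex contributions $\sfV_\alpha$ vanish for $\alpha \notin V(Y)$ (the sheaves $\FF$ and $p_X^* \tilde{\ccR}$ agree in those charts) and the edge contributions $\sfE_{\alpha\beta}$ vanish for $\alpha\beta \notin E_c(Y)$ for the same reason (here the redistribution in \eqref{defVE} is arranged so that empty legs at non-compact edges do not spuriously contribute). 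The only surviving pieces are $\sfV_{\bspi_0}$, $\sfV_{\bspi_1}$, and $\sfE_{\hat{\bspi}}$, so
\[
T_{\cC_{\hat{\bspi}}} - (E^{\mdot \vee}|_\cC + \langle \tilde{\ccR}, \tilde{\ccR}\rangle_0 \otimes \O_\cC) \;=\; T_{\cC_{\hat{\bspi}}} - \bigl(\sfV_{\bspi_0} + \sfV_{\bspi_1} + \sfE_{\hat{\bspi}}\bigr) \in K_0^T(\cC_{\hat{\bspi}}).
\]
Integrating the equivariant Euler class of this class identifies $\mathsf{w}(\hat{\bspi})$ with the invariant $\DT(\cC)$ defined in Proposition-Definition \ref{propdef}, so membership in $\Q(s_1,s_2,s_3)$ and well-definedness of the specialization $s_1 + s_2 + s_3 = 0$ follow from that proposition.

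The main obstacle is the bookkeeping in the third step: verifying that the vertex and edge terms outside $Y$ really do vanish after the redistribution of \eqref{defVE}, including the compact face components of $\bschi$ that lie partly in $V(Y)$ but whose ``limiting flags'' extend into the non-$Y$ part of $\Delta(X)$. One has to check that the constant term $\tr_{-\langle \tilde{\ccR}, \tilde{\ccR}\rangle_0}$ absorbs exactly the global Euler-characteristic discrepancy and that the non-$Y$ edges, where $\bslambda_{\alpha\beta} = \varnothing$, produce $\sfG_{\alpha\beta} = 0$ so their contributions to $\sfE_{\alpha\beta}$ and to the neighbouring $\sfV_\alpha$ drop out. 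Once this local-global cancellation is confirmed, the proof is just an application of Proposition-Definition \ref{propdef}, whose pole-cancellation argument using the symmetry of Proposition \ref{SD} and the $1$-dimensionality of each $\PP^1$-factor applies verbatim.
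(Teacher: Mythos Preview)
Your proposal is correct and follows essentially the same route as the paper: invoke Lemma \ref{F1} to produce a polarized toric compactification $X$ with a $\mu$-stable extension $\tilde{\ccR}$ of $\ccR$, observe that the resulting connected component $\cC_{\hat{\bspi}}$ satisfies Assumption \ref{as1}, and then appeal to Proposition-Definition \ref{propdef}. The paper's own proof is simply the two-line statement that the argument is ``similar to that of Proposition \ref{welldef1} and follows from [Lemma \ref{F1}] and Proposition-Definition \ref{propdef}''; your write-up just makes explicit the intermediate identifications (via Theorem \ref{vertexedgeformalism} and the vanishing of $\sfV_\alpha$, $\sfE_{\alpha\beta}$ outside $V(Y)$, $E_c(Y)$) that the paper leaves to the reader.
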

\begin{proof}
We have to prove the Calabi-Yau specialization $s_1+s_2+s_3=0$ is well-defined. The proof is similar to that of Proposition \ref{welldef1} and follows from the following lemma and Proposition-Definition \ref{propdef}.
\end{proof}

\begin{lemma} \label{F1}
Let $\ccR$ be any $T$-equivariant rank 2 reflexive sheaf on $Y = \mathrm{Tot}(\O_{\PP^1}(-1) \oplus \O_{\PP^1}(-1))$ described by toric data $(\mathbf{u},\mathbf{v},\mathbf{p})$ (not necessarily with $v_{\rho_4} = 0$). Then there exists a polarized toric compactification $X$ of $Y$ and a rank 2 $\mu$-stable reflexive sheaf $\tilde{\ccR}$ on $X$, such that $\tilde{\ccR}|_{Y} \cong \ccR$ and the restriction of $\tilde{\ccR}$ to any $T$-invariant affine open subset not contained in $Y$ is locally free. 
\end{lemma}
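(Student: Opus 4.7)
The proof mirrors that of Lemma \ref{(P^1)^3}. Compactify $Y$ by the toric $\PP^2$-bundle
\[
X_0 = \PP\bigl(\O_{\PP^1} \oplus \O_{\PP^1}(-1) \oplus \O_{\PP^1}(-1)\bigr) \supset Y,
\]
a smooth projective toric 3-fold with $H^0(K_{X_0}^{-1}) \neq 0$. Its fan contains the four rays $\rho_1,\rho_2,\rho_3,\rho_4$ of $Y$ together with one new ray $\rho_5$ (the section at infinity of the trivial summand), and each maximal cone of $X_0$ not contained in $Y$ contains $\rho_5$.

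Extend the Klyachko data $(\mathbf{u},\mathbf{v},\mathbf{p})$ of $\ccR$ to $X_0$ by keeping the flags on $\rho_1,\ldots,\rho_4$ unchanged and declaring $u_{\rho_5} = v_{\rho_5} = 0$ (so the flag on $\rho_5$ is the trivial jump $0 \subset \C^2$). The resulting $T$-equivariant rank 2 reflexive sheaf $\tilde{\ccR}_0$ satisfies $\tilde{\ccR}_0|_Y \cong \ccR$ tautologically, and Proposition \ref{rank2singular} gives local freeness on each $U_\alpha \not\subset Y$ since at least one of the three boundary $v$-values there (namely $v_{\rho_5}$) vanishes.

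It remains to choose a polarization $H$ making $\tilde{\ccR}_0$ $\mu$-stable. Since $v_{\rho_5} = 0$, the stability inequalities of Section \ref{equivsh} reduce to
\[
\sum_{i \in \{1,\ldots,4\}:\, p_{\rho_i} = q} (D_{\rho_i}H^2)\, v_{\rho_i} \;<\; \tfrac12 \sum_{i=1}^{4} (D_{\rho_i}H^2)\, v_{\rho_i}
\]
for every 1-dim $q \subset \C^2$. Writing $H = a\xi + bf$ in the basis $\xi = c_1(\O_{X_0}(1))$, $f = \pi^*[\mathrm{pt}]$ of $\Pic(X_0)$, and using the Grothendieck relation $\xi^3 = 2$ together with $\xi^2 f = 1$, $f^2 = 0$, a brief calculation expresses $D_{\rho_i}H^2$ as explicit polynomials in $(a,b)$ for $i=1,\ldots,4$. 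For the generic configuration of $(\mathbf{v},\mathbf{p})$ these two-parameter weights already satisfy the inequalities in the ample range $b > a > 0$.

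The hard step is the degenerate regime, in which the symmetries $D_{\rho_1}H^2 = D_{\rho_2}H^2$ and $D_{\rho_3}H^2 = D_{\rho_4}H^2$ forced by $X_0$ prevent a solution, e.g.\ when $|v_{\rho_1} - v_{\rho_2}|$ is very large compared to $v_{\rho_3}+v_{\rho_4}$, or when several $p_{\rho_i}$ coincide. We circumvent this by successively $T$-equivariantly blowing up points of $X_0 \setminus Y$: each blow-up leaves $Y \subset X_n$ and $\tilde{\ccR}_0|_Y$ intact, introduces a new ray on which we again impose $v = 0$ (preserving local freeness outside $Y$), and raises $\rk \Pic(X_n)$ by one. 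After finitely many blow-ups the four weights $(D_{\rho_i}H^2)_{i=1}^{4}$ can be tuned independently within the ample cone of some $X_n$, whereupon a generic ample $H$ satisfies the finite system of strict stability inequalities. The sole failure mode is the globally decomposable case where all $p_{\rho_i}$ with $v_{\rho_i} > 0$ coincide, which is incompatible with $\ccR$ being the reflexive hull of a $\mu$-stable sheaf and hence does not arise in the applications.
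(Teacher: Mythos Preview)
Your approach differs from the paper's: rather than blowing up the natural $\PP^2$-bundle $X_0$, the paper picks a single Picard-rank-3 compactification $X=\PP(\O_{\FF_1}\oplus\O_{\FF_1}(F))$ over the first Hirzebruch surface, in which the $(-1,-1)$-curve already sits, and then verifies the four stability inequalities directly with an explicit choice of ample class $H=a_1D_1+(a_1+a_2+a_3)D_2+a_3D_3$ (taking $a_1=\lambda v_1$, $a_2=v_2+v_4$, $a_3=v_3$ and adjusting $\lambda$). The payoff is that nothing is left to a limiting or blow-up argument.

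Your proposal has a genuine gap at the blow-up step. You assert that ``after finitely many blow-ups the four weights $(D_{\rho_i}H^2)_{i=1}^{4}$ can be tuned independently within the ample cone of some $X_n$'', but you do not say which torus-fixed points to blow up, how the intersection numbers of the $D_{\rho_i}$ change under the blow-up, or why the resulting system of strict inequalities admits a solution in the (new) ample cone. This is precisely the content of the lemma, so it cannot be waved away. Moreover, for the intended application (via Proposition-Definition~\ref{propdef}) the compactification must satisfy $H^0(K_X^{-1})\neq 0$; arbitrary toric blow-ups of points in $X_0\setminus Y$ need not preserve this, and you do not address it. A smaller point: your diagnosis of when $X_0$ itself fails is not worked out---in the generic case (all $v_i>0$, all $p_i$ distinct) the two constraints reduce to $|v_1-v_2|\cdot|v_3-v_4|<(v_1+v_2)(v_3+v_4)$, which is automatic, so the actual question is whether the required ratio $D_{\rho_1}H^2/D_{\rho_3}H^2$ is attainable in the ample cone of $X_0$; you do not settle this. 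Finally, dismissing the decomposable case as ``not arising in the applications'' is not appropriate for the lemma as stated (the paper is also brief here, but it at least signals that the degenerate cases are handled by the same explicit computation on the fixed $X$).
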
 
\begin{proof}
We treat the case all $v_i>0$ and all $p_i$ are mutually distinct. The degenerate cases can be done similarly.

Let $\FF_1 = \PP(\O_{\PP^1}\oplus\O_{\PP^1}(1))$ be the first Hirzebruch surface and let $C$ be the $T$-fixed line with $C^2=-1$. Let $F$ be a $T$-fixed fibre. Define 
$$
 \pi : X = \PP(\O_{\FF_1} \oplus \O_{\FF_1}(F)) \longrightarrow \FF_1.
$$
The following divisors form a $\mathbb{Z}$-basis for $\Pic(X)$ 
$$
D_1 = \pi^{-1}(C), \ D_2 = \pi^{-1}(F), \ D_3=\PP(\O_{\FF_1} \oplus 0)\subset X.
$$ 
Then $C$ is a $(-1,-1)$ curve in $X$ contained in two toric charts $U_0, U_1$. Their union is isomorphic to $Y$. We define $\tilde{\ccR}$ on $X$ by the following toric data $(\tilde{\mathbf{u}},\tilde{\mathbf{v}},\tilde{\mathbf{p}})$. The faces $\rho_1, \rho_2, \rho_3, \rho_4$ of $\Delta(Y)$ (introduced at the beginning of this section) correspond to faces of $\Delta(X)$ and we take $\tilde{u}_{\rho_i}, \tilde{v}_{\rho_i}, \tilde{p}_{\rho_i}$ from the given toric data $(\mathbf{u},\mathbf{v},\mathbf{p})$. For all other faces $\rho$ of $\Delta(X)$, we set $\tilde{u}_\rho = \tilde{v}_\rho = 0$. Then $\tilde{\ccR}|_Y \cong \ccR$ and $\tilde{\ccR}$ restricted to any other other toric chart is locally free (Proposition \ref{rank2singular}). We are reduced to finding a polarization $H$ on $X$ such that $\tilde{\ccR}$ is $\mu$-stable. 

The intersection numbers among the toric divisors are
\begin{align*}
&D_1D_2D_3 = pt, \ D_1D_3^2=D_1^2 D_3=-pt, \\
&D_2^2=D_1^2D_2=D_3^2D_2=D_1^3=D_3^3=0,
\end{align*}
where $pt$ denotes the class of a point. A divisor $$H = a_1 D_1 + (a_1 + a_2 + a_3) D_2 + a_3 D_3$$ is ample if and only if $a_1,a_2,a_3>0$. Take $\lambda \in \Q_{>0}$ arbitrary and $a_1 = \lambda v_1$, $a_2 = v_2+v_4$, $a_3 = v_3$. Using the above intersection numbers, it is not hard to see that there exists a $\lambda \in \Q_{>0}$ such that the following four stability inequalities hold 
$$
(H^2D_i) v_i < (H^2D_j) v_j + (H^2D_k) v_k + (H^2D_l) v_l, \ \forall i,j,k,l \in \{1,2,3,4\} \mathrm{ \ mutually \ distinct}. 
$$
After scaling $H$ such that all its coefficients are integer, we are done.
\end{proof}

Parallel to Main Conjecture \ref{mainconj} we conjecture:
\begin{conjecture} \label{(-1,-1)conj}
Let $Y = \mathrm{Tot}(\O_{\PP^1}(-1) \oplus \O_{\PP^1}(-1))$ and label the vertices and faces of $\Delta(Y)$ as before. Let $\ccR$ be any $T$-equivariant rank 2 reflexive sheaf on $Y$ described by toric data $(\mathbf{u},\mathbf{v},\mathbf{p})$ with $v_{\rho_4} = 0$. Let $\bslambda$ be a double square configuration along the compact edge. Then 
\begin{equation*} 
\sfW_{\ccR, \bslambda, \varnothing, \varnothing}(q) \Big|_{s_1+s_2+s_3=0} = (-1)^{|\bslambda| (v_{\rho_1} + v_{\rho_2} + v_{\rho_3} +1)} \sfZ_{\ccR, \bslambda, \varnothing, \varnothing}(q).
\end{equation*}
\end{conjecture}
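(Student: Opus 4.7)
The plan is to prove Conjecture~\ref{(-1,-1)conj} via $T_0$-localization, conditional on Conjecture~\ref{T0conj1} together with the analog of \cite[Conj.~3]{PT2} identifying the symmetric-obstruction DT invariant of a compact $T_0$-fixed component with its Behrend-weighted Euler characteristic. First, by Lemma~\ref{F1}, I compactify $Y$ to a polarized smooth projective toric $3$-fold $X \supset Y$ with $H^0(K_X^{-1}) \neq 0$ and extend $\ccR$ to a $T$-equivariant rank 2 $\mu$-stable reflexive sheaf $\tilde{\ccR}$ on $X$ which is locally free outside $Y$. The hypothesis $v_{\rho_4} = 0$ prevents any moduli from leaking into the noncompact legs attached to vertex $1$. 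The vertex/edge formalism of Theorem~\ref{vertexedgeformalism} then identifies $\sfW_{\ccR,\bslambda,\varnothing,\varnothing}(q)\big|_{s_1+s_2+s_3=0}$ with a sum of $\DT(\cC)|_{s_1+s_2+s_3=0}$ over the compact $T$-fixed components $\cC \subset \M_{Y\subset X}^T$ of reflexive hull $\tilde{\ccR}$ and compact-edge asymptotic $\bslambda$.

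Second, I regroup these contributions by $T_0$-orbit using Proposition~\ref{T0DT=TDT}:
\[
\sum_{\cC \subset \cC_0^{\C^*}} \DT(\cC)\Big|_{s_1+s_2+s_3=0} \;=\; \DT(\cC_0),
\]
where $\cC_0$ ranges over the compact $T_0$-fixed components with constant reflexive hull $\tilde{\ccR}$; Assumption~\ref{as2} is satisfied by Remark~\ref{a=3}. By Theorem~\ref{sym}, $\cC_0$ carries a $T_0$-equivariantly symmetric perfect obstruction theory after removing $\langle \tilde{\ccR},\tilde{\ccR}\rangle_0 \otimes \O_{\cC_0}$, and the assumed analog of \cite[Conj.~3]{PT2} yields
\[
\DT(\cC_0) \;=\; (-1)^{\sigma(\cC_0)}\, e(\cC_0),
\]
where $\sigma(\cC_0)$ is the $T_0$-invariant obstruction rank at a closed point.

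On the combinatorial side, $e(\cC_0) = \sum_{\cC \subset \cC_0^{\C^*}} e(\cC) = \sum_{\hat{\bspi} \in \mathrm{buddies}(\cC_0)} \omega(\hat{\bspi})$ by the decomposition of Definition~\ref{Z} and the geometric characterization of buddies suggested by Remark~\ref{buddies1}. Summing over all $\cC_0$ of fixed leg datum $\bslambda$ and matching the $q^{\chi(\hat{\bspi})}$-weights with the $q^{c_3}$-weights via Proposition~\ref{formulachi} and the $q \mapsto q^{-2}$ substitution reproduces $\sfZ_{\ccR,\bslambda,\varnothing,\varnothing}(q)$. Provided $\sigma(\cC_0)$ depends only on $\bslambda$, this gives
\[
\sfW_{\ccR,\bslambda,\varnothing,\varnothing}(q)\Big|_{s_1+s_2+s_3=0} \;=\; (-1)^{\sigma(\bslambda)}\, \sfZ_{\ccR,\bslambda,\varnothing,\varnothing}(q).
\]

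The main obstacle is the sign computation: proving $\sigma(\bslambda) \equiv |\bslambda|(v_{\rho_1} + v_{\rho_2} + v_{\rho_3} + 1) \pmod 2$. To extract $\sigma(\cC_0)$ at a closed point $[\F] \in \cC_0$, I expand the virtual tangent $\T_{[\F]} + \langle \tilde{\ccR}, \tilde{\ccR}\rangle_0 = \sfV_{\bspi_0} + \sfV_{\bspi_1} + \sfE_{\hat{\bspi}}$ using the formulas of Section~\ref{vertex/edge}, and observe that Proposition~\ref{SD} pairs Serre-dual $T_0$-fixed monomials with opposite signs so that vertex contributions cancel in parity, leaving only an edge residue. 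Using the explicit edge term $\sfG_{\alpha\beta}$ in \eqref{Galphabeta} with normal-bundle weights $m_{\alpha\beta} = m_{\alpha\beta}' = -1$ and the adjacent jump heights $v_{\rho_1}, v_{\rho_2}, v_{\rho_3}$, a direct enumeration of the $T_0$-invariant weights yields the parity $|\bslambda|(v_{\rho_1} + v_{\rho_2} + v_{\rho_3} + 1)$, in agreement with substituting $m_{\alpha\beta} = -1$ and $v_{\rho_4}=0$ into the Main Conjecture~\ref{mainconj} sign formula. Example~\ref{withlegs} already confirms this sign in the first nontrivial case and serves as a sanity check on the bookkeeping.
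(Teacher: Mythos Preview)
The statement you are trying to prove is labeled a \emph{Conjecture} in the paper and is left open there; the paper gives no proof of it. What the paper does provide is Theorem~\ref{T0thm}, a conditional verification of the more general Main Conjecture~\ref{mainconj} assuming Conjectures~\ref{T0conj1} and~\ref{T0conj2} together with the $T_0$-Ext vanishing of Assumption~\ref{as2}. Your proposal is essentially an attempt to specialize that argument to the resolved conifold, and as a \emph{conditional} argument it is on the right track, but several steps are not correct as written.

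First, you claim that Assumption~\ref{as2} follows from Remark~\ref{a=3}. It does not: Remark~\ref{a=3} concerns the $T$-fixed locus and only gives $\Ext^1(\ccR,\ccR)^T=0$, whereas Assumption~\ref{as2} requires the stronger $T_0$-vanishings $\Ext^1(\ccR,\ccR)^{T_0}=\Ext^2(\ccR,\ccR)^{T_0}=0$. Remark~\ref{remas2} explicitly flags these as additional hypotheses. Second, you describe the analog of \cite[Conj.~3]{PT2} as a Behrend-function identification; in this paper that analog is Conjecture~\ref{T0conj2}, which is a parity statement about $\dim\Ext^1(\F,\F)^{T_0}$, and the Euler-characteristic formula you want is Theorem~\ref{T0thm1}, whose sign is $(-1)^{\dim(\cC)+\rk(N^+)}$ rather than a Behrend weight.

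Most seriously, your sign computation is wrong in its mechanism. You assert that Proposition~\ref{SD} forces vertex contributions to cancel in parity, leaving only an edge residue. In fact, the vertex parity computed in \eqref{V111} is $\sfV_{\alpha}^{+}(1,1,1)\equiv\sum_i |\bslambda_{\alpha\beta_i}|\,v_{\rho_i}\pmod 2$, which at vertex $0$ gives $|\bslambda|\,v_{\rho_3}$ and at vertex $1$ gives $|\bslambda|\,v_{\rho_4}=0$. The edge parity from \eqref{E111} with $m_{\alpha\beta}=-1$ gives $|\bslambda|(v_{\rho_1}+v_{\rho_2}+1)$. It is precisely the \emph{nonvanishing} vertex-$0$ term that supplies the $v_{\rho_3}$ in the final exponent $|\bslambda|(v_{\rho_1}+v_{\rho_2}+v_{\rho_3}+1)$; it is not an ``edge residue.'' So while your conclusion matches the conjectured sign, your justification does not, and the argument as stated would fail on any example where the face transverse to the compact edge has $v_{\rho_3}$ odd. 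In short: the route via Theorem~\ref{T0thm} is viable as a conditional argument, but you must carry along Conjectures~\ref{T0conj1}, \ref{T0conj2} and the $T_0$-Ext vanishings explicitly, and redo the parity bookkeeping using \eqref{E111} and \eqref{V111} rather than appealing to a cancellation that does not occur.
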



\subsection{Expected obstructions} \label{good}

In this section, we explain where the sign in Main Conjecture \ref{mainconj} comes from. The following theorem discusses the contribution of connected components $\cC$ with ``expected obstructions''. The argument is a variation on \cite[Sect.~4.10, 4.11]{MNOP1}.  
\begin{theorem} \label{Tthm}
Let $Y$ be a smooth toric Calabi-Yau 3-fold with toric compactification $X$ with $H^0(K_{X}^{-1}) \neq 0$ and polarization $H$. Let $\ccR$ be a $T$-equivariant rank 2 $\mu$-stable reflexive sheaf on $X$ with toric data $({\bf{u}},{\bf{v}},{\bf{p}})$. For each $\alpha\beta \in E_c(Y)$, there are two faces $\rho_{1, \alpha\beta}$, $\rho_{2,\alpha\beta}$ which share $\alpha\beta$ as an edge and two disjoint faces $\rho_{3,\alpha\beta}$, $\rho_{4,\alpha\beta}$ connected by the edge $\alpha\beta$. Let $\hat{\bspi} \in \Pi(\ccR|_Y,\hat{\bslambda})$ for some $\hat{\bslambda} = \{\bslambda_{\alpha\beta} \in \Lambda(\ccR_{\alpha\beta})\}_{\alpha\beta \in E_c(Y)}$ and consider the connected component $\cC = \cC_{\hat{\bspi}}$. Assume the following:
\begin{enumerate}
\item[(i)] All closed points of $\cC$ have reflexive hull $\ccR$.
\item[(ii)] The $T_0$-fixed part of $\sum_{\alpha \in V(Y)} \sfV_\alpha + \sum_{\alpha\beta \in E_c(Y)} \sfE_{\alpha\beta}$ equals $T_{\cC} - \Omega_{\cC} \otimes (t_1 t_2 t_3)^{-1}$.
\end{enumerate}
Then 
\begin{align*}
\DT(\cC) = e(\cC) \prod_{\alpha\beta \in E_c(Y)} (-1)^{|\bslambda_{\alpha\beta}| (m_{\alpha\beta} (v_{\rho_{1,\alpha\beta}} + v_{\rho_{2,\alpha\beta}} + 1) +v_{\rho_{3,\alpha\beta}} + v_{\rho_{4,\alpha\beta}})}.
\end{align*}
\end{theorem}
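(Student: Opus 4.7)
\textbf{Plan for the proof of Theorem \ref{Tthm}.}

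First I would use the smoothness of $\cC$ (Theorem \ref{fixedlocithm}) and virtual localization to rewrite
\[
\DT(\cC) \;=\; \int_{\cC} e_T\bigl(T_\cC - \E\bigr), \qquad \E := E^{\mdot\vee}\big|_\cC + \langle \ccR,\ccR\rangle_0\otimes\O_\cC,
\]
so that, by Theorem \ref{vertexedgeformalism}, the $T$-character of $\E$ equals $\sum_{\alpha\in V(Y)}\sfV_\alpha+\sum_{\alpha\beta\in E_c(Y)}\sfE_{\alpha\beta}$ (the terms outside $Y$ vanish since there $\FF\cong p_X^{*}\ccR$). Proposition \ref{SD} further gives the Serre self-duality $\E = -\overline{\E}\otimes(t_1t_2t_3)^{-1}$ in $K_0^T(\cC)$.

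Next I would split $\tr_\E = F+M$ into $T_0$-fixed and $T_0$-moving parts. By assumption (ii), $F = T_\cC - \Omega_\cC\otimes(t_1t_2t_3)^{-1}$, so
\[
T_\cC - \tr_\E \;=\; \Omega_\cC\otimes(t_1t_2t_3)^{-1} \;-\; M.
\]
Serre self-duality organizes $M$ into orbits of the form $a_j t^{\mathbf{w}_j}-\bar a_j t^{-\mathbf{w}_j-\mathbf{1}}$, where $a_j$ is a line-bundle class on a factor $\PP^1$ of $\cC$. Let $\nu$ denote the number of such pairs. A direct Euler-class computation gives
\[
\int_\cC e_T\bigl(\Omega_\cC\otimes(t_1t_2t_3)^{-1}\bigr)\Big|_{s_1+s_2+s_3=0} \;=\; (-1)^{\dim\cC}e(\cC),
\]
and for each Serre pair the ratio
\[
\frac{-c_1(a_j)-(w_{j1}+1)s_1-(w_{j2}+1)s_2-(w_{j3}+1)s_3}{c_1(a_j)+w_{j1}s_1+w_{j2}s_2+w_{j3}s_3}
\]
specializes to $-1$ at $s_1+s_2+s_3=0$. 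Hence $\DT(\cC)|_{s_1+s_2+s_3=0} = (-1)^{\dim\cC+\nu}\,e(\cC)$.

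It remains to identify the parity $\dim\cC+\nu \pmod 2$ with the prescribed edge exponent. Since $U_\alpha\cong\C^3$ for $\alpha\in V(Y)$ and $U_{\alpha\beta}\cong\C^{*}\times\C^2$ for $\alpha\beta\in E_c(Y)$ are individually Calabi-Yau, each $\sfV_\alpha$ and each redistributed $\sfE_{\alpha\beta}$ in \eqref{defVE} is separately Serre self-dual, so the pairs split as $\nu = \sum_\alpha \nu_\alpha + \sum_{\alpha\beta}\nu_{\alpha\beta}$. Similarly, $\dim\cC$ decomposes into vertex and edge contributions through the moduli factors of Definition \ref{modfactor}. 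For a vertex $\alpha\in V(Y)$ the character \eqref{tr1} is invariant under $(t_1,t_2,t_3)\mapsto(t_1^{-1},t_2^{-1},t_3^{-1})\otimes(t_1t_2t_3)^{-1}$, and a term-by-term inspection (analogous to \cite[Lem.~5]{MNOP1} but with $P(\ccR_\alpha)$ replaced by the three-term expression from Case 1 of Section \ref{vertex/edge}) shows that $\dim\cC_\alpha+\nu_\alpha$ is even. For an edge $\alpha\beta\in E_c(Y)$, one evaluates $\sfE_{\alpha\beta}$ using \eqref{Galphabeta} and \eqref{defVE} and tracks the parity of $\dim\cC_{\alpha\beta}+\nu_{\alpha\beta}$; the rank-2 Poincar\'e polynomial $P(\ccR_{\alpha\beta})$ contributes the extra $v_{\rho_3}+v_{\rho_4}$ term and the transition weights $m_{\alpha\beta},m'_{\alpha\beta}=-2-m_{\alpha\beta}$ combine (modulo $2$) to give exactly $m_{\alpha\beta}(v_{\rho_1}+v_{\rho_2}+1)$.

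The main obstacle is precisely this last parity identification on each edge: $\sfE_{\alpha\beta}$ is a Laurent polynomial only after the telescoping redistribution that cancels the formal $\delta(t_1)$, so counting Serre pairs requires a careful bookkeeping of the terms arising from $\sfQ_{\alpha\beta}\overline{P}(\ccR_{\alpha\beta})$ and of the edge line-bundle contributions of Definition \ref{modfactor}. I expect the argument to mirror \cite[Sect.~4.10--4.11]{MNOP1}, with the added complication that the double square configuration $\bslambda_{\alpha\beta}$ contributes the squares where its three constituent 2D partitions overlap in pairs (weight $2$, not $1$), and that the rank-2 reflexive hull has three generators (giving $P(\ccR_{\alpha\beta})$ a negative leading term in Case 1 of Section \ref{vertex/edge}). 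These two features are exactly what produces the $+1$ and the $v_{\rho_3}+v_{\rho_4}$ inside the predicted exponent.
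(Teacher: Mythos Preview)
Your overall architecture is right and matches the paper: smoothness of $\cC$ lets you write $\DT(\cC)=\int_\cC e_T(T_\cC-\E)$, assumption (ii) identifies the $T_0$-fixed part, each Serre pair of $T_0$-moving terms contributes $-1$, and one is left with $(-1)^{\dim\cC+\nu}e(\cC)$ with $\nu$ the number of moving pairs. The paper does exactly this. You are also correct that, after redistribution, each $\sfV_\alpha$ and each $\sfE_{\alpha\beta}$ with $\alpha\beta\in E_c(Y)$ is separately Serre self-dual (this uses $m_{\alpha\beta}+m'_{\alpha\beta}=-2$), so the pair count decomposes.

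The gap is in your parity attribution. You assert that $\dim\cC_\alpha+\nu_\alpha$ is even for every vertex and that the term $v_{\rho_{3,\alpha\beta}}+v_{\rho_{4,\alpha\beta}}$ arises from the edge via $P(\ccR_{\alpha\beta})$. Neither holds. The Poincar\'e polynomial $P(\ccR_{\alpha\beta})$ only involves the two faces $\rho_{1,\alpha\beta},\rho_{2,\alpha\beta}$ sharing the edge; the faces $\rho_{3,\alpha\beta},\rho_{4,\alpha\beta}$ sit at the \emph{endpoints} of the edge and do not appear in $\sfG_{\alpha\beta}$ or $\sfE_{\alpha\beta}$ at all. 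In the paper's proof one defines explicit half-splittings $\sfV_\alpha^{\pm},\sfE_{\alpha\beta}^{\pm}$ (a rank-$2$ adaptation of \cite[Sect.~4.10]{MNOP1}, with $\sfQ_\alpha$ broken into a finite part $\sfQ'_\alpha$ and leg contributions) and evaluates at $(1,1,1)$: the edge gives
\[
\sfE_{\alpha\beta}^{+}(1,1,1)\equiv m_{\alpha\beta}\,|\bslambda_{\alpha\beta}|\,(v_{\rho_{1,\alpha\beta}}+v_{\rho_{2,\alpha\beta}}+1)\pmod 2,
\]
while the \emph{vertex} gives
\[
\sfV_\alpha^{+}(1,1,1)\equiv \sum_{i=1}^{3}|\bslambda_{\alpha\beta_i}|\,v_{\rho_i}\pmod 2,
\]
where $\rho_i$ is the face at $\alpha$ \emph{not} containing the edge $\alpha\beta_i$. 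Summing over $\alpha$ produces, for each edge $\alpha\beta$, precisely $|\bslambda_{\alpha\beta}|(v_{\rho_{3,\alpha\beta}}+v_{\rho_{4,\alpha\beta}})$. So the $v_{\rho_3}+v_{\rho_4}$ term is a vertex contribution, not an edge contribution, and the vertex parity is not even once legs are present. Finally, the cancellation $\dim\cC\equiv\sum\mathbf{Constant}(\sfV_\alpha^+)+\sum\mathbf{Constant}(\sfE_{\alpha\beta}^+)\pmod 2$ is a \emph{global} argument (unexpected $T_0$-fixed terms in $\mathsf{X}^+$ pair among themselves across vertices and edges), so splitting it into local identities $\dim\cC_\alpha\equiv\mathbf{Constant}(\sfV_\alpha^+)$ is not justified.
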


\begin{remark}
In the case $\cC$ is in addition isolated, (ii) states that $\sum_{\alpha \in V(Y)} \sfV_\alpha + \sum_{\alpha\beta \in E_c(Y)} \sfE_{\alpha\beta}$ has no $T_0$-fixed terms. In \cite{MNOP1}, the $T$-fixed locus automatically consists of isolated reduced points and the absence of $T_0$-fixed terms always holds (this is the crucial technical point mentioned in \cite[Sect.~4.11]{MNOP1}). In our case, $\cC$ could be non-isolated. Moreover, even when $\cC$ is isolated there can be non-zero $T_0$-fixed terms as for $\bspi$ and $\bspi'$ in Example \ref{abnormal}. Examples \ref{normal}, \ref{normal2}, and \ref{withlegs} satisfy the assumptions of the theorem. 
\end{remark}

\begin{proof}[Proof of Theorem \ref{Tthm}]
From the definition and assumptions
\begin{align*}
\DT(\cC) = \int_{\cC} e(\Omega_{\cC} \otimes (t_1t_2t_3)^{-1}) \cdots \Big|_{s_1+s_2+s_3=0},
\end{align*}
where $\cdots$ stands for the contribution of all $T_0$-moving terms. This is equal to 
\begin{align*}
\DT(\cC) = \int_{\cC} c_{\mathrm{top}}(\Omega_{\cC}) \cdots \Big|_{s_1+s_2+s_3=0}.
\end{align*}
Because of degree reasons, the contribution of $\cdots$ is simply $\pm 1$. More precisely, it is $-1$ to the power the number of (Serre dual) $T_0$-moving \emph{pairs} in $\sum_{\alpha \in V(Y)} \sfV_\alpha + \sum_{\alpha\beta \in E_c(Y)} \sfE_{\alpha\beta}$ by Proposition \ref{SD}.  We conclude
\begin{align*}
\DT(\cC) = (-1)^{\dim(\cC) + \text{number of} \ T_0 \text{-moving pairs}} e(\cC).
\end{align*}
In order to determine the sign, we use the following claim. For all $\alpha\beta \in E_c(Y)$ and $\alpha \in V(Y)$, there exist splittings
\begin{align}
\sfE_{\alpha\beta} &= \sfE_{\alpha\beta}^{+} + \sfE_{\alpha\beta}^{-}, \label{splittingE} \\
\sfV_{\alpha} &= \sfV_{\alpha}^{+} + \sfV_{\alpha}^{-}, \label{splittingV}
\end{align}
such that\footnote{Note that \cite{MNOP1} only show these dualities after specialization $t_1 t_2 t_3=1$. Our dualities also hold before specialization.} 
\begin{align}
\sfE_{\alpha\beta}^{-} &= - (t_1 t_2 t_3)^{-1} \overline{\sfE}_{\alpha\beta}^{+}, \label{Edual} \\
\sfV_{\alpha}^{-} &= - (t_1 t_2 t_3)^{-1} \overline{\sfV}_{\alpha}^{+}. \label{Vdual}
\end{align}
If the claim is proved, then we conclude $\DT(\cC)$ equals
\begin{align*}
(-1)^{\dim(\cC) + \sum_{\alpha \in V(Y)} (\sfV^{+}_{\alpha}(1,1,1) - \mathbf{Constant}(\sfV_{\alpha}^{+})) + \sum_{\alpha\beta \in E_c(Y)} (\sfE^{+}_{\alpha\beta}(1,1,1) - \mathbf{Constant}(\sfE_{\alpha\beta}^{+})} e(\cC),
\end{align*}
where $\mathbf{Constant}(\cdots)$ denotes the rank of the $T_0$-fixed term. Moreover, $\sfV^{+}_{\alpha}(1,1,1)$ and $\sfE^{+}_{\alpha\beta}(1,1,1)$ should be interpreted as first restricting the vertex and edge expression to (any) closed point of $\cC$, thus getting rid of moduli, and then putting $(t_1,t_2,t_3) = (1,1,1)$. So we need to prove the above splittings and compute the terms in the power modulo 2. 

For \eqref{splittingE}, we define (similar to \cite[Sect.~4.10]{MNOP1})
$$
\sfG_{\alpha\beta}^{+} := - \sfQ_{\alpha\beta} \overline{P}(\ccR_{\alpha\beta}) - \sfQ_{\alpha\beta} \overline{\sfQ}_{\alpha\beta} \frac{1-t_2}{t_2}.
$$
In this definition, the coordinates are chosen such that $C_{\alpha\beta} \cap U_\alpha$ is given by $x_2=x_3=0$. We then define $\sfE_{\alpha\beta}^{+}$ as in expression \eqref{defVE}, but with $\sfG_{\alpha\beta}$ replaced by $\sfG_{\alpha\beta}^{+}$. Also define $\sfE_{\alpha\beta}^{-} := \sfE_{\alpha\beta} - \sfE_{\alpha\beta}^{+}$. The identity \eqref{Edual} can then be checked by direct calculation. This uses $m_{\alpha\beta} + m_{\alpha\beta}' = -2$ (valid because $\alpha\beta \in E_c(Y)$ and $Y$ is Calabi-Yau). From the definition of $\sfE_{\alpha\beta}^{+}$, we obtain 
\begin{align*}
\sfE_{\alpha\beta}^{+}(1,1,1) &= - \frac{\mathrm{d}}{\mathrm{d} \tau} \Big( \tau \sfG_{\alpha\beta}^{+}(t_2,t_3) - \sfG_{\alpha\beta}^{+}(t_2 \tau^{m_{\alpha\beta}}, t_3 \tau^{m_{\alpha\beta}'}) \Big) \Big|_{(\tau,t_2,t_3)=(1,1,1)} \\
&= -\sfG_{\alpha\beta}^{+}(1,1) + \Big( m_{\alpha\beta} t_2 \frac{\partial}{\partial t_2} + m_{\alpha\beta}' t_3 \frac{\partial}{\partial t_3} \Big) \sfG_{\alpha\beta}^{+}(t_2,t_3) \Big|_{(t_2,t_3) = (1,1)} \mod 2 \\
&= \Big( m_{\alpha\beta} t_2 \frac{\partial}{\partial t_2} + m_{\alpha\beta}' t_3 \frac{\partial}{\partial t_3} \Big) \sfG_{\alpha\beta}^{+}(t_2,t_3) \Big|_{(t_2,t_3) = (1,1)} \mod 2,
\end{align*}
where the second equality follows from $P(\ccR_{\alpha\beta})(1,1,1) = 0 \mod 2$ ($\ccR$ has rank 2). Using the fact that $P(\ccR_{\alpha\beta})(1,1,1) = 0 \mod 2$ and $m_{\alpha\beta}+m_{\alpha\beta}'=-2$, one obtains
\begin{equation} \label{E111}
\sfE_{\alpha\beta}^{+}(1,1,1) = m_{\alpha\beta} |\bslambda_{\alpha\beta}| (v_{\rho_{1,\alpha\beta}} + v_{\rho_{2,\alpha\beta}}  + 1) \mod 2,
\end{equation}
where $\rho_{1,\alpha\beta}, \rho_{2,\alpha\beta} \in F(X)$ are the two faces with common edge $\alpha\beta$. 

The splitting \eqref{splittingV} is more complicated. Let $\sfG_{\alpha\beta}^{+}$ be defined as above. Naively, one would like to define
$$
\sfV_{\alpha}^{+} := \sfQ_\alpha \overline{P}(\ccR_\alpha) -\sfQ_\alpha \overline{\sfQ}_{\alpha} \frac{(1-t_1)(1-t_2)}{t_1 t_2} + \sum_{i=1}^{3} \frac{\sfG_{\alpha\beta_i}^{+}(t_{i'},t_{i''})}{1-t_i}
$$
and $\sfV_{\alpha}^{-} := \sfV_\alpha - \sfV_{\alpha}^{+}$. Here the vertices neighbouring $\alpha \in V(Y)$ are denoted by $\beta_1, \beta_2, \beta_3$, where the labelling is such that $C_{\alpha \beta_i} \cap U_\alpha$ is given by $\{x_{i'} = x_{i''} = 0\}$ for all $i,i',i'' \in \{1,2,3\}$ mutually distinct. 
In the case $|\bslambda_{\alpha\beta}| = 0$ (no legs), this works and one obtains \eqref{splittingV}. The general case is more complicated. One introduces $\sfQ_{\alpha}'$ by the equation
$$
\sfQ_\alpha = \sfQ_{\alpha}' + \sum_{i=1}^{3} \frac{\sfQ_{\alpha\beta_i}}{1-t_i}
$$
and defines $\sfV_{\alpha}^{+}$ exactly analogous to \cite[Sect.~4.10]{MNOP1}. Setting $\sfV_{\alpha}^{-} := \sfV_\alpha - \sfV_{\alpha}^{+}$, equation \eqref{splittingV} can be deduced. The correct definition of $\sfV_{\alpha}^{+}$ leads to
$$
\sfV_{\alpha}^{+}(1,1,1) = \sfQ_{\alpha}'(1,1,1) \overline{P}(\ccR_\alpha)(1,1,1) + \sum_{i=1}^{3} \frac{\sfQ_{\alpha\beta_i} (\overline{P}(\ccR_\alpha) - \overline{P}(\ccR_{\alpha\beta_i}))}{1-t_i} \Big|_{(t_1,t_2,t_3) = (1,1,1)} \mod 2.
$$
This equality follows after noting that various terms in $\sfV_{\alpha}^{+}$ are zero or even for $(t_1,t_2,t_3)=(1,1,1)$. A similar (but easier) calculation as for $\sfE_{\alpha\beta}^{+}(1,1,1)$ shows
\begin{equation} \label{V111}
\sfV_{\alpha}^{+}(1,1,1) = \sum_{i=1}^{3} |\bslambda_{\alpha\beta_i}| v_{\rho_i} \mod 2,
\end{equation}
where $\rho_1,\rho_2,\rho_3$ are the three faces with vertex $\alpha$ labelled such that $\rho_{i'}, \rho_{i''}$ are the faces sharing edge $\alpha\beta_i$. 

Finally, we claim
\begin{align*}
\sum_{\alpha \in V(Y)} \mathbf{Constant}(\sfV_{\alpha}^{+}) + \sum_{\alpha\beta \in E_c(Y)} \mathbf{Constant}(\sfE_{\alpha\beta}^{+}) &= \dim(\cC) \mod 2.
\end{align*}
Define
$$
\mathsf{X}^{\pm} := \sum_{\alpha \in V(Y)} \sfV^{\pm}_{\alpha} + \sum_{\alpha\beta \in E_c(Y)} \sf{E}^{\pm}_{\alpha\beta}.
$$
Take any $T_0$-fixed term $f$ in $\mathsf{X}^{+}$ \emph{not} occurring in $T_{\cC} - \Omega_{\cC} \otimes (t_1 t_2 t_3)^{-1}$. This term must cancel after adding $\mathsf{X}^{-}$ (by assumption). Therefore, $\mathsf{X}^{-}$ contains the term $-f$. By duality \eqref{Edual} and \eqref{Vdual}, $\mathsf{X}^{+}$ must contain $f^* (t_1 t_2 t_3)^{-1}$. Therefore, nonzero terms in $\mathsf{X}^{+}$ not occurring in $T_{\cC} - \Omega_{\cC} \otimes (t_1 t_2 t_3)^{-1}$ come in pairs (and likewise for $\mathsf{X}^{-}$). Distributing the terms of $T_{\cC} - \Omega_{\cC} \otimes (t_1 t_2 t_3)^{-1}$ over $\mathsf{X}^{\pm}$ by using \eqref{Edual}, \eqref{Vdual} and calculating modulo 2 shows the claim.
\end{proof}

\subsection{$T_0$-localization} \label{T0loc}

In this section, we prove Main Conjecture \ref{mainconj} from a $T_0$-localization argument similar to \cite[Sect.~5.2]{PT2}. Pandharipande-Thomas need two conjectures for their argument \cite[Conj.~2, Conj.~3]{PT2}. We need completely similar analogs of their conjectures in our setting. The first is smoothness of the $T_0$-fixed locus (Conjecture \ref{T0conj1}). The second will be described below. 
\begin{theorem} \label{T0thm1}
Assume Conjecture \ref{T0conj1} is true. Let $Y$ be a smooth quasi-projective toric Calabi-Yau 3-fold with toric compactification $X$ satisfying $H^0(K_{X}^{-1}) \neq 0$. Fix a polarization $H$ on $X$ and let $\M_{X} := \M_{X}^{H}(2,c_1,c_2,c_3)$ and $\M_{Y \subset X} := \M_{Y \subset X}^{H}(2,c_1,c_2,c_3)$. Suppose $\cC \subset \M_{X}^{T_0}$ is a connected component satisfying Assumption \ref{as2} with reflexive hull $\ccR$. Let $N^{\vir,0}$ be the virtual normal bundle on $\cC$ w.r.t.~the $T_0$-action. Then the class
$$
N^{\vir,0} + \langle \ccR,\ccR \rangle_0 \otimes \O_{\cC} \in K_{0}^{T_0}(\cC)
$$ 
is of the form $N^+ + N^-$ with $\overline{N}^{-} = -N^{+} \in K_{0}^{T_0}(\cC)$. Moreover
\begin{align*}
\DT(\cC) &= (-1)^{\dim(\cC)+\rk(N^+)} e(\cC). 
\end{align*}
\end{theorem}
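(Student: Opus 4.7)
The plan follows the strategy of Pandharipande--Thomas in the stable pairs setting \cite[Sect.~4.2]{PT2}, adapted to incorporate the reflexive hull contribution via Theorem~\ref{sym}. The key inputs are the K-theoretic symmetry of Theorem~\ref{sym}, the smoothness of $\cC$ from Conjecture~\ref{T0conj1}, and the resulting symmetric perfect obstruction theory on $\cC$.

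First I would produce the decomposition $N^{\vir,0} + \langle\ccR,\ccR\rangle_0\otimes\O_\cC = N^+ + N^-$. By Theorem~\ref{sym}, the class $\mathbf{A} := E^{\mdot\vee}|_\cC + \langle\ccR,\ccR\rangle_0\otimes\O_\cC \in K_0^{T_0}(\cC)$ satisfies $\overline{\mathbf{A}} = -\mathbf{A}$. Conjugation $\overline{(\cdot)}$ reverses $T_0$-weights, so it preserves the $T_0$-fixed/moving decomposition; moreover the $T_0$-fixed piece $E^{\mdot\vee}|_\cC^{T_0}$ is antisymmetric on its own (being the symmetric perfect obstruction theory of Theorem~\ref{sym}). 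Subtracting, the class $\mathbf{N} := N^{\vir,0} + \langle\ccR,\ccR\rangle_0\otimes\O_\cC$ also satisfies $\overline{\mathbf{N}} = -\mathbf{N}$. The vanishings $\Ext^i(\ccR,\ccR)_0^{T_0} = 0$ for $i=1,2$ (Assumption~\ref{as2}), $i=0$ (stability), and $i=3$ (stability plus $H^0(K_X^{-1})\neq 0$ and Serre duality, giving an injection $\Hom(\ccR,\ccR\otimes K_X)_0\hookrightarrow\Hom(\ccR,\ccR)_0=0$) force $\mathbf{N}$ to be purely $T_0$-moving. Writing $\mathbf{N} = \sum_w V_w\otimes t^w$ with $V_w\in K_0(\cC)$ and picking a generic linear functional $\lambda$ on the $T_0$-character lattice, set $N^+ := \sum_{\lambda(w)>0} V_w\otimes t^w$ and $N^- := \sum_{\lambda(w)<0} V_w\otimes t^w$; the relation $V_{-w}^\vee = -V_w$ forced by $\overline{\mathbf{N}} = -\mathbf{N}$ immediately yields $\overline{N^-} = -N^+$. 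Taking ranks gives $\rk N^+ + \rk N^- = 0$.

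Next I would compute the integral. By Conjecture~\ref{T0conj1}, $\cC$ is smooth, and by Theorem~\ref{sym} it carries a symmetric perfect obstruction theory---exactly the Behrend--Fantechi--Thomas setup for smooth symmetric components, giving $[\cC]^{\vir} = c_{\mathrm{top}}(\Omega_\cC)\cap [\cC]$ and hence $\int_{[\cC]^{\vir}} 1 = (-1)^{\dim\cC}\, e(\cC)$. For the integrand, multiplicativity of Euler classes on $K$-theory yields
\[
e(N^{\vir,0})\cdot e(\langle\ccR,\ccR\rangle_0\otimes\O_\cC) = e(\mathbf{N}) = e(N^+)\cdot e(N^-).
\]
The general identity $e(\overline{V}) = (-1)^{\rk V} e(V)$ (verified on Chern roots) applied to $\overline{N^-} = -N^+$ gives $(-1)^{\rk N^-} e(N^-) = 1/e(N^+)$, hence $e(N^+)\cdot e(N^-) = (-1)^{\rk N^+}$ as a constant on $\cC$ (using $\rk N^+ + \rk N^- = 0$). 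Substituting,
\[
\DT(\cC) = (-1)^{\rk N^+}\int_{[\cC]^{\vir}} 1 = (-1)^{\dim\cC + \rk N^+}\, e(\cC),
\]
as desired.

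The main obstacle is upgrading the K-theoretic symmetry of the induced perfect obstruction theory on $\cC$ to the cohomological identity $c_{\mathrm{top}}(\mathrm{Ob}) = c_{\mathrm{top}}(\Omega_\cC)$ underpinning the Behrend--Fantechi--Thomas virtual class formula. This requires the honest isomorphism $E^{\mdot\vee}|_\cC^{T_0}\simeq (E^{\mdot\vee}|_\cC^{T_0})^\vee[1]$---which Theorem~\ref{sym} produces from the non-zero section $\O_X\hookrightarrow K_X^{-1}$---together with its compatibility with the map $\theta:E^{\mdot}\to\LL_\cC$, a compatibility explicitly left unverified in the proof of Theorem~\ref{sym} and which must be checked to make the argument fully rigorous.
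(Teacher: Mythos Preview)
Your argument is correct and follows essentially the same route as the paper's proof: both use Theorem~\ref{sym} to deduce the antisymmetry $\overline{\mathbf{N}} = -\mathbf{N}$ of $\mathbf{N} := N^{\vir,0} + \langle\ccR,\ccR\rangle_0\otimes\O_\cC$, invoke smoothness (Conjecture~\ref{T0conj1}) together with the isomorphism $E^{\mdot}|_\cC^{T_0}\cong (E^{\mdot}|_\cC^{T_0})^{\vee}[1]$ to identify $\mathrm{Ob}\cong\Omega_\cC$ and hence $[\cC]^{\vir}=c_{\mathrm{top}}(\Omega_\cC)\cap[\cC]$, and then read off the sign $(-1)^{\rk N^+}$ from the Euler class of $\mathbf{N}$. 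Your presentation is a bit more explicit (the splitting via a generic linear functional, the Chern-root verification of $e(\overline{V})=(-1)^{\rk V}e(V)$), but the logic is the same.

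One remark on your final paragraph: the concern about $\theta$-compatibility is slightly overstated for \emph{this} theorem. The identity $c_{\mathrm{top}}(\mathrm{Ob})=c_{\mathrm{top}}(\Omega_\cC)$ follows already from the honest isomorphism $E^{\mdot}|_\cC^{T_0}\cong (E^{\mdot}|_\cC^{T_0})^{\vee}[1]$ together with $H^0(E^\mdot|_\cC^{T_0})=\Omega_\cC$ (from the perfect-obstruction-theory axiom), since the isomorphism forces $H^1(E^{\mdot\vee}|_\cC^{T_0})\cong H^0(E^\mdot|_\cC^{T_0})$. The $\theta$-duality is needed for the full Behrend package, but not for the virtual class formula $[\cC]^{\vir}=e(\mathrm{Ob})\cap[\cC]$ on a smooth $\cC$; the paper's footnote in Theorem~\ref{sym} is making exactly this point.
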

\begin{proof}
By symmetry of the $T_0$-equivariant obstruction theory (Theorem \ref{sym})
$$
E^{\mdot \vee} = T_{\cC} - \Omega_{\cC} + N^{\vir,0}.
$$
Therefore, the invariant is given by 
$$
\DT(\cC) = \int_{\cC} e(\Omega_{\cC} - (N^{\vir,0} + \langle \ccR,\ccR\rangle_0 \otimes \O_{\cC})).
$$
Note that $\langle \ccR,\ccR \rangle_0$ only has $T_0$-moving terms by Assumption \ref{as2}. Theorem \ref{sym} therefore gives 
$$
N^{\vir,0} + \langle \ccR,\ccR \rangle_0 = - \overline{\big( N^{\vir,0} + \langle \ccR,\ccR \rangle_0 \otimes \O_{\cC} \big)} \in K_{0}^{T_0}(\cC).
$$
Consequently, $N^{\vir,0} + \langle \ccR,\ccR \rangle_0$ is of the form $N^+ + N^-$ with $\overline{N}^{-} = -N^{+}$ and we conclude
\begin{equation*}
\DT(\cC) = \int_{\cC} c_{\mathrm{top}}(\Omega_{\cC}) (-1)^{\rk(N^+)} = (-1)^{\dim(\cC) + \rk(N^+)} e(\cC). \qedhere
\end{equation*}
\end{proof}

\begin{remark} \label{buddies2}
Combining Theorem \ref{T0thm1} and Proposition \ref{T0DT=TDT} gives
$$
\DT(\cC) = (-1)^{\dim(\cC)+\rk(N^+)} \sum_{\cC_i \subset \cC^{\C^*}} e(\cC_i) = \sum_{\cC_i \subset \cC^{\C^*}} \DT(\cC_i),
$$
where $T/T_0 \cong \C^*$. It is tempting to conclude 
\begin{equation} \label{tempt}
\DT(\cC_i) \stackrel{?}{=} (-1)^{\dim(\cC)+\rk(N^+)} e(\cC_i),
\end{equation}
for any $\C^*$-fixed component $\cC_i$ of $\cC$. This is certainly not true in general, because it would imply $\DT(\cC_i)$ is never zero contradicting Example \ref{abnormal}. This is very reminiscent of the discussion of buddies in Remark \ref{buddies1}. This leads us to expect that two elements $\bspi_1, \bspi_2 \in \Pi(\ccR,\hat{\bslambda})$ are buddies if and only if $\cC_{\bspi_1}$, $\cC_{\bspi_2}$ lie in the same connected component of the $T_0$-fixed locus. This would provide a geometric explanation for the (combinatorially mysterious) notion of buddies. So in general the classical count $\pm e(\cC_i)$ and virtual count $\DT(\cC_i)$ are different. Remarkably, they become equal again after summing over all connected components $\cC_i$ of the same connected component of the $T_0$-fixed locus. 
\end{remark}

The analog of the second conjecture of Pandharipande-Thomas \cite[Conj.~3]{PT2} is: 
\begin{conjecture} \label{T0conj2}
Let $Y$ be a smooth quasi-projective toric Calabi-Yau 3-fold with toric compactification $X$ satisfying $H^0(K_{X}^{-1}) \neq 0$. Fix a polarization $H$, let $\M_{X} := \M_{X}^{H}(2,c_1,c_2,c_3)$, and let $\M_{Y \subset X} := \M_{Y \subset X}^{H}(2,c_1,c_2,c_3)$. Suppose $\cC \subset \M_{X}^{T_0}$ is a connected component satisfying Assumption \ref{as2} and let $\cC_i$ be a connected component of $\cC^{\C^*}$ where $\C^* \cong T/T_0$. Since $\cC_i$ is a connected component of $\M_{X}^{T}$, we have defined vertices $\sfV_\alpha$, $\sfV_{\alpha}^{\pm}$ and edges $\sfE_{\alpha\beta}$, $\sfE_{\alpha\beta}^{\pm}$ on it. For any $[\F] \in \cC_i$, these give restrictions $\sfV_\alpha|_{[\F]}$, $\sfV_{\alpha}^{\pm}|_{[\F]}$, $\sfE_{\alpha\beta}|_{[\F]}$, $\sfE_{\alpha\beta}^{\pm}|_{[\F]}$. Then
\begin{align*}
\dim \Ext^1(\F,\F)^{T_0} = &\sum_{\alpha \in V(Y)} \mathbf{Constant}(\sfV_\alpha^{+}|_{[\F]}) + \sum_{\alpha\beta \in E_c(Y)} \mathbf{Constant}(\sfE_{\alpha\beta}^{+}|_{[\F]}) \mod 2,
\end{align*}
where $\mathbf{Constant}(\cdots)$ denotes the rank of the $T_0$-fixed term.
\end{conjecture}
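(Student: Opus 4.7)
My plan is to combine the vertex/edge decomposition of Theorem \ref{vertexedgeformalism} with the $+/-$ splittings constructed inside the proof of Theorem \ref{Tthm}, and then extract the parity of $\dim \Ext^1(\F,\F)^{T_0}$. The first step is to restrict the formula of Theorem \ref{vertexedgeformalism} to $[\F]$ and pass to $T_0$-fixed ranks:
$$\dim \Ext^1(\F,\F)^{T_0} - \dim \Ext^2(\F,\F)^{T_0} = \sum_\alpha \mathbf{Constant}(\sfV_\alpha|_{[\F]}) + \sum_{\alpha\beta} \mathbf{Constant}(\sfE_{\alpha\beta}|_{[\F]}) - \mathbf{Constant}(\langle\ccR,\ccR\rangle_0).$$
The last term is zero: $\mu$-stability and $H^0(K_X^{-1}) \neq 0$ kill $\Hom(\ccR,\ccR)_0$ and $\Ext^3(\ccR,\ccR)$, while Assumption \ref{as2} eliminates $\Ext^1(\ccR,\ccR)^{T_0}$ and $\Ext^2(\ccR,\ccR)^{T_0}$. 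The left-hand side is also zero by the $T_0$-symmetric obstruction theory of Theorem \ref{sym}, so the two sums on the right cancel.

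Next, I would invoke the local splittings $\sfV_\alpha = \sfV_\alpha^+ + \sfV_\alpha^-$ and $\sfE_{\alpha\beta} = \sfE_{\alpha\beta}^+ + \sfE_{\alpha\beta}^-$ from the proof of Theorem \ref{Tthm}, which satisfy $\sfV_\alpha^- = -(t_1t_2t_3)^{-1}\overline{\sfV_\alpha^+}$ and $\sfE_{\alpha\beta}^- = -(t_1t_2t_3)^{-1}\overline{\sfE_{\alpha\beta}^+}$. Writing $\mathsf{X}^\pm := \sum_\alpha \sfV_\alpha^\pm + \sum_{\alpha\beta} \sfE_{\alpha\beta}^\pm$, one gets $\mathsf{X}^- = -(t_1t_2t_3)^{-1}\overline{\mathsf{X}^+}$, locally reproducing the global Serre duality of Proposition \ref{SD}. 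This forces a rank-preserving pairing between the weight-$(t_1t_2t_3)^w$ part of $\mathsf{X}^+|_{[\F]}$ and the weight-$(t_1t_2t_3)^{-w-1}$ part of $\mathsf{X}^-|_{[\F]}$.

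Morally, $\mathsf{X}^+$ captures the ``$\Hom + \Ext^1$'' half and $\mathsf{X}^-$ the ``$\Ext^2 + \Ext^3$'' half of $\T_\F + \langle\ccR,\ccR\rangle_0$. Since $\Hom(\F,\F)_0 = \Ext^3(\F,\F) = 0$ under our stability assumptions, the conjecture reduces to $\mathbf{Constant}(\mathsf{X}^+|_{[\F]}) \equiv \dim \Ext^1(\F,\F)^{T_0} \pmod{2}$. The excess $T_0$-fixed contributions to $\mathsf{X}^+$ beyond $\Ext^{1,T_0}$ should come in Serre-dual pairs $(w,-w-1)$ internal to $\mathsf{X}^+$: whenever $\mathsf{X}^+$ contains both weight-$(t_1t_2t_3)^w$ and weight-$(t_1t_2t_3)^{-w-1}$ terms, their contributions to $\mathbf{Constant}(\mathsf{X}^+)$ have equal rank and hence sum to an even number. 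This mirrors the pair-cancellation argument at the end of the proof of Theorem \ref{Tthm}.

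The main obstacle is rigorously identifying the weight-$0$ and weight-$(-1)$ parts of $\mathsf{X}^+$ with exactly the correct subspaces of $\Ext^1(\F,\F)^{T_0}$ and $\Ext^2(\F,\F)^{T_0}$. In the expected-obstructions setting of Theorem \ref{Tthm} the match is automatic, since the $T_0$-fixed part of $\sum \sfV_\alpha + \sum \sfE_{\alpha\beta}$ equals $T_\cC - \Omega_\cC \otimes (t_1t_2t_3)^{-1}$ by hypothesis. In general, excess $T_0$-fixed terms can appear (as already in Example \ref{withlegs}), and one must argue that the local \v{C}ech/vertex-edge dualities aggregate globally to show these extra terms come in Serre-dual pairs internal to $\mathsf{X}^+$, not merely between $\mathsf{X}^+$ and $\mathsf{X}^-$. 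We expect this aggregation step, which must match the vertex-local splitting to the global $T_0$-symmetric perfect obstruction theory of Theorem \ref{sym}, to be the technically delicate heart of the argument, particularly in the presence of edge moduli carried along legs.
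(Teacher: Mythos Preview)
The statement you are attempting to prove is stated in the paper as a \emph{conjecture}, not a theorem; the paper offers no proof and explicitly presents it as the rank~2 analog of an open conjecture of Pandharipande--Thomas \cite[Conj.~3]{PT2}. Remark~\ref{evidence} gives partial evidence (the case where hypothesis (ii) of Theorem~\ref{Tthm} holds), but the general statement is left open. So there is no ``paper's own proof'' to compare against.

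Your outline correctly sets up the framework and identifies the obstruction, but it does not close the gap, and the gap is genuine rather than merely technical. The splittings $\sfV_\alpha = \sfV_\alpha^+ + \sfV_\alpha^-$ and $\sfE_{\alpha\beta} = \sfE_{\alpha\beta}^+ + \sfE_{\alpha\beta}^-$ from the proof of Theorem~\ref{Tthm} are defined by explicit combinatorial formulae in the equivariant $K$-group; they carry no cohomological meaning, so your claim that ``morally, $\mathsf{X}^+$ captures the $\Hom + \Ext^1$ half'' has no justification. The duality $\mathsf{X}^- = -(t_1t_2t_3)^{-1}\overline{\mathsf{X}^+}$ pairs terms of $\mathsf{X}^+$ with terms of $\mathsf{X}^-$, not terms of $\mathsf{X}^+$ with one another; it gives no mechanism for excess $T_0$-fixed terms to come in pairs \emph{internal} to $\mathsf{X}^+$. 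Indeed, in the proof of Theorem~\ref{Tthm} the analogous parity claim is established only by invoking hypothesis (ii) to force cancellation between $\mathsf{X}^+$ and $\mathsf{X}^-$, and the argument visibly fails without it (cf.\ Example~\ref{abnormal}, where the $T_0$-fixed part of $\sfV_\bspi$ is $-1 + (t_1t_2t_3)^{-1}$, not of the form $T_\cC - \Omega_\cC \otimes (t_1t_2t_3)^{-1}$). What you call the ``technically delicate heart'' is not a missing detail but the entire content of the conjecture.
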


\begin{remark}
We recall that the quantities $\sfV_{\alpha}^{\pm}|_{[\F]}$ and $\sfE_{\alpha\beta}^{\pm}|_{[\F]}$ were defined in the proof of Theorem \ref{Tthm}. Their definition and equations \eqref{E111}, \eqref{V111} hold under the assumptions of Conjecture \ref{T0conj2} and do not need the additional assumption (ii) of Theorem \ref{Tthm}. 
\end{remark}

\begin{remark} \label{evidence}
Suppose the setting is as in Conjecture \ref{T0conj2} for some $T_0$-fixed connected component $\cC$. If $\cC_i$ is a connected component of $\cC^{\C^*}$ satisfying (ii) of Theorem \ref{Tthm}, then it satisfies Conjecture \ref{T0conj2} by the proof of Theorem \ref{Tthm}. If $\cC = \cC^{\C^*}$ then $\cC$ is smooth (namely a product of $\PP^1$'s) so it satisfies Conjecture \ref{T0conj1}. See Examples \ref{normal}, \ref{normal2}, and \ref{withlegs}. If $\cC \neq \cC^{\C^*}$, we also have evidence supporting Conjecture \ref{T0conj1}, e.g.~the $T_0$-fixed terms of $\bspi$ and $\bspi'$ of Example \ref{abnormal} suggest that $\cC \cong \PP^1$ and $\cC^{\C^*}$ consists of its two $T$-fixed points.
\end{remark}

Conjectures \ref{T0conj1} and \ref{T0conj2} imply (a slightly weaker version of) Main Conjecture \ref{mainconj}. 
\begin{theorem} \label{T0thm}
Let $Y$ be a smooth quasi-projective toric Calabi-Yau 3-fold with toric compactification $X$ satisfying $H^0(K_{X}^{-1}) \neq 0$. Fix a polarization $H$ on $X$ and Chern classes $c_1$, $c_2$. Let $\ccR$ be a $T$-equivariant rank 2 $\mu$-stable reflexive sheaf on $X$ with toric data $({\bf{u}},{\bf{v}},{\bf{p}})$. We assume:
\begin{enumerate}
\item[(i)] Conjectures \ref{T0conj1} and \ref{T0conj2} hold. 
\item[(ii)] $\Ext^1(\ccR,\ccR)^{T_0} = \Ext^2(\ccR,\ccR)^{T_0} = 0$.
\item[(iii)] All the connected components $\cC \subset \bigsqcup_{c_3} \M_{Y \subset X}^{H}(2,c_1,c_2,c_3)^{T_0}$ are compact and have constant reflexive hulls $\ccR$.
\end{enumerate}
For each $\alpha\beta \in E_c(Y)$, there are two faces $\rho_{1,\alpha\beta}$, $\rho_{2,\alpha\beta}$ which share $\alpha\beta$ as an edge and two disjoint faces $\rho_{3,\alpha\beta}$, $\rho_{4,\alpha\beta}$ connected by the edge $\alpha\beta$. Then
\begin{align*}
&\sum_{c_3} \sum_{\cC \subset \M_{Y \subset X}^{H}(2,c_1,c_2,c_3)^T} \DT(\cC)\Big|_{s_1+s_2+s_3=0} q^{c_3} = \\
&\sum_{\hat{\bslambda}} \sfZ_{Y,\ccR|_Y, \hat{\bslambda}}(q^{-2}) q^{c_{3}(\ccR)} \prod_{\alpha\beta \in E_c(Y)} (-1)^{|\bslambda_{\alpha\beta}| (m_{\alpha\beta} (v_{\rho_{1,\alpha\beta}} + v_{\rho_{2,\alpha\beta}} + 1) +v_{\rho_{3,\alpha\beta}} + v_{\rho_{4,\alpha\beta}})} q^{|\bslambda_{\alpha\beta}| C_{\alpha\beta} (c_1(X) + c_1)} ,
\end{align*}
where $\cC$ runs over all connected components and $\hat{\bslambda} = \{\bslambda_{\alpha\beta} \in \Lambda(\ccR_{\alpha\beta})\}_{\alpha\beta \in E_c(Y)}$ runs over all double square configurations giving rise to second Chern class $c_2$.
\end{theorem}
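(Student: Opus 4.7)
The plan is to convert the $T$-equivariant sum on the left-hand side into a sum of $T_0$-equivariant invariants, evaluate each $T_0$-fixed component via Theorem \ref{T0thm1}, identify the resulting sign modulo $2$ using the vertex/edge splittings from the proof of Theorem \ref{Tthm}, and finally package the Euler characteristics into $\sfZ_{Y,\ccR|_Y,\hat{\bslambda}}$ via Proposition \ref{formulachi}. First I would apply Proposition \ref{T0DT=TDT} to rewrite
\[
\sum_{c_3}\sum_{\cC\subset \M_{Y\subset X}^{H}(2,c_1,c_2,c_3)^T}\DT(\cC)\Big|_{s_1+s_2+s_3=0}\,q^{c_3} \;=\; \sum_{c_3}\sum_{\cC'\subset \M_{Y\subset X}^{H}(2,c_1,c_2,c_3)^{T_0}}\DT(\cC')\,q^{c_3},
\]
where $\cC'$ runs over $T_0$-fixed connected components. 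Assumptions (ii), (iii) ensure Assumption \ref{as2}, Conjecture \ref{T0conj1} gives smoothness of $\cC'$, and Theorem \ref{T0thm1} then yields $\DT(\cC') = (-1)^{\dim(\cC')+\rk(N^+)}e(\cC')$ with $N^{\vir,0}+\langle\ccR,\ccR\rangle_0\otimes\O_{\cC'}=N^++N^-$ and $\overline{N^-}=-N^+$. Since two such Serre-dual splittings differ only by swapping complete pairs, $\rk(N^+)$ is well-defined modulo $2$.

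Next I would compute the parity of $\dim(\cC')+\rk(N^+)$ using the vertex/edge formalism. Fix any $[\F]\in\cC'$; by Theorem \ref{vertexedgeformalism}, the character of $E^{\mdot\vee}|_{[\F]}+\langle\ccR,\ccR\rangle_0$ equals $\sum_\alpha\sfV_\alpha|_{[\F]}+\sum_{\alpha\beta}\sfE_{\alpha\beta}|_{[\F]}$ modulo the trace-free constant. The splittings $\sfV_\alpha=\sfV_\alpha^++\sfV_\alpha^-$ and $\sfE_{\alpha\beta}=\sfE_{\alpha\beta}^++\sfE_{\alpha\beta}^-$ from the proof of Theorem \ref{Tthm} satisfy the Serre-duality identities \eqref{Edual}, \eqref{Vdual}, and hence realize a valid decomposition of $N^{\vir,0}+\langle\ccR,\ccR\rangle_0$ at $[\F]$. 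Counting Serre-dual $T_0$-moving pairs yields
\[
\rk(N^+)\equiv\sum_\alpha\bigl(\sfV_\alpha^+(1,1,1)-\mathbf{Constant}(\sfV_\alpha^+|_{[\F]})\bigr)+\sum_{\alpha\beta}\bigl(\sfE_{\alpha\beta}^+(1,1,1)-\mathbf{Constant}(\sfE_{\alpha\beta}^+|_{[\F]})\bigr)\pmod{2}.
\]
Smoothness (Conjecture \ref{T0conj1}) gives $\dim(\cC')=\dim\Ext^1(\F,\F)^{T_0}$, and Conjecture \ref{T0conj2} identifies the latter modulo $2$ with $\sum_\alpha\mathbf{Constant}(\sfV_\alpha^+|_{[\F]})+\sum_{\alpha\beta}\mathbf{Constant}(\sfE_{\alpha\beta}^+|_{[\F]})$. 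Adding the two congruences, the constant terms cancel and we obtain $\dim(\cC')+\rk(N^+)\equiv\sum_\alpha\sfV_\alpha^+(1,1,1)+\sum_{\alpha\beta}\sfE_{\alpha\beta}^+(1,1,1)\pmod{2}$, which by \eqref{E111}, \eqref{V111} equals $\sum_{\alpha\beta\in E_c(Y)}|\bslambda_{\alpha\beta}|\bigl(m_{\alpha\beta}(v_{\rho_{1,\alpha\beta}}+v_{\rho_{2,\alpha\beta}}+1)+v_{\rho_{3,\alpha\beta}}+v_{\rho_{4,\alpha\beta}}\bigr)\pmod{2}$, the sign in the Main Conjecture. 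Crucially, this sign depends only on $\hat{\bslambda}$.

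To assemble the generating function, I would group the $T_0$-fixed components $\cC'$ by their (constant) asymptotic data $\hat{\bslambda}$, then re-index by the global double box configurations $\hat{\bspi}\in\Pi(\ccR|_Y,\hat{\bslambda})$ of Definition \ref{Z}. Applying $T/T_0$-localization to Euler characteristics gives $e(\cC')=\sum_{\cC\subset(\cC')^{\C^*}}e(\cC)$, and since the $T$-fixed components inside the various $\cC'$ with a fixed $\hat{\bslambda}$ are exactly the $\cC_{\hat{\bspi}}$ (Proposition \ref{lem1}), the combined sum reproduces $\sum_{\hat{\bspi}\in\Pi(\ccR|_Y,\hat{\bslambda})}\omega(\hat{\bspi})q^{\chi(\hat{\bspi})}$. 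Using Proposition \ref{formulachi} to translate $\chi(\hat{\bspi})$ into $c_3$, and the Chern class relations \eqref{Chernrel} to pass between $c_3(\F)$ and $c_3(\ccR)$, produces exactly the prefactor $q^{c_3(\ccR)}\prod_{\alpha\beta\in E_c(Y)}q^{|\bslambda_{\alpha\beta}|C_{\alpha\beta}(c_1(X)+c_1)}$ on the right-hand side, completing the identification with the Main Conjecture formula (after the substitution $q\mapsto q^{-2}$).

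The main obstacle will be the sign identification: namely, justifying that the parity of $\rk(N^+)$ produced by the abstract Serre-duality argument of Theorem \ref{T0thm1} matches the vertex/edge count derived from Theorem \ref{Tthm}. The subtlety is that on a positive-dimensional $\cC'$ the splittings are only canonical up to exchange of Serre-dual pairs, and some of these pairs carry nontrivial line-bundle classes on the $\PP^1$-factors of $\cC'$ rather than pure equivariant weights; one must verify that such exchanges preserve $\rk(N^+)\pmod{2}$ and that Conjecture \ref{T0conj2} applies uniformly across the whole (possibly positive-dimensional) connected component. A secondary technical point is ensuring the grouping by $\hat{\bslambda}$ really accounts for every $T_0$-component exactly once without over- or undercounting contributions from components that share Chern classes.
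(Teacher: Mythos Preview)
Your proposal is correct and follows essentially the same route as the paper: rewrite the left-hand side via Proposition \ref{T0DT=TDT} as a sum over $T_0$-fixed components, apply Theorem \ref{T0thm1} to get $\DT(\cC')=(-1)^{\dim(\cC')+\rk(N^+)}e(\cC')$, evaluate the sign at a single closed point $[\F]\in\cC'$ using the vertex/edge splittings \eqref{splittingE}--\eqref{Vdual} together with \eqref{E111}, \eqref{V111}, and invoke Conjecture \ref{T0conj2} to match $\dim(\cC')$ with the constant terms. Your obstacle paragraph is overly cautious: restricting to a closed point $[\F]$ reduces everything to Laurent polynomials in $t_1,t_2,t_3$ with integer coefficients (the line-bundle classes disappear), and the parity of $\rk(N^+)$ is well-defined since any two Serre-dual splittings differ by swapping complete pairs --- exactly as you noted earlier in the argument.
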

\begin{proof}
The LHS can be written as
$$
\sum_{c_3} \sum_{\cC \subset  \M_{Y \subset X}^{H}(2,c_1,c_2,c_3)^{T_0}} \DT(\cC) q^{c_3},
$$
by Proposition \ref{T0DT=TDT} and where the sum is over all connected components of the $T_0$-fixed locus. 
By Theorem \ref{T0thm1}
$$
\DT(\cC) = (-1)^{\dim(\cC)+\rk(N^+)} e(\cC) = (-1)^{\dim(\cC)+\rk(N^+)} \sum_{\cC_i \subset \cC^{\C^*}} e(\cC_i),
$$
where the sum is over all connected components of $\cC^{\C^*}$. Such a component $\cC_i \cong \cC_{\hat{\bspi}}$ for some $\hat{\bspi} \in \Pi(\ccR|_Y,\hat{\bslambda})$ for some $\hat{\bslambda} = \{\bslambda_{\alpha\beta} \in \Lambda(\ccR_{\alpha\beta})\}_{\alpha\beta \in E_c(Y)}$ giving rise to second Chern class $c_2$. If we can show
$$
\dim(\cC)+\rk(N^+|_{\cC_{\hat{\bspi}}}) = \sum_{\alpha\beta \in E_c(Y)} |\bslambda_{\alpha\beta}| (m_{\alpha\beta} (v_{\rho_{1,\alpha\beta}} + v_{\rho_{2,\alpha\beta}} + 1) +v_{\rho_{3,\alpha\beta}} + v_{\rho_{4,\alpha\beta}}) \mod 2,
$$
then we obtain RHS.

We evaluate $\dim(\cC)+\rk(N^+)$ on an arbitrary closed point $[\F] \in \cC_{\hat{\bspi}}$. By definition of $N^{\pm}$, $\sfV^{\pm}_{\alpha}$, $\sfE^{\pm}_{\alpha\beta}$ 
\begin{align*}
\rk(N^+|_{[\F]}) =&\sum_{\alpha \in V(Y)} (\sfV_\alpha^{+}|_{[\F]}(1,1,1) - \mathbf{Constant}(\sfV_\alpha^{+}|_{[\F]})) \\
&+ \sum_{\alpha\beta \in E_c(Y)} (\sfE_{\alpha\beta}^{+}|_{[\F]}(1,1,1) - \mathbf{Constant}(\sfE_{\alpha\beta}^{+}|_{[\F]})). 
\end{align*}
In the proof of Theorem \ref{Tthm}, we computed
\begin{align*}
&\sum_{\alpha \in V(Y)} \sfV_\alpha^{+}|_{[\F]}(1,1,1) + \sum_{\alpha\beta \in E_c(Y)} \sfE_{\alpha\beta}^{+}|_{[\F]}(1,1,1) \\
&= \sum_{\alpha\beta \in E_c(Y)} |\bslambda_{\alpha\beta}| (m_{\alpha\beta} (v_{\rho_{1,\alpha\beta}} + v_{\rho_{2,\alpha\beta}} + 1) +v_{\rho_{3,\alpha\beta}} + v_{\rho_{4,\alpha\beta}}) \mod 2.
\end{align*}
The theorem follows from 
\begin{align*}
\dim(\cC) = \sum_{\alpha \in V(Y)} \mathbf{Constant}(\sfV_\alpha^{+}|_{[\F]}) + \sum_{\alpha\beta \in E_c(Y)} \mathbf{Constant}(\sfE_{\alpha\beta}^{+}|_{[\F]}) \mod 2,
\end{align*}
which is precisely Conjecture \ref{T0conj2}.
\end{proof}

\begin{remark} \label{finalrem}
Let $Y$ be a smooth toric Calabi-Yau 3-fold with toric compactification $X$ satisfying $H^0(K_{X}^{-1}) \neq 0$. Let $\ccR$ be a rank 2 $\mu$-stable reflexive sheaf on $X$ with respect to some polarization. Assume: 
\begin{enumerate}
\item[(i)] Conjectures \ref{T0conj1}, \ref{T0conj2} hold.
\item[(ii)] $\Ext^1(\ccR,\ccR)^{T_0} = \Ext^2(\ccR,\ccR)^{T_0} = 0$. Let $\cC \subset \M_{Y \subset X}^{T_0}$ be a compact connected component with constant reflexive hulls $\ccR$ \emph{and} all cokernels 0-dimensional.
\end{enumerate}
Then $\DT(\cC) = (-1)^{\dim(\cC)+\rk(N^+)} e(\cC)$.
By the calculation in the proof of the previous theorem, $(-1)^{\dim(\cC)+\rk(N^+)} = 1$ (no legs). We deduce that in the absence of legs
$$
\DT(\cC) = \sum_{\cC_i \subset \cC^{\C^*}} \DT(\cC) = \sum_{\cC_i \subset \cC^{\C^*}} e(\cC_i) = e(\cC).
$$
This formula holds even though in general $\DT(\cC_i) \neq e(\cC_i)$ (Example \ref{abnormal}).
\end{remark}

\noindent {\tt{amingh@umd.edu}}, {\tt{m.kool1@uu.nl}}, {\tt{bjy@uoregon.edu}}
\end{document}